\documentclass[11pt,reqno,a4paper]{amsart}

\usepackage{amsmath,amsfonts,amssymb,amsthm}
\usepackage{mathtools}
\usepackage{xparse}
\usepackage{tikz}
\usepackage{tikz-cd}
\usepackage{multirow,enumitem,array}
\usepackage{placeins,xspace,fix-cm}
\usepackage[utf8]{inputenc}
\usepackage[english]{babel}
\usepackage{graphicx}
\usepackage[nomessages]{fp}
\usepackage[hidelinks,linktoc=page]{hyperref}
\usepackage{bookmark,xurl}
\usepackage[all,2cell,cmtip]{xy}
\UseAllTwocells

\usepackage{geometry}
\newcolumntype{F}{>{$}c<{\hspace{-0.9ex}$}}
\newcolumntype{:}{>{$}m{0.8ex}<{$}}
\newcolumntype{R}{>{$}r<{$}}
\newcolumntype{C}{>{$}c<{$}}
\newcolumntype{L}{>{$}l<{$}}
\newcolumntype{N}{@{}>{$}l<{$}}
\newcommand{\linesep}[1]{\renewcommand{\arraystretch}{#1}}
\newlength\horspace
\newcommand{\h}[1][1.0]{\hspace*{#1\horspace}}
\newlength\verspace
\newcommand{\vsep}[1][1.0]{\vspace*{#1\verspace}\xspace}
\tikzset{iso/.style={draw=none,every to/.append style={edge node={node [sloped, allow upside down, auto=false]{$\cong$}}}}}
\tikzset{adjunction/.style={draw=none,every to/.append style={edge node={node [sloped, allow upside down, auto=false]{$\dashv$}}}}}
\tikzset{simeq/.style={draw=none,every to/.append style={edge node={node [sloped, allow upside down, auto=false]{$\simeq$}}}}}
\tikzset{simeqS/.style={draw=none,every to/.append style={edge node={node [sloped, allow upside down, auto=false]{$\raisebox{0.8em}{$\simeq$}$}}}}}
\tikzset{aiso/.style={simeqS,preaction={draw,->}}}
\tikzset{proarrowS/.style={draw=none,every to/.append style={edge node={node [sloped, allow upside down, auto=false]{\raisebox{1.4pt}{\small$\shortmid$}}}}}}
\tikzset{proarrow/.style={proarrowS,preaction={draw,->}}}
\tikzset{nullS/.style={draw=none,every to/.append style={edge node={node [sloped, allow upside down, auto=false]{\raisebox{-1.16 ex}{$\circ$}}}}}}
\tikzset{null/.style={nullS,preaction={draw,->}}}
\tikzset{dotdot/.style={dash pattern=on 0.25ex off 0.2ex, dash phase=0ex}}
\tikzset{RightA/.style={double distance=3.5pt,>={Implies},->},
	triple/.style={-,preaction={draw,RightA}},
	quadruple/.style={preaction={draw,RightA,shorten >=0pt},shorten >=1pt,-,double,double distance=0.2pt}}
\tikzset{Right/.style={double distance=1.7pt,>={Implies},->}}
\tikzset{simeqSRight/.style={draw=none,every to/.append style={edge node={node [sloped, allow upside down, auto=false]{$\raisebox{-1em}{\rotatebox{180}{$\simeq$}}$}}}}}
\tikzset{twoiso/.style={simeqSRight,preaction={draw,Right}}}

\newtheorem{theorem}{Theorem}[section]
\newtheorem{lemma}[theorem]{Lemma}
\newtheorem{proposition}[theorem]{Proposition}
\newtheorem{corollary}[theorem]{Corollary}

\theoremstyle{definition}
\newtheorem{definition}[theorem]{Definition}
\newtheorem{example}[theorem]{Example}
\newtheorem{construction}[theorem]{Construction}

\theoremstyle{remark}
\newtheorem{remark}[theorem]{Remark}

\def\nameit#1{\textrm{#1}~}
\def\thex{\nameit{Theorem}}
\def\prox{\nameit{Proposition}}
\def\corx{\nameit{Corollary}}
\def\lemx{\nameit{Lemma}}
\def\defx{\nameit{Definition}}
\def\remx{\nameit{Remark}}

\def\dfn#1{{\itshape #1}}

\newcommand{\refs}[1]{\textup{(}\ref{#1}\textup{)}}

\DeclareMathOperator{\coker}{coker}
\NewDocumentEnvironment{cd}{s O{6} O{6} b}{
	\IfBooleanF{#1}{\begin{equation*}}\begin{tikzcd}[row sep=#2ex,column sep=#3ex,ampersand replacement=\&]
			#4
		\end{tikzcd}\IfBooleanF{#1}{\end{equation*}}\ignorespacesafterend}{}

\newenvironment{enum}{\begin{enumerate}[label=$($\hspace{0.12ex}\roman*\hspace{0.075ex}$)$]}{\end{enumerate}}

\newenvironment{eqD*}{\begin{equation*}}{\end{equation*}\ignorespacesafterend}
\newcommand{\A}{\mathcal{A}}
\newcommand{\C}{\mathcal{C}}
\newcommand{\D}{\mathcal{D}}
\newcommand{\E}{\mathcal{E}}
\newcommand{\M}{\mathcal{M}}

\newcommand{\N}{\mathcal{N}}
\newcommand{\T}{\mathcal{T}}
\renewcommand{\S}{\mathcal{S}}
\newcommand{\U}{\mathcal{U}}
\newcommand{\V}{\mathcal{V}}
\newcommand{\F}{\mathcal{F}}
\newcommand{\0}{\mathbf{0}}

\newcommand{\I}{\mathcal{I}}

\newcommand{\Cat}{\mathbf{Cat}}
\newcommand{\Triang}{\mathbf{Triang}}
\newcommand{\AbCat}{\mathbf{AbCat}}
\renewcommand{\mod}{\mathbf{mod}}
\newcommand{\Ab}{\mathbf{Ab}}
\def\:{\colon}

\def\c{\circ}
\newcommand{\iso}{\cong}
\def\phi{\varphi}

\newcommand{\tom}{\rightarrowtail}
\newcommand{\toe}{\twoheadrightarrow}

\def\set#1#2{\left\{{#1}\left.\right|\,{#2}\right\}}

\newcommand{\cont}{\subseteq}
\newcommand{\contain}{\supseteq}

\newcommand{\HomC}[3]{{#1}\left({#2},\h[1]{#3}\right)}

\newcommand{\id}[1]{\operatorname{id}_{#1}}
\newcommand{\Id}[1]{\operatorname{Id}_{#1}}

\newcommand{\aar}[2][]{\xrightarrow[#1]{#2}}
\makeatletter
\newcommand{\aR}[2][]{
	\ext@arrow 0055{\Rightarrowfill@}{#1}{#2}}
\def\xLongrightarrowfill@{\arrowfill@\Relbar\Relbar\Longrightarrow}
\newcommand{\am}[2][]{
	\ext@arrow 0395\xmapstofill@{#1}{#2}}
\def\xlongmapstofill@{\arrowfill@\relbar\relbar\longmapsto}
\def\xlongrightarrowfill@{\arrowfill@\relbar\relbar\longrightarrow}
\newcommand{\aarr}[2][]{
	\ext@arrow 0099\xlongrightarrowfill@{#1}{#2}}
\newcommand{\eqq}{\DOTSB\protect\Relbar\protect\joinrel\Relbar}
\def\xeqqfill@{\arrowfill@\Relbar\Relbar\eqq}
\newcommand{\aeqq}[2][]{
	\ext@arrow 0099\xeqqfill@{#1}{#2}}
\def\xRrightarrowfill@{\arrowfill@\equiv\equiv\Rrightarrow}
\newcommand{\aM}[2][]{\ext@arrow 0359\xRrightarrowfill@{#1}{#2}}

\def\nullrightarrowfill@{\arrowfill@{\relbar{\circ}}\relbar\longrightarrow}
\newcommand{\anull}[2][]{
	\ext@arrow0099\nullrightarrowfill@{#1}{#2}}
\makeatother
\newcommand{\aiso}[1]{\overset{#1}{\iso}}

\newcommand{\PB}[1]{\arrow[#1,phantom,"\scalebox{1.6}{\color{black}$\lrcorner$}",very near start]}
\newcommand{\scaleu}[2][1.2]{{\scalebox{#1}{$#2$}}}
\NewDocumentCommand{\fib}{O{n} O{2.3} mmm}{

	\begin{cd}*[#2][5]
		{#3}\ifx#1n{\arrow[d,"{\,\scaleu{#4}}"]}\else{\ifx#1i{\arrow[d,hookrightarrow,"{\,\scaleu{#4}}"]}\else{\ifx#1e{\arrow[d,equal,"{\,\scaleu{#4}}"]}\else{\ifx#1R{\arrow[d,Rightarrow,"{\,\scaleu{#4}}"]}\fi}\fi}\fi}\fi\\
		{#5}\ifx#1o{\arrow[u,"{\,\scaleu{#4}}"']}\fi
	\end{cd}\xspace
}
\newcommand{\Ar}[4][]{\arrow[#2,"{#3}"{#1},""{name=#4, anchor=center}]}
\newcommand{\Ars}[4][]{\arrow[#2,"{#3}"'{#1},""{name=#4, anchor=center}]}
\newcommand{\Arb}[6][]{\arrow[#2,"{#3}"{#1},from=#4,to=#5,shorten <= #6 em, shorten >= #6 em]}
\newcommand{\Arbs}[6][]{\arrow[#2,"{#3}"'{#1},from=#4,to=#5,shorten <= #6 em, shorten >= #6 em]}
		\NewDocumentCommand{\twosquare}{s O{n} O{6} O{6} O{} O{2.7} O{2.2} O{0.5} O{n}}{

			\def\foosq##1##2##3##4##5##6##7##8{
				\IfBooleanTF{#1}{\begin{cd}*}{\begin{cd}}[#3][#4]
						{##1}\ifx#2p{\PB{rd}}\fi\arrow[r,"{##5}"]\ifx#9l{\arrow[d,equal,"{##6}"']}\else{\arrow[d,"{##6}"']}\fi\&{##2}\ifx#9r{\arrow[d,equal,"{##7}"]}\else{\arrow[d,"{##7}"]}\fi\ifx#2l{\arrow[ld,Rightarrow,shorten <=#6ex,shorten >=#7ex,"{#5}"{pos=#8}]}\fi \ifx#2i{\arrow[ld,twoiso,shorten <=#6ex,shorten >=#7ex,"{#5}"{pos=#8}]}\fi\\
						{##3}\ifx#9d{\arrow[r,equal,"{##8}"']}\else{\arrow[r,"{##8}"']}\fi\ifx#2o{\arrow[ur,Rightarrow,shorten <=#6ex,shorten >=#7ex,"{#5}"{pos=#8}]}\fi\&{##4}
				\end{cd}}
				\foosq}
			\NewDocumentCommand{\tr}{s O{4.5} O{6.5} O{0} O{0} O{n} O{0} O{} O{0}}{

				\def\footr##1##2##3##4##5##6{
					\IfBooleanTF{#1}{\begin{cd}*}{\begin{cd}}[#3][#2]
							{##1}\arrow[rr,"{##4}"]
							\Ars[inner sep =0.2ex]{dr}{##5}{A}\&[#4ex]\&[#5ex]{##2}\Ar[inner sep =0.2ex]{ld}{##6}{B}\\
							\&{##3}
							\ifx#6l{\Arb{Rightarrow,shift right=#7em}{#8}{A}{B}{#9}}\else{\ifx#6o{\Arbs{Rightarrow,shift right=#7em}{#8}{B}{A}{#9}}\else{\ifx#6i{\Arbs[inner sep=0.9ex]{iso,shift right=#7em}{#8}{A}{B}{#9}}\else{\ifx#6e{\Arb{equal,shift right=#7em}{#8}{A}{B}{#9}}\else{}\fi}\fi}\fi}\fi
					\end{cd}}
					\footr}
\newcommand{\pExact}{\mathbf{pEx}}

\usepackage[T1]{fontenc}
\usepackage{lmodern,microtype,booktabs,tabularx}
\setlist[enumerate]{itemsep=3pt,topsep=5pt}
\newtheorem*{maintheorem}{Main results}
\newtheorem{conjecture}[theorem]{Conjecture}
\newcommand{\Pt}{\mathbf{Cat}_{0}}
\newcommand{\Add}{\mathbf{Add}}
\newcommand{\Tri}{\Triang}
\newcommand{\TriAll}{\mathbf{Triang}_{\mathrm{all}}}
\newcommand{\HomCat}{\mathbf{HomCat}}
\newcommand{\DiHom}{\mathbf{DiHom}}
\newcommand{\Lcat}{\mathfrak L}
\newcommand{\B}{\mathcal B}
\newcommand{\K}{\mathcal K}

\newcommand{\Pcal}{\mathcal P}

\newcommand{\Scl}{\mathcal S}
\newcommand{\X}{\mathcal X}
\newcommand{\Zcat}{\mathbf 0}
\newcommand{\twoker}{\operatorname{2ker}}
\newcommand{\twocoker}{\operatorname{2coker}}

\newcommand{\pq}[2]{#1/_{\!0}#2}
\newcommand{\aq}[2]{#1/[#2]}
\newcommand{\sq}[2]{#1/_{\!\mathrm S}#2}
\newcommand{\vq}[2]{#1/_{\!\mathrm V}#2}
\DeclareMathOperator{\Ob}{Ob}
\DeclareMathOperator{\Mor}{Mor}
\DeclareMathOperator{\Ker}{Ker}
\DeclareMathOperator{\Coker}{Coker}
\DeclareMathOperator{\im}{im}
\DeclareMathOperator{\ret}{ret}
\DeclareMathOperator{\add}{add}
\DeclareMathOperator{\Ser}{Ser}
\DeclareMathOperator{\thick}{thick}
\DeclareMathOperator{\cl}{cl}
\DeclareMathOperator{\Nat}{Nat}
\DeclareMathOperator{\Ann}{Ann}
\newcommand{\Nim}{\operatorname{Nim}}
\newcommand{\Ncm}{\operatorname{Ncm}}
\newcommand{\Sden}{\mathcal S}
\newcommand{\Eden}{\mathcal E}
\newcommand{\Loc}{\mathcal L}
\newcommand{\EPos}{\mathbf{EPos}}
\DeclareMathOperator{\Ex}{Ex}
\DeclareMathOperator{\SatTh}{STh}
\DeclareMathOperator{\TFrac}{TFrac}

\hypersetup{pdftitle={Exactness of the 2-categories of abelian and triangulated categories},pdfauthor={Elena Caviglia, Zurab Janelidze, Luca Mesiti, Ulo Reimaa}}
\title[Exactness of the 2-categories of abelian and triangulated categories]{Exactness of the 2-categories of abelian and triangulated categories}
\author[E. Caviglia]{Elena Caviglia}
\address[E. Caviglia]{Mathematics Division, Department of Mathematical Sciences, Stellenbosch University, Private Bag X1 Matieland, 7602, South Africa; National Institute for Theoretical and Computational Sciences (NITheCS), Stellenbosch, South Africa.}
\email{elena.caviglia@outlook.com}
\author[Z. Janelidze]{Zurab Janelidze}
\address[Z. Janelidze]{Mathematics Division, Department of Mathematical Sciences, Stellenbosch University, Private Bag X1 Matieland, 7602, South Africa; National Institute for Theoretical and Computational Sciences (NITheCS), Stellenbosch, South Africa.}
\email{zurab@sun.ac.za}
\author[L. Mesiti]{Luca Mesiti}
\address[L. Mesiti]{Mathematics Division, Department of Mathematical Sciences, Stellenbosch University, Private Bag X1 Matieland, 7602, South Africa.}
\email{luca.mesiti@outlook.com}
\author[\"U. Reimaa]{\"Ulo Reimaa}
\address[\"U. Reimaa]{Institute of Mathematics and Statistics, 
University of Tartu, 
Narva mnt 18, 
51009 Tartu, 
Estonia}
\email{ulo.reimaa@ut.ee}
\date{5 September 2026}

\thanks{The fourth author was supported in part by the Estonian Research Council grants  STP26 and PRG1204. The second author is grateful for the invitation and kind hospitality offered by the fourth author during his visit to University of Tartu. The second author used ChatGPT-6 Astra (OpenAI) for research assistance
and for writing and organizing the exposition. The fourth author used Claude Fable 5.1 to review the paper and some of its suggestions were incorporated. All authors
remain responsible for the content of the paper, including
the correctness of its results and the accuracy of its references.}

\begin{document}
\begin{abstract}
We introduce a notion of $2$-homological category modelled on pointed homological categories in the sense of Grandis, using bizero objects whose null $1$-cells are zero objects in the hom-categories for formulating $2$-dimensional pointedness. We first prove directly that the $2$-category of di-exact homological categories, which generalize Puppe exact categories, functors preserving all kernels and cokernels, and arbitrary natural transformations is $2$-homological. Its normal subcategories are saturated thick subcategories, and its exact quotients are constructed by a complete three-arrow fraction calculus similar to the one known for Puppe exact categories. We then adapt this proof to prove that the $2$-category of triangulated categories is also $2$-homological; here, instead of the ternary fractions we use the well-known Verdier fractions. Abstracting the common quotient structure of these proofs yields a general criterion, using which we further establish that the $2$-categories of pointed, additive and abelian categories are also $2$-homological. The criterion also applies to categories enriched in semimodules over a fixed commutative rig, with a zero object, and to their full sub-$2$-category with finite biproducts. Their quotients are linear congruence quotients. Combining enrichment with Puppe exactness gives further examples whose quotients are exact linear localizations, including linear abelian categories as the finite-biproduct case. In the abelian and triangulated cases the normal subcategories and quotients are, respectively, Serre subcategories and Serre quotients, and thick triangulated subcategories and Verdier localizations.
\end{abstract}
\maketitle

\section*{Introduction}

Abelian categories and triangulated categories are fundamental concepts in homological algebra and cohomology theory, used extensively in algebraic geometry, algebraic topology and representation theory. In this paper we establish $2$-homological properties of the $2$-categories of these categories. Both of these categories are additive, and in some sense, an additive category can be viewed as a categorical analogue of a commutative monoid. The category of commutative monoids is a homological category in the sense of Grandis \cite{Grandis13}, so one may naturally wonder whether the $2$-categories of abelian and triangulated categories will have the corresponding $2$-dimensional property. Grandis established the homological structure of the category of abelian categories and exact functors, with zero-valued functors as null morphisms \cite[Section~1.9, p.~146]{Grandis92}; see also \cite[Section~5.6]{Grandis13}. We use its pointed formulation obtained by identifying naturally isomorphic functors, namely the $1$-truncation of the ordinary $2$-category of abelian categories. We show that those $1$-cells that in the $1$-truncation end up being kernels and cokernels (they are given by thick subcategories and Serre quotients, well known in algebraic geometry) are actually kernels and cokernels in a very natural $2$-dimensional sense. It then follows that the $2$-category of abelian categories is a $2$-homological category in a suitable sense. It turns out with triangulated categories there is a similar story, where Serre quotients are replaced with Verdier quotients: the $2$-category of triangulated categories is $2$-homological as well.

A central ingredient of our notion of a $2$-homological category is a seemingly new
$2$-dimensional form of pointedness. Recall that the ordinary 
$1$-dimensional pointedness can be described by first equipping each
hom-set with a distinguished element, with composition preserving these
elements in each variable, and then requiring the existence of an initial
object. Such an object is automatically terminal, hence a zero object,
and the distinguished elements are precisely the morphisms factoring
through it. Conversely, the existence of a zero object determines this
enrichment in pointed sets. It is therefore natural, in dimension $2$,
to replace pointed sets by pointed categories: we require each
hom-category to have a zero object, pre- and postcomposition by
$1$-cells to preserve these zero objects, and a bi-initial object to
exist. Under these local pointedness assumptions, a bi-initial object
is also bi-terminal, hence a \emph{bizero object}, and the $1$-cells
factoring through it are zero objects in their respective hom-categories.
Unlike in dimension $1$, however, the existence of a bizero object
alone does not guarantee this compatible local pointedness; see Example~\ref{ex:bizero-not-strong}.
Accordingly, given a bizero object $0$, we call a $1$-cell \emph{null}
if it is isomorphic to one factoring through $0$, and call $0$
\emph{strong}, or a \emph{$2$-zero object}, when every null $1$-cell
is a zero object of its hom-category. Explicitly, for every
$1$-cell $f:A\to B$ and every parallel null $1$-cell $z:A\to B$,
there must be exactly one $2$-cell $f\Rightarrow z$ and exactly one
$2$-cell $z\Rightarrow f$.
A $2$-category admitting a strong bizero object is called
\emph{$2$-pointed}. This recovers the locally pointed description
above: the null $1$-cells are precisely the zero objects of the
hom-categories, and pre- and postcomposition preserve them.

In a $2$-pointed $2$-category, we define $2$-kernels and $2$-cokernels as bipullbacks and bipushouts over the $2$-zero object (and also provide alternative equivalent descriptions). A pointed category is homological in Grandis's sense when it has kernels and cokernels, the two classes are separately closed under composition, and a kernel $k:B\to C$ followed by a cokernel $c:C\to D$ factors as a cokernel followed by a kernel whenever $\Ker(c)$ factors through $k$. Replacing these notions by their $2$-dimensional counterparts gives the definition of a $2$-homological category. For a $2$-pointed $2$-category with all $2$-kernels and $2$-cokernels, the remaining homological axioms are equivalent to homologicity of the $1$-truncation, which identifies invertibly isomorphic $1$-cells and discards the $2$-cells.

The analogous question also arises for additive and pointed categories, and we prove their $2$-homologicity as well. The additive and abelian results admit further extensions over a fixed commutative rig $R$: we treat $R$-linear categories with a zero object and $R$-linear Puppe exact categories, in each case both with and without finite biproducts. For $R=\mathbb Z$, the finite-biproduct linear case recovers additive categories; for $R=\mathbb N$, the finite-biproduct Puppe exact linear case recovers abelian categories. We begin, however, with a direct proof for the $2$-category of pointed \emph{di-exact homological categories} (where the term ``di-exact'' is taken from \cite{PeschkeLinden}), namely pointed homological categories whose normal morphisms (i.e., morphisms that factor as a cokernel followed by kernel) are closed under composition. The terminology \emph{di-exact} is borrowed from Peschke and Van der Linden
\cite[Definition~2.1.8]{PeschkeLinden}; see also the subsequent
study by Afsa \cite{Afsa}. A pointed category
with kernels and cokernels is di-exact when every composite of a kernel 
followed by a cokernel is a normal morphism. Our expression
\emph{di-exact homological category} therefore means a pointed
homological category in Grandis's sense which is additionally
di-exact in their sense. Under existence of kernels and cokernels this
is equivalent to requiring that normal morphisms be closed under
composition. 

That the $2$-category of Puppe exact categories is also $2$-homological is a consequence of the result due to Grandis that the $1$-truncation of the $2$-category of Puppe exact categories is homological. But we can also derive it from our result that the broader $2$-category of di-exact homological categories is $2$-homological. In fact our proof of the latter result extends the technique for proving that $1$-truncation of the $2$-category of Puppe exact categories is homological: in particular, we extend the ternary fraction calculus from Puppe exact to di-exact homological categories to describe the $2$-cokernels.

The tables in Section~\ref{sec:framework} summarize what are objects, $1$-cells, and $2$-cells, as well as $2$-kernels and $2$-cokernels in the $2$-homological categories mentioned above. After introducing the basic $2$-categorical conceptual ingredients of the paper in Section~\ref{sec:framework}, in
Section~\ref{sec:di-homological} we prove that the $2$-category of di-exact homological categories is $2$-homological.
Section~\ref{sec:tri-direct} gives a proof of the similar result for triangulated
categories. Then, in Section~\ref{sec:criterion}, we formulate and prove
a general theorem that abstracts essential features of the two proofs in the previous two sections.
We then turn to pointed categories in Section~\ref{sec:pointed},
followed by additive categories in Section~\ref{sec:additive} and
abelian categories in Section~\ref{sec:abelian}, where we use the general theorem to prove that the $2$-categories of these categories are $2$-homological. Section~\ref{sec:rig-linear} gives the more general results for categories enriched in semimodules over a fixed commutative rig $R$, with a zero object, and for the full sub-$2$-category with finite biproducts. Its final subsection treats Puppe exact $R$-linear categories and exact $R$-linear functors, whose $2$-cokernels are exact linear localizations. The finite-biproduct case consists of $R^{\mathrm{gp}}$-linear abelian categories, where $R^{\mathrm{gp}}$ is the universal ring obtained by additive group completion. 
Section~\ref{sec:tri} discusses a shorter proof for the triangulated case, making use of the general theorem.
Section~\ref{sec:di-revisited} explains how the same general theorem compresses
the final argument of the proof for the di-exact homological case. Finally, Section~\ref{sec:comparison} compares exactness of the $2$-categories considered in this paper. It also proposes a conjecture that the $2$-category of (pointed) homological categories is $2$-homological and makes a small step towards proving it.

The theory developed in this paper generalizes to dimension $2$ the
\emph{pointed} homological categories of Grandis; extending it to the
non-pointed setting is a further direction of interest. In that setting,
null morphisms are specified by an ideal of morphisms rather than determined by the zero
object, and kernels and cokernels are defined relative to that ideal.
A two-dimensional framework for such an extension is provided in \cite{CJM}, where $2$-ideals of null
$1$-cells and null $2$-cells, together with their relative $2$-kernels
and $2$-cokernels, are introduced without assuming pointedness. It would be interesting
to develop the corresponding non-pointed notion of $2$-homological
category, replacing the strong bizero used here by suitable $2$-ideal
data and formulating an ideal-relative version of the conditional
homology axiom. We also expect there to be several intermediate useful concepts of a $2$-homological category between ours in our $2$-pointed context and a non-pointed one given by a $2$-categorical ideal: for example, one may relax the strongness assumption on bizero objects.

Our notion of $2$-dimensional exactness is different from the approaches found in the work of Vitale
and Dupont, who are motivated by the study of symmetric categorical groups as
$2$-dimensional analogues of abelian groups. This includes the
factorization systems investigated by Kasangian and Vitale
\cite{KasangianVitale} and leads to a groupoid-enriched setting; Dupont
develops this perspective systematically through abelian and
$2$-abelian categories enriched in pointed groupoids \cite{Dupont}.
Vitale \cite{Vitale2024} further develops this programme by
constructing, from any abelian category, a $2$-abelian bicategory
obtained by localizing its $2$-category of arrows at weak equivalences.
This groupoid-enriched homological programme should be distinguished
from the broader theory of proper factorization systems in general
$2$-categories developed jointly by Dupont and Vitale
\cite{DupontVitale}, which is not restricted to invertible $2$-cells
and provides part of the framework used in \cite{CJM}. In the
groupoid-enriched homological frameworks, all $2$-cells are
invertible, and the zero $1$-cell is a distinguished object of each
hom-groupoid, not required to be a zero object in the sense of being
both initial and terminal. By contrast, our hom-categories may
contain noninvertible $2$-cells, and our stronger pointedness
requires every null $1$-cell to be a zero object of its hom-category;
consequently, an invertible nullhomotopy, when it exists, is unique.
This distinction is essential: a groupoid possessing a zero object
is equivalent to the terminal category, so imposing our local
pointedness condition while requiring all $2$-cells to be invertible
would exclude the nontrivial hom-groupoids of the categorical-group
examples. Moreover, whereas Dupont's $2$-Puppe-exact and
$2$-abelian frameworks impose abelian-type normality and
factorization conditions, we require only the two-dimensional
analogue of Grandis's conditional exactness (i.e., the homology axiom), together with
existence and separate composition closure of $2$-kernels and
$2$-cokernels. The approaches are therefore complementary rather
than related by a straightforward inclusion of axiom systems:
our combination of stronger local pointedness and conditional
exactness is designed to accommodate the categories,
structure-preserving functors, and noninvertible natural
transformations studied in this paper.

Homological categories are useful in that in them one can develop the homological algebra machinery, as shown in \cite{Grandis13}.
We hope that our paper will lead to an extension of this machinery to $2$-categories and that such extension will be useful in the development of the subject of $2$-homological algebra.

\setcounter{tocdepth}{1}
\tableofcontents

\section{The framework and the precise statements}\label{sec:framework}\label{sectionkernelscokernels}

\subsection{Objects, 1-cells, 2-cells, and size}

Here \emph{pointed category} means a category with a zero object. In the
linear cases, $R$ denotes a fixed commutative rig; the enrichment is
specified in Section~\ref{sec:rig-linear}. We use the following
$2$-categories:
\begin{center}
\small
\begin{tabularx}{\textwidth}{@{}l>{\raggedright\arraybackslash}X>{\raggedright\arraybackslash}X>{\raggedright\arraybackslash}X@{}}
\toprule
Notation & Objects & 1-cells & 2-cells\\
\midrule
$\Pt$ & Pointed categories & Zero-object-preserving functors & All natural transformations\\[3pt]
$\Add$ & Additive categories & Additive functors & All natural transformations\\[3pt]
$\pExact$ & Puppe exact categories & Exact functors & All natural transformations\\[3pt]
$\AbCat$ & Abelian categories & Exact functors & All natural transformations\\[3pt]
$\mathbf{Lin}_{R,0}$ & $R$-linear categories with a zero object & $R$-linear functors & All natural transformations\\[3pt]
$\mathbf{Lin}_{R}^{\oplus}$ & $R$-linear categories with finite biproducts & $R$-linear functors & All natural transformations\\[3pt]
$\mathbf{PEx}_{R}$ & $R$-linear Puppe exact categories & Exact $R$-linear functors & All natural transformations\\[3pt]
$\mathbf{PEx}_{R}^{\oplus}$ & $R$-linear Puppe exact categories with finite biproducts & Exact $R$-linear functors & All natural transformations\\[3pt]
$\Tri$ & Triangulated categories & Exact functors with their shift isomorphisms & Shift-compatible natural transformations\\[3pt]
$\TriAll$ & The same & The same & All natural transformations\\[3pt]
$\DiHom$ & Di-exact homological categories & Functors preserving zero objects and all kernels and cokernels & All natural transformations\\
\bottomrule
\end{tabularx}
\end{center}
Zero objects and other preserved structure need not be preserved as chosen
objects on the nose. In the Puppe exact, abelian and di-exact settings, exact functors preserve
zero objects and all kernels and cokernels; in the triangulated settings,
they preserve distinguished triangles with the specified shift isomorphism. Linear functors preserve zero
objects and every existing finite biproduct. The superscript $\oplus$
denotes the full sub-$2$-category on objects with finite biproducts.
As explained in Remark~\ref{rem:puppe-linear-abelian},
$\mathbf{PEx}_{R}^{\oplus}$ identifies with the $2$-category of
$R^{\mathrm{gp}}$-linear abelian categories and exact linear functors,
where $R^{\mathrm{gp}}$ is the universal ring of $R$. We also write $\HomCat$ for pointed Grandis
homological categories, functors preserving zero objects and all kernels
and cokernels, and arbitrary natural transformations. Thus $\DiHom$
is its full sub-$2$-category consisting of the di-exact homological categories.

Fix a universe and take the categories in the table to be small in that
universe. The statements also apply to essentially small categories, with
the usual equivalence-invariant interpretation. This convention avoids
local-smallness problems for quotients. For larger locally small categories,
the same proofs apply whenever the indicated quotient categories exist
within the adopted size convention; no assertion about the local smallness
of arbitrary large Verdier quotients is implicit.

\subsection{Strong bizero objects and invertible nullhomotopies}

\begin{definition}\label{def:2pointed}
An object $0$ of a $2$-category $\Lcat$ is a \emph{bizero object} if both
$\Lcat(0,A)$ and $\Lcat(A,0)$ are equivalent to the terminal category
for every object $A$. Relative to $0$, a $1$-cell is \emph{null} if it
is isomorphic to a $1$-cell factoring through $0$. The bizero object $0$ is \emph{strong}, or a \emph{$2$-zero object}, if,
for every pair $A,B$, every null $1$-cell $z:A\to B$ is a zero object
of the hom-category $\Lcat(A,B)$. Equivalently, for every $1$-cell
$f:A\to B$ and every parallel null $1$-cell $z:A\to B$, there is
exactly one $2$-cell in each of the two directions
\[
 f\Rightarrow z,
 \qquad z\Rightarrow f.
\]
A $2$-category admitting a strong bizero object is called
\emph{$2$-pointed}.
\end{definition}

\begin{example}\label{ex:bizero-not-strong}
A bizero object need not be strong. Consider the strict
$2$-category $\mathcal E$ with objects $0,A$, with
$\mathcal E(0,0)$, $\mathcal E(0,A)$ and $\mathcal E(A,0)$
terminal, and with $\mathcal E(A,A)$ the two-element poset
$1_A<z$. If $i:0\to A$ and $p:A\to0$ are the unique
$1$-cells, composition is given by
\[
 pi=1_0,\qquad ip=z,\qquad z^2=z,
 \qquad zi=i,\qquad pz=p,
\]
together with the identity laws. These rules are associative
and monotone on the hom-posets, so determine horizontal
composition. The object $0$ is a bizero and $z=ip$ is null.
However, $z$ is terminal but not initial in
$\mathcal E(A,A)$, since there is no $2$-cell $z\Rightarrow1_A$.
Thus $0$ is not strong.
\end{example}

\begin{lemma}[Local zero objects and whiskering]\label{lem:localpointedness}
In a $2$-pointed $2$-category, every hom-category is pointed and its
zero objects are precisely the null $1$-cells. The unique $2$-cell
between parallel null $1$-cells is invertible. Pre- and postcomposition
preserve both the zero objects and the zero morphisms of the
hom-categories. In particular, the condition of
Definition~\ref{def:2pointed} is invariant under reversing $1$-cells
or reversing $2$-cells.
\end{lemma}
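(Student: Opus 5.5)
The proof unwinds Definition~\ref{def:2pointed}, treating the claims in the order stated.

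\emph{Pointedness of hom-categories and identification of zero objects.} For any $A,B$ there is a null $1$-cell $A\to 0\to B$, obtained by composing with the essentially unique $1$-cells into and out of $0$. By strongness it is a zero object of $\Lcat(A,B)$, so every hom-category is pointed. Every null $1$-cell is a zero object by definition. Conversely, a zero object $f$ of $\Lcat(A,B)$ is isomorphic to the null $1$-cell $z$, since zero objects are unique up to unique isomorphism. Nullness is defined up to isomorphism, so $f$ is null. Between two parallel null $1$-cells there is then exactly one $2$-cell in each direction, and both composites are forced to be identities, so the unique $2$-cell is invertible.

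\emph{Whiskering.} Let $z\colon A\to B$ be null, say $z\cong vu$ with $u\colon A\to 0$ and $v\colon 0\to B$. For $g\colon B\to C$ and $h\colon D\to A$ we get $gzh\cong (gv)(uh)$, which factors through $0$. Hence whiskering preserves null $1$-cells, and therefore preserves zero objects by the first part. A zero morphism in $\Lcat(A,B)$ is a $2$-cell that factors through a zero object, $f\Rightarrow z\Rightarrow f'$. Whiskering is a functor on hom-categories, so it carries this factorization to one through $gzh$, which is again a zero object. So zero morphisms are preserved too.

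\emph{Invariance under reversal.} Reversing $1$-cells (the $2$-category $\Lcat^{op}$) and reversing $2$-cells ($\Lcat^{co}$) both keep $0$ a bizero object. Equivalences to the terminal category are self-dual, and under $1$-cell reversal the hom-categories $\Lcat(0,A)$ and $\Lcat(A,0)$ simply exchange roles. Both reversals also preserve the class of null $1$-cells, since a factorization through $0$ stays a factorization through $0$ and isomorphisms reverse to isomorphisms. The strongness condition asks for exactly one $2$-cell in each of the two directions $f\Rightarrow z$ and $z\Rightarrow f$. This is symmetric under swapping the two directions, and $1$-cell reversal leaves hom-categories unchanged up to relabelling. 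So the condition transfers in both cases.

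\emph{Main obstacle.} The argument is routine. The only point needing care is the bicategorical bookkeeping: null $1$-cells are defined only up to isomorphism, and the $1$-cells into and out of $0$ are only essentially unique. So each step should pass through "isomorphic to a composite through $0$" rather than use strict equalities.
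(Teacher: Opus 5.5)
Your proposal is correct and follows essentially the same route as the paper's proof. You identify the zero objects of each hom-category with the null $1$-cells via the composite $A\to0\to B$, and deduce invertibility from uniqueness of $2$-cells between zero objects. You then push factorizations through a null $1$-cell along the whiskering functors, and observe that the two reversals respectively swap the bi-initial and bi-terminal conditions and swap the two uniqueness requirements; you merely spell out the up-to-isomorphism bookkeeping more explicitly.
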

\begin{proof}
For every $A,B$, the bizero property supplies $1$-cells $A\to0$ and
$0\to B$. Their composite is null, and hence a zero object of
$\Lcat(A,B)$. All other zero objects of that hom-category are
isomorphic to this composite, so they are null. Conversely, every
null $1$-cell is a zero object by definition. The unique arrows
between two zero objects are mutually inverse, proving the
invertibility assertion.

Whiskering a null $1$-cell on either side again gives a null
$1$-cell: a factorization through $0$ is preserved by composition,
and a factorization up to isomorphism remains such after whiskering.
Thus both whiskering functors preserve zero objects. The zero
$2$-cell from $f$ to $g$ in a hom-category is the composite
\[
 f\longrightarrow z\longrightarrow g
\]
through any null $1$-cell $z$ in that hom-category. Whiskering sends
this factorization to one through a null $1$-cell, so preserves the
zero $2$-cell. Finally, reversing $1$-cells interchanges the
bi-initial and bi-terminal properties, while reversing $2$-cells
interchanges the two uniqueness requirements. Both operations
therefore preserve the definition.
\end{proof}

The definition could equivalently require the local zero-object
property only for $1$-cells factoring through $0$ on the nose, since
being a zero object is invariant under isomorphism. We use the
isomorphism-invariant formulation throughout. If $z$ is null and
$f$ is parallel to $z$, the unique $2$-cells $f\Rightarrow z$ and
$z\Rightarrow f$ are invertible if and only if $f$ is null.

For $f:A\to B$, a \emph{2-kernel} is a 1-cell $k:K\to A$ with an
invertible nullhomotopy $fk\iso0$, universally so in the sense that
\begin{equation}\label{eq:kernelUP}
 \Lcat(X,K)\simeq
 \{u\in\Lcat(X,A)\mid fu\text{ is null}\}.
\end{equation}
The right-hand side is a full subcategory. The equivalence is induced by
postcomposition with $k$ and is pseudonatural in $X$. Dually, a
\emph{2-cokernel} $q:B\to Q$ has $qf\iso0$ and satisfies
\begin{equation}\label{eq:cokernelUP}
 \Lcat(Q,X)\simeq
 \{v\in\Lcat(B,X)\mid vf\text{ is null}\}.
\end{equation}
Equivalently, one may retain the invertible nullhomotopies as data in
these displays. When the required composite is null, its nullhomotopy
is the unique isomorphism to a parallel null $1$-cell. The compatibility
condition on $2$-cells is automatic by the local zero-object property.
The full subcategories on the right are pointed: they contain all null
$1$-cells, and their zero objects are those of the ambient hom-category.
The displayed equivalences preserve these zero objects. For general
$2$-ideal formulations see \cite[Section 2]{CJM}.

\begin{lemma}[Kernels as bipullbacks]\label{lem:kernel-bipullback}
In a $2$-pointed $2$-category, a $2$-kernel of $f:A\to B$ is
precisely the projection to $A$ of a bipullback of $f$ and a
$1$-cell $z_B:0\to B$. Dually, $2$-cokernels are the
corresponding bipushouts over $0$.
\end{lemma}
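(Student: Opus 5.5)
We compare the two universal properties hom-wise. A bipullback of $f:A\to B$ and $z_B:0\to B$ is an object $P$ with $1$-cells $k:P\to A$, $t:P\to 0$ and an invertible $2$-cell $\phi: fk\iso z_Bt$. It is universal in the sense that, for every $X$, the induced functor
\[
 \Lcat(X,P)\longrightarrow \mathrm{PsPb}_X,
\]
into the pseudo-pullback of $\Lcat(X,A)\to\Lcat(X,B)\leftarrow\Lcat(X,0)$, is an equivalence, pseudonaturally in $X$. So the plan is to identify this pseudo-pullback category, up to equivalence, with the full subcategory $\{u\in\Lcat(X,A)\mid fu\text{ is null}\}$ appearing in \eqref{eq:kernelUP}. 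We will do this compatibly with the functors induced by $k$; then the two universal properties coincide and the lemma follows.

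\textbf{Identifying the pseudo-pullback.} Its objects are triples $(u,s,\alpha)$ with $u:X\to A$, $s:X\to0$ and $\alpha:fu\iso z_Bs$ invertible. Its morphisms $(u,s,\alpha)\to(u',s',\alpha')$ are pairs $(\beta:u\Rightarrow u',\ \gamma:s\Rightarrow s')$ satisfying the evident compatibility with $\alpha,\alpha'$.

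Since $\Lcat(X,0)\simeq 1$, there is a unique $\gamma$ for any $s,s'$, and it is invertible. The $1$-cell $z_Bs$ factors through $0$, so it is null. By Lemma~\ref{lem:localpointedness} it is therefore a zero object of $\Lcat(X,B)$, so:
\begin{itemize}
\item[(a)] $\alpha$ exists if and only if $fu$ is null;
\item[(b)] when it exists, $\alpha$ is unique;
\item[(c)] the compatibility square, being an equation of $2$-cells from $fu$ into the zero object $z_Bs'$, holds automatically.
\end{itemize}
Hence the forgetful functor $(u,s,\alpha)\mapsto u$ is fully faithful, and its image is exactly the full subcategory of those $u$ with $fu$ null. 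For essential surjectivity onto that subcategory: given $u$ with $fu$ null, choose any $s:X\to 0$ and let $\alpha$ be the unique isomorphism $fu\iso z_Bs$ between two zero objects.

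\textbf{Compatibility with $k$.} The composite of the comparison $\Lcat(X,P)\to\mathrm{PsPb}_X$ with the forgetful functor is postcomposition with $k$. So $P$ is a bipullback exactly when $k_*:\Lcat(X,P)\to\{u\mid fu\text{ null}\}$ is an equivalence for all $X$, and the data $\phi$ is exactly the nullhomotopy $fk\iso0$. Pseudonaturality in $X$ transfers along the forgetful equivalence, since it is strictly natural in $X$. Conversely, a $2$-kernel $k$ with nullhomotopy $fk\iso z$, where $z$ is null, yields bipullback data $(k,t,\phi)$. To get these, pick any $t:K\to 0$; then $z_Bt$ is null, so $z\iso z_Bt$, and we set $\phi:fk\iso z\iso z_Bt$.

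\textbf{Main obstacle and dual.} The only delicate point is bookkeeping: bipullbacks are only determined up to equivalence. One also has to check that the choice of $z_B$ and of $t$ is irrelevant. This holds because all such choices are related by unique invertible $2$-cells, which rests again on the strong-bizero property from Lemma~\ref{lem:localpointedness}. The cokernel statement follows by applying the kernel case in $\Lcat^{\mathrm{op}}$ (reversing $1$-cells), which is still $2$-pointed by the last assertion of Lemma~\ref{lem:localpointedness}.
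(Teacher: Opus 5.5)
Your proposal is correct and takes essentially the same approach as the paper. Both proofs identify the pseudocone category at each $X$ with the full subcategory $\{u\mid fu\text{ is null}\}$ by forgetting the $0$-component and the nullhomotopy: $\Lcat(X,0)\simeq\mathbf1$ and the fact that $z_Bw$ is a zero object of $\Lcat(X,B)$ force uniqueness of the $2$-cells and make the compatibility condition automatic, and the cokernel case follows by reversing $1$-cells. Your additional bookkeeping (two-out-of-three for the comparison functor against $k_*$, and building $\phi$ from a given nullhomotopy) only spells out what the paper summarizes as ``the representing universal properties coincide.''
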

\begin{proof}
For each $X$, a pseudocone consists of $u:X\to A$, $w:X\to0$,
and an invertible $2$-cell $\phi:fu\Rightarrow z_Bw$.
Since $\Lcat(X,0)\simeq\mathbf1$, choices of $w$ are uniquely
isomorphic. The composite $z_Bw$ is a zero object of
$\Lcat(X,B)$, so the $2$-cell $fu\Rightarrow z_Bw$ is unique
and is invertible exactly when $fu$ is null. Between two such
cones, the component $w\Rightarrow w'$ is forced, and every
$2$-cell $u\Rightarrow u'$ satisfies the cone compatibility:
both sides of that equation have zero codomain $z_Bw'$.
Thus forgetting $w$ and $\phi$ gives an equivalence from the
pseudocone category to the full subcategory on the right of
\eqref{eq:kernelUP}. These equivalences commute with
precomposition, so the representing universal properties
coincide. The cokernel assertion follows by reversing $1$-cells.
\end{proof}

\begin{remark}\label{rem:notlax}
Arbitrary $2$-cells between $1$-cells do not mean arbitrary
nullhomotopies. In every $2$-pointed $2$-category, not only in the
concrete examples, there is a unique $2$-cell $fu\Rightarrow0$ and
a unique $2$-cell $0\Rightarrow fu$, whether or not $fu$ is null.
Allowing either merely as a noninvertible nullhomotopy would therefore
impose no annihilation condition. The same applies to $vf$ in the
$2$-cokernel universal property. Invertibility of the nullhomotopy
is essential throughout this paper.
\end{remark}

$2$-kernels and $2$-cokernels are invariant under invertible 2-cells and
equivalence of the representing object. A 1-cell is \emph{2-normal} if,
up to invertible 2-cell, it is a 2-cokernel followed by a 2-kernel.

\begin{definition}\label{def:homological}
A $2$-category $\Lcat$ is \emph{$2$-homological} if it is
$2$-pointed in the sense of Definition~\ref{def:2pointed} and satisfies
the following axioms:
\begin{enumerate}[label=\textup{(H\arabic*)},leftmargin=16mm]
\item every 1-cell has a 2-kernel and a 2-cokernel;
\item 2-kernels are closed under composition, and so are 2-cokernels;
\item if $m:M\to A$ is a 2-kernel and $e:A\to Q$ is a 2-cokernel,
and $\twoker(e)$ factors through $m$ up to invertible 2-cell, then $em$
is 2-normal.
\end{enumerate}
\end{definition}

\begin{remark}[Independence of the bizero object]\label{rem:bizero-independence}
Any two bizero objects $0$ and $0'$ are equivalent: choose
$1$-cells in both directions, and use the unique isomorphisms in
$\Lcat(0,0)\simeq\mathbf1$ and
$\Lcat(0',0')\simeq\mathbf1$ for their composites with the
identities. Transporting a factorization along this equivalence
shows that they determine the same null $1$-cells. Hence if one
is strong, so is the other, and the notions of $2$-kernel,
$2$-cokernel, and $2$-homologicity do not depend on this choice.
\end{remark}

Thus a $2$-homological $2$-category has a bizero object and pointed
hom-categories whose zero objects are precisely the null $1$-cells.
Condition (H3) is the conditional subquotient formulation of Grandis's
homology axiom: boundaries lie in cycles. In dimension one the hypothesis
can also be written $\Coker(m)\Ker(e)=0$; see
\cite[Section 1.3.6]{Grandis13} and \cite[Definition 2.16]{Fritz}.
Grandis's foundational development is \cite{Grandis92}.

\subsection{Universal properties, normal factorizations, and truncation}

\begin{remark}\label{remcanonical2ideal}
The null $1$-cells form a two-sided ideal. On each pair of objects, take
as null $2$-cells all $2$-cells between null $1$-cells. Strong pointedness
makes that full subcategory equivalent to the terminal category. Whiskering
preserves it, since a composite with a $1$-cell factoring through the bizero
again factors through the bizero. This is the canonical pointed instance
of the $2$-ideal formalism of \cite[Section 2]{CJM}. The term
\emph{null $2$-cell} in this $2$-ideal is distinct from the ordinary
\emph{zero morphism of a hom-category}: the latter exists between
any parallel $1$-cells by Lemma~\ref{lem:localpointedness}, whereas
the former has both boundary $1$-cells null.
\end{remark}

\begin{proposition}[Expanded universal properties]\label{prop:expanded-up}
A $1$-cell $k:K\to A$ is a $2$-kernel of $f:A\to B$ precisely when:
\begin{enumerate}[label=\textup{(K\arabic*)},leftmargin=16mm]
\item $fk$ is null;
\item every $u:X\to A$ with $fu$ null admits a factorization
$u\cong kv$;
\item for any $v,w:X\to K$, every $2$-cell $kv\Rightarrow kw$ is
$k\beta$ for a unique $\beta:v\Rightarrow w$.
\end{enumerate}
Dually, $q:B\to Q$ is a $2$-cokernel of $f$ precisely when $qf$ is null,
every $u:B\to X$ annihilating $f$ is isomorphic to $vq$, and every
$2$-cell $vq\Rightarrow wq$ is $\beta q$ for a unique $\beta:v\Rightarrow w$.
In particular $2$-kernels are representably fully faithful and
$2$-cokernels are representably co-fully faithful.
\end{proposition}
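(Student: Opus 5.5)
We unpack the universal property \eqref{eq:kernelUP}. The pseudonatural functor
\[
k_*\colon \Lcat(X,K)\to \{u\in\Lcat(X,A)\mid fu\text{ is null}\}
\]
given by postcomposition with $k$ is an equivalence exactly when it is essentially surjective and fully faithful. The plan is to match these three requirements with (K1)--(K3).

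First we need the functor $k_*$ to be well defined, and this is (K1). If $fk$ is null then $fkv$ is null for every $v$, since whiskering preserves null $1$-cells by Lemma~\ref{lem:localpointedness}. So $k_*$ lands in the full subcategory on the right. Conversely, if $k$ is a $2$-kernel, then applying the equivalence at $X=K$ to $\id{K}$ shows that $k=k\,\id{K}$ lies in the subcategory, so $fk$ is null. Given (K1), essential surjectivity of $k_*$ is literally (K2). Because the target is a \emph{full} subcategory of $\Lcat(X,A)$, full faithfulness of $k_*$ says exactly that $\beta\mapsto k\beta$ is a bijection from $2$-cells $v\Rightarrow w$ to $2$-cells $kv\Rightarrow kw$, which is (K3).

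It remains to check the pseudonaturality in $X$ required in the definition. Postcomposition with $k$ strictly commutes with precomposition by any $g\colon Y\to X$, up to the associativity constraint. So the family $(k_*)_X$ is automatically pseudonatural once each component is an equivalence. As noted after \eqref{eq:cokernelUP}, the nullhomotopy data is unique by strong pointedness, so no separate compatibility condition needs to be checked. The one point needing care is that \eqref{eq:kernelUP} is meant to be \emph{induced by} $k_*$, not an arbitrary equivalence; we use the definition in that form, so (K1)--(K3) is equivalent to $k$ being a $2$-kernel.

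The cokernel statement follows by reversing $1$-cells, using the invariance in Lemma~\ref{lem:localpointedness}. Under this reversal, precomposition by $q$ plays the role of postcomposition, and the whiskering $\beta\mapsto\beta q$ replaces $\beta\mapsto k\beta$.

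For the final sentence: by (K3), $k_*\colon\Lcat(X,K)\to\Lcat(X,A)$ is fully faithful for every $X$, which is the definition of representable full faithfulness. Dually, $q^*\colon\Lcat(Q,X)\to\Lcat(B,X)$ is fully faithful for every $X$, which is co-full faithfulness. No step is a serious obstacle; the argument is bookkeeping about the full subcategory on the right of \eqref{eq:kernelUP}.
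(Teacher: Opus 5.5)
Your proposal is correct and follows essentially the same route as the paper. Both proofs treat (K1) as well-definedness of postcomposition with $k$ into the full subcategory, read (K2) and (K3) as essential surjectivity and full faithfulness, note that the nullhomotopies impose no extra condition on $2$-cells, obtain pseudonaturality from whiskering compatibility, and get the cokernel case by duality.
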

\begin{proof}
Conditions (K2) and (K3) say exactly that the functor in
\eqref{eq:kernelUP} is essentially surjective and fully faithful;
(K1) makes that functor well-defined. The nullhomotopies impose no
additional condition on the $2$-cells, since both their boundaries are
null. The dual argument gives the cokernel assertion. Whiskering
commutes with these functors, so the equivalences have the stated
pseudonaturality.
\end{proof}

\begin{proposition}\label{propkeriskerofitscoker}
In a $2$-pointed $2$-category, if $k$ is a $2$-kernel and
a $2$-cokernel $q$ of $k$ exists, then $k$ is a $2$-kernel
of $q$. Dually, if $q$ is a $2$-cokernel and a $2$-kernel
$k$ of $q$ exists, then $q$ is a $2$-cokernel of $k$.
A $2$-cokernel of a null $1$-cell is an equivalence, and dually a
$2$-kernel of a null $1$-cell is an equivalence.
\end{proposition}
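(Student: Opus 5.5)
We verify the expanded universal property of Proposition~\ref{prop:expanded-up} for $k$ as a candidate $2$-kernel of $q$, where $k\colon K\to A$ is a $2$-kernel of some $f\colon A\to B$ and $q\colon A\to Q$ is a $2$-cokernel of $k$. The first assertion of the statement then follows, and everything else is duality or a special case.

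Condition (K1) holds because $qk$ is null by definition of $2$-cokernel. Condition (K3) depends only on $k$: a $2$-kernel is representably fully faithful by Proposition~\ref{prop:expanded-up}, so every $2$-cell $kv\Rightarrow kw$ is uniquely of the form $k\beta$.

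The substantive step is (K2): any $u\colon X\to A$ with $qu$ null must factor as $u\cong kv$. Since $k$ is a $2$-kernel of $f$, it suffices to show that $fu$ is null. To get this, use that $fk$ is null, so $f$ annihilates $k$. The cokernel universal property then gives $f\cong gq$ for some $g\colon Q\to B$. Hence $fu\cong gqu$. Now $qu$ is null, and by Lemma~\ref{lem:localpointedness} postcomposition with $g$ preserves null $1$-cells, so $gqu$ is null. Nullness is isomorphism-invariant, so $fu$ is null, and (K2) gives $u\cong kv$. This is the only point that uses the interaction of the two universal properties.

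The second assertion is the dual statement. It follows by reversing $1$-cells, which preserves $2$-pointedness by Lemma~\ref{lem:localpointedness} and exchanges $2$-kernels with $2$-cokernels.

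For the final claim, let $f\colon A\to B$ be null and let $q\colon B\to Q$ be a $2$-cokernel of $f$.
\begin{itemize}
\item Every $v\colon B\to X$ annihilates $f$, since $vf$ is a whiskering of a null $1$-cell and is therefore null. So the right-hand side of \eqref{eq:cokernelUP} is all of $\Lcat(B,X)$.
\item The identity $1_B$ likewise satisfies the condition, so the essential surjectivity in \eqref{eq:cokernelUP} yields $p\colon Q\to B$ with $pq\cong 1_B$.
\item Then $(qp)q\cong q(pq)\cong q\cong 1_Q q$. Because precomposition with $q$ is fully faithful (the cokernel version of (K3)), this isomorphism $(qp)q\cong 1_Qq$ lifts to an isomorphism $qp\cong 1_Q$.
\end{itemize}
Thus $q$ is an equivalence. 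The kernel case is again dual.

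I expect no real obstacle. The only care needed is to use the fully faithfulness half of the universal property, rather than only essential surjectivity, when lifting the isomorphism $qp\cong 1_Q$, and to note that the invertible nullhomotopies carry no extra data thanks to the local zero-object property.
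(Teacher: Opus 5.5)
Your proof is correct and follows the paper's argument for the first two assertions essentially verbatim: factor $f\cong gq$, deduce that $fu$ is null from $qu$ null, and reuse the full faithfulness of $k$. For the null case, the paper observes that $1_B$ is itself a $2$-cokernel of $f$ and appeals to uniqueness of representing objects up to equivalence; your explicit construction of $p$ with $pq\cong 1_B$, followed by lifting $(qp)q\cong 1_Qq$ to $qp\cong 1_Q$, is that uniqueness argument written out directly, and it is sound.
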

\begin{proof}
Let $k$ be a $2$-kernel of $f$, and let $q$ be its $2$-cokernel.
Since $fk$ is null, write $f\cong hq$. If $qu$ is null then $fu$
is null, so $u$ factors through $k$; conversely $qk$ is null.
The full faithfulness needed for the universal property is already
that of $k$. Thus the same $k$ represents the $2$-kernel of $q$.
The dual proof gives the first assertion for $2$-cokernels.
For a null $f:A\to B$, the identity of $B$ satisfies its $2$-cokernel
universal property, because every $v:B\to X$ annihilates $f$.
Uniqueness of representing objects up to equivalence gives the assertion.
\end{proof}

\begin{proposition}[The $1$-truncation]\label{prop:truncation}
Let $\Lcat$ be $2$-pointed and have all $2$-kernels and $2$-cokernels.
Its $1$-truncation $\tau_1\Lcat$, whose arrows are invertible-isomorphism
classes of $1$-cells, is pointed and has all kernels and cokernels.
A $1$-cell is a $2$-kernel, respectively a $2$-cokernel, exactly when
its class is a kernel, respectively a cokernel. Consequently $\Lcat$
is $2$-homological exactly when $\tau_1\Lcat$ is homological in
Grandis's pointed sense.
\end{proposition}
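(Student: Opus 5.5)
The plan is to compare universal properties in $\Lcat$ and $\tau_1\Lcat$ directly, using Proposition~\ref{prop:expanded-up} for the $2$-dimensional side and the fact that in $\tau_1\Lcat$ two arrows are equal precisely when representing $1$-cells are isomorphic.

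First, pointedness of $\tau_1\Lcat$. A bizero object $0$ is a zero object of $\tau_1\Lcat$: since $\Lcat(0,A)\simeq\mathbf 1$, all $1$-cells $0\to A$ are isomorphic, and there is at least one, so there is exactly one arrow $0\to A$ in $\tau_1\Lcat$; the dual argument handles arrows $A\to0$. The zero arrows of $\tau_1\Lcat$ are then exactly the classes of null $1$-cells.

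Next, the comparison of kernels. Suppose $k$ is a $2$-kernel of $f$. By (K1) the class $[f][k]$ is zero. If $[f][u]=0$, then $fu$ is null, and (K2) gives $u\iso kv$, so $[u]=[k][v]$. For uniqueness, suppose $kv\iso kw$. By (K3) the isomorphism and its inverse are $k\beta$ and $k\beta'$. Uniqueness in (K3) then forces $\beta\beta'$ and $\beta'\beta$ to be identities, so $[v]=[w]$. Hence $[k]$ is a kernel of $[f]$.

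Since every $1$-cell has a $2$-kernel, $\tau_1\Lcat$ has all kernels, and dually all cokernels. For the converse, let $[k']$ be a kernel of $[f]$ with $k$ a $2$-kernel of $f$. Kernels in a $1$-category are unique up to isomorphism, so $[k']=[k][e]$ for an isomorphism $[e]$. Then $e$ has a pseudo-inverse, so $e$ is an equivalence, and $k'\iso ke$. Since $2$-kernels are invariant under invertible $2$-cells and under precomposition with equivalences, $k'$ is a $2$-kernel. Cokernels follow by the duality of Lemma~\ref{lem:localpointedness}.

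Finally, the axioms (H1)--(H3) translate class by class. (H1) is assumed, and kernels and cokernels exist in $\tau_1\Lcat$ as shown. Closure under composition in (H2) transfers because $[gf]=[g][f]$ and the kernel classes correspond exactly. Factoring up to invertible $2$-cell becomes factoring in $\tau_1\Lcat$, and the $2$-kernel of $e$ corresponds to $\Ker[e]$. A $1$-cell is $2$-normal exactly when its class is normal, again by the correspondence. So (H3) is equivalent to Grandis's homology axiom for $\tau_1\Lcat$.

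The main obstacle is the converse step: showing that an isomorphism in $\tau_1\Lcat$ comes from an equivalence in $\Lcat$, so that representability of the $2$-kernel transfers. This holds because an inverse up to isomorphism of $1$-cells is exactly an equivalence. That step also uses the representable uniqueness established via (K3) in the comparison of kernels.
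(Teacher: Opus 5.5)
Your proposal is correct and follows essentially the same route as the paper. Both arguments get pointedness from the bizero, obtain the forward direction by passing (K1)--(K3) to isomorphism classes (with (K3) giving $kv\cong kw\Rightarrow v\cong w$), prove the converse by lifting the unique comparison isomorphism of ordinary kernels to an equivalence $e$ with $k'\cong ke$, and finish with a term-by-term translation of (H1)--(H3) into Grandis's axioms for $\tau_1\Lcat$.
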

\begin{proof}
The bizero becomes a zero object, since each hom-category to or from it
has one isomorphism class. A composite represents zero precisely when
it is null. Passing to isomorphism classes in \eqref{eq:kernelUP}
therefore gives the ordinary kernel universal property: full faithfulness
implies $kv\cong kw$ if and only if $v\cong w$. The dual argument applies
to cokernels.

Conversely, suppose $[u]$ is a kernel of $[f]$. Choose a $2$-kernel
$k$ of $f$. The two ordinary kernels are uniquely isomorphic over their
codomain in $\tau_1\Lcat$. Representing that isomorphism and its inverse
by $1$-cells gives mutually inverse equivalences up to invertible $2$-cells,
and the over-codomain equality gives $u\cong ka$. Thus $u$ is a
$2$-kernel. The dual assertion follows in the same way. Composition,
the factorization of $\Ker(e)$ through $m$, and cokernel--kernel
factorizations now correspond on the two sides, proving the last claim.
The assumption that the $2$-kernels and $2$-cokernels already exist is
essential to this reflection argument. The strengthened $2$-pointedness
is also an explicit premise: passing to $\tau_1\Lcat$ does not by
itself recover the zero-object properties of the hom-categories.
\end{proof}

\begin{definition}\label{def:2-diexact}
A $2$-pointed $2$-category with all $2$-kernels and
$2$-cokernels is \emph{$2$-di-exact} if every $2$-kernel
followed by a $2$-cokernel is $2$-normal. It is
\emph{$2$-Puppe exact} if every $1$-cell is $2$-normal.
\end{definition}

These definitions agree under $\tau_1$ with their one-dimensional
counterparts, by Proposition~\ref{prop:truncation}. In a
$2$-homological $2$-category, $2$-di-exactness is equivalent to
closure of $2$-normal $1$-cells under composition, by
Proposition~\ref{prop:diexact-equivalences} applied to the
$1$-truncation. More generally, that closure is equivalent to
$2$-di-exactness together with separate closure of $2$-kernels
and $2$-cokernels under composition. Thus the unconditional mixed
clause in Definition~\ref{def:2-diexact} does not replace the
composition clauses when $2$-homologicity is not assumed.

\begin{proposition}[Normal factorization]\label{prop:normal-factor}
Every $1$-cell $f:A\to B$ in a $2$-pointed $2$-category with all
$2$-kernels and $2$-cokernels has a factorization, up to invertible
$2$-cell,
\[
 A\xrightarrow{q_f}\operatorname{Coim}(f)
 \xrightarrow{\overline f}\operatorname{Im}(f)
 \xrightarrow{m_f}B,
 \qquad q_f=\twocoker(\twoker f),\quad
 m_f=\twoker(\twocoker f).
\]
The comparison $\overline f$ is determined up to invertible $2$-cell.
The $1$-cell $f$ is $2$-normal if and only if $\overline f$ is an equivalence.
\end{proposition}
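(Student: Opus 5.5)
We construct the factorization directly from the universal properties of Proposition~\ref{prop:expanded-up}. Let $k=\twoker(f)\colon K\to A$, $q_f=\twocoker(k)\colon A\to\operatorname{Coim}(f)$, $c=\twocoker(f)\colon B\to C$ and $m_f=\twoker(c)\colon\operatorname{Im}(f)\to B$. Since $fk$ is null, the cokernel property of $q_f$ (essential surjectivity) gives $g\colon\operatorname{Coim}(f)\to B$ with $f\cong gq_f$.

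Next we show that $cg$ is null. We have $cgq_f\cong cf$, which is null. Consider the null $1$-cell $z\colon\operatorname{Coim}(f)\to C$. The composites $cg\,q_f$ and $z\,q_f$ are both null, so the unique invertible $2$-cell between them (Lemma~\ref{lem:localpointedness}) has the form $\beta q_f$ for a unique $\beta\colon cg\Rightarrow z$. This uses the co-full-faithfulness clause of Proposition~\ref{prop:expanded-up}. Applying the same clause to the inverse $2$-cell shows that $\beta$ is invertible, so $cg$ is null. The kernel property of $m_f$ then gives $\overline f$ with $g\cong m_f\overline f$, and hence $f\cong m_f\overline f q_f$.

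For uniqueness, suppose $f\cong m_f\overline f' q_f$. Then $m_f\overline f q_f\cong m_f\overline f' q_f$. Full faithfulness of $m_f$ (K3) yields $\overline f q_f\cong \overline f' q_f$, and the dual full faithfulness of $q_f$ yields $\overline f\cong\overline f'$. We must also confirm that the choices of $g$ and of the $2$-kernels and $2$-cokernels affect $\overline f$ only up to equivalence of the objects and invertible $2$-cells. This follows from invariance of representing objects.

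For the final claim, the implication ``if'' is immediate: when $\overline f$ is an equivalence, $\overline f q_f$ is again a $2$-cokernel, by invariance under equivalence of the representing object. So $f\cong m_f(\overline f q_f)$ is $2$-normal.

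For ``only if'', write $f\cong me$ with $e$ a $2$-cokernel and $m$ a $2$-kernel. We identify $e$ with $q_f$ and $m$ with $m_f$ as follows.
\begin{enum}
\item Since $m$ is representably fully faithful, $fu$ is null iff $eu$ is null. Indeed, if $meu\cong 0$, then $meu\cong m0$ (postcomposition preserves nulls), and (K3) gives $eu\cong 0$.
\item Hence $\twoker(f)\simeq\twoker(e)$ as $1$-cells over $A$.
\item By Proposition~\ref{propkeriskerofitscoker}, $e$ is the $2$-cokernel of its $2$-kernel, so $e\simeq q_f$ under $A$.
\item Dually, $\twocoker(f)\simeq\twocoker(m)$, and $m$ is the $2$-kernel of its $2$-cokernel, so $m\simeq m_f$ over $B$.
\end{enum}
Substituting, $f\cong m_f\,a\,q_f$ with $a$ the composite of the two comparison equivalences. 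The uniqueness already shown forces $\overline f\cong a$, so $\overline f$ is an equivalence.

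The main obstacle is step~(i) and its dual: transferring nullness through $m$ and $e$. This requires that the unique $2$-cells between null composites be invertible and that the detection of nullness be exactly reflected by (K3). This is where strong $2$-pointedness, and not mere bizero-ness, is used.
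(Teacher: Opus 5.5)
Your proposal is correct and follows essentially the same route as the paper's proof. You factor $f$ through $q_f$, and show the second factor is annihilated by $\twocoker f$ by lifting the invertible $2$-cell through the co-fully faithful $q_f$. You then factor through $m_f$ and get uniqueness from the two full-faithfulness properties. For the converse you use that a $2$-kernel reflects nullness, together with Proposition~\ref{propkeriskerofitscoker}, to identify $e$ with $q_f$ and $m$ with $m_f$. The paper's proof does the same; you additionally spell out the invertibility of the lifted $2$-cells by lifting their inverses.
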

\begin{proof}
Write $f\cong hq_f$ by the cokernel property, and let $r=\twocoker f$.
Since $rhq_f$ is null and precomposition with $q_f$ is fully faithful,
$rh$ is null: lift the isomorphism from $rhq_f$ to a zero composite
through that fully faithful functor. Hence $h$ factors through $m_f$,
giving $\overline f$. The two full-faithfulness assertions give uniqueness
up to invertible $2$-cell.

If $\overline f$ is an equivalence the displayed factorization is
$2$-normal. Conversely let $f\cong me$ with $e$ a $2$-cokernel and $m$
a $2$-kernel. A representably fully faithful $m$ reflects nullness:
compare $mv$ with $m0$ and lift their isomorphism. Thus $\twoker f$
is $\twoker e$, and dually $\twocoker f$ is $\twocoker m$.
Proposition~\ref{propkeriskerofitscoker} identifies $e$ with $q_f$ and
$m$ with $m_f$, up to equivalence. The comparison between the resulting
middle objects is consequently an equivalence.
\end{proof}

\begin{maintheorem}\label{thm:main}
Each of $\Pt$, $\Add$, $\pExact$, $\AbCat$, $\Tri$, $\TriAll$,
and $\DiHom$ is $2$-pointed and $2$-homological. For every commutative
rig $R$, the same holds for $\mathbf{Lin}_{R,0}$,
$\mathbf{Lin}_{R}^{\oplus}$, $\mathbf{PEx}_{R}$, and
$\mathbf{PEx}_{R}^{\oplus}$. Their $2$-kernels are, up to equivalence,
full annihilation subcategories. The normal subcategories and quotient
constructions are as follows:
\begin{center}
\small
\begin{tabularx}{\textwidth}{@{}>{\raggedright\arraybackslash}p{30mm}>{\raggedright\arraybackslash}X>{\raggedright\arraybackslash}X@{}}
\toprule
Setting & Normal subcategories & Normal quotients\\
\midrule
Pointed & Full replete pointed subcategories closed under retracts & Collapse morphisms factoring through the subcategory to zero\\[3pt]
Additive & Full replete additive subcategories closed under retracts & Additive ideal quotients\\[3pt]
Puppe exact & Thick subcategories (subquotients and extensions) & Exact localizations\\[3pt]
Abelian & Serre subcategories & Serre quotients\\[3pt]
$\mathbf{Lin}_{R,0}$, $\mathbf{Lin}_{R}^{\oplus}$ & Full replete subcategories satisfying Definition~\ref{def:rig-normal} & Linear ideal quotients by the congruence in~\eqref{eq:rig-congruence}\\[3pt]
$\mathbf{PEx}_{R}$ & Thick subcategories & Exact $R$-linear localizations\\[3pt]
$\mathbf{PEx}_{R}^{\oplus}$ & Serre subcategories & $R^{\mathrm{gp}}$-linear Serre quotients\\[3pt]
Triangulated & Thick triangulated subcategories & Verdier quotients\\[3pt]
Di-exact homological & Saturated thick subcategories & Exact localizations at the associated normal denominators\\
\bottomrule
\end{tabularx}
\end{center}
The linear assertions are proved in Theorems~\ref{thm:rig-linear}
and~\ref{thm:puppe-linear}. They recover the additive case at
$R=\mathbb Z$ and the abelian case at $R=\mathbb N$, respectively,
when finite biproducts are imposed. In the tables, subcategories are
full and replete, and quotient constructions are understood up to
equivalence.
For nested normal subcategories $\N\subseteq\M\subseteq\C$, the
homology axiom is realised by
\[
 \M\longrightarrow\M/\N\longrightarrow\C/\N,
\]
where the first arrow is a 2-cokernel and the second is the 2-kernel of
$\C/\N\to\C/\M$. Each quotient is taken in its indicated setting.
\end{maintheorem}

\section{Di-exact homological categories and their ternary fractions}\label{sectionabelian}\label{sec:di-homological}

We first prove the $2$-homologicity of $\DiHom$ directly.
After the one-dimensional normal calculus we introduce saturated
thick subcategories and stable normal denominators, develop epidd
reduction posets and the full three-arrow calculus, and prove the
saturated quotient theorem. The last step verifies the two composition
laws and Grandis's conditional homology axiom on exact-functor
categories, without a general quotient criterion. Puppe exact
categories then appear as the specialization in which saturation is
automatic. Throughout, exact functors preserve zero objects and the
kernels and cokernels of every morphism.

\subsection{One-dimensional normal factorizations}

Puppe exactness originates in \cite{Pup62,Mit65,BP69}; Grandis's
nonadditive development includes modular-lattice and projective-space
models \cite{Gra84,CaG96,Gra12}. 

We work with pointed categories. A \emph{normal monomorphism} is a
kernel, and a \emph{normal epimorphism} is a cokernel. A morphism is
\emph{normal} if it is a normal epimorphism followed by a normal
monomorphism. An \emph{exact functor} preserves zero objects,
all kernels, and all cokernels. Outside the Puppe-exact setting this
is stronger than preservation of short exact sequences alone;
see \cite[Section 1.3]{Grandis92}. A short exact sequence means a
kernel--cokernel pair, with zero objects adjoined at its ends.

\begin{definition}[Pointed homological categories]\label{def:grandis-homological}
A category $\C$ is \emph{homological in the pointed sense of Grandis}
if it satisfies the following conditions:
\begin{enumerate}[label=\textup{(\roman*)}]
\item $\C$ is pointed and every morphism has a kernel and a cokernel;
\item normal monomorphisms are closed under composition, and normal
      epimorphisms are closed under composition;
\item if $k:B\to C$ is a normal monomorphism and $c:C\to D$
      is a normal epimorphism such that $\ker c$ factors through
      $k$, then $ck$ is normal.
\end{enumerate}
Condition \textup{(iii)} is the \emph{homology axiom}.
\end{definition}

This is Grandis's pointed case, with zero morphisms as the null
ideal; see \cite[Sections~1.3 and~1.6]{Grandis92}.
The pullback and pushout stability laws are consequences of
\textup{(ii)} and \textup{(iii)}, proved in
Lemma~\ref{di:lem:basic}; they are not additional axioms here.
In our definitions and results, a homological category means one in
Definition~\ref{def:grandis-homological}.

A pointed category with kernels and cokernels has the canonical normal
factorization
\[
 X\xrightarrow{e_f}\Ncm(f)\xrightarrow{\overline f}\Nim(f)
 \xrightarrow{m_f}Y,
 \qquad e_f=\operatorname{coker}(\ker f),\quad
 m_f=\ker(\operatorname{coker}f).
\]
As in Proposition~\ref{prop:normal-factor}, $f$ is normal precisely when
$\overline f$ is invertible. We call $f$ a \emph{near-isomorphism} if
its kernel and cokernel have zero domain and zero codomain, respectively.
A normal near-isomorphism is an isomorphism: in a cokernel--kernel factorization
$f=me$, the kernel of $e$ and the cokernel of $m$ are zero, forcing
$e$ and $m$ to be isomorphisms.

\begin{definition}
A \emph{di-exact homological category} is a homological category
in the sense of Definition~\ref{def:grandis-homological} in which
normal morphisms are closed under composition.
A \emph{Puppe exact category} is a pointed category with kernels and
cokernels in which every morphism is normal. We use $\pExact$ for their
$2$-category, with exact functors and arbitrary natural transformations.
Here ``exact category'' always has this Puppe-Mitchell meaning when
used without a qualifier.
\end{definition}

This terminology keeps both the di-exactness and the homological
hypotheses explicit; compare \cite{Afsa,PeschkeLinden,Grandis13}.
The classical Puppe-exact theory is developed in
\cite{Pup62,Mit65,BP69}. Abelian categories are precisely the
Puppe exact categories with finite biproducts
\cite{Mit65,BP69}.

\begin{proposition}\label{prop:diexact-equivalences}
For a pointed category with kernels and cokernels, the following are
equivalent:
\begin{enumerate}[label=\textup{(\roman*)}]
\item normal morphisms are closed under composition;
\item kernels and cokernels are separately closed under composition,
and every kernel followed by a cokernel is normal;
\item the category is di-exact homological.
\end{enumerate}
In particular every Puppe exact category is di-exact homological.
\end{proposition}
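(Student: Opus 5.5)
The plan is to prove (i)$\Rightarrow$(ii)$\Rightarrow$(iii)$\Rightarrow$(i), working throughout with the canonical normal factorization $f=m_f\overline f e_f$ and the fact that $f$ is normal exactly when $\overline f$ is invertible.

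\emph{(i)$\Rightarrow$(ii).} Every kernel is normal, since it factors as an identity (a cokernel of the zero morphism into its domain) followed by itself. Dually, every cokernel is normal. Closure of normal morphisms under composition therefore makes every composite of kernels, every composite of cokernels, and every kernel followed by a cokernel normal. It remains to check that a normal composite of two kernels is actually a kernel.
\begin{itemize}
\item A kernel is a monomorphism, so a composite $kk'$ of kernels is a monomorphism.
\item A normal monomorphism $me$ has $e$ both a cokernel and a monomorphism. A cokernel that is monic is an isomorphism: its kernel is zero, so it is the cokernel of a zero morphism.
\item Hence $kk'\cong m$ is a kernel. The dual argument handles cokernels.
\end{itemize}

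\emph{(ii)$\Rightarrow$(iii).} Conditions (i) and (ii) of Definition~\ref{def:grandis-homological} are immediate. The homology axiom holds because, by (ii), every composite $ck$ of a kernel followed by a cokernel is normal, with no hypothesis on $\ker c$ needed. Closure of normal morphisms under composition then requires an argument. Given normal maps $f=me$ and $g=m'e'$, the composite is
\[
gf = m'\,(e'm)\,e.
\]
By (ii), $e'm$ is normal, say $e'm=m''e''$. Then
\[
gf = (m'm'')(e''e),
\]
which is normal because kernels compose and cokernels compose by (ii).

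\emph{(iii)$\Rightarrow$(i).} This is the main obstacle: here di-exact homological is read literally as homological plus closure of normal morphisms, so the implication is trivial. Under the alternative reading of di-exactness (kernel followed by cokernel is normal), the content is exactly the computation above, so I expect the equivalence to be essentially definitional once (ii)$\Leftrightarrow$(i) is settled.

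\emph{Final assertion.} In a Puppe exact category every morphism is normal, so (i) holds trivially. Hence every Puppe exact category is di-exact homological.
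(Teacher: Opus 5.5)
Your proposal is correct and follows essentially the same route as the paper: the monic-cokernel-is-an-isomorphism argument for (i)$\Rightarrow$(ii), the refactorization $gf=(m'm'')(e''e)$ to recover closure of normal morphisms from (ii), and the observation that (iii) literally contains (i). The only differences are cosmetic: you fold the (ii)$\Rightarrow$(i) computation into (ii)$\Rightarrow$(iii), and (iii)$\Rightarrow$(i) is no obstacle at all under the paper's definition of di-exact homological.
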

\begin{proof}
Every kernel and every cokernel is normal, using an identity for the
other factor. Assume (i). A composite of kernels is normal and monic.
If a normal monomorphism is written $me$, its cokernel factor $e$ is
monic. A monic cokernel has zero kernel and is an isomorphism: it is
a cokernel of its own zero kernel. Hence the composite is a kernel.
The dual argument gives closure of cokernels. Applying (i) to a kernel
and then a cokernel gives the rest of (ii).

Assume (ii), and write $f_i=m_ie_i$ with the indicated normal factors.
Factor $e_2m_1=m'e'$ as a cokernel followed by a kernel. Then
$f_2f_1=(m_2m')(e'e_1)$ is normal, proving (i). Condition (ii) implies the homology axiom of
Definition~\ref{def:grandis-homological}\textup{(iii)} simply by
omitting its containment hypothesis; together with (i) and the
composition clauses, it therefore implies (iii).
Conversely (iii) includes (i).
The last assertion follows because all morphisms are normal.
\end{proof}

\begin{lemma}[Cancellation and inheritance]\label{lem:inheritance}
In a pointed category with kernels and cokernels, if $k=k_1k_2$ is a
kernel and $k_1$ is monic, then $k_2$ is a kernel. Dually, if $q=q_2q_1$
is a cokernel and $q_1$ is epic, then $q_2$ is a cokernel.
A nonempty full replete subcategory closed under ambient kernels and
cokernels of its morphisms inherits homologicity, di-exact
homologicity, or Puppe exactness from the ambient category.
\end{lemma}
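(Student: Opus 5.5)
The cancellation assertions come first and are direct; the inheritance assertion then reduces to comparing kernels, cokernels and normal factorizations in the subcategory with those in the ambient category.

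\emph{Cancellation.} Let $k=k_1k_2$ be a kernel, say $k=\ker g$, with $k_1$ monic. I claim $k_2=\ker(\operatorname{coker}k_2)$. Put $c=\operatorname{coker}k_2$ and let $u$ satisfy $cu=0$; we must factor $u$ through $k_2$. Since $gk_1k_2=0$, the map $gk_1$ factors as $gk_1=hc$. Then $gk_1u=hcu=0$, so $k_1u$ factors through $k=\ker g$, giving $k_1u=k_1k_2v$. As $k_1$ is monic, $u=k_2v$. Uniqueness of $v$ holds because $k_2$ is monic, being a right factor of the monic $k$. The cokernel statement is the exact dual.

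\emph{Inheritance.} Let $\S\subseteq\C$ be nonempty, full, replete and closed under ambient kernels and cokernels of its morphisms.
\begin{enumerate}[label=\textup{(\arabic*)}]
\item \emph{Zero object.} For $S\in\S$, the kernel of $1_S$ has zero domain, so $\S$ contains a zero object of $\C$. Since $\S$ is full, this object is a zero object of $\S$.
\item \emph{Kernels and cokernels.} The ambient kernel of a morphism of $\S$ lies in $\S$ and is still universal there by fullness. Hence kernels and cokernels in $\S$ are computed as in $\C$.
\item \emph{Normality.} A morphism of $\S$ is a kernel (cokernel) in $\S$ iff it is one in $\C$, since $\ker(\operatorname{coker} m)$ is computed identically in both. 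The canonical factorization $e_f,\overline f,m_f$ of $f$ is therefore the same in $\S$ and in $\C$. So $f$ is normal in $\S$ iff $\overline f$ is invertible in $\C$, i.e.\ iff $f$ is normal in $\C$.
\end{enumerate}

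With these comparisons, each axiom transfers. Composition closure in (ii) transfers immediately. For the homology axiom (iii), the hypothesis that $\ker c$ factors through $k$ is checked in $\C$; the conclusion that $ck$ is normal in $\C$ then gives normality in $\S$ by (3). Closure of normal morphisms under composition transfers in the same way. Finally, Puppe exactness (every morphism normal) transfers by (3).

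The only point needing care is the remark in (3) that normality is absolute, which rests on computing $e_f$ and $m_f$ inside $\S$; that in turn follows from (2) applied twice.
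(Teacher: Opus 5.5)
Your proposal is correct and follows essentially the paper's proof. In the cancellation step the paper shows directly that $k_2=\ker(gk_1)$; your argument already contains that step, so the detour through $\operatorname{coker}k_2$ is harmless but unnecessary. For inheritance, the paper likewise identifies internal kernels, cokernels, normal coimages and normal images with the ambient ones, and uses fullness and repleteness to make the ambient comparison isomorphism internal; your step (3), that normality is the same in $\S$ and in $\C$, packages exactly this.
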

\begin{proof}
If $k=\ker f$, then $k_2=\ker(fk_1)$: an arrow $u$ annihilated by
$fk_1$ gives a unique $v$ with $kv=k_1u$, and monicity of $k_1$ gives
$k_2v=u$. This also identifies the square with the pullback of $k$
along $k_1$. Dualize for cokernels.

For inheritance, the kernel of the identity of any object supplies a
zero object in the subcategory. Fullness and the closure assumption
make its kernels and cokernels the ambient ones. We verify the
remaining clauses of Definition~\ref{def:grandis-homological}
inside the subcategory. If two internal kernels are composed,
ambient homologicity makes their composite a kernel;
it is then the kernel of its own cokernel, which is an internal
morphism. Thus the composite is an internal kernel. Dualize. For the
homology axiom, form the normal coimage and normal image of the relevant
composite using successive kernels and cokernels. All these objects
are internal. Its ambient normality makes the comparison invertible,
and fullness and repleteness make this an internal isomorphism.
The same argument for every normal composite, or for every morphism,
proves the di-exact and Puppe-exact assertions.
\end{proof}

\begin{proposition}[The exact core]\label{prop:exact-core}
If $\C$ is di-exact homological, its objects and normal morphisms form
a Puppe exact category $\C_{\mathrm{ex}}$. Its kernels and cokernels
are the ambient ones, and its monomorphisms and epimorphisms are,
respectively, the ambient kernels and cokernels.
\end{proposition}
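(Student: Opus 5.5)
Let $\C$ be di-exact homological, and let $\C_{\mathrm{ex}}$ have the same objects as $\C$ and the normal morphisms of $\C$ as arrows. The plan has four steps.

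\textbf{Step 1: $\C_{\mathrm{ex}}$ is a category.} Identities are normal, and normal morphisms compose by Proposition~\ref{prop:diexact-equivalences}\textup{(i)}. Zero morphisms are normal: a zero morphism $X\to Y$ factors as the cokernel $X\to0$ followed by the kernel $0\to Y$. Hence a zero object of $\C$ is still a zero object in $\C_{\mathrm{ex}}$, since all arrows into and out of it are zero and therefore lie in $\C_{\mathrm{ex}}$.

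\textbf{Step 2: ambient kernels are kernels in $\C_{\mathrm{ex}}$.} Take a normal $f$ with ambient kernel $k$; then $k$ is normal. For a normal $u$ with $fu=0$, write $u=kv$ in $\C$; we must show $v$ is normal. Factor $u=m e$ with $e$ a cokernel and $m$ a kernel. Since $kv=me$ with $k$ monic, I would compute the normal factorization of $v$ directly. The kernel of $v$ equals the kernel of $u$, which is the kernel of $e$, so $\operatorname{coker}(\ker v)=e$ and $v$ factors as $v=v'e$. Then $kv'=m$ is a kernel and $k$ is monic, so $v'$ is a kernel by Lemma~\ref{lem:inheritance}. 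Thus $v=v'e$ is normal. Uniqueness of the factorization follows from monicity of $k$ in $\C$. Cokernels are handled dually. So $\C_{\mathrm{ex}}$ has kernels and cokernels, and they are the ambient ones.

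\textbf{Step 3: every morphism of $\C_{\mathrm{ex}}$ is normal there.} A normal $f=me$ with $e$ an ambient cokernel and $m$ an ambient kernel already has this form in $\C_{\mathrm{ex}}$, because by Step 2 the ambient kernels and cokernels of normal arrows are the kernels and cokernels in $\C_{\mathrm{ex}}$. Hence $\C_{\mathrm{ex}}$ is Puppe exact.

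\textbf{Step 4: monos and epis of $\C_{\mathrm{ex}}$.} This is the main subtlety, because monicity in $\C_{\mathrm{ex}}$ is tested only against normal arrows, so a priori the class could be larger than the ambient monos. I would argue inside the Puppe exact category $\C_{\mathrm{ex}}$, where a monomorphism has zero kernel. Let $f=me$ be a monomorphism of $\C_{\mathrm{ex}}$. Its kernel in $\C_{\mathrm{ex}}$ is its ambient kernel by Step 2, so that ambient kernel is zero. Then $e=\operatorname{coker}(\ker f)$ is a cokernel of a zero map, hence an isomorphism, and $f\cong m$ is an ambient kernel. Conversely, an ambient kernel is monic in $\C$, hence in $\C_{\mathrm{ex}}$. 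Epimorphisms and ambient cokernels are handled dually.

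The expected obstacle is the lifting step in Step 2, namely showing that the factor $v$ is normal. There I need cancellation of kernels (Lemma~\ref{lem:inheritance}) together with the identification $\ker v=\ker u$, which uses that $k$ is monic.
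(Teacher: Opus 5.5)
Your proposal is correct and follows essentially the paper's argument. The key lifting step applies the cancellation part of Lemma~\ref{lem:inheritance} to $kv'=m$, exactly as the paper does; the paper lifts $m_h=kv$ directly, whereas you first lift $u$ and then factor through $e$, arriving at the same kernel factor. Your treatment of monomorphisms via their zero kernel is only a minor rephrasing of the paper's monic-cokernel argument.
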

\begin{proof}
Identities are normal and composition is allowed by hypothesis. Let
$k:K\to X$ be the ambient kernel of a normal morphism $f$, and let
a normal $h:Z\to X$ satisfy $fh=0$. Write $h=m_he_h$ as a cokernel followed by a kernel.
Since $e_h$ is epic, $fm_h=0$, so $m_h=kv$ uniquely.
By Lemma~\ref{lem:inheritance}, $v$ is a kernel. Hence $ve_h$ is normal
and gives the required factorization in $\C_{\mathrm{ex}}$.
This proves the kernel universal property there, and the dual argument
proves the cokernel property. Every arrow of the core has its given
normal factorization inside the core, so the core is Puppe exact.
A monic arrow of the core has a cokernel--kernel factorization inside it whose
cokernel factor is an isomorphism, by the argument of
Proposition~\ref{prop:diexact-equivalences}; it is therefore an ambient
kernel. The converse and the dual statement follow immediately.
\end{proof}

This proposition transfers statements about \emph{objects} and normal
subquotients from the Puppe-exact theory to di-exact homological
categories. It does not by itself transfer localization of the whole
category, because the core omits nonnormal arrows. Saturation and
arbitrary base change in the fraction calculus below address precisely
those additional arrows.

\subsection{Discrete exact categories and full annihilation kernels}

Let $\mathbf J$ have objects $0,J$, their identity arrows, and the
zero arrows, with $1_J\ne0$. Every arrow is either an identity or
zero, and direct computation gives kernels, cokernels, and cokernel--kernel
factorizations. Thus $\mathbf J$ is Puppe exact. More generally,
a pointed category in which every nonzero arrow is an identity is
Puppe exact. Cartesian products of Puppe exact categories are Puppe
exact, with all structure computed coordinatewise. Pointed wedges
are formed by keeping the summands and their morphisms, identifying
chosen zero objects, and admitting only zero arrows between distinct
summands. Their kernels and cokernels are computed in the relevant
summand; a cross-summand arrow is zero and has the identity as kernel
and cokernel. Hence these wedges too are Puppe exact. With zero objects
preserved up to isomorphism, the resulting universal property is a
bicoproduct property, rather than an assertion about an unstrictified
ordinary coproduct.

For every object $A$ of a pointed category, the functor
$L_A:\mathbf J\to\C$ selecting $A$ is exact whenever $\C$ has kernels
and cokernels: its only kernel and cokernel diagrams are those of an
identity or a zero arrow. Moreover,
\[
 \Nat(L_A,L_B)\cong\C(A,B).
\]
The component at $0$ is forced and the component at $J$ is arbitrary.
Consequently an exact functor between our homological categories is
representably fully faithful precisely when its underlying functor is
fully faithful. One implication follows by testing on $\mathbf J$;
the other follows by lifting components uniquely and using faithfulness
to verify naturality.

\begin{proposition}\label{prop:puppe-kernels}
For an exact functor $F:\C\to\D$ between Puppe exact categories,
the full subcategory
\[
 \Ker(F)=\{X\in\C\mid F(X)\cong0\}
\]
is Puppe exact, its inclusion is exact, and this inclusion is a
$2$-kernel of $F$ in $\pExact$.
\end{proposition}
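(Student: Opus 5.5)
We plan to verify three things in order: that $\Ker(F)$ is Puppe exact, that its inclusion $K:\Ker(F)\to\C$ is exact, and that $K$ satisfies (K1)--(K3) of Proposition~\ref{prop:expanded-up} in $\pExact$.

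\emph{Step 1: closure of $\Ker(F)$.} The subcategory $\Ker(F)$ is full and replete, since $F(X)\cong 0$ is invariant under isomorphism. It contains a zero object, because $F$ preserves zero objects. We show it is closed under ambient kernels and cokernels of its morphisms. Let $f:X\to Y$ with $X,Y\in\Ker(F)$ and let $k=\ker f:K_f\to X$. Since $F$ is exact, $F(k)$ is a kernel of $F(f)$ and hence a monomorphism into $F(X)\cong 0$. A monomorphism into a zero object has zero domain, so $F(K_f)\cong 0$. Dually, cokernels stay in $\Ker(F)$. Lemma~\ref{lem:inheritance} then shows that $\Ker(F)$ is Puppe exact, with the ambient kernels and cokernels. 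These are ambient kernels and cokernels computed in $\C$, so the inclusion $K$ preserves kernels, cokernels and zero objects, i.e.\ $K$ is exact.

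\emph{Step 2: the universal property.} For (K1), every $FK(X)$ is a zero object, so $FK$ is pointwise zero. A functor with zero values is a zero object of the functor category $[\Ker(F),\D]$. We identify it with the null $1$-cell through the bizero: first we note that the bizero object of $\pExact$ is the terminal category $\mathbf 0$, that its strongness is immediate, and that a $1$-cell is null exactly when all its values are zero objects. So $FK$ is null.

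For (K2), let $U:\mathcal X\to\C$ be exact with $FU$ null. Then every $F(U(X))\cong 0$, so $U$ lands in $\Ker(F)$ and factors strictly as $U=KV$. The corestriction $V$ is exact because kernels and cokernels in $\Ker(F)$ are the ambient ones and $U$ preserves them.

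For (K3), $K$ is fully faithful. By the remark preceding the statement, it is therefore representably fully faithful: a natural transformation $KV\Rightarrow KW$ has components in $\Ker(F)$, lifts uniquely, and stays natural by faithfulness.

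\emph{Main obstacle.} The substantive point is Step 1, namely that $\Ker(F)$ is closed under kernels and cokernels. This uses the exactness of $F$ on arbitrary kernels, not just on short exact sequences. A smaller point to settle carefully is that null $1$-cells in $\pExact$ are exactly the functors with zero values, which is needed to connect (K1) and (K2) with pointwise vanishing.
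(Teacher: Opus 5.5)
Your proposal is correct and follows essentially the same route as the paper: you show $\Ker(F)$ is closed under ambient kernels and cokernels using exactness of $F$, apply Lemma~\ref{lem:inheritance}, corestrict annihilated exact functors, and lift natural transformations uniquely through the full inclusion. The only extra step is your explicit check that null $1$-cells in $\pExact$ are exactly the zero-valued exact functors, which the paper records in the paragraph right after the proposition, where the one-object zero category is shown to be a strong bizero.
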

\begin{proof}
The subcategory contains zero objects. Exactness of $F$ implies that
the ambient kernel and cokernel of a morphism between annihilated
objects are annihilated. Lemma~\ref{lem:inheritance} applies.
Every exact functor whose composite with $F$ is null corestricts to
this full subcategory and is exact there, because its kernels and
cokernels are ambient. Natural transformations corestrict uniquely
by fullness. These are all parts of the hom-category universal
property of a $2$-kernel.
\end{proof}

The same annihilation-kernel argument works for di-exact homological
categories, and indeed for all pointed homological categories:
Lemma~\ref{lem:inheritance} supplies the induced structure, and
fullness supplies the lifted transformations. In each case the
one-object zero category is a strong bizero. Functors to and from
it preserve zero objects, kernels and cokernels. For an exact
$F:\C\to\D$ and a zero-valued exact $Z:\C\to\D$, the unique
maps $F(X)\to Z(X)$ and $Z(X)\to F(X)$ form natural
transformations in both directions. The naturality equations hold
because their two sides have a common zero codomain or zero domain,
respectively. Their components are forced, so the transformations
are unique. All of them are admitted $2$-cells.

\subsection{Normal subquotients, extensions, and thick closure}

A \emph{thick subcategory} of a Puppe exact category is a full replete
subcategory containing zero objects and closed under subobjects,
quotients, and extensions. In a di-exact homological category we use
\emph{normal} subobjects and normal quotients in this definition.
For abelian categories this is exactly a Serre subcategory. Thick
subcategories inherit the relevant structure by
Lemma~\ref{lem:inheritance}. The annihilation subcategory of an exact
functor is thick: a short exact sequence remains short exact, and
if its middle, or its two outside objects, vanish, the required
remaining objects vanish as well. Subobjects and quotients are the
special cases of this argument.

For a class $\mathcal B$ of objects, first adjoin a zero object and
close under isomorphisms. Write $\mathcal B^{\mathsf S}$ for its
normal subquotients, and $\mathcal B^{\mathsf E}$ for the objects
in short exact sequences whose two outside objects belong to
$\mathcal B$.

\begin{lemma}\label{LemD}
In a di-exact homological category,
\[
 \mathcal B^{\mathsf{SS}}=\mathcal B^{\mathsf S},\qquad
 \mathcal B^{\mathsf{ES}}\subseteq\mathcal B^{\mathsf{SE}},\qquad
 \mathcal B^{\mathsf{SE}^{\infty}}
   =\bigcup_{n\ge0}\mathcal B^{\mathsf{SE}^{n}}
\]
is the smallest thick subcategory containing $\mathcal B$.
Here $\mathsf E^0$ denotes no extension step, after the indicated
$\mathsf S$ step. In an abelian category the resulting subcategory
is additive.
\end{lemma}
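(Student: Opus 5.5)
I will work inside the exact core $\C_{\mathrm{ex}}$ of Proposition~\ref{prop:exact-core} wherever possible. Normal subobjects, normal quotients and short exact sequences of $\C$ are exactly the subobjects, quotients and short exact sequences of the Puppe exact category $\C_{\mathrm{ex}}$, with the same kernels and cokernels. Hence the three operators $\mathsf S$, $\mathsf E$ and the notion of thick subcategory are the same whether computed in $\C$ or in $\C_{\mathrm{ex}}$. This reduces everything except the final additivity clause to the Puppe exact case, where the classical modular-lattice calculus of subobjects (Grandis) is available.

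\emph{First identity.} A normal subquotient of a normal subquotient is again one. Take a normal subobject $m:M\to A$, a cokernel $e:M\to M/N$, and then a subquotient of $M/N$. Subobjects of $M/N$ correspond to subobjects of $M$ containing $N$, by pulling back along the cokernel $e$. Composition of kernels, from closure in (ii) of Proposition~\ref{prop:diexact-equivalences}, then shows that such a subobject of $M$ is a normal subobject of $A$. Quotients are handled dually. The third isomorphism theorem in a Puppe exact category lets us rewrite a quotient of a subobject of a quotient as a single subquotient $H/L$ with $L\le H\le A$. This gives $\mathcal B^{\mathsf{SS}}\subseteq\mathcal B^{\mathsf S}$. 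The reverse inclusion holds by taking trivial subquotients.

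\emph{Second inclusion.} Let $0\to X\to E\to Y\to0$ be short exact with $X,Y\in\mathcal B^{\mathsf S}$, and let $H/L$ be a subquotient of $E$. Intersecting with $X$ and taking images in $Y$ gives a short exact sequence
\[0\to (H\wedge X)/(L\wedge X)\to H/L\to \bigl((H\vee X)/X\bigr)/\bigl((L\vee X)/X\bigr)\to 0\]
that uses the Zassenhaus/modular law in the Puppe exact core. The outer terms are subquotients of $X$ and of $Y$, so by the first identity they lie in $\mathcal B^{\mathsf S}$. Hence $H/L\in\mathcal B^{\mathsf{SE}}$. I expect this exactness check to be the \textbf{main obstacle}. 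Exactness at the middle needs modularity of the normal subobject lattice and a Noether isomorphism $(H\vee X)/X\cong H/(H\wedge X)$ (and similarly for $L$). I would derive these in $\C_{\mathrm{ex}}$ from Puppe exactness, citing Grandis's results on modular subobject lattices.

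\emph{Thick closure.} By the second inclusion, $\mathcal B^{\mathsf{SE}^{n}\mathsf S}\subseteq\mathcal B^{\mathsf{SE}^{n}}$, proved by induction. Using the first identity, each extension step can be pushed past $\mathsf S$, since $\mathsf{ES}\subseteq\mathsf{SE}$ and $\mathsf{SS}=\mathsf S$. The union is therefore closed under $\mathsf S$. It is closed under $\mathsf E$ because $\mathsf E$ raises the level by one, and $\mathsf E$ is monotone and inflationary since trivial extensions contain the zero object. It also contains $\mathcal B$ and is replete, so it is a thick subcategory. Minimality is immediate, since every thick subcategory containing $\mathcal B$ is closed under both operators. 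In the abelian case, $A\oplus B$ is an extension of $B$ by $A$, which gives additivity.
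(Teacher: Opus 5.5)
Your proof is correct, and its overall architecture is the paper's: you pass to the exact core $\C_{\mathrm{ex}}$, prove $\mathcal B^{\mathsf{SS}}=\mathcal B^{\mathsf S}$ by the correspondence and third isomorphism theorems, push $\mathsf S$ past $\mathsf E$ by induction, and use split extensions for additivity.

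The one real difference is in the key inclusion $\mathcal B^{\mathsf{ES}}\subseteq\mathcal B^{\mathsf{SE}}$. You treat an arbitrary subquotient $H/L$ of $E$ in one step, through the Zassenhaus-type short exact sequence with outer terms $(H\wedge X)/(L\wedge X)$ and $(H\vee X)/(L\vee X)$. That requires joins of subobjects (these exist in a Puppe exact category, via pushouts of cokernels), modularity, and the second isomorphism theorem. This is precisely the check you flag as the main obstacle.

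The paper avoids that check by splitting the step in two.
\begin{itemize}
\item A subobject $H$ of $E$ sits in the short exact sequence $0\to H\wedge X\to H\to\operatorname{im}(H\to Y)\to0$. Its exactness is immediate: $H\wedge X$, the pullback of $X\to E$ along $H\to E$, is the kernel of the composite $H\to E\to Y$ (Lemma~\ref{di:lem:basic}). In a Puppe exact category every morphism is the cokernel of its kernel followed by a kernel.
\item Dually, a quotient of $E$ is an extension of a quotient of $X$ and a quotient of $Y$.
\end{itemize}
Since a subquotient is a quotient of a subobject and $\mathsf{SS}=\mathsf S$, the two steps combine.

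Unwinding them produces the same two outer terms as your sequence. However, the paper never has to identify them via the modular law; it only needs to know that they are subquotients of $X$ and of $Y$. So the paper's argument uses nothing beyond pullbacks of kernels, normal image factorizations and duality. Yours yields the canonical butterfly form of the extension, at the cost of importing Grandis's modular-lattice calculus for Puppe exact categories.
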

\begin{proof}
By Proposition~\ref{prop:exact-core}, normal subobjects and
normal quotients in $\C$ are exactly the subobjects and quotients
in $\C_{\mathrm{ex}}$. The short exact sequences also agree:
their maps are normal, and the core's kernels and cokernels are
the ambient ones. Thus both operations $\mathsf S$ and
$\mathsf E$ are unchanged, and it suffices to reason in the
Puppe exact category $\C_{\mathrm{ex}}$. We use its elementary exact calculus: pullbacks of
epimorphisms along monomorphisms are epimorphisms, dually for pushouts,
and normal image factorizations are compatible with these squares
\cite{BP69,Mit65}. A subobject of a quotient of $B$ is a quotient of
its inverse image in $B$. This proves that an iterated subquotient
is a subquotient and gives the first equality.

For $0\to A\to X\to C\to0$, a subobject $Y\hookrightarrow X$
fits in the short exact sequence
\[
 0\longrightarrow A\cap Y\longrightarrow Y
 \longrightarrow\operatorname{im}(Y\to C)\longrightarrow0.
\]
Both outside terms are subobjects of the original outside terms.
Dually, a quotient of $X$ is an extension of quotients of $A$ and
$C$. Combining these two statements proves
$\mathcal B^{\mathsf{ES}}\subseteq\mathcal B^{\mathsf{SE}}$.
Since zero belongs to $\mathcal B$, the classes
$\mathcal B^{\mathsf{SE}^n}$ form an increasing chain.
Moving each $\mathsf S$ past the preceding $\mathsf E$ by the
inclusion just proved shows that their union is closed under
subquotients. Any two of its objects occur at a common finite stage,
so an extension of them occurs at the next stage. This proves
thickness. Every thick subcategory containing $\mathcal B$ contains
every stage, proving minimality. In the abelian case a finite
biproduct is a split extension of its two summands, so the closure
is additive.
\end{proof}

\subsection{Normal pullbacks, pushouts, and comparison morphisms}\label{di:sec:grandis}

We first recall the universal constructions and the normal-factorization facts used in the fraction calculus. Existence of a pullback or pushout is distinct from stability of a chosen denominator class under that operation.

\begin{lemma}\label{di:lem:basic}
In a pointed category with all kernels and cokernels, a normal monomorphism $m:M\to Y$ has a pullback along every $f:X\to Y$, and a normal epimorphism has a pushout along every morphism with the same domain. Explicitly,
\[
 f^*m=\ker((\coker m)f),\qquad
 f_*e=\coker(f(\ker e)).
\]
We also use normal cancellation as in Lemma~\ref{lem:inheritance}: if $ba$ is normal monic and $b$ is monic, then $a$ is normal monic, and dually for normal epimorphisms.
If the category is homological in the sense of
Definition~\ref{def:grandis-homological}, pulling back a normal
epimorphism along a normal monomorphism gives a normal epimorphism, and
dually pushing out a normal monomorphism along a normal
epimorphism gives a normal monomorphism.
\end{lemma}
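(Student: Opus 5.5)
We verify the two explicit formulas directly from the universal properties, then obtain the stability assertions from the homology axiom and the composition clauses of Definition~\ref{di:lem:basic}'s ambient setting, Definition~\ref{def:grandis-homological}.

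\emph{Pullback of a kernel.} Let $m:M\to Y$ be a normal monomorphism, so $m=\ker c$ with $c=\coker m$; indeed a kernel is the kernel of its cokernel. Put $k=\ker(cf):P\to X$. Since $cfk=0$, we get a unique $g:P\to M$ with $mg=fk$. For a commuting square $fu=mv$ we have $cfu=cmv=0$, so $u=kw$ for a unique $w$. Then $mgw=fkw=fu=mv$, and since $m$ is monic, $gw=v$. Uniqueness of $w$ follows because $k$ is monic. Hence $(k,g)$ is a pullback of $m$ along $f$, and $f^*m=\ker(cf)$. The pushout formula $f_*e=\coker(f\ker e)$ is the exact dual: write $e=\coker(\ker e)$ and reverse arrows. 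The cancellation clause is Lemma~\ref{lem:inheritance}, quoted verbatim.

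\emph{Stability in a homological category.} Let $e:X\to Y$ be a normal epimorphism and $m:M\to Y$ a normal monomorphism. The pullback of $e$ along $m$ is the map $P\to M$, where $P\to X$ is $\ker(\coker(m)\,e)$. We need this map to be a cokernel.

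The strategy is to compute $em$-type composites via the homology axiom. Consider $n=\ker(\coker(m)e):P\to X$. This is a kernel, and $e$ is a cokernel. Moreover $\ker e$ factors through $n$, because $\coker(m)\,e\,\ker e=0$. So the homology axiom (iii) says that $en$ is normal. Write $en=m'e'$ with $e'$ a cokernel and $m'$ a kernel.

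Next, $en=mg$ where $g:P\to M$ is the pullback projection. The main obstacle is to show that $g$ is a cokernel, i.e.\ that in the factorization the kernel part $m'$ is identified with $m$. I would argue as follows. The normal image of $en$ is $\ker\coker(en)$. Since $e$ is epic, $\coker(en)=\coker(mg)$, and we want this to equal $\coker m$. The identification comes from the universal property of the pullback: for any $t$ with $tmg=0$, set $s=\coker m$ and show $t$ factors through $s$.

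An alternative that is likely cleaner: $m'$ and $m$ are both kernels through which $en$ factors. The inclusion $\operatorname{im}\subseteq M$ gives $m'=mj$, with $j$ a kernel by cancellation. Then $e'$ factors as $g=je'$. Now test with the pullback: since $e$ is epic and $m$ is a kernel, every element of $M$ lifts. In categorical terms, the map $j$ is an isomorphism precisely when $\coker(j)=0$. Suppose $\coker(j)\,g=0$. Composing appropriately with the pullback and using that $e$ is a cokernel, we can deduce $\coker(j)=0$. This is the step where I expect the most care. If needed, I would instead use the standard Grandis argument (\cite[Section~1.5]{Grandis13}) that $m$ pulled back then pushed forward recovers the normal image $me$-style.

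Once $j$ is invertible, $g\cong e'$ is a cokernel. The pushout statement then follows by duality, since Definition~\ref{def:grandis-homological} is self-dual.
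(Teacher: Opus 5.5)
Your proofs of the explicit pullback and pushout formulas and of the cancellation clause are correct, and they match the paper's. Your setup for the stability assertion is also right: with $n=\ker(\coker(m)\,e)$ and pullback projection $g$, the homology axiom makes $en=mg$ normal.

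\textbf{The gap.} The step you flag as the main obstacle is never carried out, and it is the whole content of the stability claim. You need $\coker(en)=\coker m$, so that the normal image of $en$ is $\ker\coker m=m$. Your proposal only says this should come ``from the universal property of the pullback''. But that property concerns maps into $P$; it gives no control over maps out of $Y$ that kill $mg$.

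\textbf{The missing ingredient} is axiom (ii) of Definition~\ref{def:grandis-homological}. You announce it in your first sentence but never use it. The argument runs as follows.
\begin{enumerate}
\item Since cokernels compose, $\coker(m)\,e$ is a normal epimorphism. Hence it is the cokernel of its own kernel $n$.
\item If $t\,en=0$, then $te=b\,\coker(m)\,e$ for a unique $b$. Epicity of $e$ gives $t=b\,\coker(m)$, and $b$ is unique because $\coker m$ is epic.
\item Therefore $\coker(en)=\coker m$, so the normal image of $en$ is $m$.
\item By monicity of $m$, the normal-epic first factor of $en$ through $m$ must be $g$.
\end{enumerate}

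\textbf{Your alternative route is circular.} With $m'=mj$ and $g=je'$, the hypothesis $\coker(j)\,g=0$ holds automatically, since $\coker(j)\,g=\coker(j)\,j\,e'=0$. Deducing $\coker(j)=0$ from it would require knowing that $g$ is epic, which is exactly what you are trying to prove. The element-wise heuristic (``every element of $M$ lifts'') and the fallback to citing Grandis do not replace this step.

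The remark ``since $e$ is epic, $\coker(en)=\coker(mg)$'' is vacuous, because $en=mg$ by construction. Epicity of $e$ is genuinely needed only in step 2, to cancel $e$ from $te=b\,\coker(m)\,e$.

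Once you supply $\coker(en)=\coker m$, the rest of your outline goes through, including the dual pushout statement.
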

\begin{proof}
Put $c=\coker m$, so $m=\ker c$ up to the canonical isomorphism. If $k=\ker(cf)$, the equation $cfk=0$ produces $v$ with $mv=fk$. Given $r:T\to X$ and $s:T\to M$ with $fr=ms$, one has $cfr=0$, so $r=ku$ uniquely. Monicity of $m$ then gives $vu=s$. This is precisely the pullback universal property. The pushout assertion is dual. Normal cancellation was proved in Lemma~\ref{lem:inheritance}.

For the homological stability assertion, let $e:X\to Y$ be normal
epic and $m:M\to Y$ normal monic. Put $c=\coker m$ and
let $l:P\to X$ be $\ker(ce)$. The pullback projection
$v:P\to M$ satisfies $el=mv$. By Definition~\ref{def:grandis-homological}\textup{(ii)},
normal epimorphisms compose, so $ce$ is normal epic and hence is
a cokernel of $l$. Since $\ker e$ factors through $l$, clause
\textup{(iii)} of that definition makes $el$ normal. Moreover, $c$ is a cokernel of $el$: if $ael=0$,
the cokernel property of $ce$ gives $ae=bce$ uniquely, and
epicity of $e$ gives $a=bc$; uniqueness also follows from
epicity of $c$. Thus the normal image of $el$ is $\ker c=m$.
Its normal factorization through $m$ has a normal-epic first
factor, necessarily $v$ by monicity of $m$. The corresponding
pushout law follows by duality.
\end{proof}

The existence and cancellation assertions also appear, in greater
generality with an ideal of null morphisms, in
\cite[Lemmas~2.6 and~2.9]{Fritz}. The homological pullback and
pushout laws are the equivalent forms \textup{(ex3a)} and
\textup{(ex3a*)} of the homology axiom in
\cite[Section~1.6, p.~144]{Grandis92}.

\begin{lemma}\label{di:lem:comparison}
In a homological category as in
Definition~\ref{def:grandis-homological}, the canonical comparison
\[
 \bar f:\Ncm f\longrightarrow\Nim f,
 \qquad f=m_f\bar f e_f,
\]
has zero kernel and zero cokernel. Here $e_f=\coker(\ker f)$ and $m_f=\ker(\coker f)$.
\end{lemma}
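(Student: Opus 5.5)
To prove that $\bar f$ has zero kernel, we will show that $\ker\bar f$ vanishes; the zero cokernel will then follow by duality. The plan avoids the homology axiom at first, since only the universal properties of $e_f$ and $m_f$ are needed.

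First we show that $m_f\bar f$ is monic on the relevant arrows. Let $t:T\to\Ncm f$ satisfy $\bar f t=0$. Because $m_f$ is monic, this is equivalent to $m_f\bar f t=0$. Write $g=m_f\bar f:\Ncm f\to Y$, so that $f=ge_f$. Then $\ker\bar f=\ker g$.

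Next we compare $\ker f$ with $\ker g$. Set $k=\ker f$. Pulling back $\ker g$ along $e_f$, Lemma~\ref{di:lem:basic} gives $e_f^*(\ker g)=\ker(\coker(\ker g)\,e_f)$. Directly, an arrow $u$ into $X$ satisfies $fu=0$ iff $g e_f u=0$ iff $e_fu$ factors through $\ker g$. Hence $k$ is the pullback of $\ker g$ along $e_f$, with projection $v:K\to\ker g$ satisfying $(\ker g)v=e_fk=0$. Since $\ker g$ is monic, $v=0$.

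The key step is to show that $v$ is a normal epimorphism. By the homological stability assertion of Lemma~\ref{di:lem:basic}, pulling back the normal epimorphism $e_f$ along the normal monomorphism $\ker g$ yields a normal epimorphism. We identify this pullback with the square built from $k$ and $v$. Its domain is the kernel of $\coker(\ker g)e_f$, which by the computation above is exactly $\ker f$. Hence $v$ is a normal epi. A zero morphism that is a cokernel has zero codomain: if $v=\coker(w)$, then $1$ and $0$ both annihilate $v$ on the codomain side, so the identity of $\ker g$'s domain equals zero. Thus $\ker\bar f=\ker g$ is zero.

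The dual argument, using pushout of $m_f$ along $\coker(e_f\text{-part})$, namely $\coker(\bar f)=\coker(\bar f e_f)$ with the pushout law, gives zero cokernel.

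The main obstacle is justifying that the pullback of $e_f$ along $\ker g$ is computed by the square through $k=\ker f$. This requires checking that the pullback object of the two maps equals $\ker f$, up to the canonical isomorphism, with its leg into $X$ being $k$. We would verify this via the universal property: a cone $(r,s)$ with $e_fr=(\ker g)s$ forces $fr=ge_fr=0$. We must also make sure the stability lemma applies to an arbitrary normal mono such as $\ker g$, which holds as stated.
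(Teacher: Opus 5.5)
Your proof is correct and is essentially the paper's argument: the paper also pulls back the normal epimorphism $e_f$ along $\ker\bar f$ using Lemma~\ref{di:lem:basic}, notes that the leg into $X$ factors through $\ker f$, so that monicity forces the normal-epic projection $v$ to be zero, uses epicity of $v$ to get $1=0$ on the kernel object, and then dualizes. Your extra steps are harmless but not needed: replacing $\ker\bar f$ by $\ker(m_f\bar f)$, and identifying the whole pullback with $\ker f$. The dual step is just the pushout of $m_f$ along $\coker\bar f$.
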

\begin{proof}
Let $k:K\to\Ncm f$ be the kernel of $\bar f$. Pull back the normal epimorphism $e_f$ along $k$, obtaining $l:P\to X$ and a normal epimorphism $v:P\to K$ with $e_fl=kv$. Then $fl=0$, so $l$ factors through $\ker f$ and $e_fl=0$. Monicity of $k$ gives $v=0$. Since $v$ is epic, $1_K=0$ and $K$ is zero. Apply the same argument in the opposite category for the cokernel.
\end{proof}

\begin{lemma}\label{di:lem:nested-normal}
For normal monomorphisms $A\aar{m}B\aar{n}C$ in a pointed homological category there is a short exact sequence
\[
 0\longrightarrow\Coker m\longrightarrow\Coker(nm)
   \longrightarrow\Coker n\longrightarrow0.
\]
Dually, for composable normal epimorphisms there is a short exact sequence of their kernel objects.
\end{lemma}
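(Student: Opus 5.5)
We argue in the pointed homological category, using only kernels, cokernels and Lemma~\ref{di:lem:basic}. Put $p=\coker m:B\to\Coker m$, $q=\coker(nm):C\to\Coker(nm)$ and $r=\coker n:C\to\Coker n$. By Definition~\ref{def:grandis-homological}\textup{(ii)}, $nm$ is a normal monomorphism, so $nm=\ker q$. Since $rnm=0$, $r$ factors as $r=sq$ for a unique $s:\Coker(nm)\to\Coker n$. Since $qnm=0$, $qn$ factors as $qn=tp$ for a unique $t:\Coker m\to\Coker(nm)$. We will show that $t=\ker s$ and $s=\coker t$.

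The first step is to show that $s$ is a cokernel of $t$. In the composite $r=sq$ the map $r$ is a cokernel and $q$ is epic, so Lemma~\ref{lem:inheritance} makes $s$ a cokernel, namely of some map. To identify which map, let $a:\Coker(nm)\to X$ satisfy $at=0$. Then $aqn=atp=0$, so $aq$ factors through $\coker n=r$, say $aq=br=bsq$. Since $q$ is epic, $a=bs$. Uniqueness of $b$ follows because $s$ is epic. Also $st p=sqn=rn=0$, and $p$ is epic, so $st=0$. Hence $s=\coker t$.

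The second step is to show that $t$ is a kernel of $s$. This is where the homological axioms are genuinely needed, and it is the main obstacle. The plan is to view $qn$ as a normal monomorphism followed by a normal epimorphism. The map $\ker q=nm$ factors through $n$ via $m$, so the homology axiom \textup{(iii)} makes $qn$ normal. Its kernel is $\ker(qn)=m$, because $n$ is monic and $nm=\ker q$; thus its normal coimage map is $\coker m=p$. It follows that $qn=m'p$ with $m'$ a normal monomorphism, and since $p$ is epic, $m'=t$. So $t$ is a normal monomorphism.

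To finish, we show that $t=\ker(\coker t)=\ker s$, using the first step. Because $t$ is a normal monomorphism, it is the kernel of its own cokernel, and by the first step that cokernel is $s$. Alternatively, if one prefers a check that is independent of the first step, one can directly identify $\ker s$ with the image of $qn$. This uses Lemma~\ref{di:lem:basic}: $\ker(sq)=\ker r=n$, and pulling $\ker s$ back along the normal epimorphism $q$ gives $n$. Monicity of $t$ also shows that $\Coker m$ is the kernel object, which gives exactness at the left end. Exactness at the right end is the statement that $s$ is epic, which holds because $s$ is a cokernel. This yields the stated short exact sequence.

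The dual statement, for composable normal epimorphisms and their kernel objects, follows by applying the same argument in the opposite category. This is legitimate because the axioms of Definition~\ref{def:grandis-homological} are self-dual.
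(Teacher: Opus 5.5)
Your proof is correct and follows the paper's argument essentially step by step: the homology axiom applied to $\coker(nm)\circ n$, whose kernel is $m$, yields the normal monomorphism $\Coker m\to\Coker(nm)$, and the factor of $\coker n$ through $\coker(nm)$ is shown to be its cokernel. The only difference is cosmetic: you check the cokernel property of $s$ against $t$ directly, whereas the paper first checks it against $qn$ and then transfers it along the epimorphism $\coker m$.
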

\begin{proof}
Set $c=\coker(nm)$. Since $\ker c=nm$ factors through $n$, the homology axiom makes $cn$ normal. Its kernel is $m$: this follows from monicity of $n$ and $\ker c=nm$. Hence its cokernel--kernel factorization is $cn=j\,\coker m$ with $j$ normal monic. The map $\coker n$ factors as $rc$ because it annihilates $nm$. The map $r$ is the cokernel of $cn$. Indeed, if $a cn=0$, then $ac=b\,\coker n=brc$ uniquely, and epicity of $c$ gives $a=br$. Since $\coker m$ is epic, $r$ is also the cokernel of $j$. This gives the asserted short exact sequence. Dualize for kernels.
\end{proof}

\subsection{Saturated thick subcategories}\label{di:sec:saturation}

\begin{definition}\label{di:def:saturated}
A full replete subcategory $\N$ of a pointed homological category $\C$ is \emph{thick} if it contains the zero objects and is closed under normal subobjects, normal quotients, and extensions. A thick subcategory is called \emph{saturated} if, for every morphism $f:X\to Y$ of $\C$, it satisfies
\begin{align}
 Y,\Ker f\in\N&\quad\Longrightarrow\quad X\in\N,\label{di:eq:satk}\\
 X,\Coker f\in\N&\quad\Longrightarrow\quad Y\in\N.\label{di:eq:satc}
\end{align}
Write $\SatTh_{\C}(\mathcal A)$ for the smallest saturated thick subcategory containing a given class of objects $\mathcal A$ of $\C$.
\end{definition}

In a Puppe exact category the two implications (\ref{di:eq:satk}-\ref{di:eq:satc}) follow from thickness and normality of $f$, so saturation introduces no extra restriction there.

\begin{lemma}\label{di:lem:saturated-properties}
Saturated thick subcategories have the following properties.
\begin{enumerate}[label=\textup{(\roman*)}]
\item They are closed under existing retracts and under ambient kernels and cokernels of their morphisms. They inherit homologicity, and inherit di-exact homologicity when the ambient category is di-exact. Their inclusions are exact.
\item Every full annihilation subcategory of an exact functor is saturated thick.
\item Arbitrary intersections and inverse images under exact functors are saturated thick. Saturation and thickness are transitive along full inclusions. If $\N\subseteq\M$ are saturated thick in $\C$, then $\N$ is saturated thick in $\M$.
\item If $\Nim f\in\N$, then $\Ncm f\in\N$. Dually, $\Ncm f\in\N$ implies $\Nim f\in\N$.
\end{enumerate}
\end{lemma}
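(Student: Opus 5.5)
We take the four items in the order (iv), (i), (ii), (iii), since (iv) is a pure factorization argument used in the others.

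\emph{Item (iv).} Write $f=m_f\bar f e_f$. By Lemma~\ref{di:lem:comparison}, $\bar f$ has zero kernel and zero cokernel. If $\Nim f\in\N$, apply \eqref{di:eq:satk} to $\bar f:\Ncm f\to\Nim f$: its codomain and its kernel both lie in $\N$, so $\Ncm f\in\N$. The dual statement follows from \eqref{di:eq:satc} applied to $\bar f$ with its zero cokernel.

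\emph{Item (i).} Let $f:X\to Y$ have $X,Y\in\N$.
\begin{itemize}
\item $\Ker f$ is a normal subobject of $X$, and $\Coker f$ is a normal quotient of $Y$. Thickness puts both in $\N$, together with their normal subquotients.
\item For a retract $r s=1_A$ with $A$ a retract of $N\in\N$, the morphism $s:A\to N$ is monic, so $\Ker s=0$. Then \eqref{di:eq:satk} gives $A\in\N$.
\item Lemma~\ref{lem:inheritance} now transfers homologicity, and di-exactness when the ambient category has it. Since kernels and cokernels are ambient, the inclusion is exact.
\end{itemize}

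\emph{Item (ii).} Let $F$ be exact and $\N=\Ker F$. Thickness was shown in the subsection on thick subcategories. For \eqref{di:eq:satk}, exactness gives $F(\Ker f)\cong\Ker F(f)$, so $F(f):F(X)\to 0$ has zero kernel. The kernel of a map to zero is the identity, so $F(X)\cong 0$. Condition \eqref{di:eq:satc} is dual.

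\emph{Item (iii).}
\begin{itemize}
\item Intersections: every defining condition is a closure condition on objects, so it is preserved by intersections.
\item Inverse images under exact $G$: $G$ preserves kernels, cokernels, normal monos, normal epis and short exact sequences, so each condition pulls back.
\item Transitivity along full inclusions $\N\subseteq\M\subseteq\C$: by (i), kernels, cokernels, normal sub/quotients and short exact sequences in $\M$ are the ambient ones. So the conditions for $\N$ in $\M$ are instances of the conditions for $\N$ in $\C$. Conversely, if $\N$ is saturated thick in $\M$ and $\M$ in $\C$, a test in $\C$ with the relevant objects in $\N\subseteq\M$ first lands in $\M$ by saturation/thickness of $\M$, then in $\N$.
\end{itemize}

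The main obstacle is checking that "normal subobject in $\M$" coincides with the ambient notion. This uses Lemma~\ref{lem:inheritance}: an ambient kernel between objects of $\M$ is a kernel of its cokernel, and that cokernel is an $\M$-morphism.
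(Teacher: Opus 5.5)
Your proposal is correct and follows essentially the same route as the paper. For (iv) you apply the saturation implications to the comparison $\bar f$ of Lemma~\ref{di:lem:comparison}. For (ii) you use the kernel-of-a-map-to-zero argument, and for (iii) you pass to the intermediate subcategory.

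The differences are cosmetic. You get retract closure from the monic section via \eqref{di:eq:satk}, where the paper uses the retraction's zero cokernel via \eqref{di:eq:satc}. You also cite Lemma~\ref{lem:inheritance} for the inheritance of homologicity and di-exactness, where the paper re-runs that argument inline.
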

\begin{proof}
For (i), a retraction $r:A\to B$ has zero cokernel. If $A\in\N$, implication~\eqref{di:eq:satc} gives $B\in\N$. Ambient kernels and cokernels of internal arrows are normal subobjects and normal quotients of internal objects. They therefore lie in $\N$ and are the internal kernels and cokernels by fullness. Normal compositions remain normal internally because a normal monomorphism is the kernel of its cokernel (and dually for normal epimorphisms), and those objects are internal. The normal coimage and normal image of any internal arrow are also internal. Consequently its ambient normality is its internal normality. This proves inheritance of the conditional homology axiom and of di-exactness.

For (ii), zero objects, repleteness, normal subquotients, and extensions are preserved or detected as required by applying the exact functor. For instance, a short exact sequence with zero outer terms has zero middle term. If $F(Y)=F(\Ker f)=0$, then $F(f)$ has zero codomain and its kernel has zero domain. The kernel of a map to zero is an identity, so $F(X)=0$. This proves~\eqref{di:eq:satk}; the dual proves~\eqref{di:eq:satc}.

Intersections in (iii) are immediate, and inverse images follow because an exact functor preserves all the diagrams and objects used in the definition. For transitivity, suppose $\N$ is saturated thick in $\M$, which is saturated thick in $\C$. Closure of $\N$ under normal subquotients and extensions in $\C$ follows first by putting the relevant objects into $\M$, then applying the corresponding internal closure of $\N$. If $Y,\Ker f\in\N$, saturation of $\M$ first puts $X$ into $\M$. The arrow $f$ is then internal, with the same kernel, and saturation of $\N$ puts $X$ into $\N$. Dualize. The restriction assertion follows because kernels, cokernels, and short exact sequences in $\M$ are computed in $\C$. Since $\C$ itself is saturated thick, the intersection defining $\SatTh$ exists.

Finally, apply~\eqref{di:eq:satk} or~\eqref{di:eq:satc} to the comparison $\bar f$ of Lemma~\ref{di:lem:comparison}, whose kernel and cokernel are zero.
\end{proof}

\subsection{Stable normal denominators}\label{di:sec:denominators}

Fix a di-exact homological category $\C$ and a saturated thick $\N\subseteq\C$. Put
\begin{align*}
 \Sden&=\{m\mid m\text{ is normal monic and }\Coker m\in\N\},\\
 \Eden&=\{e\mid e\text{ is normal epic and }\Ker e\in\N\}.
\end{align*}
These will be the two types of denominator.

\begin{lemma}\label{di:lem:denominators}$\;$
\begin{enumerate}[label=\textup{(\roman*)}]
\item Both classes contain all isomorphisms and are closed under composition.
\item $\Sden$ is stable under pullback along every morphism of $\C$, and $\Eden$ is stable under pushout along every morphism.
\item A pushout of an arrow of $\Sden$ along a normal epimorphism is again in $\Sden$. Dually a pullback of an arrow of $\Eden$ along a normal monomorphism is again in $\Eden$.
\item If $s\in\Sden$ and $e\in\Eden$ are composable, then
\[
 es=s'e',\qquad s'\in\Sden,\quad e'\in\Eden.
\]
\item If $e,ue\in\Eden$, then $u\in\Eden$. Dually, if $s,sv\in\Sden$, then $v\in\Sden$.
\end{enumerate}
\end{lemma}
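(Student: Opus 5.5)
By duality (passing to $\C^{\mathrm{op}}$ swaps $\Sden$ and $\Eden$ and preserves saturated thickness), it suffices in each item to prove one half. Throughout I will use that $\C$ is di-exact homological, Lemma~\ref{di:lem:basic}, Lemma~\ref{di:lem:nested-normal}, and the closure properties of $\N$.

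\emph{Items (i) and (v).} Isomorphisms are normal monic with zero cokernel, so they lie in $\Sden$. For composites $nm$ with $m,n\in\Sden$, the composite is a kernel by Definition~\ref{def:grandis-homological}(ii). Lemma~\ref{di:lem:nested-normal} gives a short exact sequence $\Coker m\to\Coker(nm)\to\Coker n$, and closure of $\N$ under extensions puts $\Coker(nm)$ in $\N$.

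For (v), take $s,sv\in\Sden$. First, $v$ is normal monic by normal cancellation (Lemma~\ref{lem:inheritance}), since $s$ is monic. The same lemma gives a short exact sequence
\[
0\to\Coker v\to\Coker(sv)\to\Coker s\to0,
\]
so $\Coker v$ is a normal subobject of an object of $\N$ and hence lies in $\N$.

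\emph{Item (ii).} Let $m\in\Sden$, $f:X\to Y$, and $c=\coker m$. By Lemma~\ref{di:lem:basic}, $f^*m=\ker(cf)$, which is normal monic. Its cokernel is $\Ncm(cf)$. The normal image $\Nim(cf)$ is a normal subobject of $\Coker m\in\N$, so it lies in $\N$. Lemma~\ref{di:lem:saturated-properties}(iv) then gives $\Ncm(cf)\in\N$. This is exactly where saturation is needed: $cf$ is not normal in general.

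\emph{Item (iii).} Push out $m\in\Sden$ along a normal epimorphism $e$. Lemma~\ref{di:lem:basic} makes the result $m'$ normal monic. Since pushouts paste, $\coker m'$ is induced from $\coker m$, and the induced map $\Coker m\to\Coker m'$ is epic. I expect it to be a cokernel, obtained by pushing out the normal epimorphism $e$ and using composition closure. Then $\Coker m'$ is a normal quotient of an object of $\N$, hence in $\N$.

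\emph{Item (iv).} This is the main obstacle. Given $s:A\to B$ in $\Sden$ and $e:B\to C$ in $\Eden$, di-exactness makes $es$ normal. Factor it as $es=s'e'$ with $e'=\coker(\ker(es))$ and $s'=\ker(\coker(es))$; what remains is to show both denominators have kernel or cokernel in $\N$.
\begin{itemize}
\item[--] $\Ker e'=\Ker(es)$ is the pullback of $\ker e$ along $s$. It is therefore a normal subobject of $\Ker e\in\N$, and so lies in $\N$.
\item[--] $\Coker s'=\Coker(es)$ is the pushout cokernel of $s$ along $e$. I expect it to be a normal quotient of $\Coker s\in\N$, via the induced map $\Coker s\to\Coker(es)$. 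I would check this through Lemma~\ref{di:lem:basic}, identifying $\Coker(es)$ with $\Coker$ of the pushout of $s$ along $e$ as in (iii).
\end{itemize}
The delicate points are verifying that these comparison maps are normal epimorphisms or normal monomorphisms. I would handle this with the composition-closure and homology axioms, falling back on saturation (Lemma~\ref{di:lem:saturated-properties}(iv)) if normality fails.
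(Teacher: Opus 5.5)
Your decomposition is the paper's, item by item, and (i), (ii), (v) are complete. One small correction in (v): the short exact sequence $0\to\Coker v\to\Coker(sv)\to\Coker s\to0$ comes from Lemma~\ref{di:lem:nested-normal} applied to $v$ followed by $s$, not from Lemma~\ref{lem:inheritance}.

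The gap is in (iii) and in the cokernel half of (iv). There you only state what you ``expect'', and the check you sketch for (iv) cannot work. The arrows $s$ and $e$ are composable, not a span, so ``the pushout of $s$ along $e$'' is not defined. Nor is the square $es=s'e'$ a pushout in general: for $A=0$ one gets $\Coker s'=C$ and $\Coker s=B$, which differ unless $e$ is invertible. So (iii) cannot be transported to this square. Neither the homology axiom nor saturation is needed for these steps.

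The missing ingredient is the first part of Lemma~\ref{di:lem:basic} together with the pushout universal property.

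\emph{Item (iii).} Write the pushout as $m'e=e'm$. Maps $h$ with $hm'=0$ correspond exactly to maps $g=he'$ with $gm=0$. Hence the comparison $\Coker m\to\Coker m'$ is an isomorphism, not merely epic, and $\Coker m'\in\N$ by repleteness.

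\emph{Item (iv), cokernel side.} Maps $g$ out of $C$ with $ges=0$ correspond to pairs $(g,h)$ with $ge=h\,\coker s$. So $\Coker(es)$ is the pushout of $\coker s$ and $e$. The comparison $\Coker s\to\Coker(es)$ is therefore the pushout of the normal epimorphism $e$ along $\coker s$, namely $\coker\bigl((\coker s)(\ker e)\bigr)$, which is a normal epimorphism.

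\emph{Item (iv), kernel side.} Dually, $\Ker(es)\to\Ker e$ is the pullback of the normal monomorphism $s$ along $\ker e$, namely $\ker\bigl((\coker s)(\ker e)\bigr)$. This is what justifies the ``therefore'' in your first bullet.

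Finally, $s'$ is monic and $e'$ is epic, so $\Ker e'=\Ker(es)$ and $\Coker s'=\Coker(es)$. Thickness of $\N$ then finishes (iv).
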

\begin{proof}
For (i), normal monomorphisms and normal epimorphisms are separately closed under composition, and the short exact sequences in Lemma~\ref{di:lem:nested-normal} put the new cokernel or kernel in $\N$ by extension closure.

For (ii), pull back $s$ along $f$ and put $c=\coker s$. The resulting normal monomorphism is $k=\ker(cf)$. Since $\Nim(cf)$ is a normal subobject of $\Coker s\in\N$, Lemma~\ref{di:lem:saturated-properties}(iv) gives
\[
 \Coker k=\Ncm(cf)\in\N.
\]
Thus $k\in\Sden$. The opposite-category argument proves the assertion for $\Eden$. This is the precise use of saturation that allows arbitrary, rather than only normal, base-change maps.

For (iii), the pushout exists by Lemma~\ref{di:lem:basic}, since it is also the pushout of the normal epimorphism along the arrow of $\Sden$. The homological law in that lemma makes the pushed-out arrow normal monic; its cokernel is canonically the original cokernel, by the pushout universal property. The dual argument gives existence of the pullback and preservation of the kernel object.

For (iv), di-exactness makes $es$ normal. In its normal factorization, the kernel is a normal subobject of $\Ker e$, obtained by pulling back $\ker e$ along $s$. The cokernel is a normal quotient of $\Coker s$, obtained by pushing out $\coker s$ along $e$. Hence both belong to $\N$, giving the stated classes for the two normal factors.

For (v), $u$ is normal epic by Lemma~\ref{di:lem:basic}. Pulling back $e$ along $\ker u$ gives a normal epimorphism $\Ker(ue)\to\Ker u$. Its source belongs to $\N$, so its target does too. Therefore $u\in\Eden$. Dualize for $\Sden$.
\end{proof}

\subsection{Epidd posets and common reductions}\label{subsec:epidd}

The order-theoretic mechanism behind a common-reduction calculus can be
separated from the diagrams producing its reductions. Define an \emph{epidd poset} to be a poset in which every
principal ideal is downward directed. Explicitly, if $x,y\leqslant z$,
there is $w\leqslant x,y$. Put
\[
 x\sim y\quad\Longleftrightarrow\quad
 \text{$x$ and $y$ have a common lower bound}.
\]
The direction of the order is chosen so that a further reduction is
smaller than the diagram being reduced.

\begin{lemma}\label{lem:epidd}
On an epidd poset, $\sim$ is an equivalence relation. For a function
$f:P\to Q$ between epidd posets, the following conditions are equivalent:
\begin{enumerate}[label=\textup{(\roman*)}]
\item $f(x)\sim f(y)$ whenever $x\leqslant y$;
\item $f(x)\sim f(y)$ whenever $x\sim y$.
\end{enumerate}
If $R$ is a reflexive relation on $P$ with $R\circ R$ being equal to ``$\leqslant$'', these
conditions are also equivalent to $f(x)\sim f(y)$ whenever $xRy$.
\end{lemma}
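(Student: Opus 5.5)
Three things need checking: that $\sim$ is an equivalence relation, that (i) and (ii) are equivalent, and the variant with $R$.

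\emph{$\sim$ is an equivalence relation.} Reflexivity holds because $x$ is a common lower bound of $x$ and $x$. Symmetry is immediate from the definition. For transitivity, suppose $x\sim y$ via $a\leqslant x,y$ and $y\sim z$ via $b\leqslant y,z$. Then $a,b\leqslant y$, so the epidd property gives $w\leqslant a,b$. This $w$ lies below both $x$ and $z$, hence $x\sim z$. The only step that uses the hypothesis is this one, where the two witnesses share the upper bound $y$.

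\emph{(i) is equivalent to (ii).} Since $x\leqslant y$ implies $x\sim y$ (take $x$ itself as the common lower bound), (ii) implies (i). Conversely, assume (i) and let $x\sim y$ via $w\leqslant x,y$. Applying (i) to $w\leqslant x$ and to $w\leqslant y$ gives $f(w)\sim f(x)$ and $f(w)\sim f(y)$. Symmetry and transitivity of $\sim$ on $Q$, proved in the first step, then give $f(x)\sim f(y)$. This is where we need $Q$ to be epidd.

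\emph{The condition via $R$.} Call it (iii): $f(x)\sim f(y)$ whenever $xRy$. I would prove (i)$\Rightarrow$(iii)$\Rightarrow$(i).

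For (i)$\Rightarrow$(iii): since $R$ is reflexive, $R\subseteq R\circ R$, which equals $\leqslant$. So $xRy$ implies $x\leqslant y$, and (i) applies.

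For (iii)$\Rightarrow$(i): if $x\leqslant y$, then $x\,(R\circ R)\,y$, so there is $t$ with $xRt$ and $tRy$. The order of composition does not matter here, since either convention produces such an intermediate $t$. Then (iii) gives $f(x)\sim f(t)$ and $f(t)\sim f(y)$, and transitivity in $Q$ gives $f(x)\sim f(y)$.

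None of these steps is a real obstacle. The only points needing care are where transitivity of $\sim$ is used in $Q$ rather than in $P$, and the observation that reflexivity of $R$ forces $R\subseteq{\leqslant}$.
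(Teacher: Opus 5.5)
Your proof is correct and follows the paper's argument step for step: transitivity comes from the epidd property below the shared element $y$, (i)$\Rightarrow$(ii) passes through a common lower bound and uses transitivity of $\sim$ in $Q$, and the $R$-variant uses $R\subseteq R\circ R={\leqslant}$ together with factoring $x\leqslant y$ as $x\,R\,t\,R\,y$. Your remarks on where $Q$ must be epidd point to exactly the places where the paper invokes transitivity in $Q$.
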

\begin{proof}
Reflexivity and symmetry of $\sim$ are immediate. If $a\leqslant x,y$
and $b\leqslant y,z$, the epidd property below $y$ gives $c\leqslant a,b$.
Then $c\leqslant x,z$, proving transitivity.
Suppose (i) and let $u\leqslant x,y$. Then
$f(x)\sim f(u)\sim f(y)$, so (ii) follows. Conversely, $x\leqslant y$
implies $x\sim y$, proving (i) from (ii).
If $xRy$, reflexivity implies $x(R\circ R)y$, hence $x\leqslant y$.
Conversely, if $x\leqslant y$, choose $z$ with $xRzRy$. The condition
on $R$ gives $f(x)\sim f(z)\sim f(y)$. Transitivity in $Q$ gives
(i), completing the equivalence.
\end{proof}

\begin{definition}
An \emph{epidd map} is a function satisfying the conditions in Lemma~\ref{lem:epidd}.
Write $\EPos$ for the category of epidd posets and epidd maps.
An \emph{epidd category} is a category enriched in $\EPos$ with its
cartesian monoidal structure.
\end{definition}

\begin{proposition}\label{prop:epidd-products}
The category $\EPos$ has finite products, given by cartesian products
of posets. A function $P\times Q\to R$ is epidd precisely when it is
epidd in each variable separately. The assignment
$\pi_0P=P/{\sim}$ is left adjoint to the inclusion of sets as discrete
posets and preserves finite products.
\end{proposition}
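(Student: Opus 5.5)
We argue directly from the definitions, in three steps: the product structure on $\EPos$, the characterisation of epidd maps out of a product, and the adjunction $\pi_0\dashv\text{disc}$ together with product preservation.

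\textbf{Products.} First we check that the cartesian product $P\times Q$ of epidd posets, with the componentwise order, is epidd. If $(x,y),(x',y')\leqslant(z,w)$, apply the epidd property below $z$ in $P$ and below $w$ in $Q$ separately; this yields $(a,b)\leqslant(x,y),(x',y')$. The one-point poset is trivially epidd and is terminal. The projections are monotone, hence epidd by condition (i) of Lemma~\ref{lem:epidd}, since $\leqslant$ implies $\sim$. Pairing: given epidd $f:T\to P$ and $g:T\to Q$, we must show that $t\leqslant t'$ implies $(f t,g t)\sim(f t',g t')$. We have common lower bounds $a\leqslant ft,ft'$ and $b\leqslant gt,gt'$, so $(a,b)$ is a common lower bound of the pairs. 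Hence the pairing is epidd, and it is unique as a function. Along the way we record that $(x,y)\sim(x',y')$ holds if and only if $x\sim x'$ and $y\sim y'$.

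\textbf{Separate epiddness.} Let $h:P\times Q\to R$ be a function. If $h$ is epidd, then fixing one variable gives a composite with the monotone inclusion $x\mapsto(x,y_0)$. That inclusion preserves $\leqslant$, so the restriction satisfies condition (i). Conversely, suppose $h$ is epidd in each variable and $(x,y)\leqslant(x',y')$. Then
\[
 h(x,y)\sim h(x',y)\sim h(x',y'),
\]
since $x\leqslant x'$ and $y\leqslant y'$. Transitivity of $\sim$ in $R$ (Lemma~\ref{lem:epidd}) then gives condition (i) for $h$. This transitivity step is the only place the epidd hypothesis on the codomain enters, and it is exactly why epidd maps are defined via $\sim$ rather than via monotonicity.

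\textbf{Adjunction and products.} Discrete posets are epidd, and $\sim$ on a discrete poset is equality. So an epidd map $P\to S$ into a discrete $S$ is precisely a function constant on $\sim$-classes, which is the same as a function $P/{\sim}\to S$. Condition (ii) of Lemma~\ref{lem:epidd} is what makes this identification hold. Naturality is clear, and every epidd map preserves $\sim$, so $\pi_0$ is functorial. For products, the description of $\sim$ on $P\times Q$ from the first step gives a bijection $(P\times Q)/{\sim}\cong P/{\sim}\times Q/{\sim}$, compatible with the projections, and $\pi_0$ of the point is a point.

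The main obstacle is only bookkeeping: $\sim$ on the product must be computed componentwise. That requires the componentwise common lower bound, which uses that $\sim$ is witnessed by a single lower bound rather than by a zigzag.
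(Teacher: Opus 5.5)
Your proof is correct and follows essentially the same route as the paper's: coordinatewise lower bounds for the product, the chain $h(x,y)\sim h(x',y)\sim h(x',y')$ for joint epiddness (you use condition (i) where the paper uses (ii), which makes no difference), epidd maps into a discrete poset identified with functions constant on $\sim$-classes, and $\sim$ computed componentwise for product preservation. You leave implicit the paper's one-line check that composites of epidd maps are epidd, which follows immediately from condition (ii) of Lemma~\ref{lem:epidd}.
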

\begin{proof}
Identities and composites preserve $\sim$, so $\EPos$ is a category.
In a product, common lower bounds are chosen coordinatewise, as are
lower bounds of two elements in a principal ideal. Thus the product
is epidd and its projections have the claimed universal property.
The one-element poset is terminal. If $b$ is separately epidd and
$x\sim x'$, $y\sim y'$, then
$b(x,y)\sim b(x',y)\sim b(x',y')$, so it is jointly epidd.
The converse follows by fixing a coordinate. A map from $P$ to a
discrete poset is epidd exactly when it is constant on the $\sim$
classes, which proves the adjunction. Finally, two pairs have a common
lower bound exactly when their two coordinate pairs do; hence
$\pi_0(P\times Q)=\pi_0P\times\pi_0Q$ canonically, and the terminal
object is also preserved.
\end{proof}

Thus an epidd category can equivalently be described as an ordinary
category whose hom-sets are epidd posets and whose composition is
an epidd map in each variable.

\begin{proposition}[The common-reduction quotient]\label{prop:epidd-congruence}
In an epidd category, the relation of having a common lower bound is
a congruence. The quotient has the same objects and hom-sets
$\pi_0P(X,Y)$.
More generally, start with epidd posets $P(X,Y)$, distinguished elements
$1_X\in P(X,X)$ and binary composition operations which are epidd in
each variable. It suffices for the unit and associativity equations
to hold \emph{modulo $\sim$}; then the same quotient is a category.
\end{proposition}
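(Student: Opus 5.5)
We prove the general statement, since the first sentence is the special case in which unit and associativity hold on the nose. By Lemma~\ref{lem:epidd}, $\sim$ is an equivalence relation on each $P(X,Y)$, so the sets $\pi_0P(X,Y)=P(X,Y)/{\sim}$ are defined. The plan has three steps.

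\textbf{Well-defined composition.} Composition $\circ\colon P(Y,Z)\times P(X,Y)\to P(X,Z)$ is epidd in each variable. By Proposition~\ref{prop:epidd-products} it is therefore jointly epidd on the product epidd poset. Explicitly, if $g\sim g'$ and $f\sim f'$, then
\[
 gf\sim g'f\sim g'f'
\]
by condition (ii) of Lemma~\ref{lem:epidd} applied in each variable, and transitivity of $\sim$ gives $gf\sim g'f'$. So composition descends to a map
\[
 \pi_0P(Y,Z)\times\pi_0P(X,Y)\to\pi_0P(X,Z).
\]
Equivalently, $\pi_0$ preserves finite products (Proposition~\ref{prop:epidd-products}), so it sends the composition maps to maps between the quotient sets.

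\textbf{Units and associativity.} Take $[1_X]$ as the identity. The equations
\[
 [1_Y][f]=[f],\qquad [f][1_X]=[f],\qquad [h]([g][f])=([h][g])[f]
\]
hold in the quotient precisely because the corresponding equations hold modulo $\sim$ in $P$. Hence the quotient is a category with the same objects and hom-sets $\pi_0P(X,Y)$.

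\textbf{The congruence statement.} In an honest epidd category the relation $\sim$ is, by the first step, an equivalence relation on each hom-set that is compatible with composition on both sides. That is exactly what it means to be a congruence.

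No step is genuinely hard. The only point needing care is the first one. We must avoid arguing from "$x\leqslant y$ implies $f(x)\sim f(y)$" alone in both variables at once. Instead we pass through the equivalent condition (ii) of Lemma~\ref{lem:epidd} and use transitivity of $\sim$ in the codomain. That transitivity is exactly where the epidd property of the target hom-poset is used.
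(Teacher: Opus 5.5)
Your proposal is correct and follows essentially the same route as the paper. Both arguments take the equivalence relation from Lemma~\ref{lem:epidd}, use separate compatibility in each variable together with transitivity to get $gf\sim g'f\sim g'f'$, and obtain the unit and associativity laws on classes from the equations modulo $\sim$, which hold on the nose in an epidd category.
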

\begin{proof}
If $f\sim f'$ and $g\sim g'$, separate compatibility gives
$gf\sim gf'\sim g'f'$. Thus composition is well-defined on classes.
The equivalence-relation assertion is Lemma~\ref{lem:epidd}, and
the unit and associativity equations on classes follow from the
corresponding equations modulo $\sim$. For an epidd category those
equations already hold before taking classes.
\end{proof}

For ternary fractions the elementary relation $R$ will mean either
one source restriction or one target quotient. Every reduction is
one of each, so $R\circ R$ is the entire reduction order. The lemma
therefore makes precise why these two elementary operations suffice
for compatibility checks. Section~\ref{subsec:di-reductions} constructs
the epidd posets of ternary diagrams, and
Theorem~\ref{di:thm:composition} proves their composition law and
common-reduction quotient. The more general formulation of
Proposition~\ref{prop:epidd-congruence} is needed there: with arbitrary
nonnormal middle arrows, associativity before reduction need not hold,
as Example~\ref{di:ex:raw-associativity} shows. This does not affect the
associative category of localized fractions.

\subsection{Ternary diagrams and their epidd reduction posets}\label{subsec:di-reductions}

Fix the classes $\Sden,\Eden$ of Lemma~\ref{di:lem:denominators}.
The construction in this and the following two subsections uses only
properties (i)--(v) of that lemma, not the description of the classes
in terms of $\N$. Thus it gives a three-arrow calculus for any pair
of normal denominator classes with those properties.

\begin{definition}\label{di:def:ternary}
A \emph{ternary fraction diagram} from $X$ to $Y$ is a diagram
\begin{equation}\label{di:eq:ternary}
 X\xleftarrow{m}U\xrightarrow{f}V\xleftarrow{e}Y,
 \qquad m\in\Sden,\quad e\in\Eden.
\end{equation}
Its middle arrow $f$ is arbitrary. Diagrams are identified only when
their two intermediate objects are related by isomorphisms commuting
with all three arrows. Write $e\backslash f/m$ for the resulting
isomorphism class and $\TFrac(X,Y)$ for their set.
A \emph{reduction} of $\rho=e\backslash f/m$ is
\[
 \rho'=(be)\backslash(bfa)/(ma),
 \qquad a\in\Sden,\quad b\in\Eden.
\]
We write $\rho'\preceq\rho$. A source restriction uses $b=1$,
and a target quotient uses $a=1$; these are the elementary reductions.
Equivalently, between two admissible diagrams one may require only
$m'=ma$, $e'=be$ and $f'=bfa$: membership of the comparisons
$a\in\Sden$ and $b\in\Eden$ then follows from
Lemma~\ref{di:lem:denominators}(v).
\end{definition}

Thus the comparison diagram for a reduction has the form
\begin{equation}\label{di:eq:large-reduction}
\vcenter{\xymatrix@C=24pt@R=20pt{
 & U\ar@{>->}[dl]_-{m}\ar[rr]^-{f}
 && V\ar@{->>}[dd]^-{b} & \\
 X &&&& Y\ar@{->>}[ul]^-{e}\ar@{->>}[dl]_-{be} \\
 & U'\ar@{>->}[uu]^-{a}\ar@{>->}[ul]^-{ma}\ar[rr]_-{bfa}
 && V' &
}}
\end{equation}
whose three defining equations are
$m'=ma$, $e'=be$ and $f'=bfa$. No inverse of $a$ or $b$ is used
in a reduction.

\begin{lemma}\label{di:lem:reduction-posets}
Reduction is well-defined on isomorphism classes and is a partial
order on $\TFrac(X,Y)$. It is an epidd order. If $R$ denotes the
reflexive relation of an elementary reduction, then
$R\circ R$ is equal to ``$\preceq$''. Consequently, the relation of having a common reduction is an equivalence relation, denoted $\sim$.
\end{lemma}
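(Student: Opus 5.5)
We check the assertions in order: well-definedness on isomorphism classes, the partial-order axioms, the factorization $R\circ R={\preceq}$, the epidd property, and then the conclusion about $\sim$.

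\emph{Well-definedness.} If $\rho$ is replaced by an isomorphic diagram via isomorphisms $\alpha:U_1\to U$ and $\beta:V\to V_1$, a reduction by $(a,b)$ transports to one by $(\alpha^{-1}a,\,b\beta^{-1})$. These comparisons stay in $\Sden$ and $\Eden$ by Lemma~\ref{di:lem:denominators}(i).

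\emph{Preorder.} Reflexivity uses $a=1$, $b=1$. Transitivity composes comparisons: $(a,b)$ followed by $(a',b')$ gives $(aa',\,b'b)$, again admissible by Lemma~\ref{di:lem:denominators}(i).

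\emph{Antisymmetry.} Suppose $\rho'\preceq\rho\preceq\rho'$, witnessed by $(a,b)$ and $(a',b')$. Then $m=maa'$. Since $m$ is monic, $aa'=1$. The same argument starting from $m'$ gives $a'a=1$, so $a$ is an isomorphism. Dually, $e=b'be$ and epicity of $e$ show that $b$ is an isomorphism. Hence $\rho$ and $\rho'$ are the same isomorphism class.

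\emph{$R\circ R={\preceq}$.} Any reduction $(a,b)$ factors as the source restriction $(a,1)$ followed by the target quotient $(1,b)$. Conversely, a composite of two elementary reductions is a reduction by transitivity. Reflexivity of $R$ comes from the identity comparisons.

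\emph{Epidd property (main step).} Let $\rho_1,\rho_2\preceq\rho$, with comparisons $(a_1,b_1)$ and $(a_2,b_2)$. We build a common lower bound.

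On the source side, form the pullback of $a_1$ and $a_2$ inside $U$. It exists by Lemma~\ref{di:lem:basic}, since both are normal monomorphisms. Its projections lie in $\Sden$ by Lemma~\ref{di:lem:denominators}(ii). Call the common composite $a:U_{12}\to U$, which is in $\Sden$ by (i).

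On the target side, form the pushout of $b_1$ and $b_2$ out of $V$. Its legs $b_1',b_2'$ lie in $\Eden$ by (ii); write $b=b_2'b_1=b_1'b_2$.

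Now set $\rho_{12}=(be)\backslash(bfa)/(ma)$. It reduces $\rho_1$ via the source comparison $U_{12}\to U_1$ and the target comparison $b_1'$. Indeed, $ma=m a_1 p_1$, $be=b_1'(b_1e)$, and $bfa=b_1'(b_1fa_1)p_1$, where $p_1$ is the pullback projection to $U_1$. The same check with $p_2$ and $b_2'$ shows $\rho_{12}\preceq\rho_2$.

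This step is expected to be the main obstacle. One must make sure that only stability under pullback and pushout is needed, not any commutation between source restrictions and target quotients. The construction above handles this: the source and target sides are treated independently and never interact.

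\emph{Conclusion.} Since $\preceq$ is an epidd order, Lemma~\ref{lem:epidd} shows that having a common reduction is an equivalence relation.
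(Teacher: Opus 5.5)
Your proof is correct and follows the paper's argument essentially step for step. The matching steps are: conjugating the comparisons by the intermediate isomorphisms for well-definedness; monicity of $m,m'$ and epicity of $e,e'$ for antisymmetry; splitting a reduction into a source restriction followed by a target quotient for $R\circ R={\preceq}$; an independent pullback of $a_1,a_2$ over $U$ and pushout of $b_1,b_2$ under $V$ for the epidd property; and Lemma~\ref{lem:epidd} at the end.

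The only blemish is an index slip. Having set $b=b_2'b_1=b_1'b_2$, the target comparison for $\rho_1$ is the leg $b_2'$ out of $V_1$, not $b_1'$. The checks should therefore read $be=b_2'(b_1e)$ and $bfa=b_2'(b_1fa_1)p_1$.
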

\begin{proof}
Changing representatives conjugates $a$ and $b$ by the intermediate
isomorphisms, and the denominator classes contain isomorphisms and
are closed under composition. This proves independence of
representatives. Reflexivity uses identities; transitivity composes
the two source comparisons and the two target comparisons.
If $\rho'\preceq\rho\preceq\rho'$, monicity of $m,m'$ shows that
the two source comparisons are inverse isomorphisms. Epicity of
$e,e'$ gives the same conclusion for the target comparisons.
The numerator equations then identify the diagrams, proving
antisymmetry.

Let $\rho_i=(b_ie)\backslash(b_ifa_i)/(ma_i)\preceq\rho$ for
$i=1,2$. Pull $a_1$ and $a_2$ back over $U$, obtaining
$a=a_1r_1=a_2r_2$ with $r_i\in\Sden$. Push $b_1$ and $b_2$ out
under $V$, obtaining $b=t_1b_1=t_2b_2$ with $t_i\in\Eden$.
The two comparison constructions are the following pullback and
pushout, respectively:
\begin{equation}\label{di:eq:common-reduction-squares}
\begin{tikzcd}[column sep=large,row sep=large]
U_{12}\arrow[r,tail,"r_2"]\arrow[d,tail,"r_1"'] &
 U_2\arrow[d,tail,"a_2"] &
 V\arrow[r,two heads,"b_2"]\arrow[d,two heads,"b_1"'] &
 V_2\arrow[d,two heads,"t_2"]\\
 U_1\arrow[r,tail,"a_1"'] & U &
 V_1\arrow[r,two heads,"t_1"'] & V_{12}.
\end{tikzcd}
\end{equation}
All the asserted memberships follow from arbitrary base-change
stability and composition of denominators. Then
\[
 (be)\backslash(bfa)/(ma)
\]
is a reduction of both $\rho_i$, since
$t_i(b_ifa_i)r_i=bfa$. This proves the epidd condition.
Every reduction is a source restriction followed by a target
quotient, and the identity is elementary. Conversely, two elementary
reductions compose to a reduction. Thus $R\circ R=\preceq$.
Lemma~\ref{lem:epidd} proves the last assertion.
\end{proof}

\begin{lemma}[Common denominators and equality]\label{di:lem:common-denominators}
Any two ternary diagrams have reductions with the same source and
target denominators, but not necessarily the same numerator. If two
diagrams already have the same denominators $m,e$, with numerators
$f,g$, then they have a common reduction precisely when
\begin{equation}\label{di:eq:equality}
 bfa=bga
\end{equation}
for some $a\in\Sden$, $b\in\Eden$ of the appropriate types.
Moreover, the set of common-reduction classes is the filtered colimit
\begin{equation}\label{di:eq:epidd-colimit}
 \pi_0\TFrac(X,Y)\cong
 \varinjlim_{\substack{m:U\tom X\,\in\Sden\\
                       e:Y\toe V\,\in\Eden}}\C(U,V).
\end{equation}
\end{lemma}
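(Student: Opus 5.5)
The argument has three parts, each resting on the common-reduction constructions of Lemma~\ref{di:lem:reduction-posets}. First I would produce common denominators. Given $\rho_1=e_1\backslash f_1/m_1$ and $\rho_2=e_2\backslash f_2/m_2$ from $X$ to $Y$, pull back $m_1$ and $m_2$ over $X$ to get $m=m_1r_1=m_2r_2$, and push out $e_1$ and $e_2$ under $Y$ to get $e=t_1e_1=t_2e_2$. Both operations exist by Lemma~\ref{di:lem:basic}, and the new denominators lie in $\Sden$ and $\Eden$ by Lemma~\ref{di:lem:denominators}(i),(ii). Then $(t_ie_i)\backslash(t_if_ir_i)/(m_ir_i)$ is a reduction of $\rho_i$, and both reductions have denominators $m$ and $e$. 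The numerators $t_1f_1r_1$ and $t_2f_2r_2$ have no reason to agree, which is exactly the caveat in the statement.

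Second, I would prove the equality criterion. Suppose $e\backslash f/m$ and $e\backslash g/m$ have a common reduction $\rho'$. Write it as $(b_1e)\backslash(b_1fa_1)/(ma_1)$ and also as $(b_2e)\backslash(b_2ga_2)/(ma_2)$. Since these are the same class, there are isomorphisms $\alpha$ and $\beta$ of the intermediate objects with $ma_1\alpha=ma_2$, $\beta b_1e=b_2e$ and $\beta b_1fa_1\alpha=b_2ga_2$. Monicity of $m$ gives $a_2=a_1\alpha$, and epicity of $e$ gives $b_2=\beta b_1$. Substituting, $\beta b_1fa_1\alpha=\beta b_1ga_1\alpha$, and cancelling the isomorphisms leaves $b_1fa_1=b_1ga_1$. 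So \eqref{di:eq:equality} holds with $a=a_1$ and $b=b_1$. Conversely, if $bfa=bga$, then $(be)\backslash(bfa)/(ma)$ is a reduction of both diagrams.

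Third, the colimit formula \eqref{di:eq:epidd-colimit}. The indexing category has objects the pairs $(m,e)$, and a morphism $(m,e)\to(ma,be)$ for each $a\in\Sden$ and $b\in\Eden$. By monicity and epicity these comparisons are unique when they exist, so the index is a preorder. It is filtered because of the pullback/pushout construction above. The transition map sends $f$ to $bfa$.

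It remains to identify the two sides.
\begin{itemize}
\item Each $f\in\C(U,V)$ gives the class of $e\backslash f/m$. This is compatible with the transition maps because $(be)\backslash(bfa)/(ma)\preceq e\backslash f/m$.
\item The induced map is surjective, since every class has a representative of this form.
\item For injectivity, two numerators with the same class can first be reduced to a common index. By the filtered-colimit description of equality, together with the equality criterion of the second step, they then agree in the colimit.
\end{itemize}

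The main obstacle is bookkeeping rather than any single hard step. Diagrams are defined only up to isomorphism of the intermediate objects, whereas the colimit is indexed by actual denominators. The pullback-and-pushout index therefore has to be taken up to isomorphism, which I would do by choosing representatives or noting that it is essentially small and filtered. I would also need to check carefully that the index preorder is indeed directed using only Lemma~\ref{di:lem:denominators}(i),(ii).
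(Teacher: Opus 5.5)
Your proposal is correct and follows the paper's proof: common denominators come from pullback and pushout, the equality criterion comes from cancelling the comparison maps using monicity of $m$ and epicity of $e$, and the colimit is identified through the common-reduction relation. The only differences are cosmetic: you track the isomorphisms of intermediate objects explicitly and build $a\in\Sden$, $b\in\Eden$ into the index morphisms, whereas the paper chooses representatives and obtains membership of the comparisons from Lemma~\ref{di:lem:denominators}(v).
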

\begin{proof}
Pull the two source denominators back over $X$ and push the two
target denominators out under $Y$. This gives the claimed common
denominators. For equal denominators, in any common reduction the
two source comparisons coincide by monicity of $m$, and the two
target comparisons coincide by epicity of $e$. The equality of the
reduced numerators is therefore exactly \eqref{di:eq:equality}.
The converse is the definition of reduction.

Choose representatives of normal subobjects and normal quotients.
Order the pairs of denominators by further restriction and further
quotient. The pullbacks and pushouts just used make this a filtered
index category. Comparisons are unique, and are in $\Sden$ and
$\Eden$ by Lemma~\ref{di:lem:denominators}(v). Its transition maps
send $f$ to $bfa$. Equality in the colimit is equality after a
common transition, which is precisely the common-reduction relation
already proved to be an equivalence. This proves the display.
\end{proof}

\subsection{The pullback--factorization--pushout composition}\label{subsec:di-composition}

Take composable diagrams
\[
 \rho=e\backslash f/m:X\longrightarrow Y,
 \qquad \sigma=e'\backslash g/m':Y\longrightarrow Z,
\]
where $m:U\to X$, $f:U\to V$, $e:Y\to V$, and
$m':U'\to Y$, $g:U'\to V'$, $e':Z\to V'$.
The middle composite $em'$ is a normal monomorphism followed by
a normal epimorphism; di-exactness and
Lemma~\ref{di:lem:denominators}(iv) give its normal factorization
\begin{equation}\label{di:eq:middle-factor}
 em'=np,\qquad p:U'\to I\text{ in }\Eden,
 \quad n:I\to V\text{ in }\Sden.
\end{equation}
The whole composition construction is displayed in the following
pasted diagram. The left parallelogram is a pullback, the central
diamond is the normal factorization \eqref{di:eq:middle-factor},
and the right parallelogram is a pushout. The dashed outer arrows
are the new denominators, and the dashed top arrow is the new
numerator.
\begin{equation}\label{di:eq:large-composition}
\vcenter{\xymatrix@C=18pt@R=26pt{
 && P\ar@{>->}[dl]_-{a}\ar[rr]^-{h}
      \ar@/^18pt/@{-->}[rrrr]^-{jh}
      \ar@/_16pt/@{>-->}[ddll]_-{ma}
 && I\ar@{>->}[dl]^-{n}\ar[rr]^-{j}
 && W && \\
 & U\ar@{>->}[dl]_-{m}\ar[rr]^-{f}
 && V && U'\ar@{>->}[ld]_-{m'}\ar@{->>}[ul]^-{p}\ar[rr]_-{g}
 && V'\ar@{->>}[ul]^-{b} & \\
 X &&&& Y\ar@{->>}[ul]^-{e}
 &&&& Z\ar@{->>}[ul]_-{e'}\ar@/_16pt/@{-->>}[uull]_-{be'}
}}
\end{equation}
In particular, the pullback gives $fa=nh$, and the pushout gives
$bg=jp$. Arbitrary base-change stability in
Lemma~\ref{di:lem:denominators}(ii) gives $a\in\Sden$ and
$b\in\Eden$. Thus $ma\in\Sden$ and $be'\in\Eden$.
This is the full pullback--factorization--pushout diagram of the
ternary calculus, with normal arrows retained explicitly. Its
central factorization is the place where di-exactness is used;
neither $f$ nor $g$ is assumed normal. Define the diagrammatic
composite by
\begin{equation}\label{di:eq:diagram-composition}
 \sigma\star\rho=(be')\backslash(jh)/(ma).
\end{equation}
All choices are immaterial up to isomorphism of the two intermediate
objects: normal factorizations are unique up to a unique commuting
isomorphism, as are the displayed pullbacks and pushouts.
Changing either input by an isomorphism changes the result by such
an isomorphism. Hence $\star$ is well-defined on ternary diagram
classes. The diagrams $1_X\backslash1_X/1_X$ are its two-sided
units, since the corresponding normal factorizations and squares
with identity maps can be taken to be identity factorizations and
squares.

For every functor $T$ inverting $\Sden\cup\Eden$, assign the value
\[
 T\langle e\backslash f/m\rangle=(Te)^{-1}(Tf)(Tm)^{-1}.
\]
Reductions preserve this value. The three equations
$em'=np$, $fa=nh$ and $bg=jp$ imply
\begin{equation}\label{di:eq:composition-value}
 T\langle\sigma\star\rho\rangle
   =T\langle\sigma\rangle T\langle\rho\rangle.
\end{equation}
Indeed, $(Tm')^{-1}(Te)^{-1}=(Tp)^{-1}(Tn)^{-1}$;
then $(Tn)^{-1}Tf=Th(Ta)^{-1}$ and
$Tg(Tp)^{-1}=(Tb)^{-1}Tj$, which give the equation by substitution.
The common-reduction criterion and associativity of the induced
composition are proved below.

\begin{lemma}[The three elementary fraction diagrams]\label{di:lem:three-factor-diagram}
For $\rho=e\backslash f/m:X\to Y$, put
\[
 \alpha=1_U\backslash1_U/m:X\to U,\qquad
 \beta=1_V\backslash f/1_U:U\to V,\qquad
 \gamma=e\backslash1_V/1_V:V\to Y.
\]
Both $(\gamma\star\beta)\star\alpha$ and
$\gamma\star(\beta\star\alpha)$ are isomorphic as ternary diagrams
to $\rho$. This particular decomposition holds before taking
common-reduction classes; it does not assert general associativity
at that stage.
\end{lemma}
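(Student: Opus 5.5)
The plan is to compute both bracketings directly from the pullback--factorization--pushout recipe \eqref{di:eq:large-composition}. At each stage I will choose the normal factorization, pullback and pushout to be identities whenever one of the relevant arrows is an identity. This is legitimate because $\star$ is well defined on isomorphism classes of diagrams.

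The first step records two degenerate shapes of $\sigma\star\rho$.

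\begin{enumerate}[label=\textup{(\alph*)}]
\item \emph{Target denominator of $\rho$ and source denominator of $\sigma$ both identities.} The middle composite $em'$ is then $1$. Its normal factorization is taken as $n=p=1$. The pullback of $n=1$ along $f$ is $a=1$, $h=f$, and the pushout of $p=1$ along $g$ is $b=1$, $j=g$. So the composite is $e'\backslash gf/m$.
\item \emph{$\sigma=e'\backslash 1/1$ and $\rho$ with target denominator $1$.} Again $em'=1$, so $a=1$, $h=f$. Pushing out $p=1$ along $g=1$ gives $b=1$, $j=1$. The composite is $e'\backslash f/m$.
\end{enumerate}

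For the first bracketing, (a) gives $\beta\star\alpha=1_V\backslash f/m$. Then (b) with $\sigma=\gamma$ gives $\gamma\star(\beta\star\alpha)=e\backslash f/m=\rho$.

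For the second bracketing, $\gamma\star\beta$ has source denominator $1_U$ on $\beta$'s side and $\gamma$'s source denominator $1_V$, so (a) gives $\gamma\star\beta=e\backslash f/1_U$. Composing with $\alpha=1_U\backslash 1_U/m$, the middle composite is $1_U\cdot 1_U=1_U$, so $n=p=1$. The pullback of $1$ along $1_U$ gives $a=1$, $h=1_U$. The pushout of $p=1_U$ along $f$ gives $b=1$, $j=f$. The result is $e\backslash f/m=\rho$.

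The only point that needs care is checking that identity choices really are valid pullbacks, pushouts and normal factorizations. An identity is a normal epimorphism and a normal monomorphism, so $1=1\cdot1$ is a legitimate cokernel--kernel factorization. Moreover, the pullback of an identity along any arrow is that arrow with an identity, and dually for pushouts. Since different choices change the result only by isomorphisms of the intermediate objects commuting with all three arrows, the two composites are isomorphic as ternary diagrams to $\rho$, and not merely equivalent under common reduction. No associativity of $\star$ is invoked, which is consistent with the caveat in the statement.
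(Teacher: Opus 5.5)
Your proposal is correct and follows the paper's own argument. The paper likewise computes both bracketings using identity normal factorizations and identity pullback and pushout squares, obtaining $\beta\star\alpha=1_V\backslash f/m$ and $\gamma\star\beta=e\backslash f/1_U$ along the way, and then invokes uniqueness of these constructions up to commuting isomorphism. Your degenerate formula (a), $\sigma\star\rho=e'\backslash gf/m$ when the two inner denominators are identities, is a tidy packaging of exactly those checks.
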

\begin{proof}
The two ways of evaluating the three elementary diagrams are
assembled in the following pasted diagram. The three bottom zigzags
are $\alpha$, $\beta$, and $\gamma$. The successive identity
factorizations and pullback--pushout squares give the two
parenthesizations above them.
\begin{equation}\label{di:eq:large-decomposition}
\vcenter{\xymatrix@!=7pt{ & & & U\ar@{>->}[dl]_-{1_U}\ar[rr]^-{1_U} & & U\ar@{>->}[dl]_-{1_U}\ar[rrr]^-{f}\ar@{<<-}[drr]_-{1_U} & & & V\ar@{>->}[dll]^-{1_V}\ar[rr]^-{1_V}\ar@{<<-}[dr]^-{1_V} & & V\ar@{<<-}[dr]^-{1_V} \\ & & U\ar@{>->}[dl]_-{1_U}\ar[rr]^-{1_U} & & U\ar@{>->}[dl]_-{1_U}\ar[rr]_-{f}\ar@{<<-}[dr]_-{1_U} & & V\ar@{<<-}[drr]^-{1_V} & U\ar@{>->}[dll]_-{1_U}\ar[rr]_-{f} & & V\ar@{>->}[dl]^-{1_V}\ar[rr]^-{1_V}\ar@{<<-}[dr]^-{1_V} & & V\ar@{<<-}[dr]^-{1_V} \\ & U\ar@{>->}[dl]_-{m}\ar[rr]_-{1_U} & & U\ar@{<<-}[dr]_-{1_U}  & & U\ar@{>->}[dl]^-{1_U}\ar[rrr]_-{f} & & & V\ar@{<<-}[dr]_-{1_V} & & V\ar@{>->}[dl]^-{1_V}\ar[rr]_-{1_V} & & V\ar@{<<-}[dr]_-{e}\\ X & & & & U & & &  & & V & & & & Y }}
\end{equation}
For an explicit check, the central normal factorization in
$\beta\star\alpha$ is that of $1_U$, its source pullback is
an identity square, and its target pushout is the pushout of
$1_U$ along $f$. Thus $\beta\star\alpha=1_V\backslash f/m$.
Composing with $\gamma$ uses the factorization of $1_V$ and
identity squares and gives $e\backslash f/m$. Likewise
$\gamma\star\beta=e\backslash f/1_U$, and subsequent composition
with $\alpha$ gives $e\backslash f/m$. Uniqueness of the indicated
universal constructions yields the claimed isomorphisms of diagrams.
\end{proof}
\subsection{Identification with the ordinary localization}\label{di:sec:fractions}

Let
\[
 Q:\C\longrightarrow\Loc=\C[(\Sden\cup\Eden)^{-1}]
\]
be the ordinary localization. For completeness it can be constructed from the graph of $\C$ by adjoining inverse arrows for the denominators, taking finite paths, and imposing the composition, identity, and inverse relations. Since $\C$ is small, this gives a small category and its usual universal property. In particular associativity is already ensured by the construction. This is the elementary localization of \cite[Chapter~I]{GZ}; no fraction theorem for di-exact categories is being assumed.

\begin{theorem}\label{di:thm:fractions}
For every $X,Y\in\C$ there is a natural description
\begin{equation}\label{di:eq:colimit}
 \Loc(QX,QY)\cong
 \varinjlim_{\substack{m:U\tom X\ \in\Sden\\ e:Y\toe V\ \in\Eden}}
 \C(U,V).
\end{equation}
The index is ordered by further restriction of $m$ and further quotient of $e$. A representative $[m,f,e]$ corresponds to
\[
 (Qe)^{-1}(Qf)(Qm)^{-1}.
\]
Two representatives are equal precisely when their numerators become equal after a common such refinement. In particular, equality of representatives over the same $m,e$ is witnessed by $a\in\Sden$ and $b\in\Eden$ with
\[
 bfa=bga.
\]
\end{theorem}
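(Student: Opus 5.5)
The approach is to build a category $\mathcal F$ of common-reduction classes of ternary diagrams and show that it has the universal property of $\Loc$. Then \eqref{di:eq:colimit} follows from Lemma~\ref{di:lem:common-denominators}. Take $\mathcal F(X,Y)=\pi_0\TFrac(X,Y)$, with composition induced by $\star$. To apply the general form of Proposition~\ref{prop:epidd-congruence}, three things are needed. First, $\star$ is epidd in each variable; by Lemma~\ref{lem:epidd} it suffices to test elementary reductions. Second, $1\backslash1/1$ is a unit modulo $\sim$. Third, associativity holds modulo $\sim$. Units are exact, as noted after \eqref{di:eq:diagram-composition}.

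For compatibility with an elementary reduction of $\rho$ or $\sigma$, I would compare the pasted diagram \eqref{di:eq:large-composition} before and after the reduction.
\begin{enumerate}[label=(\alph*)]
\item A source restriction $ma'$ of $\rho$ pulls back along $a'$, so the new composite is a source restriction of the old one.
\item A target quotient of $\sigma$ pushes out, which gives a target quotient of the composite.
\item The inner cases are a target quotient $b'e$ of $\rho$ and a source restriction $m'a'$ of $\sigma$. Here the middle factorization changes. For instance $b'em'=(b'n)p$, and $b'n$ must be refactored using Lemma~\ref{di:lem:denominators}(iv) into an $\Sden$ arrow after an $\Eden$ arrow. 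Lemma~\ref{di:lem:denominators}(iii),(v) then compares the new pullback and pushout with the old ones.
\end{enumerate}
I would not aim for an isomorphism in case (c). Instead I would show that both composites admit a common further reduction, built from the pasted pullbacks and pushouts.

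Associativity modulo $\sim$ should go as follows. Using Lemma~\ref{di:lem:three-factor-diagram}, every $\rho$ is $\gamma\star\beta\star\alpha$. This reduces associativity of triples to triples of elementary diagrams of the three types, given compatibility. Most of these cases compute on the nose as identity factorizations, as in that lemma. The genuinely nontrivial case is when an $\Eden$-inverse is followed by an $\Sden$-inverse. Its two bracketings must be related by a reduction, constructed from the factorization in Lemma~\ref{di:lem:denominators}(iv). I expect this step, together with case (c) above, to be the main obstacle, because raw associativity fails in general (Example~\ref{di:ex:raw-associativity}). Here I would first try to exhibit an explicit common reduction obtained by one further pullback along a $\Sden$ arrow and one pushout along an $\Eden$ arrow.

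With $\mathcal F$ a category, define $P:\C\to\mathcal F$ by $f\mapsto 1\backslash f/1$. It is a functor, since $\star$ of two such diagrams uses identity factorizations. For $s\in\Sden$, the class $1\backslash1/s$ is inverse to $Ps$: one composite is directly the identity, and the other, $1\backslash s/s$, reduces along $a=s$ to a common reduction with the identity. The dual argument handles $e\in\Eden$, and Lemma~\ref{di:lem:three-factor-diagram} gives $\rho=P(e)^{-1}P(f)P(m)^{-1}$. So every functor $T$ inverting the denominators factors uniquely through $P$, via $T\langle-\rangle$. This is well defined on classes because reductions preserve the value, and it is functorial by \eqref{di:eq:composition-value}. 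Hence $\mathcal F\simeq\Loc$ over $\C$, with $\rho\mapsto(Qe)^{-1}(Qf)(Qm)^{-1}$. Finally, \eqref{di:eq:epidd-colimit} and the equality criterion \eqref{di:eq:equality} of Lemma~\ref{di:lem:common-denominators} give the colimit description and the criterion $bfa=bga$. Naturality in $X,Y$ is transport along $\star$.
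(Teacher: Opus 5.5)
Your overall plan (build the category of common-reduction classes directly, then check that it has the universal property of $\Loc$) is a legitimate alternative to the paper's. The steps you actually carry out are sound: cases (a) and (b), the inverse of $P(s)$, and the final universal-property and colimit steps. Case (c) also closes as you suggest. For a target quotient $b'$ of $\rho$, factor $b'n=n'p'$ by Lemma~\ref{di:lem:denominators}(iv). The pullback of $n$ along $f$ maps into the pullback of $n'$ along $b'f$ by some $c\in\Sden$, by part (v). Restricting the new composite along $c$ gives the old composite quotiented by the pushout of $p'$ along $j$.

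The genuine gap is associativity modulo $\sim$. Your route cannot avoid it, since Example~\ref{di:ex:raw-associativity} rules out raw associativity, and you only announce it. Moreover, the reduction you propose for it is not valid. Associativity for triples of elementary diagrams does not give associativity for all triples. To rebracket the elementary expansions of $(\rho_3\star\rho_2)\star\rho_1$ and $\rho_3\star(\rho_2\star\rho_1)$, you need associativity for triples whose entries are themselves composites, and composites of elementary diagrams are not elementary.

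What is true, given compatibility, is that the classes $x$ with $(x\star y)\star z\sim x\star(y\star z)$ for all $y,z$ are closed under $\star$. So it would suffice to treat an elementary $x$ against \emph{arbitrary} $y,z$. For $x=1\backslash1/s$ this means comparing two iterated pullback--factorization--pushout constructions with arbitrary nonnormal numerators. That comparison is the real content of the theorem, and nothing in the proposal supplies it.

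The paper's proof is organized precisely to avoid this. It never proves compatibility or associativity before the identification with $\Loc$. Instead it makes $H_X=\pi_0\TFrac(X,-)$ a pointed-set-valued functor on $\C$ by pushing out target denominators; functoriality needs only pushout pasting. It then shows two things:
\begin{enumerate}
\item $H_X(t)$ is invertible for $t\in\Eden$, with inverse $[m,f,e]\mapsto[m,f,et]$;
\item $H_X(s)$ is bijective for $s\in\Sden$, using the factorization of Lemma~\ref{di:lem:denominators}(iv) with a pullback for surjectivity, and cancellation of a normal monomorphism for injectivity.
\end{enumerate}
Hence $H_X$ descends to $\Loc$. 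A Yoneda-type argument then shows that $\Phi_Y[m,f,e]=(Qe)^{-1}(Qf)(Qm)^{-1}$ has inverse $\Psi_Y(\alpha)=\overline H_X(\alpha)(\eta_X)$.

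Compatibility of $\star$ and associativity modulo $\sim$ are deduced only afterwards, in Theorem~\ref{di:thm:composition}, from injectivity of $\Phi$ and \eqref{di:eq:composition-value}. If you want to keep your approach, the natural repair is to drop the direct associativity check. Use instead this action of $\C$, and of the inverse denominators, on $\pi_0\TFrac(X,-)$.
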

\begin{proof}
By Lemmas~\ref{di:lem:reduction-posets} and
\ref{di:lem:common-denominators}, the right side is
$H_X(Y)=\pi_0\TFrac(X,Y)$. In this proof write $[m,f,e]$ for the
common-reduction class of $e\backslash f/m$. Its transitions are
$[m,f,e]\mapsto[ma,bfa,be]$. In particular equality is witnessed by
one common reduction. All zero
numerators give the same class after choosing common denominators;
this makes $H_X(Y)$ a pointed set.

\smallskip\noindent\emph{Functoriality in $Y$.}
For $g:Y\to Z$ and a representative $[m,f,e:Y\to V]$, take the pushout
\[
\begin{tikzcd}
Y\arrow[r,"e"]\arrow[d,"g"']&V\arrow[d,"g'"]\\
Z\arrow[r,"e'"']&W.
\end{tikzcd}
\]
The bottom map is in $\Eden$ by Lemma~\ref{di:lem:denominators}(ii). Set
\[
 H_X(g)[m,f,e]=[m,g'f,e'].
\]
A source refinement commutes with this formula. For a target refinement $b:V\to V'$ in $\Eden$, push out $b$ along $g'$ to obtain $d:W\to W'$ in $\Eden$. Pasting the two pushouts gives a pushout of $be$ along $g$, and both formulas agree after the target refinement $d$. Choices of pushout are uniquely isomorphic and thus give the same colimit class. Pushout pasting for two successive maps identifies their action with the action of their composite; the pushout along an identity supplies the identity action. Thus $H_X$ is a pointed-set-valued functor.

\smallskip\noindent\emph{Inverting $\Eden$.}
For $t:Y\to Z$ in $\Eden$, the inverse of $H_X(t)$ is
\[
 [m,f,e:Z\to V]\longmapsto[m,f,et:Y\to V].
\]
Indeed, the pushout of $et$ along $t$ has maps $1_V$ and $e$, because $t$ is epic. Conversely, in the pushout of $e$ along $t$, the other comparison $V\to W$ lies in $\Eden$, since it is a pushout of $t$. The composite with the displayed inverse is then a permitted target refinement. Both composites are identities.

\smallskip\noindent\emph{Surjectivity for $\Sden$.}
Let $s:Y\to Z$ lie in $\Sden$ and take $[m,f,e]\in H_X(Z)$, with $e:Z\to V$. By Lemma~\ref{di:lem:denominators}(iv), factor
\[
 es=np,\qquad p:Y\to I\text{ in }\Eden,
 \quad n:I\to V\text{ in }\Sden.
\]
Pull $n$ back along $f:U\to V$, obtaining $a:U'\to U$ in $\Sden$ and $h:U'\to I$ with $fa=nh$. Consider $\theta=[ma,h,p]\in H_X(Y)$. Push $p$ out along $s$, obtaining $d:Z\to W$ in $\Eden$ and $j:I\to W$. There is a comparison $u:W\to V$ with
\[
 ud=e,\qquad uj=n.
\]
As $d,e\in\Eden$, Lemma~\ref{di:lem:denominators}(v) gives $u\in\Eden$. Consequently
\[
 H_X(s)\theta=[ma,jh,d]=[ma,nh,e]=[ma,fa,e]=[m,f,e].
\]

\smallskip\noindent\emph{Injectivity for $\Sden$.}
Put two classes of $H_X(Y)$ over common $m:U\to X$ and $e:Y\to V$, with numerators $f,g$. Push $e$ out along $s$. Write $j:V\to W$ for the resulting arrow in $\Sden$ and $d:Z\to W$ for the arrow in $\Eden$; membership follows from Lemma~\ref{di:lem:denominators}(ii),(iii). If the image classes agree, the filtered equality relation gives $a\in\Sden$ and $b\in\Eden$ such that
\[
 bjfa=bjga.
\]
Factor $bj=np$ with $n\in\Sden$ and $p\in\Eden$. Monicity of $n$ gives $pfa=pga$, a permitted refinement proving equality of the original classes. Therefore $H_X(s)$ is bijective.

\smallskip\noindent\emph{Identification with the localization.}
We have proved that $H_X$ factors through the ordinary localization to $\overline H_X:\Loc\to\mathbf{Set}_*$. There is a well-defined map
\[
 \Phi_Y:H_X(Y)\longrightarrow\Loc(QX,QY),\qquad
 [m,f,e]\longmapsto(Qe)^{-1}(Qf)(Qm)^{-1}.
\]
It respects refinements by direct cancellation of the invertible denominator images. The pushout relation $Q(e')Q(g)=Q(g')Q(e)$ makes $\Phi$ natural for all original arrows. Since both functors invert the denominators, it is also natural for their inverses and hence for all arrows of $\Loc$.

Let $\eta_X=[1_X,1_X,1_X]$. An inverse to $\Phi_Y$ is
\[
 \Psi_Y(\alpha)=\overline H_X(\alpha)(\eta_X).
\]
Naturality and $\Phi_X(\eta_X)=1_{QX}$ give $\Phi_Y\Psi_Y=1$. To check the other composite on $[m,f,e]$, first note that $H_X(m)$ sends $[m,1_U,1_U]$ to $[m,m,1_X]=\eta_X$. Thus $\overline H_X((Qm)^{-1})\eta_X=[m,1_U,1_U]$. Applying $Qf$ gives $[m,f,1_V]$, and applying $(Qe)^{-1}$ gives $[m,f,e]$ by the inverse formula for $\Eden$. Hence $\Psi_Y\Phi_Y=1$, proving~\eqref{di:eq:colimit} and the asserted equality criterion.
\end{proof}

\begin{theorem}[The complete three-arrow calculus]\label{di:thm:composition}
For the normal denominator classes above, the quotient sets
$\pi_0\TFrac(X,Y)$ form a category under the composition induced by
\eqref{di:eq:diagram-composition}. This category is canonically
isomorphic, over $\C$, to
$\C[(\Sden\cup\Eden)^{-1}]$. More explicitly:
\begin{enumerate}[label=\textup{(\roman*)}]
\item every localized arrow has a representative $e\backslash f/m$;
\item two representatives define the same arrow if and only if they
have a common reduction, with equality tested by
\eqref{di:eq:equality} after choosing common denominators;
\item $\star$ is epidd in each variable, so common reduction is a
congruence; associativity holds modulo common reduction;
\item $f\mapsto[1\backslash f/1]$ defines the localization functor.
The inverses of $Qm$ and $Qe$ are represented, respectively, by
$1_U\backslash1_U/m$ and $e\backslash1_V/1_V$;
\item every functor $T$ inverting the denominators extends uniquely,
in the identity-on-objects model, by
\[
 \overline T[e\backslash f/m]=(Te)^{-1}(Tf)(Tm)^{-1}.
\]
Every natural transformation between such functors extends uniquely
with the same components.
\end{enumerate}
\end{theorem}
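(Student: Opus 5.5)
The plan is to transport the structure along the bijections $\Phi_Y$ of Theorem~\ref{di:thm:fractions} rather than checking the category axioms on $\pi_0\TFrac$ directly. By Lemmas~\ref{di:lem:reduction-posets} and~\ref{di:lem:common-denominators}, $\pi_0\TFrac(X,Y)$ is the colimit in \eqref{di:eq:colimit}. Theorem~\ref{di:thm:fractions} identifies this colimit with $\Loc(QX,QY)$ via
\[
[e\backslash f/m]\mapsto(Qe)^{-1}(Qf)(Qm)^{-1}.
\]
This map is $Q\langle-\rangle$ in the notation of the composition subsection. Surjectivity of $\Phi$ gives (i), and injectivity together with Lemma~\ref{di:lem:common-denominators} gives (ii).

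For (iii), I first show that $\star$ descends to classes. Applying \eqref{di:eq:composition-value} with $T=Q$ gives
\[
Q\langle\sigma\star\rho\rangle=Q\langle\sigma\rangle\,Q\langle\rho\rangle.
\]
Since reductions preserve $Q\langle-\rangle$ and $\Phi$ is injective, a reduction of $\rho$ or $\sigma$ changes $\sigma\star\rho$ only within its common-reduction class. This is exactly the statement that $\star$ is epidd in each variable, using Lemma~\ref{lem:epidd} with $R$ the elementary reductions. Associativity and the unit laws then hold modulo $\sim$, because they hold in $\Loc$ and $\Phi$ is injective. Proposition~\ref{prop:epidd-congruence} therefore yields a category, and $\Phi$ becomes an identity-on-objects functor that is bijective on homs, hence an isomorphism over $\C$.

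For (iv), I compute $\Phi$ on the relevant diagrams. The diagram $1\backslash f/1$ is sent to $Qf$, so $f\mapsto[1\backslash f/1]$ corresponds to $Q$. Likewise $1_U\backslash1_U/m\mapsto(Qm)^{-1}$ and $e\backslash1_V/1_V\mapsto(Qe)^{-1}$.

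For (v), I use the universal property of the ordinary localization. A functor $T$ inverting $\Sden\cup\Eden$ factors uniquely through $Q$, and by the composition formula its value on $[e\backslash f/m]$ must be $(Te)^{-1}(Tf)(Tm)^{-1}$. Alternatively, Lemma~\ref{di:lem:three-factor-diagram} writes each class as a composite of the three generators, which forces this formula and gives uniqueness. For a natural transformation $\tau:T\Rightarrow T'$, naturality with respect to $Te^{-1}$ and $Tm^{-1}$ follows from naturality at $e$ and $m$ by inverting. Hence the same components give a natural transformation on the localization, and uniqueness is clear since the objects are unchanged.

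The main obstacle is step (iii). Raw $\star$ is not associative (Example~\ref{di:ex:raw-associativity}), so I must check two things carefully. First, \eqref{di:eq:composition-value} must hold for arbitrary representatives. Second, well-definedness up to isomorphism of the choices in \eqref{di:eq:diagram-composition} must be compatible with passing to classes. Both checks only use the three equations $em'=np$, $fa=nh$ and $bg=jp$, so I expect the transport argument to go through without a direct diagrammatic verification of epiddness.
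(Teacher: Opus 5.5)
Your proposal is correct and follows essentially the paper's own proof: you use the bijections of Theorem~\ref{di:thm:fractions} for (i)--(ii), and \eqref{di:eq:composition-value} with $T=Q$ together with injectivity to get epiddness and associativity modulo common reduction. You then invoke Proposition~\ref{prop:epidd-congruence}, and use the three-factor decomposition and the inversion of naturality squares for (iv)--(v), all as the paper does.

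The only difference is cosmetic. You obtain existence in (v) by transporting the universal property of $\C[(\Sden\cup\Eden)^{-1}]$ along $\Phi$, whereas the paper checks directly that the formula is reduction-invariant and multiplicative. The two checks you flag as obstacles are already established in the composition subsection preceding the theorem.
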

\begin{proof}
Theorem~\ref{di:thm:fractions} identifies each proposed hom-set
bijectively with the corresponding hom-set of the ordinary
localization. This proves (i) and (ii) independently of any
associativity assertion about raw diagrams.
Equation~\eqref{di:eq:composition-value}, applied to $T=Q$, shows
that the operation $\star$ realizes the ordinary composite under
these bijections. If either input is replaced by a reduction, its
localized value is unchanged. Therefore the two output values are
equal, and (ii) supplies a common reduction of the output diagrams.
This proves the epidd condition on comparable inputs. By
Lemma~\ref{lem:epidd}, it proves the condition on all related inputs;
by Proposition~\ref{prop:epidd-products}, it also proves joint
compatibility. Equivalently it is enough to check the two types of
elementary reductions, since their relation $R$ has
$R\circ R=\preceq$ by Lemma~\ref{di:lem:reduction-posets}.

The two bracketings of three diagrams have equal values by
\eqref{di:eq:composition-value} and associativity in the ordinary
localization. Part (ii) supplies a common reduction, proving
associativity modulo $\sim$. The unit diagrams are already units
before reduction. Proposition~\ref{prop:epidd-congruence} now applies
in its general form and proves the category assertion and (iii).
The bijections are compatible with the indicated composition and
units, hence define the asserted canonical isomorphism of categories.

The inverse representatives in (iv) have the claimed values by
cancellation, so their two inverse equations follow from (ii).
The assignment on original arrows respects composition and
identities, either by the construction with identity denominators
or by the canonical isomorphism. Every representative decomposes as
\begin{equation}\label{di:eq:three-factor}
 [e\backslash f/m]
 = [e\backslash1_V/1_V]\,
   [1_V\backslash f/1_U]\,
   [1_U\backslash1_U/m].
\end{equation}
This proves uniqueness in (v). Its proposed formula is unchanged
under reduction and preserves composition by
\eqref{di:eq:composition-value}; it therefore proves existence
without further identifications. For a transformation $\alpha:T\Rightarrow T'$,
use its components on the unchanged object set. Naturality for an
original denominator implies naturality for its inverse by
multiplication by inverse images, and hence naturality for
\eqref{di:eq:three-factor}. These components are forced, proving
existence and uniqueness for arbitrary transformations.
\end{proof}

\begin{example}[Raw ternary composition need not be associative]\label{di:ex:raw-associativity}
Let $\C$ be a small skeleton of finite pointed sets and put $\N=\C$.
A normal monomorphism is a pointed subset inclusion, and a normal
epimorphism collapses a pointed subset to the basepoint without other
identifications. Hence the normal maps are those injective away from
their zero fibre. They compose, so $\C$ is di-exact homological by
Proposition~\ref{prop:diexact-equivalences}. The denominator classes
here are all normal monomorphisms and all normal epimorphisms.

Set $J=\{*,1\}$ and $B=\{*,b,c\}$. Let $e:B\to J$ send $b$ to
$*$ and $c$ to $1$, and let $g:B\to J$ send both $b,c$ to $1$.
Write $i:0\to J$ and $q:J\to0$. Consider the three diagrams
\[
 \rho=e\backslash1_J/1_J:J\to B,\qquad
 \sigma=1_J\backslash g/1_B:B\to J,\qquad
 \tau=1_0\backslash1_0/i:J\to0.
\]
The pushout of $e$ and $g$ is $0$: $e$ identifies $b$ with the
basepoint, and $g(b)=g(c)$ then forces both elements of $J$ to the
basepoint. Thus
$\sigma\star\rho=q\backslash0_{J,0}/1_J$.
Formula~\eqref{di:eq:diagram-composition} gives
\[
 \tau\star(\sigma\star\rho)
       =1_0\backslash0_{J,0}/1_J.
\]
On the other hand $\Ker g=0$, so
$\tau\star\sigma=1_0\backslash1_0/(0\to B)$ and
\[
 (\tau\star\sigma)\star\rho
       =1_0\backslash1_0/i.
\]
These diagrams are not isomorphic: their source-denominator domains
are $J$ and $0$. The second is a source restriction of the first,
so they have a common reduction, as the theorem requires. In
particular one cannot in this generality call the raw diagrams an
epidd-enriched \emph{category} before imposing common reductions.
The category is obtained from the epidd composition calculus by
Proposition~\ref{prop:epidd-congruence}. This example involves the
nonnormal middle map $g$.
\end{example}

\subsection{Exactness and precise annihilation of the saturated quotient}\label{di:sec:quotient}

\begin{theorem}\label{di:thm:quotient}
For $\C$ di-exact homological and $\N\subseteq\C$ saturated thick, the localization
\[
 Q_{\N}:\C\longrightarrow\C/\N:=\C[(\Sden\cup\Eden)^{-1}]
\]
has the following properties.
\begin{enumerate}[label=\textup{(\roman*)}]
\item $\C/\N$ is pointed and $Q_{\N}X\cong0$ if and only if $X\in\N$.
\item $Q_{\N}$ preserves every kernel and every cokernel.
\item $\C/\N$ is di-exact homological.
\item For every pointed category $\D$ with kernels and cokernels, precomposition induces an equivalence
\begin{equation}\label{di:eq:2up}
 \Ex(\C/\N,\D)\simeq
 \{F\in\Ex(\C,\D)\mid F(X)\cong0\text{ for }X\in\N\},
\end{equation}
where the right side is full and both sides retain arbitrary natural transformations.
\end{enumerate}
\end{theorem}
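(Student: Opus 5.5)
The plan is to treat the four clauses in the order (i), (ii), (iii), (iv). The main tools are the three-arrow calculus of Theorem~\ref{di:thm:composition} and Theorem~\ref{di:thm:fractions}, and the denominator properties of Lemma~\ref{di:lem:denominators}. Throughout I use that every arrow of $\C/\N$ has the form $(Q_{\N}e)^{-1}(Q_{\N}f)(Q_{\N}m)^{-1}$. Hence every arrow of $\C/\N$ is isomorphic, in the arrow category, to some $Q_{\N}f$.

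\emph{Clause (i).} A ternary fraction out of a zero object $0$ has source denominator $m:U\tom 0$, forcing $U\cong 0$. Its numerator is therefore zero, so $\pi_0\TFrac(0,Y)$ is a singleton. Dually, $\pi_0\TFrac(X,0)$ is a singleton, and $Q_{\N}0$ is a zero object. If $X\in\N$, then $X\to 0$ is a normal epimorphism with kernel $X\in\N$, so it lies in $\Eden$ and is inverted. Conversely, suppose $Q_{\N}X\cong0$. Then $Q_{\N}1_X=Q_{\N}0_X$, and the equality criterion \eqref{di:eq:equality} gives $a:U\tom X$ in $\Sden$ and $b:X\toe V$ in $\Eden$ with $ba=0$. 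Then $a$ factors through $\ker b$, and by normal cancellation (Lemma~\ref{di:lem:basic}) $U$ is a normal subobject of $\Ker b\in\N$, so $U\in\N$. Since also $\Coker a\in\N$, extension closure gives $X\in\N$.

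\emph{Clause (ii).} I expect this to be the main obstacle. Let $k=\ker f$ for $f:X\to Y$. Take $\varphi=(Q e)^{-1}(Qg)(Qm)^{-1}:Q T\to QX$ with $(Qf)\varphi=0$. First push $e$ out along $f$ to obtain $e'\in\Eden$ and $f'$ with $f'e=e'f$. Writing $\varphi$ over $V$, the hypothesis becomes $Q(f'g)=0$ after refinement. By \eqref{di:eq:equality} this means $b f' g a=0$ for some $a\in\Sden$ and $b\in\Eden$.

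The key computation is to replace $e$ by a denominator so that the numerator lands in $X$ itself. Concretely, I will show that $\varphi$ is represented by a fraction whose numerator factors through $k$ after a source restriction. I will try pulling $e$ back along $k$, using Lemma~\ref{di:lem:denominators}(iii), and comparing $\ker(bf')$ with the pushed-forward kernel $\ker f'$. The quotient $\ker(bf')/\ker f'$ is a normal subquotient of $\Ker b\in\N$, which is where saturation, via Lemma~\ref{di:lem:saturated-properties}(iv), should enter. This produces the required factorization through $Qk$. Uniqueness of the factorization holds because $Qk$ is monic: if $Qk\,\psi=Qk\,\psi'$, write both over common denominators and cancel the normal mono $k$ in the criterion \eqref{di:eq:equality}. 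Preservation of cokernels is the dual argument.

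\emph{Clause (iii).} By (i) and (ii), every arrow of $\C/\N$ has a kernel and a cokernel, namely images of ambient ones up to isomorphism. Consequently normal monos and normal epis of $\C/\N$ are, up to isomorphism, images of normal monos and normal epis of $\C$. I will verify condition (ii) of Proposition~\ref{prop:diexact-equivalences}. In a composite such as $Qk_1\,\theta\,Qk_2$ with $\theta$ an isomorphism written as a fraction, the inverted denominators are moved outward. This uses pushouts of $\Sden$-arrows along normal epis and pullbacks of $\Eden$-arrows along normal monos (Lemma~\ref{di:lem:denominators}(iii)), and the interchange of Lemma~\ref{di:lem:denominators}(iv). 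These moves turn the composite into $Q$ of a composite in $\C$ of normal arrows of the same types, flanked by isomorphisms. That composite is a kernel, a cokernel, or normal in $\C$ by di-exactness, and $Q$ preserves these notions by (ii).

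\emph{Clause (iv).} Let $F$ be exact with $F(\N)\cong0$. For $s\in\Sden$, $F(s)$ is a kernel with zero cokernel, hence an isomorphism; dually $F$ inverts $\Eden$. Theorem~\ref{di:thm:composition}(v) then gives a unique $\overline F$ with $\overline F Q=F$ on the nose, and it extends natural transformations uniquely with the same components. The extension $\overline F$ is exact because every kernel and cokernel diagram of $\C/\N$ is isomorphic to $Q$ of an ambient one, and $\overline F Q=F$ preserves those. Conversely, $\overline F Q$ is exact and annihilates $\N$ by (i) and (ii). So precomposition is surjective on objects, and it is bijective on $2$-cells by the uniqueness clause. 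It is therefore an isomorphism, in particular an equivalence.
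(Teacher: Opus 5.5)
Your parts (i) and (iv) are essentially the paper's argument. The genuine gap is in the existence half of (ii), which you correctly flag as the main obstacle but do not resolve.

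Taken literally, your target cannot be reached. A morphism of $\C/\N$ need not have a representative with identity target denominator, even when $k=1_X$. Take representations of $1\to2$ modulo the Serre subcategory of representations supported at vertex $2$. The inverse of $Q(P_1\to S_1)$ is not of the form $Q(g)Q(m)^{-1}$: the only admissible $m$ into $S_1$ is invertible, and $\operatorname{Hom}(S_1,P_1)=0$.

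Two further points in your sketch do not work. ``Pulling $e$ back along $k$'' is ill-typed, since $k$ ends at the domain of $e$. And comparing $\ker(bf')$ with $\ker f'$ stays inside $V$, so nothing connects the subobject of $V$ through which $ga$ factors to $Qk$. In a di-exact category, $\ker f'$ can even strictly contain the normal image of $ek$.

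The missing step is to transport back to $X$:
\begin{itemize}
\item Pull the normal monomorphism $j=\ker(bf')$ back along $e$. This gives $l=\ker(pf)$ with $p=be'\in\Eden$. By Lemma~\ref{di:lem:denominators}(iii) it also gives $v:L\to K'$ in $\Eden$ with $el=jv$.
\item Write $k=lu$ with $u=\ker(fl)$. Since $pfl=0$, $\Nim(fl)$ is a normal subobject of $\Ker p\in\N$.
\item Lemma~\ref{di:lem:saturated-properties}(iv) then gives $\Coker u=\Ncm(fl)\in\N$, i.e.\ $u\in\Sden$. This, not a comparison inside $V$, is where saturation enters.
\item Writing $ga=ja'$, you get $\varphi=Qk\,(Q(vu))^{-1}(Qa')(Q(ma))^{-1}$, with the mixed denominator $vu$ on $K$.
\end{itemize}

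Two other steps also fail as written.

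For uniqueness in (ii), expressing $Qk\,\psi$ as a fraction requires pushing the target denominator of $\psi$ out along $k$. The equality criterion then gives $tk'cr=tk'c'r$, with $k'$ normal monic and $t\in\Eden$. Since $tk'$ is not monic, there is no ``$k$'' to cancel. You need di-exactness here: factor $tk'=nd$ with $n$ normal monic and $d$ normal epic. Then $\Ker d$ is a normal subobject of $\Ker t$, so $d\in\Eden$, and you cancel $n$.

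In (iii), moving the denominators of $\theta$ outward in $Qk_1\,\theta\,Qk_2$ leaves the numerator $g$ of $\theta$ in the middle. Only $Qg$ is invertible and $g$ may be nonnormal, so you do not obtain a composite of normal arrows of $\C$. Avoid $\theta$ altogether. For $\phi=(Qe)^{-1}(Qf)(Qm)^{-1}$ one has $\ker\phi=Q(m\ker f)$ and $\coker\phi=Q((\coker f)e)$. Hence every normal monomorphism into $QB$ is isomorphic over $QB$ to $Q$ of an ambient normal monomorphism with codomain $B$, and dually for normal epimorphisms out of $QB$. Represent the outer arrow first, then the inner one over the resulting ambient middle object. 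The composite is then $Q$ of a composite in $\C$, up to isomorphisms of the outer objects, and di-exactness of $\C$ together with (ii) finishes the argument.
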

\begin{proof}
Write $Q=Q_{\N}$ and $\Loc=\C/\N$.

\smallskip\noindent\emph{Pointedness and precise annihilation.}
The hom-set formula makes $Q0$ both initial and terminal: a normal subobject of $0$ is zero, and a normal quotient of $0$ is zero, so each relevant filtered colimit is a singleton. Thus $Q$ preserves zero morphisms. If $X\in\N$, the arrow $0\to X$ belongs to $\Sden$ and $QX\cong0$. Conversely, if $QX\cong0$, then $Q1_X=Q0_{X,X}$. The equality criterion supplies $m:U\to X$ in $\Sden$ and $e:X\to V$ in $\Eden$ with $em=0$. Hence $m$ factors through $\ker e$, and its induced map into $\Ker e$ is normal monic by Lemma~\ref{di:lem:basic}. Thus $U\in\N$. Also $\Coker m\in\N$, and the short exact sequence determined by $m$ gives $X\in\N$.

\smallskip\noindent\emph{Images of normal monomorphisms are monic.}
Let $k:K\to X$ be any normal monomorphism. Put two arrows $\alpha,\beta:QZ\to QK$ over common denominators $m:U\to Z$ and $e:K\to V$, with numerators $a,b$. Since $e$ is normal epic, Lemma~\ref{di:lem:basic} supplies its pushout along $k$:
\[
\begin{tikzcd}
K\arrow[r,"k"]\arrow[d,"e"']&X\arrow[d,"p"]\\
V\arrow[r,"k'"']&W.
\end{tikzcd}
\]
Here $p\in\Eden$ and $k'$ is normal monic, by denominator stability and Grandis's law. If $Qk\,\alpha=Qk\,\beta$, there are refinements $r\in\Sden$ and $t:W\to W'$ in $\Eden$ such that
\[
 tk'ar=tk'br.
\]
Di-exactness gives $tk'=nd$ with $n$ normal monic and $d$ normal epic. Its kernel is a normal subobject of $\Ker t\in\N$, so $d\in\Eden$. Cancel $n$ to obtain $dar=dbr$. This is a common refinement of the representatives of $\alpha,\beta$, proving $\alpha=\beta$. Thus $Qk$ is monic.

\smallskip\noindent\emph{Preservation of kernels of arbitrary arrows.}
Let $f:X\to Y$, and let $k:K\to X$ be its kernel. Suppose $\alpha:QZ\to QX$ satisfies $Qf\,\alpha=0$. Represent it as
\[
 \alpha=(Qe)^{-1}(Qa)(Qm)^{-1},\qquad
 m:U\to Z\text{ in }\Sden,\quad e:X\to V\text{ in }\Eden.
\]
Again Lemma~\ref{di:lem:basic} supplies the pushout of the normal epimorphism $e$ along $f$:
\[
\begin{tikzcd}
X\arrow[r,"f"]\arrow[d,"e"']&Y\arrow[d,"p_0"]\\
V\arrow[r,"f_0"']&Y_0.
\end{tikzcd}
\]
Then $p_0\in\Eden$. The equation $Qf\,\alpha=0$ says $[m,f_0a,p_0]=0$. After a source refinement and a target refinement $d:Y_0\to Y_1$ in $\Eden$, we may replace $m,a$ by their source refinements and assume
\[
 h a=0,\qquad h e=p f,
 \quad h=df_0:V\to Y_1,\quad p=dp_0:Y\to Y_1\text{ in }\Eden.
\]
Set $j=\ker h:K'\to V$ and $l=\ker(pf):L\to X$. There is a pullback square
\[
\begin{tikzcd}
L\arrow[r,"v"]\arrow[d,"l"']&K'\arrow[d,"j"]\\
X\arrow[r,"e"']&V.
\end{tikzcd}
\]
Grandis's law makes $v$ normal epic; its kernel object is canonically $\Ker e$, so $v\in\Eden$.

Factor $k=lu$. The map $u:K\to L$ is $\ker(fl)$, by monicity of $l$ and the universal property of $k$. Since $pfl=0$, the normal image of $fl$ is a normal subobject of $\Ker p\in\N$. Lemma~\ref{di:lem:saturated-properties}(iv) therefore gives
\[
 \Coker u=\Ncm(fl)\in\N,
\]
so $u\in\Sden$. Finally $a=ja'$ for a unique $a':U\to K'$. The arrow
\[
 \beta=\bigl(Q(vu)\bigr)^{-1}(Qa')(Qm)^{-1}:QZ\longrightarrow QK
\]
is defined, since both $Qu$ and $Qv$ are invertible. The equations $el=jv$ and $k=lu$ give $Qk\,\beta=\alpha$. Uniqueness follows from the already proved monicity of $Qk$. Therefore $Qk$ is a kernel of $Qf$.

All the hypotheses and the construction are self-dual: the opposite localization is the corresponding localization of $\C^{\mathrm{op}}$. Applying the preceding argument there proves preservation of every cokernel. This proves (ii).

\smallskip\noindent\emph{The quotient stays di-exact homological.}
For a general fraction
\[
 \phi=(Qe)^{-1}(Qf)(Qm)^{-1}:QX\longrightarrow QY,
\]
we have, up to their universal isomorphisms,
\begin{equation}\label{di:eq:fraction-kc}
 \ker\phi=Q(m\,\ker f),\qquad
 \coker\phi=Q((\coker f)e).
\end{equation}
Both inside composites are normal in $\C$. Hence every normal monomorphism in $\Loc$ is isomorphic over its codomain to the image of an ambient normal monomorphism, and dually for normal epimorphisms under their domains.

For two composable normal monomorphisms in $\Loc$, represent the outer one by an ambient normal monomorphism. Transport the inner one across the resulting isomorphism, and represent it in turn over the ambient middle object. Their composite is isomorphic to the image of a composite of normal monomorphisms in $\C$, so is normal. Dualize for normal epimorphisms. For a normal monomorphism followed by a normal epimorphism with common middle object $QB$, formula~\eqref{di:eq:fraction-kc} represents them, up to isomorphisms of the outside objects, by $Qm$ and $Qe$ for an ambient pair with middle object $B$. The composite $em$ is normal by di-exactness of $\C$; its image is normal by (ii). Thus every such composite in $\Loc$ is normal. This proves (iii), including the conditional homology axiom.

\smallskip\noindent\emph{The exact functor and natural-transformation universal property.}
An exact functor $F:\C\to\D$ annihilating $\N$ sends $s\in\Sden$ to a normal monomorphism with zero cokernel, and sends $e\in\Eden$ to a normal epimorphism with zero kernel. Such arrows are isomorphisms: use $\ker(\coker s)=s$ and its dual. Consequently there is a unique ordinary functor $\overline F:\Loc\to\D$ with $\overline FQ=F$. Formula~\eqref{di:eq:fraction-kc}, together with exactness of $F$ and invertibility of denominator images, proves that $\overline F$ preserves every kernel and cokernel. It preserves the zero object since $\overline F(Q0)=F0$. Thus it is exact. Conversely, an exact functor out of $\Loc$ restricts to one annihilating $\N$ by (i).

Let $R,S:\Loc\to\D$ be exact, and let $\alpha:RQ\Rightarrow SQ$ be any natural transformation. Since $Q$ is the identity on objects, its extension must have components $\overline\alpha_X=\alpha_X$. Naturality holds for $Qf$ by assumption. For a denominator $w$, naturality for $Qw$ implies naturality for $(Qw)^{-1}$ by composing with $R(Qw)^{-1}$ and $S(Qw)^{-1}$. Since these arrows generate $\Loc$, the components form a natural transformation, uniquely. No component of $\alpha$ was assumed invertible. This proves full faithfulness and essential surjectivity in~\eqref{di:eq:2up}.
\end{proof}

\begin{remark}
Equivalently one may localize at the class of \emph{normal} morphisms whose kernel and cokernel objects lie in $\N$: each such morphism has a factor in $\Eden$ followed by a factor in $\Sden$, and every arrow of $\Sden\cup\Eden$ belongs to that class.
\end{remark}

\subsection{An axiomatic version in terms of normal denominators}\label{subsec:di-exact-denominators}

The preceding construction also completes the formulation in terms
of a single class of normal morphisms. The unrestricted base-change
clause below is important; it is not the clause restricted to normal
base-change maps that suffices only on the exact core.

\begin{theorem}\label{di:thm:denominator-correspondence}
In a di-exact homological category $\C$, saturated thick subcategories
correspond to classes $\mathcal W$ of \emph{normal} morphisms satisfying:
\begin{enumerate}[label=\textup{(TF\arabic*)},leftmargin=19mm]
\item $\mathcal W$ contains the isomorphisms and has two-out-of-three
for a composable triple $f,g,gf$ of normal morphisms;
\item normal monomorphisms in $\mathcal W$ are stable under pullback
along arbitrary morphisms, and normal epimorphisms in $\mathcal W$
are stable under pushout along arbitrary morphisms;
\item if $f=me\in\mathcal W$ with $m$ normal monic, then
$m\in\mathcal W$; dually, if $e$ is normal epic, then
$e\in\mathcal W$;
\item in a commutative square
\[
\begin{tikzcd}
 A\arrow[r,two heads,"e'"]\arrow[d,tail,"m"']&
 B\arrow[d,tail,"m'"]\\
 C\arrow[r,two heads,"e"']&D,
\end{tikzcd}
\]
whose vertical maps are normal monic and horizontal maps normal epic,
$e\in\mathcal W$ implies $e'\in\mathcal W$, and
$m\in\mathcal W$ implies $m'\in\mathcal W$.
\end{enumerate}
The mutually inverse assignments are
\begin{align*}
 \mathcal W_{\N}
 &=\{f\mid f\text{ is normal},\ \Ker f,\Coker f\in\N\},\\
 \N_{\mathcal W}
 &=\{X\mid 0\to X\in\mathcal W\}
  =\{X\mid X\to0\in\mathcal W\}.
\end{align*}
For every such $\mathcal W$, localization at $\mathcal W$ has the
three-arrow calculus of Theorem~\ref{di:thm:composition}, with normal
monic and normal epic denominators in $\mathcal W$, and the exact
quotient properties of Theorem~\ref{di:thm:quotient}.
\end{theorem}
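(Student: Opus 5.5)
I would prove the correspondence in four steps: check that each assignment lands in the right place, then that the two composites are identities, and finally transfer the localization statements.

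\emph{Step 1: $\mathcal W_{\N}$ satisfies (TF1)--(TF4) for saturated thick $\N$.} Isomorphisms have zero kernel and cokernel, so they lie in $\mathcal W_{\N}$. For (TF3), write $f=me$ with $m=m_f$ and $\bar f$ invertible. Then $\Ker e=\Ker f$ and $\Coker m=\Coker f$, so $e\in\Eden$ and $m\in\Sden$. This also shows that $\mathcal W_{\N}$ consists exactly of composites $me$ with $m\in\Sden$ and $e\in\Eden$, i.e.\ $\mathcal W_{\N}\cap\{\text{normal monos}\}=\Sden$ and similarly for $\Eden$. (TF2) is then Lemma~\ref{di:lem:denominators}(ii).

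For (TF4), consider the square $me'=m'e$ with $e\in\mathcal W$. Pulling $\ker e$ back along $m$ shows $\Ker e'$ is a normal subobject of $\Ker e$, hence lies in $\N$. Dually, $\Coker m'$ is a normal quotient of $\Coker m$.

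Two-out-of-three in (TF1) is the step I expect to be the main obstacle. I would decompose each normal map via (TF3) into its $\Sden$ and $\Eden$ parts and reduce to the normal-factor calculus of Lemma~\ref{di:lem:denominators}: closure under composition (i), the swap (iv), and cancellation (v). For the composite $gf$, rewrite $g_ef_m$ by (iv), giving $gf=(g_m s')(e' f_e)\in\mathcal W_{\N}$. For cancellation, use the exact sequences of kernels and cokernels attached to a composite of normal maps. In the Puppe exact core $\C_{\mathrm{ex}}$ (Proposition~\ref{prop:exact-core}) this is the kernel--cokernel exact sequence $0\to\Ker f\to\Ker gf\to\Ker g\to\Coker f\to\Coker gf\to\Coker g\to0$. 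Thick closure of $\N$ then gives the remaining objects: the terms in positions adjacent to known members of $\N$ are subquotient-extensions of them.

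\emph{Step 2: $\N_{\mathcal W}$ is saturated thick.} First, the two descriptions of $\N_{\mathcal W}$ agree. Apply (TF4) to the square with $0\to X$ vertical on the left, $X\to 0$ horizontal on the bottom, and zero objects elsewhere. Alternatively, use (TF1) on $0\to X\to 0$ together with (TF3) applied to the normal map $0\to 0$.

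For a short exact sequence $K\tom X\toe C$:
\begin{itemize}
\item The normal mono $K\to X$ is a pullback of $0\to C$ along $X\to C$. Hence $C\in\N_{\mathcal W}$ gives $(K\to X)\in\mathcal W$ by (TF2).
\item Dually, $K\in\N_{\mathcal W}$ gives $(X\to C)\in\mathcal W$.
\end{itemize}
Two-out-of-three on composites through $0$ then gives extension closure and closure under normal subobjects and quotients. For the saturation implications \eqref{di:eq:satk}--\eqref{di:eq:satc}, factor $f$ through its normal coimage and image. Pulling back $0\to Y$ along $f$, which is allowed by (TF2) for arbitrary $f$, gives $\ker f$ as a member of $\mathcal W$. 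Combined with $\Ker f\in\N_{\mathcal W}$ and (TF1), this yields $X\in\N_{\mathcal W}$. The unrestricted base change in (TF2) is exactly what is used here.

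\emph{Step 3: the composites are identities.} The identity $\N_{\mathcal W_{\N}}=\N$ is immediate, since $0\to X$ has cokernel $X$. For $\mathcal W_{\N_{\mathcal W}}=\mathcal W$, use (TF3) to reduce to normal monos and normal epis. A normal mono $m$ lies in $\mathcal W$ iff $\Coker m\in\N_{\mathcal W}$: one direction pushes out or compares via the (TF4) square relating $m$ to $0\to\Coker m$; the other pulls back $0\to\Coker m$ as in Step 2. The case of normal epis is dual, and a normal $f=me$ is then handled by (TF1) and (TF3).

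\emph{Step 4: localization.} Since $\mathcal W=\mathcal W_{\N}$ and the localization at $\mathcal W_{\N}$ coincides with $\C[(\Sden\cup\Eden)^{-1}]$ by the Remark following Theorem~\ref{di:thm:quotient}, the last sentence of the statement follows from Theorems~\ref{di:thm:composition} and~\ref{di:thm:quotient}.
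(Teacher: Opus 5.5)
\paragraph{The gap in Step 2.}
Your Step 2 does not prove that $\N_{\mathcal W}$ is closed under normal subobjects and normal quotients. Take a short exact sequence $K\xrightarrow{k}X\xrightarrow{e}C$ in which only $X\in\N_{\mathcal W}$ is known. Every composite through $0$ you could use needs $k$ or $e$ to lie in $\mathcal W$ first:
\[
 0\to X=k\circ(0\to K),\qquad K\to0=(X\to0)\circ k,\qquad 0\to C=e\circ(0\to X),\qquad X\to0=(C\to0)\circ e .
\]
Your two bullets supply $k\in\mathcal W$ or $e\in\mathcal W$ only from $C\in\N_{\mathcal W}$ or $K\in\N_{\mathcal W}$. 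So two-out-of-three gives extension closure, but the argument for subquotients is circular, and you need another axiom. There are two ways to supply it.
\begin{enumerate}
\item Use (TF4) twice. The square with top $K\to0$, vertical arrows $k$ and $1_0$, and bottom $X\to0$ gives $K\to0\in\mathcal W$. The square with top $1_0$, vertical arrows $0\to X$ and $0\to C$, and bottom $e$ gives $0\to C\in\mathcal W$.
\item Use (TF3), as the paper does, with an arbitrary complementary factor. Then $0\to X=k\circ(0\to K)$ gives $k\in\mathcal W$, and (TF1) gives $0\to K\in\mathcal W$.
\end{enumerate}
Relatedly, your (TF4) argument that the two descriptions of $\N_{\mathcal W}$ agree is vacuous. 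The top and right arrows of that square are both $1_0$, so (TF4) only concludes $1_0\in\mathcal W$. What works is the (TF1) argument on $0\to X\to 0$, whose composite is the isomorphism $1_0$; (TF3) is not needed there.

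\paragraph{(TF3) in Step 1, and comparison with the paper.}
If you repair Step 2 via (TF3), note that in Step 1 you checked (TF3) only for the canonical factorization $f=m_f(\bar f e_f)$. The statement places no condition on $e$ in its first clause, nor on $m$ in its second. This is quickly supplied. Since $\coker m$ annihilates $f=me$, we have $\coker m=r\circ\coker f$, and $r$ is a cokernel by Lemma~\ref{lem:inheritance}. So $\Coker m$ is a normal quotient of $\Coker f\in\N$; dually, $\Ker e$ is a normal subobject of $\Ker f$.

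Otherwise your Step 1 is a correct direct verification, but it takes a different route from the paper's. The paper passes to the exact quotient $Q$ of Theorem~\ref{di:thm:quotient} and notes that a normal $f$ lies in $\mathcal W_{\N}$ exactly when $Qf$ is invertible. This reduces (TF1), (TF3) and (TF4) to facts about isomorphisms: two-out-of-three, a monic left factor of an isomorphism is invertible, and a monic normal epimorphism is invertible. Your route uses Lemma~\ref{di:lem:denominators} and the six-term kernel--cokernel sequence in $\C_{\mathrm{ex}}$. It avoids the localization theorem, but it relies on that sequence, which holds in Puppe exact categories and is not proved in the paper.

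Your saturation argument, Step~3 and Step~4 agree with the paper.
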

\begin{proof}
Let $\N$ be saturated thick and $Q:\C\to\C/\N$ its exact quotient.
For a normal $f$, normality of $Qf$ and precise annihilation imply
\begin{equation}\label{di:eq:exact-inverted}
 f\in\mathcal W_{\N}\quad\Longleftrightarrow\quad Qf
 \text{ is invertible}.
\end{equation}
Indeed, the forward implication follows by factoring $f$ normally;
the reverse implication follows because $Q$ preserves its kernel
and cokernel and annihilates precisely $\N$. Property (TF1) follows
from two-out-of-three for isomorphisms. Property (TF2) is
Lemma~\ref{di:lem:denominators}(ii). For (TF3), $Qm$ is monic
and is a left factor of the isomorphism $Qf$, hence is also split
epic and therefore invertible; use \eqref{di:eq:exact-inverted}.
The dual argument treats $Qe$. For (TF4), if $Qe$ is invertible,
then $Qm$ monic and $Qm'Qe'=QeQm$ imply that $Qe'$ is monic.
It is a normal epimorphism, so it is invertible. Dually, if $Qm$
is invertible, the equation makes $Qm'$ epic; a normal monomorphism
which is epic has zero cokernel and is invertible. This proves all
four conditions.

Conversely, suppose (TF1)--(TF4), and define $\N=\N_{\mathcal W}$.
The two definitions of $\N$ agree by (TF1), applied to
$0\to X\to0$ whose composite is $1_0$; all three maps are normal.
The class contains zero objects and is replete.
For a normal epimorphism $e:A\to B$ with kernel $k:K\to A$,
the square
\[
\begin{tikzcd}
 K\arrow[r]\arrow[d,tail,"k"']&0\arrow[d,tail]\\
 A\arrow[r,two heads,"e"']&B
\end{tikzcd}
\]
is a pushout. If $e\in\mathcal W$, (TF4) puts $K\to0$ in
$\mathcal W$. Conversely, if $K\in\N$, (TF2) applied to that
pushout gives $e\in\mathcal W$. The dual argument gives
\[
 e\in\mathcal W\Longleftrightarrow\Ker e\in\N,
 \qquad
 m\in\mathcal W\Longleftrightarrow\Coker m\in\N
\]
for normal epimorphisms $e$ and normal monomorphisms $m$.
For a cokernel--kernel factorization $f=me$, (TF3) and (TF1) now give
\[
 f\in\mathcal W\Longleftrightarrow e,m\in\mathcal W
 \Longleftrightarrow\Ker f,\Coker f\in\N.
\]

In a short exact sequence $A\xrightarrow{m}B\xrightarrow{e}C$,
if $B\in\N$, factor $0\to B$ through $m$. Property (TF3) gives
$m\in\mathcal W$, and then (TF1) gives $0\to A\in\mathcal W$.
The dual argument gives $C\in\N$. If instead $A,C\in\N$, the
criterion for $m$ gives $m\in\mathcal W$, and composing it with
$0\to A$ puts $B$ in $\N$. Thus $\N$ is thick.
To verify saturation, let $f:X\to Y$ be arbitrary and suppose
$Y,\Ker f\in\N$. Pull $0\to Y\in\mathcal W$ back along $f$.
Property (TF2) gives $\ker f\in\mathcal W$. Composing this with
$0\to\Ker f\in\mathcal W$ gives $0\to X\in\mathcal W$ by
(TF1). Thus $X\in\N$. The dual proves the second saturation
implication. This proves both inverse identities.

Finally, a normal map in $\mathcal W_{\N}$ factors as an
$\Eden$-denominator followed by an $\Sden$-denominator, and both
normal denominator classes are contained in $\mathcal W_{\N}$.
The two localizations therefore have identical universal properties.
The preceding fraction and quotient theorems apply.
\end{proof}

\begin{remark}
The restriction to \emph{normal triples} in (TF1) and to normal maps
in \eqref{di:eq:exact-inverted} must not be dropped. The full inverse
image of the isomorphisms under $Q$ can contain nonnormal arrows.
The theorem characterizes the normal denominator class, not necessarily
all morphisms inverted by $Q$. For Puppe exact categories this
distinction disappears, saturation is automatic, and the statement
recovers the classical exact-isokernel correspondence, recorded below
as Proposition~\ref{prop:exact-isokernels}. On an arbitrary
di-exact category it adds precisely the arbitrary base-change and
saturation information absent from the exact-core construction.
\end{remark}

\subsection{Nested quotients and two-dimensional homologicity}\label{di:sec:2dim}

\begin{lemma}\label{di:lem:nested}
Let $\N\subseteq\M\subseteq\C$ be saturated thick subcategories of a di-exact homological category. The induced functor
\[
 j:\M/\N\longrightarrow\C/\N
\]
is fully faithful and exact. If $p:\C/\N\to\C/\M$ is the induced exact functor, then $j$ identifies $\M/\N$, up to repletion, with the full annihilation subcategory of $p$. Moreover,
\[
 (\C/\N)/(\M/\N)\simeq\C/\M,
\]
where the middle subcategory is understood as its replete image.
\end{lemma}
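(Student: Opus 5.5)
First I construct the two functors. By Lemma~\ref{di:lem:saturated-properties}(iii), $\N$ is saturated thick in $\M$, and $\M$ is di-exact homological by (i), so $\M/\N$ is defined. The inclusion $\M\to\C$ is exact. It sends $\Sden_{\M}\cup\Eden_{\M}$ into $\Sden_{\C}\cup\Eden_{\C}$, because kernels and cokernels in $\M$ are computed in $\C$. The composite $\M\to\C\to\C/\N$ is therefore exact and annihilates $\N$. Theorem~\ref{di:thm:quotient}(iv) then yields an exact $j$ with $jQ^{\M}_{\N}=Q^{\C}_{\N}|_{\M}$. Similarly $Q_{\M}:\C\to\C/\M$ annihilates $\N$, which gives the exact functor $p$ with $pQ_{\N}=Q_{\M}$.

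Next I prove that $j$ is fully faithful, using the colimit description~\eqref{di:eq:colimit}. Since $j$ is the identity on objects, I compare the two filtered colimits for $X,Y\in\M$. The index category for $\M/\N$ consists of pairs $m:U\tom X$ and $e:Y\toe V$ in $\M$ with cokernel and kernel in $\N$. For $\C/\N$ the index uses the same kind of pairs in $\C$. The key observation is that these coincide. If $m:U\tom X$ is in $\Sden_{\C}$ with $X\in\M$, then $U$ is a normal subobject of $X$ and so lies in $\M$. Dually $V$ is a normal quotient of $Y$ and lies in $\M$. The hom-sets $\C(U,V)=\M(U,V)$ agree by fullness, and the transition maps are the same. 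Hence the colimits coincide, and $j$ is bijective on hom-sets.

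Then I identify $\M/\N$ with the annihilation subcategory of $p$. Certainly $pj$ annihilates everything, since $\M$ is killed by $Q_{\M}$. Conversely, suppose $pQ_{\N}X\cong0$. Then $Q_{\M}X\cong0$, so $X\in\M$ by Theorem~\ref{di:thm:quotient}(i). Every object of $\C/\N$ is of the form $Q_{\N}X$, so the kernel of $p$ is exactly the replete image of $j$.

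For the last equivalence, put $\M'=$ the replete image of $j$. By Lemma~\ref{di:lem:saturated-properties}(ii), $\M'$ is saturated thick in $\C/\N$, so $(\C/\N)/\M'$ is defined. To compare it with $\C/\M$, I match universal properties through~\eqref{di:eq:2up}. An exact functor out of $(\C/\N)/\M'$ corresponds to an exact $G:\C/\N\to\D$ killing $\M'$. That in turn corresponds to an exact $GQ_{\N}:\C\to\D$ killing $\M$, where the condition on $\N$ is subsumed. Finally this corresponds to an exact functor out of $\C/\M$. All steps are equivalences of categories with arbitrary natural transformations, pseudonatural in $\D$. The bicategorical Yoneda argument for the test object $\D$ ranging over di-exact homological categories then gives the equivalence. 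Concretely, the comparison is induced by $p$.

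The main obstacle is the full faithfulness step. I must check carefully that the index categories really agree, including the refinement morphisms. The comparison maps $a,b$ between denominators of objects of $\M$ lie in $\M$ by fullness and repleteness, and by Lemma~\ref{di:lem:denominators}(v) they lie in the $\M$-denominator classes. So the filtered systems are literally identical, not merely cofinal.
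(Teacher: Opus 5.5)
Your proposal is correct and matches the paper's proof. Full faithfulness comes from the colimit formula~\eqref{di:eq:colimit} having literally the same index systems and hom-sets when $X,Y\in\M$. Exactness of $j$ and the existence of $p$ come from Theorem~\ref{di:thm:quotient}(iv), the annihilator is identified by precise annihilation, and the final equivalence follows by applying~\eqref{di:eq:2up} twice. Your check via Lemma~\ref{di:lem:denominators}(v) that the refinement maps lie in the $\M$-denominator classes simply spells out what the paper compresses into ``all refinements are identical.''
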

\begin{proof}
The restriction assertion of Lemma~\ref{di:lem:saturated-properties} allows the quotient $\M/\N$. In the hom-set formula~\eqref{di:eq:colimit} with $X,Y\in\M$, every intermediate source $U$ is a normal subobject of $X$, and every intermediate target $V$ is a normal quotient of $Y$. Hence both are in $\M$. Kernels and cokernels in $\M$ are computed in $\C$, so the allowed denominators and all refinements are identical in both formulas. Thus $j$ induces a bijection on every hom-set. It is exact by the exact universal property of the quotient of $\M$.

The functor $p$ exists by that same universal property, and $pQ_{\N}=Q_{\M}$. Since the quotients are the identity on objects and have the precise annihilation property,
\[
 p(Q_{\N}X)\cong0\quad\Longleftrightarrow\quad X\in\M.
\]
This identifies the full annihilation subcategory with the replete essential image of $j$. In particular that image is saturated thick, by Lemma~\ref{di:lem:saturated-properties}(ii). Finally, for an exact functor $R:\C/\N\to\D$, annihilation of this image is equivalent to annihilation of $\M$ by $RQ_{\N}$. Apply~\eqref{di:eq:2up} twice, including its natural-transformation assertion, to obtain the displayed equivalence and its exact universal property.
\end{proof}

\begin{proposition}\label{di:prop:2kc}
In $\DiHom$, a $2$-kernel of an exact $F:\C\to\D$ is
\[
 \Ker(F)=\{X\in\C\mid F(X)\cong0\}\hookrightarrow\C,
\]
and a $2$-cokernel of an exact $G:\mathcal A\to\C$ is
\[
 Q_{\N}:\C\longrightarrow\C/\N,
 \qquad\N=\SatTh_{\C}(G(\mathcal A)).
\]
In particular, $2$-kernels are, up to equivalence, precisely the inclusions of saturated thick subcategories, and $2$-cokernels are, up to equivalence, precisely their quotient functors.
\end{proposition}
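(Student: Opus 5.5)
The plan is to verify the expanded universal properties (K1)--(K3) of Proposition~\ref{prop:expanded-up} for the kernel, and their duals for the cokernel, using the results already established for annihilation subcategories and saturated quotients.

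For the $2$-kernel I will follow the argument sketched after Proposition~\ref{prop:puppe-kernels}. By Lemma~\ref{di:lem:saturated-properties}(ii), $\Ker(F)$ is saturated thick. By part (i) of that lemma it is di-exact homological, its inclusion $\iota$ is exact, and its kernels and cokernels are the ambient ones. Then (K1) holds because $F\iota$ is zero-valued, hence null, with the unique invertible comparison to a zero functor. For (K2), let $U:\X\to\C$ be exact with $FU$ null. Then $F(UX)\cong0$ for every $X$, so $U$ corestricts to $\Ker(F)$. The corestriction is exact because kernels and cokernels in $\Ker(F)$ are computed in $\C$. For (K3), the inclusion is fully faithful, so every natural transformation $\iota V\Rightarrow\iota W$ has components that lift uniquely, and naturality is inherited by faithfulness.

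For the $2$-cokernel, set $\N=\SatTh_\C(G(\mathcal A))$. For any exact $H:\C\to\D$, I claim that $HG$ is null if and only if $H$ annihilates $\N$. One direction is clear since $G(\mathcal A)\subseteq\N$. For the other, $\Ker(H)$ is saturated thick and contains $G(\mathcal A)$, so it contains $\N$ by minimality. Now apply Theorem~\ref{di:thm:quotient}(iv) with $\D$ in $\DiHom$. The equivalence \eqref{di:eq:2up} becomes exactly \eqref{eq:cokernelUP}: both sides are full on arbitrary natural transformations, and pseudonaturality in $\D$ is immediate from precomposition. Theorem~\ref{di:thm:quotient}(iii) gives that $\C/\N$ is in $\DiHom$, and part (i) gives that $Q_\N G$ is null.

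For the final characterization, every $2$-kernel is equivalent to an inclusion of an annihilation subcategory, which is saturated thick. Conversely, a saturated thick $\N$ equals $\Ker(Q_\N)$ by Theorem~\ref{di:thm:quotient}(i), so its inclusion is a $2$-kernel. Similarly, $Q_\N$ is a $2$-cokernel of the inclusion of $\N$, because $\SatTh_\C(\N)=\N$. Every $2$-cokernel is a $Q_\N$ for some such $\N$, by the formula just proved together with invariance of $2$-cokernels under equivalence.

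I expect the main obstacle to be bookkeeping rather than substance. The delicate step is checking that the $\Ex$ equivalence of Theorem~\ref{di:thm:quotient}(iv), which was stated for any pointed $\D$ with kernels and cokernels, restricts correctly to $\DiHom$. It also has to match the null-annihilation condition through the minimality of $\SatTh$. This is where saturation, and not merely thickness, is essential, since annihilation subcategories are automatically saturated.
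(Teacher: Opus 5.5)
Your proposal is correct and follows essentially the same route as the paper. The $2$-kernel comes from the saturated thick annihilation subcategory, using corestriction (exact because kernels and cokernels are ambient) and fullness for the $2$-cells. The $2$-cokernel reduces to \eqref{di:eq:2up} via the equivalence between annihilating $G(\mathcal A)$ and annihilating $\SatTh_{\C}(G(\mathcal A))$. The normal forms then follow from precise annihilation in Theorem~\ref{di:thm:quotient}(i) and uniqueness of representing objects up to equivalence.
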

\begin{proof}
The full annihilation subcategory is saturated thick and di-exact homological. Any exact functor annihilated by $F$ corestricts to it; the corestriction is exact because all kernels and cokernels are ambient. Fullness gives existence and uniqueness of the lifted natural transformations. This is the $2$-kernel universal property.

For cokernels, an exact $T:\C\to\D$ annihilates $G$ if and only if it annihilates $\N$: one direction is immediate, and the other follows because $\Ker T$ is saturated thick and contains $G(\mathcal A)$. Apply~\eqref{di:eq:2up}. Conversely, for every saturated thick $\N$, its inclusion is the $2$-kernel of $Q_{\N}$ by precise annihilation, and $Q_{\N}$ is the $2$-cokernel of that inclusion. The general characterizations now follow from uniqueness of $2$-kernels and $2$-cokernels up to equivalence.
\end{proof}

\begin{theorem}\label{di:thm:main}
The $2$-category $\DiHom$ of small di-exact homological
categories, functors preserving zero objects and all kernels and
cokernels, and arbitrary natural transformations, is $2$-pointed
and $2$-homological. Its $2$-kernels are, up to equivalence, saturated
thick inclusions, and its $2$-cokernels are their localizations.
For $\N\subseteq\M\subseteq\C$ saturated thick, the square
\[
\begin{tikzcd}
 \M\arrow[r,tail]\arrow[d,"Q_{\N}^{\M}"']&
 \C\arrow[d,"Q_{\N}^{\C}"]\\
 \M/\N\arrow[r,tail,"j"']&\C/\N
\end{tikzcd}
\]
is both a bipullback and a bipushout.
\end{theorem}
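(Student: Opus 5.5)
We verify the axioms of Definition~\ref{def:homological} in turn, relying on Proposition~\ref{di:prop:2kc} and Lemma~\ref{di:lem:nested}.

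\emph{$2$-pointedness.} The one-object zero category is a strong bizero. This was already observed after Proposition~\ref{prop:puppe-kernels}: for exact $F$ and zero-valued exact $Z$, the unique componentwise maps form the only natural transformations $F\Rightarrow Z$ and $Z\Rightarrow F$.

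\emph{(H1).} Every exact functor has a $2$-kernel and a $2$-cokernel by Proposition~\ref{di:prop:2kc}. Here Theorem~\ref{di:thm:quotient}(iii),(iv) guarantees that $\C/\N$ lies in $\DiHom$ and has the required universal property on hom-categories.

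\emph{(H2).} Up to equivalence, $2$-kernels are inclusions of saturated thick subcategories. A composite $\N\subseteq\M\subseteq\C$ is again saturated thick by the transitivity clause of Lemma~\ref{di:lem:saturated-properties}(iii), so it is a $2$-kernel. For $2$-cokernels, take $Q_{\N}:\C\to\C/\N$ followed by a quotient of $\C/\N$ by some saturated thick $\mathcal K\subseteq\C/\N$. Put $\M=Q_{\N}^{-1}(\mathcal K)$. This is saturated thick by Lemma~\ref{di:lem:saturated-properties}(iii), and it contains $\N$. Exact functors out of $(\C/\N)/\mathcal K$ correspond, via \eqref{di:eq:2up} applied twice, to exact functors on $\C$ annihilating $\M$. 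Hence the composite is $Q_{\M}$, and the same argument covers the natural transformations.

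\emph{(H3).} By Proposition~\ref{prop:truncation} it suffices to check that $\tau_1\DiHom$ is homological, but it is just as easy to argue directly. Let $m$ be the inclusion $\M\hookrightarrow\C$ and $e=Q_{\N}$ with $\N=\twoker(e)$. The hypothesis says $\N\subseteq\M$ up to repletion. By Theorem~\ref{di:thm:quotient}(i), $\N$ is exactly the annihilation subcategory of $Q_\N$, so it is saturated thick in $\M$ by Lemma~\ref{di:lem:saturated-properties}(iii). Then $em=Q^{\C}_{\N}|_{\M}=j\circ Q^{\M}_{\N}$. Here $Q^{\M}_{\N}$ is a $2$-cokernel, and $j$ is a $2$-kernel by Lemma~\ref{di:lem:nested}, being the kernel of $p:\C/\N\to\C/\M$. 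So $em$ is $2$-normal.

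\emph{The square.} The bipullback claim says $\M$ is, up to equivalence, the bipullback of $j$ along $Q^{\C}_{\N}$. For each $X$, a pair (functor into $\C$, functor into $\M/\N$, invertible comparison) amounts to a functor into $\C$ whose image under $Q_\N$ lands in the replete image of $j$. By the nested lemma, this means its composite with $p$ is null, i.e.\ it factors through $\Ker(pQ_\N)=\Ker(Q_\M)=\M$. The $2$-cells work out because $j$ is fully faithful.

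The bipushout claim is the main obstacle. For an exact $R:\C\to\D$ and an exact $S:\M/\N\to\D$ with $R|_{\M}\cong SQ^{\M}_{\N}$, the restriction $R|_\M$ annihilates $\N$, so $R$ factors uniquely through $\C/\N$ by \eqref{di:eq:2up}. The task is to check that this factorization, restricted along $j$, agrees with $S$ compatibly with the given isomorphism. This follows from uniqueness in \eqref{di:eq:2up} for $\M$, because $j$ is identity-on-objects compatible and the natural transformations extend uniquely. Finally, one must verify the full equivalence of hom-categories, including noninvertible $2$-cells, using the natural-transformation clause of \eqref{di:eq:2up}.
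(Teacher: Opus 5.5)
Your proposal is correct and follows essentially the same route as the paper's proof. The paper likewise gets the strong bizero from the unique componentwise maps and all $2$-kernels and $2$-cokernels from Proposition~\ref{di:prop:2kc}. It composes $2$-kernels by transitivity, and composes $2$-cokernels via the inverse image of the second subcategory and two applications of \eqref{di:eq:2up}. It proves (H3) through the factorization $\M\to\M/\N\to\C/\N$ and Lemma~\ref{di:lem:nested}. For the square it uses corestriction to $\M$ plus full faithfulness of $j$ (bipullback), and descent through $Q_{\N}^{\C}$ plus full faithfulness of precomposition with $Q_{\N}^{\M}$ (bipushout). The one step you leave implicit is that $j$, whose replete image is the annihilator of $p$, is a $2$-kernel only after noting that its corestriction is an equivalence in $\DiHom$; this holds because the quasi-inverse is exact, as equivalences preserve kernels and cokernels, and the paper records exactly this.
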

\begin{proof}
The one-object zero category is di-exact homological. An exact
functor into it is unique; an exact functor out of it selects a zero
object. Between any exact functor and any parallel null exact
functor the unique component maps to, and from, zero objects are
natural. They are permitted $2$-cells. Thus the null functors are
zero objects of their hom-categories, proving the two-sided
$2$-pointedness of Definition~\ref{def:2pointed}.

\smallskip\noindent\emph{Existence and composition of $2$-kernels.}
Proposition~\ref{di:prop:2kc} constructs all $2$-kernels and
$2$-cokernels and gives their normal forms. Replace two composable
$2$-kernels, up to equivalence, by successive saturated thick
inclusions $\N\hookrightarrow\M\hookrightarrow\C$.
Transitivity in Lemma~\ref{di:lem:saturated-properties}(iii) makes
$\N$ saturated thick in $\C$. Its inclusion is therefore another
$2$-kernel by Proposition~\ref{di:prop:2kc}.

\smallskip\noindent\emph{Composition of $2$-cokernels.}
By the same normal forms it suffices to consider consecutive quotients
\[
 \C\xrightarrow{q}\C/\N
 \xrightarrow{r}(\C/\N)/\Pcal,
\]
where $\Pcal$ is saturated thick in $\C/\N$.
Put $\M=q^{-1}(\Pcal)$. This is saturated thick in $\C$ by
Lemma~\ref{di:lem:saturated-properties}(iii), and contains $\N$.
For every di-exact homological target $\X$, the exact quotient
universal properties of Theorem~\ref{di:thm:quotient} give
\begin{align*}
 \Ex((\C/\N)/\Pcal,\X)
 &\simeq\{R\in\Ex(\C/\N,\X)\mid R(\Pcal)\cong0\}\\
 &\simeq\{H\in\Ex(\C,\X)\mid H(\M)\cong0\}.
\end{align*}
For the second equivalence, a functor annihilating $\M$ descends
through $q$ since $\N\subseteq\M$. Its descendant annihilates
$\Pcal$: $q$ is the identity on objects and every object of
$\Pcal$ is $qM$ for an object $M$ of $\M$. The converse is
immediate. The quotient properties include arbitrary natural
transformations, so both displayed equivalences are equivalences of
full categories, not merely bijections of isomorphism classes.
Consequently $rq$ is a $2$-cokernel of $\M\hookrightarrow\C$,
and is equivalent under $\C$ to $Q_{\M}$.

\smallskip\noindent\emph{The conditional homology axiom.}
Represent a $2$-kernel followed by a $2$-cokernel as
\[
 \M\xhookrightarrow{i}\C\xrightarrow{Q_{\N}^{\C}}\C/\N.
\]
By precise annihilation, the hypothesis that the $2$-kernel of the
second map factors through the first is exactly
$\N\subseteq\M$. Their composite factors as
\[
 \M\xrightarrow{Q_{\N}^{\M}}\M/\N
       \xrightarrow{j}\C/\N.
\]
The first arrow is a $2$-cokernel by Proposition~\ref{di:prop:2kc}.
By Lemma~\ref{di:lem:nested}, the second identifies its domain,
up to repletion, with the full annihilator of
$p:\C/\N\to\C/\M$. Its corestriction to that annihilator is an
exact equivalence: a quasi-inverse to a fully faithful, essentially
surjective exact functor preserves kernels and cokernels by their
universal properties. Hence $j$ is a $2$-kernel. This proves (H3)
directly, without invoking any quotient criterion.

\smallskip\noindent\emph{The two universal properties of the square.}
Write $q=Q_{\N}^{\C}$ and $q'=Q_{\N}^{\M}$, choosing the
identity-on-objects models so that $qi=jq'$. For a bipullback cone
consisting of exact $U:\X\to\C$, $V:\X\to\M/\N$ and an
isomorphism $qU\cong jV$, composition with $p$ gives
$Q_{\M}U\cong0$. Thus $U$ corestricts to an exact
$\widetilde U:\X\to\M$. Full faithfulness of $j$ lifts the
specified isomorphism uniquely to $q'\widetilde U\cong V$.
For cones coming from $H,K:\X\to\M$, a compatible pair of
transformations $\alpha:iH\Rightarrow iK$ and
$\beta:q'H\Rightarrow q'K$ satisfies $q\alpha=j\beta$.
Fullness of $i$ gives a unique $\gamma:H\Rightarrow K$ with
$i\gamma=\alpha$, and faithfulness of $j$ then gives
$q'\gamma=\beta$. This proves the full cone-category equivalence.

For a bipushout cocone take exact $H:\C\to\X$,
$K:\M/\N\to\X$ and an isomorphism $Hi\cong Kq'$.
It forces $H$ to annihilate $\N$, so $H$ descends exactly through
$q$. If $\overline H$ is its descendant, the comparison
$(\overline Hj)q'\cong Kq'$ descends uniquely to
$\overline Hj\cong K$, by full faithfulness of precomposition with
$q'$. For two such cocones, a transformation of their $H$-components
extends uniquely through $q$; compatibility with their
$K$-components is equivalent, after precomposition with $q'$, to
the original cocone equation. Full faithfulness for $q'$ then
establishes that compatibility and uniqueness. This proves the
cocone-category equivalence and completes the square assertion.
\end{proof}

\begin{corollary}\label{di:cor:inclusions}
The inclusions $\pExact\to\DiHom\to\HomCat$ preserve strong
bizero objects, local zero objects and zero morphisms, and the
$2$-kernels and $2$-cokernels constructed in their domains.
\end{corollary}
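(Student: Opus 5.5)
The plan has three steps: check that both inclusions are full sub-$2$-categories, show that the bizero and its local zero structure are the same in each, and then show that the explicit $2$-kernels and $2$-cokernels of the smaller $2$-category satisfy the universal properties in the larger one.

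First, the inclusions $\pExact\to\DiHom\to\HomCat$ are full on $1$-cells and $2$-cells. In all three $2$-categories the $1$-cells are functors preserving zero objects and all kernels and cokernels, and the $2$-cells are arbitrary natural transformations. Proposition~\ref{prop:diexact-equivalences} makes every Puppe exact category di-exact homological. For the bizero, the one-object zero category lies in $\pExact$. It is a strong bizero in each of the three $2$-categories, by the argument in the proof of Theorem~\ref{di:thm:main}, which uses only that functors to and from it are exact and that the components to and from zero objects are natural. The null $1$-cells are then the zero-valued exact functors in all three settings. Local zero objects and zero morphisms of the hom-categories are computed in the same full hom-categories, so they are preserved.

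Second, for $2$-kernels, take an exact $F:\C\to\D$ between objects of the smaller $2$-category. Its $2$-kernel there is the annihilation subcategory $\Ker(F)\hookrightarrow\C$ (Proposition~\ref{prop:puppe-kernels} for $\pExact$, Proposition~\ref{di:prop:2kc} for $\DiHom$). The universal property must now be tested against arbitrary test objects $\X$ of the larger $2$-category. I would repeat the corestriction argument. An exact $U:\X\to\C$ with $FU$ null lands in $\Ker(F)$ up to repletion. It stays exact because kernels and cokernels in $\Ker(F)$ are ambient, by Lemma~\ref{lem:inheritance} and Lemma~\ref{di:lem:saturated-properties}(i). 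Transformations lift uniquely by fullness. None of this uses any property of $\X$ beyond being pointed with kernels and cokernels. The paragraph after Proposition~\ref{prop:puppe-kernels} already states that this argument works in $\HomCat$.

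Third, for $2$-cokernels, the key input is Theorem~\ref{di:thm:quotient}(iv). Its equivalence~\eqref{di:eq:2up} holds for \emph{every} pointed $\D$ with kernels and cokernels, hence for every test object of $\HomCat$, with arbitrary natural transformations. So for exact $G:\A\to\C$ in $\DiHom$, the quotient $Q_\N$ with $\N=\SatTh_\C(G(\A))$ satisfies the $2$-cokernel property in $\HomCat$. The remaining check is that an exact $T:\C\to\X$ into a general homological $\X$ annihilates $G$ if and only if it annihilates $\N$. This follows because $\Ker T$ is saturated thick by Lemma~\ref{di:lem:saturated-properties}(ii), whose proof only uses exactness of $T$. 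For $\pExact\to\DiHom$ the same argument applies, with saturation automatic. One must also observe that the Puppe-exact $2$-cokernel is the one built from the thick closure, which coincides with $\SatTh$ in a Puppe exact category. Its quotient is again Puppe exact: every arrow of the localization is a fraction whose numerator is normal, so it is normal by~\eqref{di:eq:fraction-kc} and the preservation of kernels and cokernels.

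The main obstacle I expect is this last point for $\pExact$. One needs to confirm that the $2$-cokernel taken in $\pExact$ really is the $\DiHom$ quotient $Q_\N$, which requires $\C/\N$ to be Puppe exact. I would first try to show that every arrow $(Qe)^{-1}(Qf)(Qm)^{-1}$ has invertible normal comparison: $Qf$ is normal as the image of a normal arrow under an exact functor, and composing with isomorphisms preserves normality. Once that holds, everything else reduces to test-object independence of the universal properties already proved.
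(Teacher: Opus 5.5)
Your proposal is correct and follows essentially the same route as the paper. The paper also observes that the strong bizero and annihilation-kernel arguments allow test objects from the larger $2$-category, and that Theorem~\ref{di:thm:quotient}(iv) holds against every pointed target with kernels and cokernels. For a Puppe exact source it likewise uses that thick equals saturated thick and that all fraction numerators are normal, so the quotient stays Puppe exact; your explicit check that $\Ker T$ is saturated thick for a general homological target $T$ only spells out a step the paper leaves implicit via Proposition~\ref{di:prop:2kc}.
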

\begin{proof}
The local zero-object argument and the full-annihilation-kernel
construction are unchanged and allow test categories in the larger
$2$-category. For a di-exact source, the quotient universal property
in Theorem~\ref{di:thm:quotient}(iv) allows every pointed target with
kernels and cokernels, hence in particular every homological target.
This proves the assertion for $\DiHom\to\HomCat$.

For a Puppe exact source, saturated thick and thick subcategories
coincide. Moreover, every numerator in a ternary fraction is normal,
so its image and its conjugates by denominator isomorphisms are
normal. Therefore every arrow of its saturated quotient is normal;
the quotient is Puppe exact. It is the classical quotient by the
same universal property, and Theorem~\ref{di:thm:quotient}(iv)
allows all di-exact test targets. Thus its $2$-cokernel property is
preserved under $\pExact\to\DiHom$ as well.
\end{proof}

\begin{remark}[Scope of the direct proof]
The proof separates the fraction-theoretic input from the
$2$-categorical conclusions: denominator stability, equality by
common reduction, exactness of localization, and full faithfulness
of nested quotients are established before the three homological
axioms are verified. The exact core of
Proposition~\ref{prop:exact-core} controls normal subquotients and
normal denominators, while saturation and arbitrary base change
control possibly nonnormal numerators. We next specialize to Puppe exact
categories and compare with Grandis's classical construction.
\end{remark}

\subsection{The classical Puppe-exact quotient and its fraction calculus}

We now specialize the completed di-exact construction to Puppe
exact categories. Saturation is automatic in this case, and every
numerator is normal. The following specialization recovers
Grandis's exact localization and ternary-fraction construction
\cite[Section~5.5]{Grandis13}, \cite{GrandisFractions92}.
We state the result in the pointed setting and derive it from
the preceding di-exact construction.

\begin{theorem}[Grandis's exact quotient theorem]\label{thm:puppe-localization}
Let $\N$ be a thick subcategory of a small Puppe exact category $\C$,
and put
\[
 W_{\N}=\{f\in\Mor\C\mid\Ker(f),\Coker(f)\in\N\}.
\]
The ordinary localization $\C[W_{\N}^{-1}]$ has a Puppe-exact
structure for which its identity-on-objects quotient $q$ is exact.
It annihilates precisely the objects of $\N$ and inverts precisely
$W_{\N}$. Exact functors annihilating $\N$ factor uniquely through
$q$ in the canonical localization model. Every morphism of the
localization has a ternary representative
\begin{equation}\label{eq:puppe-ternary}
 X\xleftarrow{m}U\xrightarrow{f}V\xleftarrow{e}Y,
 \qquad m\in W_{\N}\text{ monic},\quad
 e\in W_{\N}\text{ epic},
\end{equation}
with value $q(e)^{-1}q(f)q(m)^{-1}$. Equality of representatives is
characterized by a common reduction.
\end{theorem}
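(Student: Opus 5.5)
The plan is to derive the statement from the di-exact construction, taking $\C$ as an object of $\DiHom$ via Proposition~\ref{prop:diexact-equivalences}.

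\emph{Step 1: $\N$ is saturated and the denominators match.} In a Puppe exact category every morphism is normal. So for $f:X\to Y$ with $Y,\Ker f\in\N$, the normal factorization $f=m\,\bar f\,e$ has $\bar f$ invertible. Since $\Ncm f$ is a normal subobject of $Y$, it lies in $\N$, and $X$ is an extension of $\Ncm f$ by $\Ker f$, hence $X\in\N$. The dual argument gives \eqref{di:eq:satc}. Thus $\N$ is saturated thick.

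Every $f$ is normal, so $W_{\N}=\mathcal W_{\N}$ in the notation of Theorem~\ref{di:thm:denominator-correspondence}. Each $f\in W_{\N}$ factors as an $\Eden$-arrow followed by an $\Sden$-arrow, and $\Sden\cup\Eden\subseteq W_{\N}$. Hence $\C[W_{\N}^{-1}]$ and $\C[(\Sden\cup\Eden)^{-1}]=\C/\N$ have the same universal property and are canonically isomorphic over $\C$. The monic arrows of $W_{\N}$ are exactly $\Sden$, and the epic arrows are exactly $\Eden$. This is because in a Puppe exact category monomorphisms are kernels (Proposition~\ref{prop:exact-core} with $\C_{\mathrm{ex}}=\C$). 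So the ternary representatives \eqref{eq:puppe-ternary}, their values, and the common-reduction equality criterion follow from Theorems~\ref{di:thm:fractions} and~\ref{di:thm:composition}.

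\emph{Step 2: exactness, annihilation and factorization.} Theorem~\ref{di:thm:quotient}(i),(ii),(iv) gives the following.
\begin{enumerate}
\item $q$ is exact.
\item $q$ annihilates precisely $\N$.
\item Exact functors annihilating $\N$ factor uniquely through $q$; uniqueness on the nose holds in the identity-on-objects model, by Theorem~\ref{di:thm:composition}(v).
\end{enumerate}
To show that $\C/\N$ is Puppe exact rather than merely di-exact, take an arrow $\phi=q(e)^{-1}q(f)q(m)^{-1}$. Here $f=m_f\bar f e_f$ is normal, so $q(f)$ is normal because $q$ preserves kernels and cokernels. Composing a normal arrow with isomorphisms keeps it normal, so every arrow of $\C/\N$ is normal. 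This is the argument already sketched in Corollary~\ref{di:cor:inclusions}.

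\emph{Step 3: $q$ inverts precisely $W_{\N}$.} This is the main point needing care, because for general di-exact categories only the normal part of the inverted class was identified (see the remark after Theorem~\ref{di:thm:denominator-correspondence}). Here all arrows are normal, so \eqref{di:eq:exact-inverted} applies to every $f$. Explicitly:
\begin{itemize}
\item If $q(f)$ is invertible, its kernel and cokernel are zero. Since $q$ preserves them, $q(\Ker f)\cong0\cong q(\Coker f)$, and precise annihilation puts both in $\N$.
\item Conversely, if $f\in W_{\N}$, factor $f=m\,\bar f\,e$ with $\bar f$ an isomorphism, $e\in\Eden$ and $m\in\Sden$. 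Then $q(f)$ is a composite of isomorphisms.
\end{itemize}
I expect no real obstacle beyond verifying that no step secretly uses a nonnormal numerator. The only delicate point is making sure the identification of $\C[W_{\N}^{-1}]$ with $\C/\N$ is an isomorphism over $\C$ and not merely an equivalence, so that ``uniquely'' is meant literally in the canonical model.
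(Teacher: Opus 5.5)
Your proposal is correct and follows the paper's own proof. Both arguments obtain saturation from thickness via the normal factorization, invoke Theorems~\ref{di:thm:composition} and~\ref{di:thm:quotient}, get Puppe exactness because every quotient arrow is the image of a normal numerator conjugated by isomorphisms, and read off the inversion criterion from exactness plus precise annihilation. You also make explicit two identifications the paper leaves tacit: that $\C[W_{\N}^{-1}]$ and $\C[(\Sden\cup\Eden)^{-1}]$ coincide over $\C$, and that the monic (respectively epic) members of $W_{\N}$ are exactly $\Sden$ (respectively $\Eden$). This tightens the argument rather than changing its route.
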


\begin{proof}
Thickness implies saturation: if $Y,\Ker f\in\N$, cokernel--kernel
factorization expresses $X$ as an extension of $\Ker f$ by
$\Nim f$, and $\Nim f$ is a subobject of $Y$. The dual proves
the other saturation implication. Theorems~\ref{di:thm:composition}
and \ref{di:thm:quotient} therefore apply. Every quotient arrow is
an image of a normal numerator conjugated by isomorphisms, so it is
normal; hence the quotient is Puppe exact. Exactness of the quotient
functor and precise annihilation show that $qf$ is invertible exactly
when $\Ker f,\Coker f\in\N$: a normal morphism with zero kernel
and cokernel is invertible. The stated normal form and equality
criterion are those of Theorem~\ref{di:thm:composition}, and its
universal property, restricted to exact functors, is
Theorem~\ref{di:thm:quotient}(iv).
\end{proof}

The homological structure of the ordinary categories of exact,
Puppe-exact and abelian categories is described in
\cite[Section~1.9, p.~146]{Grandis92} and
\cite[Section~5.6]{Grandis13}.
We write $\C/\N$ for this quotient when no confusion with the
other quotient constructions is possible.

For clarity, a reduction of \eqref{eq:puppe-ternary} is obtained by
replacing $m$ by $mi$ and $e$ by $je$, where $i$ is a monomorphism
in $W_{\N}$ and $j$ is an epimorphism in $W_{\N}$, and replacing
$f$ by $jfi$. The data are taken up to isomorphism of their two
intermediate objects. These operations leave the displayed value
unchanged. The common-reduction assertion in the theorem says that
they also detect all equalities; it is stronger than just the
existence of a representative. A common reduction uses intersections
of source subobjects and common quotients of target quotients.
This explains why the two intermediate objects stay inside a thick
subcategory containing the two endpoints. Related three-arrow
methods, in a different localization setting, appear in \cite{Tho11}.

\begin{remark}\label{remternaryfractionsSerrequotient}
For an abelian category the preceding quotient is the Serre quotient,
and \eqref{eq:puppe-ternary} is the familiar description by a map from
a subobject of $X$ to a quotient of $Y$, with the discarded quotient
and subobject lying in $\N$. In the triangulated case two-arrow roofs
replace these three-arrow fractions. Both are ordinary localizations;
this common feature controls arbitrary natural transformations.
\end{remark}

\begin{lemma}[Natural transformations descend through localization]\label{lem:puppe-nat}
Let $q:\C\to\C[W^{-1}]$ be an ordinary localization.
For any category $\D$, precomposition with $q$ is fully faithful on
functor categories. Its essential image consists of the functors
inverting $W$. The statement includes noninvertible natural
transformations.
\end{lemma}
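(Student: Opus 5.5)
The plan is to use the standard presentation of $\C[W^{-1}]$: it has the same objects as $\C$, $q$ is the identity on objects, and its arrows are finite composites of images $q(f)$ and formal inverses $q(w)^{-1}$ with $w\in W$. This is the construction already recalled before Theorem~\ref{di:thm:fractions}. The two claims, full faithfulness of $q^*\colon[\C[W^{-1}],\D]\to[\C,\D]$ and the description of its essential image, then reduce to generation arguments of the kind already used at the end of the proof of Theorem~\ref{di:thm:quotient}.

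\emph{Faithfulness and fullness.} Let $R,S\colon\C[W^{-1}]\to\D$ and let $\alpha\colon Rq\Rightarrow Sq$ be any natural transformation, not assumed invertible. Since $q$ is bijective on objects, any transformation $\overline\alpha\colon R\Rightarrow S$ with $\overline\alpha q=\alpha$ must have components $\overline\alpha_{X}=\alpha_X$. This already gives faithfulness, and it leaves only naturality to check for fullness. Naturality holds for every $q(f)$ by hypothesis. For $w\colon A\to B$ in $W$ I multiply the square $Sq(w)\,\alpha_A=\alpha_B\,Rq(w)$ on the left by $Sq(w)^{-1}$ and on the right by $Rq(w)^{-1}$. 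This yields $\alpha_A\,Rq(w)^{-1}=Sq(w)^{-1}\alpha_B$, which is naturality for $q(w)^{-1}$. Naturality squares paste along composites, and every arrow of the localization is a composite of generators, so $\overline\alpha$ is natural. No inverse of any component of $\alpha$ is used, so the argument covers noninvertible transformations.

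\emph{Essential image.} If $F=Rq$, then $F(w)=R(q(w))$ is invertible for $w\in W$. Conversely, if $F$ inverts $W$, the universal property of the localization gives $\overline F$ with $\overline F q=F$ on the nose. Hence the essential image is exactly the class of functors inverting $W$. This class is replete, because being invertible is transported along natural isomorphisms.

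The only point requiring care is that the universal property of $\C[W^{-1}]$ is stated for functors. For transformations, one needs the fact that $q$ is bijective on objects and that its image together with the inverses $q(w)^{-1}$ generates all arrows. I would therefore state explicitly that this holds in the path model. For another model of the localization, I would transport the statement along the canonical isomorphism of categories under $\C$. Beyond this, I see no real obstacle; the rest is the routine naturality check above.
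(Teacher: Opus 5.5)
Your proposal is correct and follows essentially the same route as the paper. Both arguments use the identity-on-objects path model, observe that bijectivity on objects forces the components, obtain naturality for $q(w)^{-1}$ by multiplying the square for $w$ by the inverse images (without inverting any component), extend naturality to composites of generators, and treat other models of the localization by transport, while the essential-image part likewise rests on extending $W$-inverting functors through the localization.
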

\begin{proof}
Use the identity-on-objects model generated by the arrows of $\C$
and formal inverses to $W$, subject to the composition relations and
the two inverse equations. A functor inverting $W$ extends by sending
a formal inverse to the inverse image arrow; the relations prove
existence and uniqueness. A transformation between two such functors
has already prescribed components on every object. Naturality for
$s:X\to Y$ in $W$ reads
$G(s)\alpha_X=\alpha_YF(s)$ and is equivalent to
$\alpha_XF(s)^{-1}=G(s)^{-1}\alpha_Y$.
It therefore gives naturality for the formal inverse as well.
Naturality for words follows by composition, and the relations
preserve it. This proves existence and uniqueness of the descended
transformation. Passing to equivalent models gives the stated
essential-image formulation.
\end{proof}

\begin{proposition}[Exact isokernels]\label{prop:exact-isokernels}
Thick subcategories of a Puppe exact category correspond to classes
$W$ of morphisms with the following properties:
\begin{enumerate}[label=\textup{(F\arabic*)},leftmargin=16mm]
\item $W$ contains isomorphisms and has two-out-of-three for composition;
\item monomorphisms in $W$ are stable under pullback, and epimorphisms
in $W$ under pushout;
\item if $f=me\in W$ and $m$ is monic then $m\in W$; dually, if $e$
is epic then $e\in W$;
\item in a commutative square with horizontal epimorphisms $e',e$ and
vertical monomorphisms $m,m'$, if $e\in W$ then $e'\in W$, and if
$m\in W$ then $m'\in W$.
\end{enumerate}
The correspondence sends $\N$ to $W_{\N}$ and $W$ to
$\{X\mid 0\to X\in W\}$. The latter condition is equivalent to
$X\to0\in W$ and to admitting a short exact sequence through $X$
whose two nonzero arrows belong to $W$.
\end{proposition}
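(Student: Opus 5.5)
The plan is to deduce the statement from Theorem~\ref{di:thm:denominator-correspondence}, specialized to a Puppe exact category $\C$. In this setting every morphism is normal, and every thick subcategory is saturated, as noted in the proof of Theorem~\ref{thm:puppe-localization}. Moreover, by Proposition~\ref{prop:exact-core}, monomorphisms and epimorphisms are exactly normal monomorphisms and normal epimorphisms. Under these identifications, (F1)--(F4) become (TF1)--(TF4) word for word. The restriction of (TF1) to normal triples is now vacuous. The unrestricted base change in (TF2) is automatic, since pullbacks of monomorphisms along arbitrary maps exist by Lemma~\ref{di:lem:basic}. Likewise $\mathcal W_{\N}$ becomes $W_{\N}$.

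So the key steps, in order, are the following.
\begin{enum}
\item Check that each monomorphism is a kernel and each epimorphism a cokernel in a Puppe exact category. This holds because a morphism with a cokernel--kernel factorization $me$ that is monic has $e$ monic, hence $e$ is invertible.
\item Check that thick subcategories are saturated. I will reuse the argument from Theorem~\ref{thm:puppe-localization}.
\item Conclude that the two assignments $\N\mapsto W_{\N}$ and $W\mapsto\{X\mid 0\to X\in W\}$ are mutually inverse bijections, as in Theorem~\ref{di:thm:denominator-correspondence}. That theorem already shows the two descriptions $0\to X\in W$ and $X\to0\in W$ agree.
\end{enum}

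It remains to prove the third equivalent description: $X\in\N_W$ iff there is a short exact sequence $A\xrightarrow{m}X\xrightarrow{e}C$ with $m,e\in W$.
\begin{enum}
\item If $X\in\N_W$, take $0\to X\xrightarrow{1}X$, or $X\xrightarrow{1}X\to0$. One arrow is the identity, in $W$ by (F1), and the other is $0\to X$ or $X\to 0$, in $W$ by hypothesis.
\item Conversely, given such a sequence, apply the proven dichotomy $m\in W\Leftrightarrow\Coker m\in\N_W$, together with its dual for $e$, both from the proof of Theorem~\ref{di:thm:denominator-correspondence}. This gives $C\cong\Coker m\in\N_W$ and $A\cong\Ker e\in\N_W$. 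Extension closure of the thick class $\N_W$ then puts $X$ in $\N_W$.
\end{enum}
If "nonzero arrows" is read as excluding degenerate sequences, the same argument still applies.

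The main obstacle is purely one of bookkeeping. I must make sure that (F4)'s square is literally the square of (TF4), with horizontal epimorphisms and vertical monomorphisms. I must also make sure that the hypotheses used inside the proof of Theorem~\ref{di:thm:denominator-correspondence} (pushout of $\ker e$ to $0$, the (TF3) factorization of $0\to B$ through $m$) remain meaningful when $W$ is an arbitrary class rather than one a priori consisting of normal maps. Since every map is normal here, this holds, and no new fraction calculus is needed.
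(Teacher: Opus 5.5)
Your proposal is correct and follows the paper's proof: you specialize Theorem~\ref{di:thm:denominator-correspondence} using that every morphism is normal and that thickness implies saturation. You then derive the short-exact-sequence description from the kernel/cokernel characterization of $W$ together with extension closure, and use $0\to X\xrightarrow{1_X}X$ for the converse.
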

\begin{proof}
In a Puppe exact category every morphism is normal and
saturation is automatic. Thus the correspondence and
(F1)--(F4) are the specialization of
Theorem~\ref{di:thm:denominator-correspondence}.
For a short exact sequence $A\to X\to B$, the kernel and
cokernel descriptions of $W$ show that its two maps belong
to $W$ exactly when $A,B\in\N$; extension closure then
gives $X\in\N$. Conversely, for $X\in\N$ use
$0\to X\xrightarrow{1_X}X$.
\end{proof}

\subsection{Two-dimensional quotients and normal functors}

\begin{theorem}\label{thm:puppe-characterization}
The $2$-category $\pExact$ has all $2$-kernels and $2$-cokernels.
For $F:\A\to\B$, a $2$-cokernel is
\[
 \B\longrightarrow\B/\thick_{\B}(F(\A)).
\]
An exact functor $F:\A\to\B$ is a $2$-kernel precisely when it is
fully faithful and has thick replete essential image. It is a
$2$-cokernel precisely when it is essentially surjective and the
induced functor
\[
 \widehat F:\A/\Ker F\longrightarrow\B
\]
is fully faithful. More generally $F$ is $2$-normal precisely when
$\widehat F$ is fully faithful and the replete essential image of
$F$ is thick.
\end{theorem}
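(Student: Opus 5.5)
The proof specializes the $\DiHom$ results to Puppe exact categories, using Corollary~\ref{di:cor:inclusions} and Theorem~\ref{thm:puppe-localization}, and then reads off the characterizations from the normal factorization of Proposition~\ref{prop:normal-factor}.

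\emph{Existence.} For $F:\A\to\B$, the full annihilation subcategory $\Ker F$ is a $2$-kernel by Proposition~\ref{prop:puppe-kernels}. For the $2$-cokernel, take $\N=\thick_{\B}(F(\A))$, the closure of Lemma~\ref{LemD}. It is saturated automatically in the Puppe exact case, so $\N=\SatTh_{\B}(F(\A))$. By Corollary~\ref{di:cor:inclusions} the quotient $\B/\N$ is Puppe exact. An exact $T:\B\to\X$ annihilates $F$ exactly when $\Ker T\supseteq\N$, since $\Ker T$ is thick. Theorem~\ref{di:thm:quotient}(iv) then gives the hom-category equivalence, including arbitrary natural transformations, with all Puppe exact test targets allowed.

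\emph{Kernels.} If $F$ is a $2$-kernel, then up to equivalence it is the inclusion of some $\Ker G$. That inclusion is fully faithful with thick replete image, and both properties are invariant under equivalence. Conversely, suppose $F$ is fully faithful and its replete image $\M$ is thick. Then $F$ corestricts to an exact equivalence $\A\simeq\M$; a quasi-inverse preserves kernels and cokernels by universality. The inclusion of $\M$ is the $2$-kernel of $Q_{\M}$ by precise annihilation (Theorem~\ref{di:thm:quotient}(i)).

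\emph{Cokernels.} By Proposition~\ref{di:prop:2kc}, transported via the corollary, $2$-cokernels are, up to equivalence, the quotients $Q_{\N}$. Such a quotient is identity-on-objects, hence essentially surjective, and its induced $\widehat{Q_{\N}}$ is an equivalence, in particular fully faithful. Conversely, let $F$ be essentially surjective with $\widehat F$ fully faithful. The functor $\widehat F$ exists because $F$ annihilates the thick $\Ker F$; it is exact and essentially surjective, so it is an exact equivalence. Hence $F\cong\widehat F\,Q_{\Ker F}$ is a $2$-cokernel.

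\emph{Normality.} Apply Proposition~\ref{prop:normal-factor}. The coimage factor $q_F$ is $2$-$\operatorname{coker}(\twoker F)=Q_{\Ker F}$, so $\operatorname{Coim}(F)\simeq\A/\Ker F$. The image factor is $m_F=\twoker(\twocoker F)$, the inclusion of $\M=\thick_{\B}(F(\A))$, by the annihilation description. The comparison $\overline F:\A/\Ker F\to\M$ is the corestriction of $\widehat F$. So $F$ is $2$-normal iff $\overline F$ is an equivalence. Full faithfulness of $\overline F$ is the same as full faithfulness of $\widehat F$, because $\M$ is full in $\B$. Essential surjectivity says that the replete essential image of $F$, equal to that of $\widehat F$, is all of $\M$. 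That holds iff this image is thick: $\M$ is the smallest thick subcategory containing $F(\A)$.

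\emph{Main obstacle.} The delicate step is the converse for cokernels and for normality: showing that $\widehat F$ is well defined and exact, and that full faithfulness of $\widehat F$ in the ordinary sense gives the representable statements. For exactness, the plan is to use formula~\eqref{di:eq:fraction-kc}. For the representable statements, the plan is to test on $\mathbf J$, as discussed after the construction of $L_A$.
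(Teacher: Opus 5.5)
Your proposal is correct and follows essentially the paper's route: full annihilation subcategories give the $2$-kernels, the (automatically saturated) thick quotient with its full hom-category universal property gives the $2$-cokernels, and the factorization $F\cong\widehat F\,Q_{\Ker F}$ together with Proposition~\ref{prop:normal-factor} gives the cokernel and normality characterizations. The paper shortcuts the ``if'' direction of normality by noting that $\widehat F$ is a $2$-kernel by the kernel characterization, rather than analysing the comparison $\overline F$. The ``main obstacle'' you flag is not one: existence and exactness of $\widehat F$ are already the one-dimensional part of Theorem~\ref{di:thm:quotient}(iv). Your argument also only uses ordinary full faithfulness, together with the fact that an exact ordinary equivalence has an exact quasi-inverse.
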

\begin{proof}
Kernels were constructed in Proposition~\ref{prop:puppe-kernels}.
An exact functor annihilating $F(\A)$ annihilates its thick closure,
and Theorem~\ref{thm:puppe-localization} gives its exact factorization
through the displayed quotient. Lemma~\ref{lem:puppe-nat} supplies
full faithfulness on arbitrary natural transformations. This proves
the $2$-cokernel universal property.

A fully faithful exact functor with thick essential image identifies
its domain, as a Puppe exact category, with that image. The image is
the annihilation kernel of its quotient. Hence the functor is a
$2$-kernel. Conversely every $2$-kernel is equivalent over its
codomain to the annihilation kernel already constructed, so it is
fully faithful and has thick image.

The induced $\widehat F$ exists since $F$ annihilates $\Ker F$.
The first quotient has the same objects as $\A$, and thus
$\widehat F$ is essentially surjective if and only if $F$ is.
If it is also fully faithful, it is an exact equivalence. Its
quasi-inverse is exact because an equivalence transports all the
kernel and cokernel universal properties. Composing this equivalence
with the quotient exhibits $F$ as a $2$-cokernel. Conversely a
$2$-cokernel is a $2$-cokernel of its own $2$-kernel by
Proposition~\ref{propkeriskerofitscoker}; uniqueness of $2$-cokernels
makes $\widehat F$ an equivalence.

Finally suppose $\widehat F$ is fully faithful with thick image.
By the kernel characterization it is a $2$-kernel, so the displayed
factorization of $F$ is $2$-normal. Conversely if $F$ is $2$-normal,
Proposition~\ref{prop:normal-factor} identifies this factorization,
up to equivalence, with its normal factorization. Its second factor
is fully faithful and has thick image, as required.
\end{proof}

In terms of \eqref{eq:puppe-ternary}, full faithfulness of
$\widehat F$ means both existence of a representing ternary fraction
for every arrow $F(X)\to F(Y)$ and equality detection by common
reduction. Neither essential surjectivity nor fullness alone substitutes
for equality detection. The class of $2$-normal functors need
not be closed under composition.

\begin{theorem}\label{thm:puppe-homological}
The $2$-category $\pExact$ is $2$-pointed and $2$-homological.
For $\N\subseteq\M\subseteq\C$ thick, the fully faithful functor
$\M/\N\to\C/\N$ is the $2$-kernel of $\C/\N\to\C/\M$.
The nested subquotient square is a bipullback and a bipushout.
\end{theorem}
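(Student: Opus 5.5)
The plan is to deduce Theorem~\ref{thm:puppe-homological} from the di-exact result, Theorem~\ref{di:thm:main}, by passing through the inclusion $\pExact\to\DiHom$ and Corollary~\ref{di:cor:inclusions}. The key observation is that $\pExact$ is a full sub-$2$-category of $\DiHom$ (Proposition~\ref{prop:diexact-equivalences} puts every Puppe exact category in $\DiHom$, and both $2$-categories have the same $1$-cells and $2$-cells), and that it is closed under the relevant constructions.

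\textbf{Step 1: $2$-pointedness.} The one-object zero category is Puppe exact. The argument at the start of the proof of Theorem~\ref{di:thm:main} applies verbatim: exact functors into it are unique, functors out of it pick zero objects, and the forced component maps to and from zero objects are natural. So the zero category is a strong bizero object in $\pExact$.

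\textbf{Step 2: existence and composition (H1)--(H2).} Existence of all $2$-kernels and $2$-cokernels is Theorem~\ref{thm:puppe-characterization}. For composition I would argue inside $\DiHom$.
\begin{enumerate}
\item By Corollary~\ref{di:cor:inclusions}, the $2$-kernels and $2$-cokernels of $\pExact$ remain $2$-kernels and $2$-cokernels in $\DiHom$.
\item So a composite of two $2$-kernels (respectively $2$-cokernels) of $\pExact$ is a $2$-kernel (respectively $2$-cokernel) in $\DiHom$, by Theorem~\ref{di:thm:main}.
\item The $2$-kernel or $2$-cokernel $P$ it represents there is of the form $\N\hookrightarrow\C$ or $\C\to\C/\N$. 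Here $\N$ is thick in a Puppe exact $\C$, so it is Puppe exact by Lemma~\ref{lem:inheritance}. The quotient $\C/\N$ is Puppe exact by the argument in Corollary~\ref{di:cor:inclusions}.
\item Since $\pExact$ is full, the universal property, tested only against Puppe exact categories, restricts. Hence the composite is a $2$-kernel or $2$-cokernel in $\pExact$.
\end{enumerate}
Alternatively, one can argue directly: thickness is transitive, and the composite of quotients $\C\to\C/\N\to(\C/\N)/\Pcal$ is the quotient by $q^{-1}(\Pcal)$, exactly as in the proof of Theorem~\ref{di:thm:main}.

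\textbf{Step 3: the homology axiom (H3) and the square.} For (H3) I follow the proof of Theorem~\ref{di:thm:main}.
\begin{enumerate}
\item Represent the composite, up to equivalence, as $\M\hookrightarrow\C\to\C/\N$. By precise annihilation (Theorem~\ref{thm:puppe-localization}), the factorization hypothesis becomes $\N\subseteq\M$.
\item Factor the composite as $\M\to\M/\N\xrightarrow{j}\C/\N$. All three categories are Puppe exact, since $\N$ is thick in $\M$ by Lemma~\ref{di:lem:saturated-properties}(iii).
\item The first arrow is a $2$-cokernel by Theorem~\ref{thm:puppe-characterization}.
\item Lemma~\ref{di:lem:nested} says $j$ is fully faithful, exact, and identifies $\M/\N$ with the annihilation kernel of $\C/\N\to\C/\M$. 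That kernel is thick, by the annihilation argument before Lemma~\ref{LemD}.
\item So $j$ is a $2$-kernel, by the kernel characterization in Theorem~\ref{thm:puppe-characterization}.
\end{enumerate}
The bipullback/bipushout assertion for the nested square follows from the square statement of Theorem~\ref{di:thm:main}. Every vertex is Puppe exact, and a bi-universal cone in the full sub-$2$-category $\DiHom$ whose vertex lies in $\pExact$ stays bi-universal when tested only against Puppe exact categories.

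\textbf{Main obstacle.} The only delicate point is the restriction from $\DiHom$ to $\pExact$: we must check that every object produced (annihilation kernels, quotients, iterated quotients) is Puppe exact rather than merely di-exact. This rests on the observation in Corollary~\ref{di:cor:inclusions} that all ternary-fraction numerators are normal. Once that is in hand, every universal property restricts because $\pExact$ is full in $\DiHom$. Alternatively, Proposition~\ref{prop:truncation} together with Grandis's result that $\tau_1\pExact$ is homological gives a second route.
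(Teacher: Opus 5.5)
Your proposal is correct and follows essentially the same route as the paper, which also transfers Theorem~\ref{di:thm:main} to $\pExact$: it notes that the annihilation kernels and quotients coincide with the $\DiHom$ ones and are Puppe exact, uses transitivity of thickness and the inverse-image quotient for the composition laws, uses $\M\to\M/\N\to\C/\N$ together with Lemma~\ref{di:lem:nested} for (H3), and reruns the square arguments against Puppe-exact test categories. Your use of Corollary~\ref{di:cor:inclusions} plus fullness of $\pExact$ in $\DiHom$ is only a slightly more packaged form of the paper's appeal to explicit normal forms and uniqueness of representing objects.
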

\begin{proof}
The annihilation kernels and quotient functors of
Theorem~\ref{thm:puppe-characterization} are the same as those
constructed in $\DiHom$. Their domains and codomains are Puppe
exact: this is immediate for full annihilators, and was proved for
quotients in Theorem~\ref{thm:puppe-localization}. Thus a $1$-cell
between Puppe exact categories is a $2$-kernel or a $2$-cokernel
in $\pExact$ precisely when it is of the corresponding normal form
in $\DiHom$. This assertion uses the explicit normal forms and
uniqueness of representing objects, not closure of an arbitrary
sub-$2$-category under universal constructions.

Successive thick inclusions compose because thickness is transitive.
Successive quotients compose to the quotient by the inverse image
of the second thick subcategory, by the direct hom-category argument
in Theorem~\ref{di:thm:main}; this inverse image is thick and its
quotient is Puppe exact. For a kernel followed by a cokernel with
$\N\subseteq\M$, the factorization
$\M\to\M/\N\to\C/\N$ likewise takes place entirely in
Puppe exact categories. Lemma~\ref{di:lem:nested} identifies its
second arrow as the full annihilation kernel of
$\C/\N\to\C/\M$. These are the two composition laws and the
conditional homology axiom. The strong bizero argument was given
above. Finally the bipullback and bipushout proofs in
Theorem~\ref{di:thm:main} apply to Puppe-exact test categories and
the same intermediate quotients, establishing both universal
properties in $\pExact$.
\end{proof}

\subsection{Why there is no unconditional interchange theorem}

\begin{example}\label{ex:J-counterexample}
The canonical functor $\mathbf J\vee\mathbf J\to\mathbf J\times\mathbf J$
is exact and has zero $2$-kernel and zero $2$-cokernel, but is not an
equivalence. Its image consists of the two axes. They contain every
nonzero object of the source, so the kernel is zero. The remaining
object $(J,J)$ is an extension of $(J,0)$ by $(0,J)$, so the thick
closure of the image is the whole target, and the cokernel is zero.
The functor is not essentially surjective. Were it $2$-normal, its
normal comparison would be an equivalence by
Proposition~\ref{prop:normal-factor}, and with these zero kernel
and cokernel the functor itself would be an equivalence.
Thus $\pExact$ is not $2$-Puppe exact.
\end{example}

The following stronger example also corrects the possible inference
that all $2$-kernel--$2$-cokernel composites are $2$-normal. It uses only small
abelian categories, so no size issue is involved.

\begin{proposition}\label{prop:abelian-not-diexact}
Neither $\AbCat$ nor $\pExact$ is $2$-di-exact in the sense of
Definition~\ref{def:2-diexact}. In each, the class of $2$-normal
functors is not closed under composition.
\end{proposition}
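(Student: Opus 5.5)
The plan is to give one explicit counterexample: a $2$-kernel followed by a $2$-cokernel whose composite fails to be $2$-normal. Both assertions then follow at once. Every $2$-kernel and every $2$-cokernel is $2$-normal, using an identity for the other factor. So such a composite violates Definition~\ref{def:2-diexact}, and it also shows that $2$-normal functors are not closed under composition. The example will consist of small abelian categories and exact functors, so it lives in $\AbCat$. By Corollary~\ref{di:cor:inclusions} and Theorem~\ref{thm:puppe-homological}, the same $1$-cells are $2$-kernels and $2$-cokernels in $\pExact$, since thick and Serre subcategories agree for abelian categories. The failure of $2$-normality will be detected by the intrinsic criterion of Theorem~\ref{thm:puppe-characterization}, and that criterion is the same in both $2$-categories.

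For the construction, let $k$ be a field. Let $\C$ be a small skeleton of finite-dimensional representations of the quiver $1\to2\to3$, with simples $S_1,S_2,S_3$.

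First I would take $\M=\langle S_1,S_3\rangle$, the representations supported at the vertices $1$ and $3$. I would check that $\M$ is Serre. It is clearly closed under subobjects and quotients. For extensions, $\operatorname{Ext}^1(S_i,S_j)\neq0$ only along an arrow $i\to j$ of the quiver, and there is no arrow between $1$ and $3$. So every object of $\M$ is semisimple and $\M$ is closed under extensions.

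Next I would take $\N=\langle S_2\rangle$, with quotient $Q:\C\to\C/\N$. Using the idempotent $e=e_1+e_3$ of the path algebra $A$, the Serre quotient $\C/\N$ is identified with finite-dimensional modules over $eAe\cong k(1\to3)$, the arrow being the length-two path. I would verify this either via the standard identification $\operatorname{mod}A/\langle S_2\rangle\simeq\operatorname{mod}eAe$, or directly: the functor $X\mapsto eX$ is exact, has annihilator exactly $\N$, and is essentially surjective. By Theorem~\ref{thm:puppe-localization} and Theorem~\ref{thm:puppe-characterization}, a functor with these properties is the quotient once the induced functor is fully faithful, and full faithfulness comes from the adjoints $Ae\otimes_{eAe}-$.

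The key step is to analyse the composite $F=Q\circ(\M\hookrightarrow\C)$. Its $2$-kernel is $\M\cap\N=0$. I would then apply Theorem~\ref{thm:puppe-characterization}: $F$ is $2$-normal if and only if $\widehat F:\M/0=\M\to\C/\N$ is fully faithful and the replete image of $F$ is thick. Full faithfulness can in fact hold here, and in general one can argue this via the hom formula~\eqref{di:eq:colimit}, as in Lemma~\ref{di:lem:nested}. Thickness, however, fails. The image of $F$ consists of the semisimple $k(1\to3)$-modules. But the uniserial representation $k\to k\to k$ of $\C$ maps to the nonsplit representation $k\xrightarrow{1}k$ of $1\to3$. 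That object is an extension of the images of $S_3$ and $S_1$, yet it is not in the image. So the replete image is not closed under extensions, and $F$ is not $2$-normal.

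The main obstacle I expect is making the identification of $\C/\N$ with $\operatorname{mod}\,k(1\to3)$ rigorous in the paper's framework. It can be sidestepped. It suffices to show directly that the image of $k\to k\to k$ in $\C/\N$ is not isomorphic to any $Q(X)$ with $X\in\M$. For this, use the exact functor $\C/\N\to\operatorname{mod}\,k(1\to3)$, $X\mapsto eX$. It is induced through the quotient by Theorem~\ref{thm:puppe-localization} because it annihilates $\N$. It sends $Q(\M)$ to semisimples and sends the uniserial to a non-semisimple module. This invariant separates them without needing the full equivalence.
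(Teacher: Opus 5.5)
Your proposal is correct and is essentially the paper's proof. It uses the same $\M$ (representations with $V_2=0$) and $\N$ (those with $V_1=V_3=0$), and the same obstruction: $QI$ is not $2$-normal because its replete image, the zero-arrow representations of $1\to3$, is not closed under the extension $k\xrightarrow{1}k$, whereas any $2$-normal functor has thick (Serre) replete image.

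The only difference is in how the $2$-cokernel is handled. The paper verifies directly that $(V_1\xrightarrow{a}V_2\xrightarrow{b}V_3)\mapsto(V_1\xrightarrow{ba}V_3)$, which is your $X\mapsto eX$, is the $2$-cokernel of $\N\hookrightarrow\C$, using its exact section $L$ and the transformation $\theta:LQ\Rightarrow 1_{\C}$ with components $(1,a,1)$, whose kernel and cokernel lie in $\N$. Your sidestep instead keeps the abstract Serre quotient and uses $X\mapsto eX$ only as an isomorphism invariant; there you should state explicitly that $Q(k\to k\to k)$ is an extension of $Q(S_1)\cong Q(k\to k\to 0)$ by $Q(S_3)$ in $\C/\N$, which follows from exactness of $Q$.
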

\begin{proof}
Fix a field $k$. Let $\C$ be the category of finite-dimensional
representations of $1\to2\to3$ and let $\D$ be that of $1\to3$.
Kernels and cokernels are componentwise. Let $\M\subseteq\C$
consist of the representations with $V_2=0$, and let
$\N\subseteq\C$ consist of those with $V_1=V_3=0$. Both are
Serre subcategories. The inclusion $I:\M\to\C$ is the $2$-kernel
of evaluation at vertex $2$.

Define the exact functor
\[
 Q:\C\longrightarrow\D,\qquad
 (V_1\xrightarrow aV_2\xrightarrow bV_3)
 \longmapsto(V_1\xrightarrow{ba}V_3).
\]
It has an exact section
$L(U\xrightarrow hW)=(U\xrightarrow1U\xrightarrow hW)$.
On morphisms,
\[
 Q(f_1,f_2,f_3)=(f_1,f_3),\qquad
 L(f_1,f_3)=(f_1,f_1,f_3).
\]
Because kernels and cokernels of representations are computed
at each vertex, these formulas show that both functors preserve
kernels and cokernels, as well as zero objects, proving their
exactness.
There is a natural transformation $\theta:LQ\Rightarrow1_{\C}$
with components $(1,a,1)$, whose kernel and cokernel are in $\N$.
Also $QL=1$ and $Q\theta=1_Q$.
If an exact $T:\C\to\E$ to a Puppe exact category annihilates
$\N$, then $T\theta$ has zero kernel and cokernel, hence is
invertible. Thus $T\cong(TL)Q$ with $TL$ exact. For exact
$R,S:\D\to\E$ and any $\alpha:RQ\Rightarrow SQ$, define
$\beta=\alpha L$. Naturality on $\theta_X$, and $Q\theta_X=1$,
gives $\alpha_X=\beta_{QX}$. Hence $\beta Q=\alpha$, uniquely
since $QL=1$. This proves that $Q$ is the $2$-cokernel of
$\N\hookrightarrow\C$, both in $\pExact$ and in $\AbCat$.

The replete image of $QI$ is the class of zero-arrow representations
of $1\to3$. It is not extension closed, as the short exact sequence
\[
 0\longrightarrow(0\to k)\longrightarrow
 (k\xrightarrow1 k)\longrightarrow(k\to0)\longrightarrow0
\]
shows. Any $2$-cokernel in either $2$-category is essentially
surjective, and any $2$-kernel has thick (respectively Serre)
replete image. Therefore a $2$-normal functor has such an image,
and $QI$ is not $2$-normal. Both $I$ and $Q$ are $2$-normal
individually, completing the proof. Here the abelian normal forms
follow from the classical Serre quotient theorem
\cite{Gabriel,StacksSerreQuotient}, together with full annihilation
kernels and descent of transformations through localization
(Lemma~\ref{lem:puppe-nat}). They do not require the general quotient
criterion or the later abelian homologicity proof.
\end{proof}

\begin{remark}
The obstruction is extension closure of the quotient image, not a
failure of the existence of fractions. It rules out a sub-$2$-category
with all objects and precisely the $2$-normal functors
as $1$-cells. The valid replacement is the conditional homology axiom
of Theorem~\ref{thm:puppe-homological}.
\end{remark}

\section{Exactness of the 2-category of triangulated categories}\label{sec:tri-direct}

In this section, we prove that the 2-kernels and 2-cokernels in the 2-category of triangulated categories precisely capture the widely used concepts of thick triangulated subcategories and Verdier localizations. As a consequence, our theory unifies Serre quotients and Verdier localizations, via 2-dimensional categorical algebra. We then show further characterizations of 2-kernels and 2-cokernels in the 2-category of triangulated categories, parallel to those established above for $\pExact$ and the Serre quotient specialization. Finally, we prove that the 2-category of triangulated categories is Grandis homological in a 2-dimensional sense.

We first recall the definition of triangulated category. Our main reference is \cite{Neeman,StacksTri}.

\begin{definition}
    A category $\C$ is called \dfn{additive} if it is pointed, it is enriched over the category $\Ab$ of abelian groups and it has finite biproducts.
\end{definition}

\begin{definition}
Let $\T$ be an additive category.

A \dfn{translation functor} on $\T$ is an additive endoequivalence $\Sigma:\T \to \T$.

A \dfn{triangle} for the additive category $\T$ equipped with the translation functor $\Sigma:\T \to \T$ is a sequence of morphisms in $\T$ of the form
\begin{cd}
X \arrow[r, "u"] \& Y \arrow[r, "v"]  \& Z  \arrow[r, "w"] \&  \Sigma X.
\end{cd}

A \dfn{morphism of triangles} is a triple $(f,g,h)$ of morphisms in $\T$ such that the following diagram is commutative
\begin{cd}
X \arrow[r, "u"] \arrow[d,"f"] \& Y \arrow[r, "v"] \arrow[d,"g"]\& Z  \arrow[r, "w"] \arrow[d,"h"] \&  \Sigma X \arrow[d,"\Sigma f"]\\
X' \arrow[r, "u'"] \& Y' \arrow[r, "v'"]  \& Z'  \arrow[r, "w'"] \&  \Sigma X'.
\end{cd}
\end{definition}

\begin{definition}
    A \dfn{triangulated category} $\T$ is an additive category equipped with a translation functor $\Sigma:\T \to \T$ and a specified class of triangles that are called \dfn{exact triangles} (or distinguished triangles), such that:
\begin{itemize}
\item [(TR0)] exact triangles are closed under isomorphisms
\item[(TR1)] for every $X\in \T$ the triangle
$X \aar{\id{X}} X \aar{}   0  \aar{}  \Sigma X$
is an exact triangle
\item [(TR2)] for every morphism $X \aar{a} Y$ in $\T$ there exists an exact triangle of the form $X \aar{a} Y \aar{}   Z  \aar{}  \Sigma X$
\item [(TR3)]  $X \aar{u} Y \aar{v}   Z  \aar{w}  \Sigma X$ is an exact triangle if and only if  $Y \aar{v}   Z  \aar{w}  \Sigma X \aar{-\Sigma u} \Sigma Y$ is an exact triangle
\item [(TR4)] given a commutative diagram with exact triangles as rows as follows
\begin{cd}
X \arrow[r, "u"] \arrow[d,"f"] \& Y \arrow[r, "v"] \arrow[d,"g"]\& Z  \arrow[r, "w"] \&  \Sigma X \arrow[d,"\Sigma f"]\\[-3ex]
X' \arrow[r, "u'"] \& Y' \arrow[r, "v'"]  \& Z'  \arrow[r, "w'"] \&  \Sigma X'
\end{cd}
  there exists a morphism $Z \aar{h} Z'$ that completes the diagram to a morphism of triangles.
\item [(TR5)] given exact triangles $X \aar{u} Y \aar{} Z' \aar{} \Sigma X$, $Y \aar{v} Z \aar{}   X'  \aar{}  \Sigma Y$ and $X \aar{v \c u} Z \aar{}   Y'  \aar{}  \Sigma X$, there exists an exact triangle $Z' \aar{} Y' \aar{}   X' \aar{}  \Sigma Z'$ making the following diagram commutative
\begin{cd}
X \& Y \& {Z'} \& {\Sigma X} \\
	X \& Z \& {Y'} \& {\Sigma X} \\
	Y \& Z \& {X'} \& {\Sigma Y} \\
	{Z'} \& {Y'} \& {X'} \& {\Sigma Z'}
	\arrow["u", from=1-1, to=1-2]
	\arrow[equals, from=1-1, to=2-1]
	\arrow[from=1-2, to=1-3]
	\arrow["v", from=1-2, to=2-2]
	\arrow[from=1-3, to=1-4]
	\arrow[dashed, from=1-3, to=2-3]
	\arrow[equals, from=1-4, to=2-4]
	\arrow["{v \circ u}"', from=2-1, to=2-2]
	\arrow["u"', from=2-1, to=3-1]
	\arrow[from=2-2, to=2-3]
	\arrow[equals, from=2-2, to=3-2]
	\arrow[from=2-3, to=2-4]
	\arrow[dashed, from=2-3, to=3-3]
	\arrow["{\Sigma u}", from=2-4, to=3-4]
	\arrow["v"', from=3-1, to=3-2]
	\arrow[from=3-1, to=4-1]
	\arrow[from=3-2, to=3-3]
	\arrow[from=3-2, to=4-2]
	\arrow[from=3-3, to=3-4]
	\arrow[equals, from=3-3, to=4-3]
	\arrow[from=3-4, to=4-4]
	\arrow[dashed, from=4-1, to=4-2]
	\arrow[dashed, from=4-2, to=4-3]
	\arrow[dashed, from=4-3, to=4-4]
\end{cd}

\end{itemize}
\end{definition}

\begin{remark}\label{remexacttrianglesasses}
    We can think of the exact triangles as a replacement for short exact sequences. The idea is that some important constructions, like passing to the homotopy or derived category of an abelian category, do not in general produce abelian categories, but do produce triangulated structures where the exact triangles somehow play the role of short exact sequences.

    Any composite of two consecutive morphisms in an exact triangle is zero: apply (TR4) from $X\xrightarrow{1_X}X\to0\to\Sigma X$ to $X\xrightarrow{u}Y\xrightarrow{v}Z\to\Sigma X$, using $1_X$ and $u$ as the first two components, to obtain $vu=0$; rotation gives the other composites. So in this sense, exact triangles are complexes.
\end{remark}

\begin{remark}
    The rotation axiom of exact triangles (axiom (TR3)) can be intuitively seen as a form of self-duality of the triangulated structure.
\end{remark}

\begin{remark}\label{remconesascokernels}
    Axiom (TR2) ensures that we can complete any morphism $X \aar{a} Y$ to an exact triangle. The object $Z$ in such a chosen exact triangle is called a \textit{cone} of the morphism $a$ and it can be intuitively thought as an analogue of the cokernel of $a$. By the remark above, combining (TR2) with (TR3), it is also possible to obtain the analogues of kernels.

    Axiom (TR4) is what replaces the universal property of cokernels. The induced morphisms are not unique, but some reformulations of (TR5) in the literature can be seen as conditions that require the morphisms induced by (TR4) to be nice.
\end{remark}

The theory of triangulated categories has been extensively used to tackle and solve homological and cohomological problems in contexts that lack short exact sequences. A preeminent class of examples of triangulated categories is the following.

\begin{example}
    Let $\A$ be an abelian category.

    The category $\operatorname{C}(\A)$ of cochain complexes is abelian. Its homotopy category $\operatorname{K}(\A)$ is triangulated: translation shifts a complex and negates its differential, and exact triangles are those isomorphic to mapping-cone triangles. The derived category $\operatorname{D}(\A)$ is the Verdier localization of $\operatorname{K}(\A)$ at the acyclic complexes, equivalently at quasi-isomorphisms. The bounded versions $\operatorname{K}^b(\A)$ and $\operatorname{D}^b(\A)$ give essentially small examples under our size convention. These standard constructions and their triangulated axioms are established in \cite[Chapter III]{GelfandManin} and \cite{Verdier,StacksDerived}.
\end{example}

The following elementary results about triangulated categories will be useful for us.

\begin{lemma}[\cite{GelfandManin}]\label{lemmaisozero}
   Let $\T$ be a triangulated category and let
   $$X \aar{f} Y \aar{g} Z \aar{h} \Sigma X$$
   be an exact triangle in $\T$. Then the morphism $f$ is an isomorphism if and only if $Z$ is the zero object.
\end{lemma}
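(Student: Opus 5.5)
The plan is to use the long exact sequences of representable functors that every exact triangle produces. First I record the standard fact: for every object $W$, applying $\T(W,-)$ to $X\aar{f}Y\aar{g}Z\aar{h}\Sigma X$ gives an exact sequence of abelian groups, and so does $\T(-,W)$. Exactness at $\T(W,Y)$ comes from Remark~\ref{remexacttrianglesasses} ($gf=0$) together with (TR4): a map $y:W\to Y$ with $gy=0$ lifts through $f$. To see this, rotate with (TR3) to the triangle $Y\to Z\to\Sigma X\to\Sigma Y$, compare it with the rotated trivial triangle on $W$, and desuspend. Repeated rotation then extends the sequence in both directions.

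For the direction ``$Z\cong0$ implies $f$ invertible'', take the sequence
\[
\T(W,\Sigma^{-1}Z)\to\T(W,X)\xrightarrow{f_*}\T(W,Y)\to\T(W,Z).
\]
Both outer groups vanish, because $\Sigma$ is an equivalence and so $\Sigma^{-1}Z\cong0$. Hence $f_*$ is bijective for every $W$, and Yoneda makes $f$ an isomorphism.

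For the converse, suppose $f$ is invertible. I would compare the given triangle with $X\xrightarrow{1_X}X\to0\to\Sigma X$, which is exact by (TR1). Using $1_X$ and $f$ as the first two vertical components, (TR4) supplies a morphism of triangles whose third component is $0\to Z$. All the other components are isomorphisms, so a five-lemma argument for triangles makes $0\to Z$ an isomorphism, and hence $Z\cong0$.

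That five-lemma argument is itself an application of the long exact sequences: apply $\T(W,-)$ to both triangles and use the ordinary five lemma to get bijectivity on $\T(W,-)$ at the third spot. Alternatively, I can avoid the five lemma. With $f$ invertible, $f_*$ is bijective for every $W$, and exactness then forces $\T(W,Z)\to\T(W,\Sigma X)$ to be injective and $\T(W,Y)\to\T(W,Z)$ to be zero. Since $\Sigma f$ is also invertible, the next map $\T(W,\Sigma X)\to\T(W,\Sigma Y)$ is injective as well. Exactness at $\T(W,\Sigma X)$ then says the image of $\T(W,Z)$ is zero, so $\T(W,Z)=0$ for every $W$. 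Taking $W=Z$ gives $1_Z=0$, so $Z\cong0$.

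The main obstacle is the first step, proving that $\T(W,-)$ is exact at the middle term. This is the one place where (TR3) and (TR4) have to be combined carefully, including the signs introduced by rotation. Everything after it is formal. Since the statement is cited from \cite{GelfandManin}, it would also be acceptable simply to quote this homological-functor fact.
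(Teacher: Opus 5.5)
Your proposal is correct and follows essentially the same route as the paper: apply $\operatorname{Hom}(W,-)$ to get the long exact sequence, use vanishing of the outer terms together with Yoneda for one direction, and for the converse use invertibility of $f$ and $\Sigma f$ to force $\operatorname{Hom}(W,Z)=0$, then take $W=Z$ to get $1_Z=0$. The paper simply asserts the long exact sequence that you sketch via (TR3)--(TR4), and your alternative five-lemma route is not needed.
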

\begin{proof}
Applying $\operatorname{Hom}(W,-)$ gives an exact sequence extending in both directions. If $Z=0$, the map $\operatorname{Hom}(W,f)$ is bijective for every $W$, so $f$ is invertible by Yoneda. Conversely, if $f$ is invertible, the same sequence, using also invertibility of $\Sigma f$, gives $\operatorname{Hom}(W,Z)=0$ for every $W$. Take $W=Z$ to obtain $1_Z=0$.
\end{proof}

\begin{remark}
    \lemx \ref{lemmaisozero} is the counterpart for triangulated categories of the fact that given a short exact sequence
    $$0\aar{} X\aar{f} Y \aar{} Z \aar{} 0$$
    in an abelian category $\mathcal{A}$, the morphism $f$ is an isomorphism if and only if $Z$ is the zero object in $\mathcal{A}$. This shows once again how exact triangle can be seen as good replacements of short exact sequences.
\end{remark}

\begin{definition}
    An \dfn{exact} functor $\T \to \U$ between triangulated categories is a pair $(F,\mu)$ where $F\: \T \to \U$ is an additive functor and $\mu\: F \c \Sigma^{\T} \Rightarrow \Sigma^{\U} \c F$ is an invertible natural transformation such that for every exact triangle in $\T$
\vspace{-1.5mm}
\begin{cd}
X \arrow[r, "u"] \& Y \arrow[r, "v"]  \& Z  \arrow[r, "w"] \&  \Sigma^{\T} X
\vspace{-2.5mm}
\end{cd}
the triangle
\vspace{-1.5mm}
\begin{cd}
FX \arrow[r, "Fu"] \& FY \arrow[r, "Fv"]  \& FZ  \arrow[r, "\mu_X \c Fw"] \&  {\Sigma ^{\U} FX}
\vspace{-1.5mm}
\end{cd}
is exact in $\U$.
\end{definition}

\begin{remark}
    Along the lines of \remx\ref{remexacttrianglesasses} and \remx\ref{remconesascokernels}, that intuitively see exact triangles as short exact sequences and cones as cokernels, we can see exact functors between triangulated categories as an analogue of the exact functors between abelian categories.
\end{remark}

\begin{definition}
    An \dfn{exact natural transformation} $\alpha$ between exact functors $(F,\mu^F),(G,\mu^G)$ from $\T$ to $\U$ triangulated categories is a natural transformation $\alpha\:F\aR{}G$ such that the following equality holds:
    \begin{eqD*}
        \begin{cd}*
            \T \& \U \\
	\T \& \U
	\arrow[""{name=0, anchor=center, inner sep=0}, "G", bend left=45, from=1-1, to=1-2]
	\arrow[""{name=1, anchor=center, inner sep=0}, "F"',bend right=20, from=1-1, to=1-2]
	\arrow[""{name=2, anchor=center, inner sep=0}, "{\Sigma^{\T}}"', from=1-1, to=2-1]
	\arrow[""{name=3, anchor=center, inner sep=0}, "{\Sigma^{\U}}", from=1-2, to=2-2]
	\arrow["F"', from=2-1, to=2-2]
	\arrow["{\alpha}"',shorten <=0.5ex, shorten >=0.5ex, Rightarrow, from=1, to=0]
	\arrow["{\mu^F}"',shorten <=2.5ex, shorten >=2.5ex, Rightarrow, from=2, to=3]
        \end{cd} \quad = \quad
        \begin{cd}*
            \T \& \U \\
	\T \& \U
	\arrow["G", from=1-1, to=1-2]
	\arrow[""{name=0, anchor=center, inner sep=0}, "{\Sigma^{\T}}"', from=1-1, to=2-1]
	\arrow[""{name=1, anchor=center, inner sep=0}, "{\Sigma^{\U}}", from=1-2, to=2-2]
	\arrow[""{name=2, anchor=center, inner sep=0}, "F"', bend right=45, from=2-1, to=2-2]
	\arrow[""{name=3, anchor=center, inner sep=0}, "G",bend left=20, from=2-1, to=2-2]
	\arrow["{\mu^G}",shorten <=2.5ex, shorten >=2.5ex, Rightarrow, from=0, to=1]
	\arrow["\alpha"',shorten <=0.5ex, shorten >=0.5ex, Rightarrow, from=2, to=3]
        \end{cd}
    \end{eqD*}
\end{definition}

\begin{proposition}
    Triangulated categories, exact functors and exact natural transformations form a 2-category, that we will denote $\Triang$.
\end{proposition}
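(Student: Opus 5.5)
We have to show that triangulated categories, exact functors and exact natural transformations form a $2$-category. The plan is to realise $\Triang$ as a locally full sub-$2$-category of the strict $2$-category $\Cat$ (or of $\Add$). Everything not involving triangles or shift isomorphisms is then inherited, and the work reduces to closure checks.

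First, $1$-cells. Composition of $(F,\mu^F)\colon\T\to\U$ and $(G,\mu^G)\colon\U\to\V$ is defined as $(GF,\ (\mu^G F)\c(G\mu^F))$, where
\[
(\mu^G F)\c(G\mu^F)\colon GF\Sigma^{\T}\Rightarrow G\Sigma^{\U}F\Rightarrow\Sigma^{\V}GF .
\]
We must check three things about this composite.
\begin{enumerate}
\item $GF$ is additive. This holds because additive functors compose.
\item The pasted transformation is invertible, as a composite of whiskered invertible transformations.
\item $GF$ sends exact triangles to exact triangles. Start from an exact triangle $X\to Y\to Z\xrightarrow{w}\Sigma X$. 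Applying $F$ gives the exact triangle with last map $\mu^F_X\c Fw$. Applying $G$ then gives an exact triangle with last map $\mu^G_{FX}\c G\mu^F_X\c GFw$, which is exactly the composite structure evaluated at $X$.
\end{enumerate}
The identity $1$-cell is $(1_{\T},1_{\Sigma})$. Associativity and the unit laws hold strictly, because they hold for the underlying functors and the shift isomorphisms compose by strict pasting in $\Cat$.

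Second, $2$-cells. Identity transformations are exact. The defining equation $\mu^G\c\alpha\Sigma^{\T}=\Sigma^{\U}\alpha\c\mu^F$ is stable under vertical composition: paste the two squares for $\alpha$ and $\beta$. For horizontal composition, take $\alpha\colon F\Rightarrow F'$ on $\T\to\U$ and $\beta\colon G\Rightarrow G'$ on $\U\to\V$. We verify that $\beta*\alpha=\beta F'\c G\alpha$ is exact with respect to the composite shift isomorphisms. This follows by whiskering the exactness equation of $\alpha$ by $G$, using the exactness of $\beta$ at the objects $F'X$, and invoking naturality of $\mu^G$ with respect to $\alpha$ to interchange the middle terms.

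Once these closures are established, the interchange law and all remaining $2$-category axioms are inherited from $\Cat$, since the $2$-cells are ordinary natural transformations with the usual compositions. The only step with real content is the horizontal-composition check. There the expected obstacle is purely bookkeeping: correctly invoking the naturality square of $\mu^G$ against $G\alpha$ so that the two pasting composites agree. I would write it componentwise at an object $X$ as a commutative diagram of four squares.
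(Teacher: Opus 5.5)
Your proposal is correct and is essentially the paper's proof: the same composite shift comparison $(\mu^G F)(G\mu^F)$, preservation of exact triangles by applying $F$ and then $G$, closure of exact transformations under vertical and horizontal composition by pasting, and the remaining axioms inherited from $\Cat$. Two small points: your list of ingredients for the horizontal check leaves out naturality of $\beta$ with respect to $\mu^{F'}_X$, which is the fourth square of your diagram. Also, strictly speaking $\Triang$ is not a locally full sub-$2$-category of $\Cat$, since its $1$-cells carry the datum $\mu$ and only exact transformations are admitted, though the argument you actually give handles both points correctly.
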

\begin{proof}
    The composite $(G,\mu^G)(F,\mu^F)$ has shift comparison $(\mu^G F)(G\mu^F)$. It preserves exact triangles by applying $F$ and then $G$. The identity functor has identity shift comparison. Vertical and horizontal composites of transformations satisfying the displayed shift equation satisfy it by pasting the two equations. The ordinary associativity, unit, and interchange equations in $\Cat$ therefore restrict to these data.
\end{proof}

\begin{remark}
    One could potentially consider a looser version of the 2-category $\Triang$ of triangulated categories, where 2-cells are all natural transformations. Interestingly, all the results of this section hold similarly for such looser 2-category as well.
\end{remark}

\begin{proposition}\label{propffinCatisffinTriang}
    Let $F\:\T\to \U$ be an exact functor. If $F$ is a fully faithful functor, then it is also a fully faithful arrow in $\Triang$. That is, for all exact functors $H,H'\: \mathcal{V} \to \T$ and every exact natural transformation
\begin{cd}[4.5][4.5]
\V  \& \T \arrow[d, Rightarrow, "\lambda", shorten <= -0.5ex, shorten <= -0.5ex] \& \U \\[-3ex]
	\& \T
	\arrow[from=1-1, to=1-2,"H"]
	\arrow[from=1-1, to=2-2, bend right=20,"{H'}"']
	\arrow[from=1-2, to=1-3,"F"]
	\arrow[from=2-2, to=1-3, bend right=20,"F"']
\end{cd}
  there exists a unique exact natural transformation $\overline{\lambda}\: H \Rightarrow H'$ such that $F \star \overline{\lambda} = \lambda$.

  Dually, if $F$ is co-fully faithful as a functor then it is also a co-fully faithful arrow in $\Triang$.
\end{proposition}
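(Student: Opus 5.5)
Since $F$ is fully faithful as a functor, each component $\lambda_V\colon FHV\to FH'V$ has a unique preimage $\overline\lambda_V\colon HV\to H'V$ with $F(\overline\lambda_V)=\lambda_V$. This defines the candidate $\overline\lambda$ and forces uniqueness. We then verify two things: that $\overline\lambda$ is natural, and that it is exact with respect to the shift comparisons of $H$ and $H'$. Whiskering is computed componentwise, so $F\star\overline\lambda=\lambda$ holds by construction.

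For naturality, take $g\colon V\to V'$. Both $H'g\circ\overline\lambda_V$ and $\overline\lambda_{V'}\circ Hg$ are morphisms $HV\to H'V'$. Their images under $F$ are $FH'g\circ\lambda_V$ and $\lambda_{V'}\circ FHg$, which coincide by naturality of $\lambda$. Faithfulness of $F$ then gives the equality.

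The step needing care is exactness, because the shift comparisons of the composites $FH$ and $FH'$ mix $\mu^F$ with $F\mu^H$. For exactness of $\overline\lambda$ we must show, for each $V$,
\[
\mu^{H'}_V\circ\overline\lambda_{\Sigma V}=\Sigma\overline\lambda_V\circ\mu^H_V
\]
as morphisms $H\Sigma V\to\Sigma H'V$. Since $F$ is faithful, it suffices to check this after applying $F$ and then postcomposing with the isomorphism $\mu^F_{H'V}\colon F\Sigma H'V\to\Sigma FH'V$. On the left-hand side, this produces $\mu^F_{H'V}\circ F\mu^{H'}_V\circ\lambda_{\Sigma V}$. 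The first two factors together are the shift comparison of $FH'$ at $V$. On the right-hand side, naturality of $\mu^F$ at the morphism $\overline\lambda_V$ rewrites $\mu^F_{H'V}\circ F\Sigma\overline\lambda_V$ as $\Sigma F\overline\lambda_V\circ\mu^F_{HV}=\Sigma\lambda_V\circ\mu^F_{HV}$. The right-hand side therefore becomes $\Sigma\lambda_V\circ(\mu^F_{HV}\circ F\mu^H_V)$, where the bracket is the shift comparison of $FH$. The two expressions agree precisely because $\lambda$ is an exact transformation $FH\Rightarrow FH'$. Since $\mu^F_{H'V}$ is invertible, we may cancel it, and faithfulness of $F$ then yields the required equation. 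This cancellation is the only point where invertibility of $\mu^F$ is used.

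The dual statement follows in the same way. If $F$ is co-fully faithful as a functor, then precomposition with $F$ is fully faithful on natural transformations. Given exact $K,K'$ out of $\U$ and an exact $\lambda\colon KF\Rightarrow K'F$, we therefore obtain a unique natural $\overline\lambda\colon K\Rightarrow K'$ with $\overline\lambda F=\lambda$. Exactness of $\overline\lambda$ is a pair of parallel transformations $K\Sigma\Rightarrow\Sigma K'$, and we compare them by precomposing with $F$. After inserting the invertible $\mu^F$, via $K\mu^F$ and naturality, this comparison reduces to the exactness of $\lambda$ with respect to the composite comparisons $(\mu^KF)(K\mu^F)$ and $(\mu^{K'}F)(K'\mu^F)$. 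Faithfulness of precomposition with $F$ then concludes. Here again the main obstacle is the bookkeeping that brings the composite comparisons into the form needed to apply invertibility of $\mu^F$.
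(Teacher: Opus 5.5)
Your proposal is correct and follows the paper's proof. In both, $\lambda$ is lifted through the full faithfulness of $F$ in $\Cat$ (respectively, of precomposition with $F$), and shift compatibility is then verified after applying $F$ and transporting through the invertible $\mu^F$, concluding by faithfulness. Your explicit use of naturality of $\mu^F$ at $\overline\lambda_V$ and of the composite comparisons $(\mu^F H')(F\mu^{H'})$ spells out what the paper compresses into ``the needed axiom can be checked after whiskering with $F$''.
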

\begin{proof}
    It is well-known that fully faithful functors are precisely fully faithful arrows in $\Cat$. Then, starting from the underlying natural transformation of $\lambda$, there exists a unique natural transformation $\overline{\lambda}\:H\aR{}H'$ such that $F\star \overline{\lambda}=\lambda$. It remains to prove that $\overline{\lambda}$ is exact. But the needed axiom can be checked after whiskering with $F$, since $F$ is fully faithful, and that holds because $\lambda$ is an exact natural transformation.

    For the co-fully faithful assertion, lift the underlying transformation by the co-full faithfulness of $F$ in $\Cat$. The two sides of its shift-compatibility equation are transformations between functors out of the codomain of $F$. After precomposition with $F$ and transport through the invertible shift comparison $\mu^F$, their equality is the shift-compatibility equation for the given transformation. Faithfulness of precomposition with $F$ therefore gives the required equality before precomposition. Uniqueness is inherited from the underlying natural-transformation lifting.
\end{proof}

\begin{proposition}\label{propequivinTriang}
    For an exact functor between triangulated categories, being an equivalence in the 2-category $\Triang$ is the same as being an equivalence in the 2-category $\Cat$ of categories.
\end{proposition}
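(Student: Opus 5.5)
One direction is immediate: an equivalence $(F,\mu^F)$ in $\Triang$ has a quasi-inverse $(G,\mu^G)$ and invertible exact transformations $GF\cong 1$ and $FG\cong 1$. Forgetting the shift data, these give an equivalence in $\Cat$.

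For the converse, suppose the underlying functor $F\colon\T\to\U$ is an equivalence of categories. The plan is to avoid constructing $\mu^G$ and checking triangle preservation by hand. Instead I will use the representable characterization of equivalences in a $2$-category. A $1$-cell is an equivalence exactly when it is representably fully faithful and admits a $1$-cell $G$ with an invertible $2$-cell $FG\cong 1$. By Proposition~\ref{propffinCatisffinTriang}, $F$ is fully faithful in $\Triang$. So it remains to produce an exact $G$ with an exact invertible $FG\cong 1_{\U}$. Then the unit $1\cong GF$ is obtained by lifting the invertible exact transformation $F\cong FGF$ through the full faithfulness of $F$ in $\Triang$. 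Its inverse lifts the same way, and uniqueness of lifts makes the two mutually inverse.

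To build $G$, I choose for each $U\in\U$ an object $GU$ and an isomorphism $\varepsilon_U\colon FGU\to U$. Full faithfulness of $F$ then makes $G$ a functor and $\varepsilon$ natural. $G$ is additive because it is an equivalence between additive categories.

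The shift comparison $\mu^G_U\colon G\Sigma U\to\Sigma GU$ is defined as the unique preimage under $F$ of the composite
\[
F G\Sigma U\xrightarrow{\varepsilon}\Sigma U\xrightarrow{\Sigma\varepsilon^{-1}}\Sigma FGU\xrightarrow{(\mu^F_{GU})^{-1}}F\Sigma GU .
\]
It is invertible and natural by faithfulness. Shift-compatibility of $\varepsilon$ then holds by construction.

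The main obstacle is showing that $G$ preserves exact triangles. Given an exact triangle $U\to V\to W\to\Sigma U$ in $\U$, I use (TR2) to complete $GU\to GV$ to an exact triangle $GU\to GV\to C\to\Sigma GU$ in $\T$. Applying $F$ gives an exact triangle, and via $\varepsilon$ and $\mu$ this is isomorphic on its first two terms to the original one. By (TR4) there is a comparison map on the third terms. The five lemma for the long exact $\operatorname{Hom}$ sequences, as in the proof of Lemma~\ref{lemmaisozero}, shows this comparison is an isomorphism. So $F$ applied to the triangle $(GU,GV,GW)$, with connecting map transported through $\mu^G$, is isomorphic to $F$ applied to the exact triangle on $C$. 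Full faithfulness of $F$ reflects this isomorphism of triangles back to $\T$, and (TR0) then gives exactness of the image of the triangle under $G$.

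Care is needed to verify the commutativity of the third square involving $\mu^F$ and $\mu^G$. This reduces to the defining equation of $\mu^G$.
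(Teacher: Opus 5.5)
Your proposal is correct and follows the paper's proof in all essentials. Your defining composite for $F(\mu^G_U)$ is exactly the formula the paper derives for $F(\mu^G_A)$ from the mate of $\mu^F$, so the counit's shift-compatibility holds by construction in both. Triangle preservation is the same (TR2)--(TR4)--five-lemma argument, followed by lifting the comparison $\varepsilon_W^{-1}j$ through the fully faithful $F$ and checking the third square against the definition of $\mu^G$.

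The only difference is in how you get the unit and its shift-compatibility. You lift the exact isomorphism $\varepsilon^{-1}F\colon F\Rightarrow FGF$ through Proposition~\ref{propffinCatisffinTriang}, which is legitimate once $G$ is known to be exact, so that $GF$ is a $1$-cell of $\Triang$. The paper instead starts from an adjoint equivalence and verifies the unit's shift equation by a direct computation, using a triangle identity and faithfulness of $F$.
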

\begin{proof}
    Let $F\:\S\to \T$ be an exact functor between triangulated categories. Of course, if $F$ is an equivalence in $\Triang$, then the underlying functors and natural transformations involved exhibit an equivalence in $\Cat$. Assume now that the underlying functor of $F$ is an adjoint equivalence in $\Cat$, exhibited by a pseudoinverse $G\:\T\to \S$, a unit $\eta\:\Id{}\aR{}G\c F$ and a counit $\epsilon\:F\c G\aR{}\Id{}$. We prove that $G$ can be automatically equipped with the structure of an exact functor, so that unit and counit become exact natural transformations and the adjoint equivalence is lifted to $\Triang$. Surely, $G$ is automatically an additive functor, since equivalences preserve limits and colimits. Then call $\Sigma^{\S}$ and $\Sigma^{\T}$ the translation functors associated to $\S$ and $\T$ respectively, and call $\mu^F$ the isomorphism $F\c \Sigma^{\S}\iso \Sigma^{\T}\c F$ that exhibits $F$ as an exact functor. The mate of $\mu^F$, with respect to the adjunction $F\dashv G$, yields a natural isomorphism $\mu^G\:G\c \Sigma^{\T}\iso \Sigma^{\S}\c G$, as needed. Explicitly, given $A\in \T$, the component $\mu^G_A$ of $\mu^G$ on $A$ is given by the composite
    $$G(\Sigma^{\T}(A))\aiso{G\Sigma^{\T}\epsilon^{-1}} G(\Sigma^{\T}(F(G(A))))\aiso{G(\mu^{F})^{-1}G}G(F(\Sigma^{\S}(G(A))))\aiso{\eta^{-1}\Sigma^{\S}G}\Sigma^{\S}(G(A))$$

    We check the shift equations for the unit and counit explicitly.
The triangular identity $F(\eta_X^{-1})=\epsilon_{F(X)}$
and naturality of $\epsilon$ give, for $A\in\T$,
\begin{align*}
 F(\mu^G_A)
 &= (\mu^F_{G(A)})^{-1}
    (\Sigma^{\T}(\epsilon_A))^{-1}
    \epsilon_{\Sigma^{\T}(A)},\\
 \Sigma^{\T}(\epsilon_A)\,\mu^F_{G(A)}\,F(\mu^G_A)
 &=\epsilon_{\Sigma^{\T}(A)}.
\end{align*}
The second equation is shift compatibility of the counit. For
$X\in\S$, the first equation, naturality of $\epsilon$ and
$\mu^F$, and $\epsilon_{F(X)}^{-1}=F(\eta_X)$ give
\begin{align*}
 F\bigl(\mu^G_{F(X)}\,G(\mu^F_X)\,\eta_{\Sigma^{\S}(X)}\bigr)
 &= (\mu^F_{GF(X)})^{-1}
    (\Sigma^{\T}(\epsilon_{F(X)}))^{-1}\mu^F_X\\
 &=F\bigl(\Sigma^{\S}(\eta_X)\bigr).
\end{align*}
Faithfulness of $F$ gives shift compatibility of the unit.

    We show that $G$ preserves exact triangles. So let
    $$A\aar{l}B\aar{m}C\aar{n}\Sigma^{\T}(A)$$ be an exact triangle in $\T$. We need to prove that
    $$G(A)\aar{G(l)}G(B)\aar{G(m)}G(C)\aar{\mu^G_A\c G(n)}\Sigma^{\S}(G(A))$$
    is an exact triangle in $\S$. Surely, $G(l)$ can be completed to an exact triangle in $\S$
    $$G(A)\aar{G(l)}G(B)\aar{v}W\aar{w}\Sigma^{\S}(G(A))$$
    Then since $F$ preserves exact triangles, also
    $$F(G(A))\aar{F(G(l))}F(G(B))\aar{F(v)}F(W)\aar{\mu^F_{G(A)}\c F(w)}\Sigma^{\T}(F(G(A)))$$
    is an exact triangle. But notice that such triangle is isomorphic to the starting one. Indeed, the commutative square
    \twosquare[n][5][5]{F(G(A))}{F(G(B))}{A}{B}{F(G(l))}{\epsilon_A}{\epsilon_B}{l}
    given by the naturality of the counit $\epsilon$ induces a morphism of triangles
    \begin{cd}[4]
        {F(G(A))} \& {F(G(B))} \& {F(W)} \& {\Sigma^{\T}(F(G(A)))} \\
	A \& B \& C \& {\Sigma^{\T}(A)}
	\arrow["{F(G(l))}", from=1-1, to=1-2]
	\arrow["{\epsilon_A}"',aiso, from=1-1, to=2-1]
	\arrow["{F(v)}", from=1-2, to=1-3]
	\arrow["{\epsilon_B}"',aiso, from=1-2, to=2-2]
	\arrow["{\mu^F_{G(A)}\c F(w)}", from=1-3, to=1-4]
	\arrow["j", dashed, from=1-3, to=2-3]
	\arrow["{\Sigma^{\T}(\epsilon_A)}", from=1-4, to=2-4]
	\arrow["l"', from=2-1, to=2-2]
	\arrow["m"', from=2-2, to=2-3]
	\arrow["n"', from=2-3, to=2-4]
    \end{cd}

    And by the triangulated five lemma, $j\:F(W)\to C$ must be an isomorphism. Then, since $F$ is fully faithful, the isomorphism $F(W)\aar{j}C\aar{\epsilon_C^{-1}}F(G(C))$ in $\T$ induces a unique isomorphism $j'\:W\iso G(C)$ in $\S$ such that $F(j')=\epsilon_C^{-1}\c j$. We can now see that the following is an isomorphism of triangles
    \begin{cd}[4]
    {G(A)} \& {G(B)} \& W \& {\Sigma^{\S}(G(A))} \\
	{G(A)} \& {G(B)} \& {G(C)} \& {\Sigma^{\S}(G(A))}
	\arrow["{G(l)}", from=1-1, to=1-2]
	\arrow[equals, from=1-1, to=2-1]
	\arrow["v", from=1-2, to=1-3]
	\arrow[equals, from=1-2, to=2-2]
	\arrow["w", from=1-3, to=1-4]
	\arrow["j'", dashed, from=1-3, to=2-3]
	\arrow[equals, from=1-4, to=2-4]
	\arrow["{G(l)}"', from=2-1, to=2-2]
	\arrow["{G(m)}"', from=2-2, to=2-3]
	\arrow["{\mu^G_A\c G(n)}"', from=2-3, to=2-4]
    \end{cd}
    since the squares commute after applying the fully faithful functor $F$, using naturality and the counit shift equation proved above. Thus $G$ is exact, and the verified shift equations make $\eta$ and $\epsilon$ exact natural transformations, proving the assertion.
\end{proof}

We show that the $2$-category $\Triang$ has a strong bizero object
in the two-sided sense of Definition~\ref{def:2pointed}.

\begin{proposition}\label{prop:tri-strongzero}
The one-object zero triangulated category $\0$ is a strong bizero
object in $\Triang$. In particular, a null exact functor is both
initial and terminal in its hom-category.

Consequently, $\Triang$ has the canonical $2$-ideal of null
$1$-cells and null $2$-cells and the associated notions of
$2$-kernel and $2$-cokernel; see Remark~\ref{remcanonical2ideal}.
\end{proposition}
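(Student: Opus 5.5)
We must show three things: that $\0$ is a bizero object, that every null exact functor is a zero object of its hom-category, and that the 2-ideal consequence follows. The last is then immediate from Remark~\ref{remcanonical2ideal} and Lemma~\ref{lem:localpointedness}.

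For the bizero property, we first describe the exact functors into and out of $\0$. Let $\T$ be triangulated. An exact functor $\T\to\0$ is unique as a functor, and its shift isomorphism is the unique natural transformation into the constant functor at the unique object. It preserves triangles because the only triangle in $\0$ is exact by (TR1). The only $2$-cell is the identity, so $\Triang(\T,\0)$ is terminal.

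An exact functor $\0\to\T$ is additive, so it sends the unique object to a zero object $Z$. Its shift comparison $\mu$ has a single component, namely the unique map $Z\to\Sigma Z$. This map is invertible because $\Sigma$, being an additive equivalence, preserves zero objects. The image of the unique triangle is $Z\to Z\to Z\to\Sigma Z$, which is isomorphic to the (TR1) triangle of $Z$, hence exact by (TR0). Between two such functors there is exactly one natural transformation, namely the unique map between zero objects, and it is automatically shift compatible since both sides of the equation are maps into the zero object $\Sigma Z'$. Hence $\Triang(\0,\T)$ is equivalent to $\mathbf 1$.

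For strongness, let $(F,\mu^F),(Z,\mu^Z):\T\to\U$ be exact with $Z$ null. By Definition~\ref{def:2pointed}, $Z$ is isomorphic to a functor factoring through $\0$. Its underlying functor therefore takes zero-object values, because exact natural isomorphisms have invertible components. We then take the components $F(X)\to Z(X)$ and $Z(X)\to F(X)$ to be the unique maps to and from zero objects. Both naturality and shift compatibility hold, and are the only possible choices, because every equation involved is an equality of parallel maps into or out of a zero object: $Z(Y)$, $\Sigma^{\U}Z(X)$, or its shift. This gives exactly one exact natural transformation in each direction, which is precisely the strong condition.

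None of these steps is a serious obstacle. The step needing the most care is checking that the shift-compatibility pasting equation has the zero object as its common codomain or domain in both directions. One also has to remember that $\Sigma^{\U}$ preserves zero objects, because it is an additive equivalence.
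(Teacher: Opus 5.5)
Your proposal is correct and follows essentially the same route as the paper. Both describe the exact functors to and from $\0$, then obtain the unique $2$-cells $F\Rightarrow Z$ and $Z\Rightarrow F$ componentwise. Naturality and shift compatibility hold because the two sides share the zero codomain $\Sigma_{\U}Z(X)$, respectively the zero domain $Z(\Sigma_{\T}X)$. The paper additionally records the converse, that zero-valued exact functors are null, but the statement itself does not need it.
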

\begin{proof}
Equip the category $\0$ with its unique triangulated structure.
There is a unique exact functor $\T\to\0$, including its shift
isomorphism, and a unique transformation between any two such
functors. An exact functor $\0\to\T$ selects a zero object of
$\T$, with the unique possible shift isomorphism. Any two such
choices are related by a unique natural isomorphism, which is
shift-compatible because all its components involve zero objects.
Thus $\0$ is a bizero.

An exact functor is null if and only if it is zero-valued. Indeed,
the unique maps between zero objects identify a zero-valued functor
with a constant zero functor, and this isomorphism is shift-compatible
since its components and those of both shift isomorphisms are maps
between zero objects.

Let $F:\T\to\U$ be arbitrary exact and let $Z:\T\to\U$ be
null. The unique maps $F(X)\to Z(X)$ and $Z(X)\to F(X)$ assemble
into natural transformations $\alpha:F\Rightarrow Z$ and
$\beta:Z\Rightarrow F$: each naturality equation has a zero
codomain or zero domain. They are exact:
for every $X$, the required equations are
\[
 (\Sigma_{\U}\alpha_X)\mu^F_X
   =\mu^Z_X\alpha_{\Sigma_{\T}X},
 \qquad
 (\Sigma_{\U}\beta_X)\mu^Z_X
   =\mu^F_X\beta_{\Sigma_{\T}X}.
\]
In the first equation both sides have the same zero codomain
$\Sigma_{\U}Z(X)$; in the second they have the same zero domain
$Z(\Sigma_{\T}X)$. The equations therefore hold. The components
are forced in either direction, so these are the unique allowed
$2$-cells $F\Rightarrow Z$ and $Z\Rightarrow F$. This proves the
strong bizero property.
\end{proof}

\begin{definition}
A \dfn{zero exact functor} is a null exact functor, that is, an exact
functor isomorphic to one factoring through $\0$. Equivalently, it
is an exact functor taking every object to a zero object.
\end{definition}

We record explicitly the two-sided compatibility used above.

\begin{lemma}\label{lemmanattozeroisexact}
Let $F:\T\to\U$ be an exact functor and $Z:\T\to\U$ a zero
exact functor. There is exactly one natural transformation
$F\Rightarrow Z$ and exactly one $Z\Rightarrow F$, and both are
exact. Consequently $\0$ is also a strong bizero when all natural
transformations are admitted as $2$-cells.
\end{lemma}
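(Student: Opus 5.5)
The proof is a direct componentwise argument, essentially a repackaging of the last paragraph of the proof of Proposition~\ref{prop:tri-strongzero}, followed by an appeal to Definition~\ref{def:2pointed} for the looser $2$-category.

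\emph{Existence and uniqueness as plain natural transformations.} Since $Z$ is a zero exact functor, every $Z(X)$ is a zero object of $\U$. So for each $X$ there is exactly one morphism $F(X)\to Z(X)$ and exactly one $Z(X)\to F(X)$. We take these as the components $\alpha_X$ and $\beta_X$. For $f:X\to Y$, the naturality square for $\alpha$ has codomain $Z(Y)$, which is zero, so both composites agree. Dually, the square for $\beta$ has domain $Z(X)$, which is zero. Hence $\alpha$ and $\beta$ are natural. Any natural transformation $F\Rightarrow Z$ or $Z\Rightarrow F$ must have exactly these forced components, which gives uniqueness among \emph{all} natural transformations, not merely exact ones.

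\emph{Exactness.} We check the shift-compatibility equation
\[
 (\Sigma_{\U}\alpha_X)\mu^F_X=\mu^Z_X\alpha_{\Sigma_{\T}X}.
\]
Both sides are maps $F(\Sigma_{\T}X)\to\Sigma_{\U}Z(X)$. The target is zero, because $\Sigma_{\U}$ is an additive equivalence and so preserves zero objects. Hence the two sides coincide. For $\beta$, both sides of the corresponding equation are maps out of $Z(\Sigma_{\T}X)$, which is zero, so they agree.

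\emph{Consequence for $\TriAll$.} Proposition~\ref{prop:tri-strongzero} already shows that $\0$ is a bizero in $\Triang$. The same argument applies in $\TriAll$: the hom-categories out of and into $\0$ acquire no new $2$-cells, because all components involve zero objects and are therefore forced. Null $1$-cells are the same in both $2$-categories. The uniqueness just proved holds among arbitrary natural transformations, so every null $1$-cell is a zero object of its hom-category in $\TriAll$. By Definition~\ref{def:2pointed}, $\0$ is strong there.

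The only point needing any care is the observation that $\Sigma_{\U}$ sends zero objects to zero objects; everything else is forced by zero domains or zero codomains.
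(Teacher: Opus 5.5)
Your proposal is correct and follows essentially the same route as the paper. The components are forced by the zero objects $Z(X)$. The two shift-compatibility equations hold because their sides share the zero codomain $\Sigma_{\U}Z(X)$ or the zero domain $Z(\Sigma_{\T}X)$. Since uniqueness already holds among arbitrary natural transformations, passing to $\TriAll$ leaves these singleton sets, the null $1$-cells and the bizero hom-categories unchanged. Your extra remarks, that $\Sigma_{\U}$ preserves zero objects and that no new $2$-cells appear at $\0$, spell out what the paper's three-line proof leaves implicit.
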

\begin{proof}
Existence and uniqueness follow componentwise from the zero-object
property. The two shift-compatibility equations are exactly those
verified in Proposition~\ref{prop:tri-strongzero}. Admitting all
natural transformations leaves these two singleton sets unchanged.
\end{proof}

We want to study the 2-categorical kernels and cokernels in the 2-category $\Triang$ of triangulated categories. We will show that, interestingly, 2-kernels and 2-cokernels correspond respectively to the fundamental concepts of thick triangulated subcategory and of Verdier localization of a triangulated category. We first recall these fundamental notions. We take \cite{StacksTri} as main reference.

\begin{definition}
    A non-empty full subcategory $\S$ of a triangulated category $\T$ is a \textbf{triangulated subcategory} if
\begin{itemize}
\item [(TS1)] $\Sigma^n X\in \S$ for every $X\in \S$ and every $n\in \mathbb{Z}$;
\item [(TS2)] given any exact triangle $X \aar{} Y \aar{} Z \aar{} \Sigma X$, if two objects in $\{X,Y,Z\}$ are in $\S$ so is the third one.
\end{itemize}
A triangulated subcategory $\S$ is \textbf{thick} if in addition the following condition holds:
\begin{itemize}
\item [(TS3)] $\S$ is closed under retracts, i.e.\ given any morphisms $X \aar{\pi} Y$ and $Y\aar{\iota} X$ in $\T$ such that $\pi \circ \iota =\id{Y}$ and $X\in \S$, we have that $Y\in \S$.
\end{itemize}
\end{definition}

Non-emptiness and (TS2) already imply repleteness. For $X\in\S$,
the exact triangle $X\xrightarrow{1_X}X\to0\to\Sigma X$ puts
$0$ in $\S$. If $X\to Y$ is an isomorphism with $X\in\S$,
the exact triangle $X\to Y\to0\to\Sigma X$ then puts $Y$ in
$\S$. Thus ``full'' here is automatically ``full replete''.

For triangulated categories, this notion of thickness is the
``strictly full saturated triangulated subcategory'' convention
of \cite{StacksTri}. Here saturation refers to closure under
retracts; it differs from the condition on arbitrary morphisms
in Definition~\ref{di:def:saturated}. In the Puppe-exact and
di-exact settings, thickness instead refers to closure under
normal subquotients and extensions.

There is a known important notion in the theory of triangulated categories that naively generalizes the notion of kernel from algebra. We will show that such notion actually satisfies the 2-dimensional universal property of 2-categorical kernels. So it can be captured by our theory.

\begin{definition}\label{defker}
    Let $F\:\T \to \U$ be an exact functor. The \dfn{kernel} of $F$, denoted $\ker(F)$ is the full subcategory of $\T$ on the objects $X$ such that $F(X)$ is a zero object in $\U$.
\end{definition}

We now recall the fundamental concept of Verdier localization.

\begin{definition}\label{defVerdloc}
    Let $\S$ be a triangulated subcategory of a triangulated category $\T$. The \textbf{Verdier localization} of $\T$ by $\S$ is a triangulated category $\T/\S$ together with an exact functor $Q\:\T \to \T / \S$ which sends all objects of $\S$ to a zero object, that is the universal such, in the sense that for every triangulated category $\U$ and every exact functor $H\:\T \to \U$ that sends all objects of $\S$ to a zero object, there exists a unique exact functor $\overline{H}\: \T / \S \to \U$ such that the following triangle commutes
    \begin{cd}[4]
        \T \& {\T/\S} \\
	\& \U
	\arrow["Q", from=1-1, to=1-2]
	\arrow["H"', from=1-1, to=2-2]
	\arrow["{\overline{H}}", dashed, from=1-2, to=2-2]
    \end{cd}
\end{definition}

\begin{remark}\label{remtriangleinTriang}
    The triangle drawn in \defx\ref{defVerdloc} exhibiting the universal property of the Verdier localization is a commutative triangle in $\Triang$, respecting on the nose the isomorphisms regulating the image of translations. In particular, the identity 2-cell that fills it is an exact natural transformation.
\end{remark}

It is known that the Verdier localization of a triangulated category by a triangulated subcategory can be explicitly constructed as a category of fractions.

\begin{definition}
    Let $\T$ be a triangulated category. A set $\mathcal{F}$ of morphisms in $\T$ is a \dfn{multiplicative system} if it admits a calculus of left and right fractions (see \cite{GZ,StacksLoc} for further details). A multiplicative system $\mathcal{F}$ is \dfn{compatible with the triangulation} if
    \begin{itemize}
        \item [(1)] given a morphism $f \in \mathcal{F}$, the morphism $\Sigma^n f$ is in $\F$ for every $n\in \mathbb{Z}$;
        \item [(2)] given a morphism $(f,g,h)$ between exact triangles with $f,g\in \F$, there exists a morphism $(f,g,h')$ between the same triangles with $h'\in \F$.
    \end{itemize}
\end{definition}

\begin{construction}
    Let $\S$ be a triangulated subcategory of $\T$ and let $\mathcal{M}(\S)$ be the set of  morphisms $X \aar{f} Y$ in $\T$ such that there exists an exact triangle $X \aar{f} Y \aar{} Z \aar{} \Sigma X$ in $\T$ with $Z \in \S$. It is known that $\mathcal{M}(\S)$  is a multiplicative system compatible with the triangulation, see \cite{StacksTri}. The \textbf{Verdier localization} $\T/\S$ of $\T$ by $\S$ coincides with the category of fractions
$$\T [\mathcal{M}(\S)^{-1}]$$
equipped with its quotient functor.
\end{construction}

\begin{remark}
    The construction above reveals another similarity between abelian (and Puppe exact) categories and triangulated categories. The class $\mathcal M(\S)$ is determined by requiring a cone to lie in $\S$, whereas the class $W_{\N}$ in Remark~\ref{remternaryfractionsSerrequotient} is determined by requiring both kernel and cokernel to lie in $\N$. Exactness of a functor turns the corresponding vanishing conditions into invertibility. The fraction lengths differ, but in both settings the universal property extends to all natural transformations.
\end{remark}

The following classical proposition about Verdier localizations will be useful later in the paper. Its construction and proof can be found in \cite{Verdier,StacksTri,StacksTriQuotientKernel}.

\begin{proposition}\label{propVerdfacts}
   Let $\S$ be a triangulated subcategory of a triangulated category $\T$. Then the category of fractions $\T [\mathcal{M}(\S)^{-1}]$ carries its canonical triangulated structure, whose exact triangles are precisely the triangles isomorphic to images of exact triangles of $\T$, such that the quotient functor $Q\:\T\to \T [\mathcal{M}(\S)^{-1}]$ is exact. This yields an explicit construction of the Verdier localization $\T/\S$.

   Moreover, $\ker (Q)$ is the smallest (replete) thick subcategory of $\T$ containing $\S$.
\end{proposition}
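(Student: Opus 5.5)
The statement is classical, so the plan is to assemble it from the standard Verdier theory of \cite{Verdier,StacksTri}, in four steps.

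First, I will verify that $\mathcal M(\S)$ is a multiplicative system compatible with the triangulation. Identities and isomorphisms lie in $\mathcal M(\S)$ by (TR1) and Lemma~\ref{lemmaisozero}, because $0\in\S$. Closure under composition comes from the octahedral axiom (TR5) combined with (TS2): if $f$ and $g$ have cones in $\S$, the octahedron gives an exact triangle relating the cone of $gf$ to those two cones, so the cone of $gf$ lies in $\S$. The Ore conditions are then checked in the usual way. Given a roof $X\xleftarrow{s}X'\xrightarrow{f}Y$, complete $s$ to a triangle, compose its connecting arrow with $f$, and take a cone; (TR4) and (TR3) then produce a morphism in $\mathcal M(\S)$ whose cone is a shift of the original cone, so it stays in $\S$ by (TS1). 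The cancellation condition follows from the long exact $\operatorname{Hom}$ sequences: if $fs=0$ with $s\in\mathcal M(\S)$, then $f$ factors through the cone of $s$, and completing that factorization to a triangle yields $t\in\mathcal M(\S)$ with $tf=0$. Compatibility clause (1) is rotation plus (TS1). Clause (2) is the step I expect to be the main obstacle. Here one applies the $3\times3$ consequence of (TR5) to the morphism of triangles and chooses the completion $h'$ so that its cone sits in a triangle whose other two terms are the cones of $f$ and $g$; (TS2) then gives $h'\in\mathcal M(\S)$.

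Second, I will put the triangulation on $\T[\mathcal M(\S)^{-1}]$. Two-sided fractions make the localization additive, and $\Sigma$ descends to it by clause (1). The exact triangles are declared to be those isomorphic to images of exact triangles of $\T$. Axioms (TR0)--(TR3) and (TR5) are checked by transporting to $\T$ after clearing denominators, using roof representatives. (TR4) is checked using clause (2) and the calculus of fractions. With these definitions $Q$ is exact with identity shift comparison. For the universal property, an exact $H$ killing $\S$ inverts $\mathcal M(\S)$ by Lemma~\ref{lemmaisozero}, so it factors uniquely as an ordinary functor. This factorization is additive and exact because every exact triangle of the localization is isomorphic to the image of one from $\T$.

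Third, I will identify $\ker(Q)$. It contains $\S$, it is a triangulated subcategory, and it is closed under retracts, since a retract of a zero object is zero in an additive category; so it contains the thick closure of $\S$. For the converse, suppose $QX\cong0$. Then $Q(1_X)=0$, so by the fraction calculus there is some $s\colon X\to X'$ in $\mathcal M(\S)$ with $s\cdot 1_X=0$, that is, $s=0$. Take the triangle $X\xrightarrow{0}X'\to C\to\Sigma X$ with $C\in\S$. A triangle whose first map is zero splits, giving $C\cong X'\oplus\Sigma X$. Hence $\Sigma X$ is a retract of an object of $\S$, and so $X$ lies in the thick closure, using (TS1) for that closure.
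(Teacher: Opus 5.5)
Your sketch is correct and is the classical Verdier/Stacks argument. The paper gives no independent proof of this proposition; it simply cites \cite{Verdier,StacksTri,StacksTriQuotientKernel}, so your route is exactly the one it relies on. Your kernel step also matches the paper's own later argument in Lemma~\ref{lem:VerdierUP}, except that the paper puts the zero denominator on the source side, $s\colon Z\to X$: the cone $X\oplus\Sigma Z$ then exhibits $X$ itself as a retract, so no appeal to (TS1) is needed.
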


\begin{remark}
    The previous result ensures that the Verdier localization of a triangulated category $\T$ by a triangulated subcategory $\S$ is determined by the smallest (replete) thick subcategory $\overline{\S}$ (also denoted $\langle\S\rangle$) of $\T$ containing $\S$. In fact, $\T/\S$ is canonically equivalent, as a triangulated category, to $\T/\overline{\S}$.
\end{remark}

We now prove a characterization of 2-kernels in the 2-category $\Triang$. In particular, we prove that the known notion of kernel of an exact functor satisfies the 2-dimensional universal property of a 2-kernel. So the 2-categorical theory introduced in \cite{CJM} and continued here captures kernels of exact functors between triangulated categories and unifies them with the kernels of exact functors between abelian (or Puppe exact) categories.

\begin{theorem}\label{teorchar2ker1}
    All 2-kernels in $\Triang$ exist and they are equivalently given by either of the following:
    \begin{itemize}
        \item [(i)] thick triangulated subcategories;
        \item [(ii)] kernels of exact functors in the sense of \defx \ref{defker}.
    \end{itemize}
\end{theorem}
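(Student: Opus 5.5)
The plan is to show three things in order. First, the kernel $\ker(F)$ of any exact functor $F\:\T\to\U$ (Definition~\ref{defker}), with its inclusion, is a $2$-kernel of $F$ in $\Triang$. Second, every such kernel is a thick triangulated subcategory. Third, every thick triangulated subcategory $\S\subseteq\T$ arises as $\ker(Q)$ for the Verdier localization $Q\:\T\to\T/\S$. Together with uniqueness of $2$-kernels up to equivalence, this gives existence and both descriptions (i) and (ii).

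\emph{Step 1: $\ker(F)$ is thick and triangulated.} For (TS1), exactness of $F$ gives $F(\Sigma^nX)\iso\Sigma^nF(X)\iso 0$. For (TS2), apply $F$ to an exact triangle with two vertices in $\ker(F)$. The image triangle has two zero vertices, and Lemma~\ref{lemmaisozero}, used after rotating if necessary, makes the remaining arrow an isomorphism between zero objects or forces the third vertex to be zero. For (TS3), a retract of a zero object is zero because additive functors preserve split idempotent data. Finally, the full subcategory inherits a triangulated structure: translation and the triangles of $\T$ whose vertices lie in $\ker(F)$, with (TR2) ensured by (TS2). Its inclusion $\iota$ is exact with identity shift comparison, and $F\iota$ is zero-valued, hence null.

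\emph{Step 2: the universal property.} I will verify (K1)--(K3) of Proposition~\ref{prop:expanded-up}.
\begin{itemize}
\item (K1) holds by Step~1.
\item (K2): given exact $(H,\mu)\:\V\to\T$ with $FH$ null, every $H(V)$ lies in $\ker(F)$, so $H$ corestricts on the nose to $\widetilde H\:\V\to\ker(F)$. The shift comparison $\mu$ corestricts by fullness, since its components are maps between objects of $\ker(F)$. Exactness of $\widetilde H$ holds because the exact triangles of $\ker(F)$ are exactly the ambient exact triangles inside it.
\item (K3): this is Proposition~\ref{propffinCatisffinTriang}, applied to the fully faithful inclusion.
\end{itemize}
This establishes existence and the description (ii).

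\emph{Step 3: every thick subcategory is a kernel.} Given a thick triangulated $\S$, take the Verdier localization $Q\:\T\to\T/\S$. By Proposition~\ref{propVerdfacts}, $\ker(Q)$ is the smallest replete thick subcategory containing $\S$, which is $\S$ itself since $\S$ is thick and automatically replete. So $\S\hookrightarrow\T$ is a $2$-kernel of $Q$ by Step~2. Conversely, any $2$-kernel of $F$ is equivalent over $\T$ to $\ker(F)\hookrightarrow\T$, by uniqueness up to equivalence together with Proposition~\ref{propequivinTriang}, and $\ker(F)$ is thick by Step~1.

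I expect the main obstacle to be the bookkeeping in Step~1: making sure the full subcategory genuinely carries a triangulated structure with an exact inclusion. In particular (TR5) must be checked inside the subcategory, which should follow because all cones produced in the ambient octahedron lie in $\ker(F)$ by (TS2). A second point needing care is the retract-closure argument, where zero objects must be handled up to isomorphism rather than on the nose.
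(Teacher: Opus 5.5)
Your proposal is correct and follows the paper's proof essentially step by step. Both show that $\ker(F)$ is thick, obtain the one-dimensional universal property by corestricting on the nose, get the two-dimensional part from Proposition~\ref{propffinCatisffinTriang}, and realize an arbitrary thick subcategory as $\ker(Q)$ for its Verdier localization via Proposition~\ref{propVerdfacts}. Your checks of (TS1)--(TS3) and of the inherited triangulation are more explicit than the paper's, which cites these facts as well known. The appeal to Proposition~\ref{propequivinTriang} in your last step is harmless but unnecessary, since uniqueness of representing objects already gives an equivalence in $\Triang$.
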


\begin{proof}
The fact that the kernel of an exact functor in the sense of \defx \ref{defker} is a thick triangulated subcategory of the domain triangulated category is a well-known result. Indeed shifts and cones of maps between annihilated objects are annihilated by exactness, and retracts are annihilated by additivity; see also \cite{StacksTriKernel}. Moreover, it is also true that every thick triangulated subcategory can be seen as a kernel in the sense of \defx \ref{defker}. Indeed, given a thick triangulated subcategory $\S$ of a triangulated category $\T$, the quotient functor $Q\: \T \to \T /\S$ is an exact functor whose kernel is $\S$ thanks to \prox \ref{propVerdfacts}.

Let now $F\: \T \to \U$ be an exact functor between triangulated categories. We show that the thick triangulated subcategory $\ker(F)$ satisfies the 2-dimensional universal property of the 2-kernel. The composite
$$\ker(F) \hookrightarrow \T \aar{F} \U$$
is clearly isomorphic to a null morphism by definition of $\ker(F)$. Notice that by \lemx\ref{lemmanattozeroisexact} this natural isomorphism is also exact. Consider now an exact functor $M\: \S \to \T$ such that $F \c M$ is isomorphic to a null exact functor. This means that for every $X\in \S$ the object $F(M(X))\in \U$ is isomorphic to $0$ and thus $M(X)\in\ker(F)$ by definition of $\ker(F)$. So we have an exact functor $\S \aar{\widetilde{M}} \ker(F)$ such that the composite of $\widetilde{M}$ with the inclusion of $\ker(F)$ in $\T$ is equal to $M$. Since this is an equality of exact functors, also respecting the isomorphisms regulating the image of translations, the identity 2-cell between them is automatically exact. We show that, furthermore, the  2-dimensional universal property of the 2-kernel is satisfied. $\ker(F)\hookrightarrow \T$ is an exact functor that is fully faithful as a functor. So by \prox\ref{propffinCatisffinTriang} we obtain that it is a fully faithful arrow in $\Triang$. This precisely means that $\ker(F)\hookrightarrow \T$ satisfies the 2-dimensional universal property of the 2-kernel.

So every exact functor $F$ has a 2-kernel that can be explicitly constructed as $\ker(F)$. This also shows that every 2-kernel is equivalent in $\Triang$ to a kernel in the sense of \defx\ref{defker} (or equivalently to a thick triangulated subcategory), whence we conclude.
\end{proof}

\begin{remark}
    \thex \ref{teorchar2ker1} shows that the intuitive notion of kernel of \defx \ref{defker} actually coincides with that of 2-dimensional kernel in the 2-category $\Triang$. Notice that the morphism $\S \rightarrow \ker(F)$ induced by the 1-dimensional universal property of the 2-kernel $\ker(F)$ in the proof is actually the unique one that makes the triangle strictly commute, since the exact functor $\ker(F) \hookrightarrow \T$ is injective on objects and faithful and thus it is a monomorphism in the 1-category of categories.

    The fact that kernels in the sense of \defx \ref{defker} and equivalently thick triangulated subcategories are actually 2-dimensional kernels allows us to inscribe them in the same framework of other fundamental concepts such as that of Serre subcategory of an abelian category.
\end{remark}

\begin{corollary}\label{cor2kerareff}
    2-kernels in $\Triang$ are fully faithful functors.
\end{corollary}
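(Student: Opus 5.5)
Every $2$-kernel in $\Triang$ is, up to equivalence over its codomain, the full inclusion of a kernel in the sense of \defx\ref{defker}. I will reduce to that representative and then transport full faithfulness along the equivalence.

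Let $k:K\to\T$ be a $2$-kernel of some exact $F:\T\to\U$. By \thex\ref{teorchar2ker1}, $\ker(F)\hookrightarrow\T$ is also a $2$-kernel of $F$. Uniqueness of $2$-kernels up to equivalence then gives an equivalence $e:K\to\ker(F)$ in $\Triang$ and an invertible $2$-cell $k\cong\iota e$, where $\iota$ is the inclusion. This follows by comparing the two representing objects of \eqref{eq:kernelUP}, or from Proposition~\ref{prop:expanded-up}, since (K2) and (K3) make the comparison maps mutually inverse up to isomorphism.

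Next I pass to underlying functors.
\begin{enum}
\item An equivalence in $\Triang$ has, in particular, an underlying equivalence of categories (the easy direction of \prox\ref{propequivinTriang}). So $e$ is fully faithful as a functor.
\item The inclusion $\iota$ of a full subcategory is fully faithful.
\item Composites of fully faithful functors are fully faithful, so $\iota e$ is fully faithful.
\item Full faithfulness is invariant under natural isomorphism: hom-maps of $k$ are conjugates of those of $\iota e$ by the components of the invertible $2$-cell. Hence $k$ is fully faithful.
\end{enum}

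Alternatively, one can argue directly from (K3). Every $2$-kernel is representably fully faithful, but in $\Triang$ only against exact test functors and exact transformations, so a test-category argument would be needed to deduce ordinary full faithfulness. Since such a test argument is not available as cleanly as with $\mathbf J$ in the Puppe case, I prefer the normal-form route above.

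The only point needing care is that the equivalence supplied by uniqueness is an equivalence in $\Triang$ with exact $2$-cells. Forgetting this structure is harmless, so I expect no real obstacle.
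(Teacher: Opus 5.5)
Your proposal is correct and follows essentially the same route as the paper. The paper likewise uses Theorem~\ref{teorchar2ker1} to identify any $2$-kernel, up to equivalence in $\Triang$, with the fully faithful inclusion $\ker(F)\hookrightarrow\T$, and then concludes because equivalences in $\Triang$ are in particular equivalences in $\Cat$.
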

\begin{proof}
    By \thex\ref{teorchar2ker1}, every 2-kernel $K\to \T$ of an exact functor $F\:\T\to \U$ is equivalent in $\Triang$ to the inclusion $\ker(F)\hookrightarrow \T$, which is a fully faithful functor. Since equivalences in $\Triang$ are in particular equivalences in $\Cat$, we conclude.
\end{proof}

We now prove that, furthermore, Verdier localizations are intimately connected with 2-cokernels in the 2-category $\Triang$. Indeed, we prove that Verdier localizations satisfy the 2-dimensional universal property of the 2-cokernel. Moreover, 2-cokernels in $\Triang$ can be constructed via Verdier localizations.

\begin{theorem}\label{teorchar2coker1}
 All 2-cokernels in $\Triang$ exist and they precisely correspond to Verdier localizations.
\end{theorem}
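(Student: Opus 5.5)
Given an exact functor $G\:\mathcal A\to\T$, I will take as candidate $2$-cokernel the Verdier quotient $Q\:\T\to\T/\S$, where $\S$ is the smallest thick triangulated subcategory of $\T$ containing the replete image of $G$; it can also be described as the Verdier localization by the triangulated subcategory generated by $G(\mathcal A)$, by the remark following Proposition~\ref{propVerdfacts}. First, $QG$ is null. Indeed $\ker(Q)=\S$ by Proposition~\ref{propVerdfacts}, so $QG$ is zero-valued. The resulting isomorphism to a zero exact functor is automatically exact by Lemma~\ref{lemmanattozeroisexact}.

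Next comes the $1$-dimensional part, condition (K2) in its cokernel form from Proposition~\ref{prop:expanded-up}. Let $H\:\T\to\U$ be exact with $HG$ null. Then $\ker(H)$ is a thick triangulated subcategory by Theorem~\ref{teorchar2ker1}, and it contains $G(\mathcal A)$, so it contains $\S$. The universal property in Definition~\ref{defVerdloc} then gives an exact $\overline H$ with $\overline H Q=H$ on the nose, shift isomorphisms included (Remark~\ref{remtriangleinTriang}). The identity $2$-cell is exact, so this provides the required factorization up to invertible $2$-cell.

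The $2$-dimensional part, condition (K3), says that precomposition with $Q$ is fully faithful on exact natural transformations. I expect this to be the main obstacle, because Definition~\ref{defVerdloc} speaks only about functors. My plan is to use the fraction model $\T[\mathcal M(\S)^{-1}]$ together with Lemma~\ref{lem:puppe-nat}. Since $Q$ is an ordinary localization, precomposition with $Q$ is fully faithful on all natural transformations, including non-invertible ones; this already settles the $\TriAll$ variant. It remains to check that the descended transformation $\beta\:R\Rightarrow S$ is exact whenever $\beta Q$ is exact. The shift-compatibility equation for $\beta$ is an equality of transformations $R\Sigma\Rightarrow\Sigma S$ between functors out of $\T/\S$. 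The shift of $\T/\S$ satisfies $\Sigma Q=Q\Sigma$ with identity comparison in the canonical model. Precomposing both sides with $Q$ therefore yields exactly the shift equation for $\beta Q$. Faithfulness of precomposition with $Q$ then gives the equation for $\beta$. This is precisely the co-fully-faithful argument of Proposition~\ref{propffinCatisffinTriang}, and I would simply invoke it, since $Q$ is co-fully faithful in $\Cat$ by Lemma~\ref{lem:puppe-nat}.

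Finally, for the word ``precisely'' there are two directions. Any $2$-cokernel is equivalent under $\T$ to the constructed one, since $2$-cokernels are unique up to equivalence and equivalences in $\Triang$ agree with those in $\Cat$ (Proposition~\ref{propequivinTriang}). Conversely, every Verdier localization $\T\to\T/\S$ is the $2$-cokernel of the inclusion of $\S$; when $\S$ is not thick, its thick closure gives the same quotient. So, up to equivalence, $2$-cokernels are exactly Verdier localizations.
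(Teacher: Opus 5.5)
Your proposal is correct and follows essentially the paper's proof. It uses the same candidate (the Verdier quotient by the thick closure of the image), the same one-dimensional argument via thickness of $\ker(H)$ and the Verdier universal property, and the same two-dimensional argument: co-full faithfulness of the localization in $\Cat$, upgraded to exact transformations as in Proposition~\ref{propffinCatisffinTriang}. The only difference is the order of the steps. The paper first shows that Verdier quotients by thick subcategories are $2$-cokernels of their inclusions, deduces the $2$-cell property for a general $G$ from that case, and identifies arbitrary $2$-cokernels via Proposition~\ref{propkeriskerofitscoker}; you instead treat a general $G$ directly and then specialize to inclusions.
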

\begin{proof}
   We first prove that every Verdier localization is a 2-cokernel in $\Triang$. Let $\S$ be a thick triangulated subcategory of a triangulated category $\T$ and let $Q\: \T \to \T /\S$ be the corresponding Verdier localization. We prove that $Q$ is the 2-cokernel of the inclusion $\S\hookrightarrow \T$. The composite
   $$\S \hookrightarrow \T \aar{Q} \T / \S$$
   is isomorphic to a null exact functor because the inclusion functor $\S \hookrightarrow \T$ is the kernel of $Q$ in the sense of \defx \ref{defker} thanks to \prox \ref{propVerdfacts}. Moreover such natural isomorphism is exact by \lemx\ref{lemmanattozeroisexact}. The fact that the Verdier localization satisfies the 1-dimensional universal property of the 2-cokernel is clearly guaranteed by the universal property of the Verdier localization (see \defx \ref{defVerdloc} and \remx\ref{remtriangleinTriang}). We now prove the 2-dimensional universal property. We prove that $Q$ is a co-fully faithful arrow in $\Cat$, whence we will deduce that it is a co-fully faithful arrow in $\Triang$ using \prox\ref{propffinCatisffinTriang}. So let $H,H'\: \T/\S \to \U$ be functors and let $\lambda\:H \c Q \Rightarrow H' \c Q$ be a natural transformation. We need to show that there exists a unique natural transformation $\overline{\lambda}\: H \Rightarrow H'\:\T/\S\to \U$ such that $\overline{\lambda}\star Q=\lambda$. We construct $\overline{\lambda}$ using the explicit construction of $\T/\S$ as category of fractions. Given $X\in \T/\S$, which means $X\in \T$ since $\T$ and $\T/\S$ have the same objects, we define $\overline{\lambda}_X:= \lambda_X$. Naturality for $Q(f)$ follows from naturality of $\lambda$. For a denominator $v$, its naturality equation can be multiplied by $H(Q(v))^{-1}$ and $H'(Q(v))^{-1}$ to obtain naturality for $Q(v)^{-1}$. Composing these two equations proves naturality for every roof $Q(f)Q(v)^{-1}$. No component of $\lambda$ is required to be invertible. In particular, we use that every fraction \begin{cd}*[2][2] \& D \& \\[-2ex]
	X \&\& Y
	\arrow["v"', from=1-2, to=2-1]
	\arrow["f", from=1-2, to=2-3]\end{cd} coincides with the composite $Q(f)\c Q(v)^{-1}$ in $\T/\S$. It is then clear that $\overline{\lambda}$ is such that its whiskering with $Q$ is $\lambda$ and that it is the unique such natural transformation as the components are forced to be those of $\lambda$. So we have shown that Verdier localizations are 2-cokernels in $\Triang$.

   Consider now a 2-cokernel
   $$\T \aar{G} \U \aar{c_{G}} C(G)$$
   in $\Triang$. Thanks to \prox \ref{propkeriskerofitscoker}, $c_{G}$ is also the 2-cokernel of its 2-kernel $\ker(c_{G})$, which is a thick triangulated subcategory by \thex \ref{teorchar2ker1}. And by what we have just proved the Verdier localization  $\U/\ker(c_{G})$ is the 2-cokernel of the inclusion $\ker(c_G)\hookrightarrow \U$.  So all 2-cokernels in $\Triang$ are (equivalent to) Verdier localizations.

   It remains to prove that all 2-cokernels in $\Triang$ exist. By what we have proved above, we know that we can build them via Verdier localizations. So let $G\:\T\to \U$ be an exact functor between triangulated categories. We build the 2-cokernel of $G$ as follows. Consider the full subcategory $\I$ of $\U$ on the objects that are $G(X)$ for some $X\in \T$. That is, the full subcategory of $\U$ on the objects of the image of $G$. We construct the 2-cokernel of $G$ to be the Verdier localization of $\U$ by the smallest (replete) thick triangulated subcategory $\langle \I \rangle$ generated by $\I$. This smallest thick subcategory is the intersection of all full replete thick subcategories containing $\I$. We prove that the canonical projection $Q\:\U\to \U/\langle \I\rangle$ satisfies the universal property of the 2-cokernel of $G$. Surely, $Q\c G$ is isomorphic to a null exact functor, since $Q$ maps to a zero object all objects of $\langle \I \rangle \contain \I$ (see also \lemx\ref{lemmanattozeroisexact}). Let then $M\:\U\to \mathcal{V}$ be an exact functor such that $M\c G$ is isomorphic to a null exact functor. This means that the kernel of $M$ in the sense of \defx\ref{defker} contains all objects of $\I$. But by \thex\ref{teorchar2ker1}, the kernel of $M$ is a thick triangulated subcategory of $\U$, whence it contains $\langle \I \rangle$. By the universal property of the Verdier localization $\U/\langle \I \rangle$, we obtain a unique exact functor $\overline{M}\:\U/\langle \I \rangle\to \mathcal{V}$ such that $\overline{M}\c Q=M$. Whence $Q$ satisfies the 1-dimensional universal property of the 2-cokernel of $G$ (see also \remx\ref{remtriangleinTriang}). It remains to prove that $Q$ is a co-fully faithful arrow in $\Triang$. But this is guaranteed by the fact that $Q$ is a 2-cokernel, precisely of the inclusion $\langle \I\rangle\hookrightarrow \U$, by what we have proved above.
\end{proof}

\begin{remark}
    Theorem \ref{teorchar2coker1} thus captures the fundamental concept of Verdier localization, which is widely used in derived category theory, as a 2-categorical cokernel. As a consequence, our theory unifies Serre quotients and Verdier localizations and opens the way to the study of further connected fundamental concepts in other 2-categories.

    Moreover, \thex\ref{teorchar2coker1} provides a way to associate a Verdier localization to any exact functor between triangulated categories, rather than just to a triangulated category equipped with a triangulated subcategory. In this sense, 2-cokernels in $\Triang$ are a natural generalization of the fundamental notion of Verdier localization.
\end{remark}

Starting from the characterization of 2-kernels and 2-cokernels in the 2-category of Puppe exact categories (or of abelian categories) presented in \thex\ref{thm:puppe-characterization} and replacing the words appropriately following the remarks above, we obtain a characterization of 2-kernels and 2-cokernels in $\Triang$. We make this precise below.

\begin{remark}
    The theory developed in Section \ref{sectionabelian} for Puppe exact categories and abelian categories has a very interesting analogue in the theory of triangulated categories, as we now present.

    Recall that cones of morphisms play the role of cokernels, and, after rotation, of kernels. Thus $W_{\N}$ from Theorem~\ref{thm:puppe-localization} corresponds to $\mathcal M(\S)$ above. The operation $(-)^{\mathsf{SE}^{\infty}}$ of Lemma~\ref{LemD} corresponds to thick closure: translations, finite biproducts, existing retracts, and cones are adjoined, and the process is iterated. Equivalently, thick closure is the intersection of all full replete thick subcategories containing the given objects. This intersection description proves existence without imposing idempotent completeness. In the abelian case, the first operation is precisely Serre closure.
\end{remark}

\begin{lemma}\label{lemmamultsystF}
    Let $F\: \T \to \U$ be an exact functor between triangulated categories. The class of morphisms $$\mathcal{F}^F=\set{X\aar{f} Y \text{ in }\T}{F(f) \text{ is an isomorphism in }\U}$$
    is a multiplicative system that is compatible with the triangulation.

    In fact, we have $\mathcal{F}^F=\mathcal{M}(\ker(F)).$
\end{lemma}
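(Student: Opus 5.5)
The plan is to prove the identity $\mathcal{F}^F=\mathcal{M}(\ker(F))$ first. Then the multiplicative-system assertion follows from the cited fact that $\mathcal{M}(\S)$ is a multiplicative system compatible with the triangulation for every triangulated subcategory $\S$. This applies because $\ker(F)$ is a thick triangulated subcategory by Theorem~\ref{teorchar2ker1}.

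For the inclusion $\mathcal{M}(\ker(F))\subseteq\mathcal{F}^F$, take $f:X\to Y$ with an exact triangle $X\xrightarrow{f}Y\to Z\to\Sigma X$ and $Z\in\ker(F)$. Exactness of $(F,\mu)$ gives an exact triangle $FX\xrightarrow{Ff}FY\to FZ\to\Sigma FX$ in $\U$. Since $FZ$ is a zero object, Lemma~\ref{lemmaisozero} shows that $Ff$ is an isomorphism.

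For the converse, let $Ff$ be invertible. By (TR2), complete $f$ to an exact triangle $X\xrightarrow{f}Y\to Z\to\Sigma X$ in $\T$. Its image under $F$ is exact, and its first arrow $Ff$ is an isomorphism, so Lemma~\ref{lemmaisozero} gives $FZ\cong0$, that is, $Z\in\ker(F)$. Hence $f\in\mathcal{M}(\ker(F))$.

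The only point needing care is the definition of $\mathcal{M}(\S)$, which asks for the existence of \emph{some} exact triangle on $f$ with third vertex in $\S$. The converse direction produces exactly such a triangle. The forward direction works with whatever triangle is given, so no comparison between different cones is needed. The cones are in any case isomorphic by (TR4) and the triangulated five lemma, and $\ker(F)$ is replete.

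I expect no real obstacle here. If a self-contained check of the multiplicative-system axioms were wanted instead of the citation, they can be read off from $\mathcal{F}^F$ directly:
\begin{itemize}
\item closure under composition and two-out-of-three hold because $F$ is a functor;
\item closure under $\Sigma^{n}$ holds by the shift isomorphism $\mu$;
\item compatibility with triangles, clause (2), follows by completing with (TR4) and applying $F$ and the triangulated five lemma.
\end{itemize}
The Ore conditions, however, would still require the standard cone arguments, which is why relying on the cited result for $\mathcal{M}(\S)$ is preferable.
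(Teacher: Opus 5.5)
Your proposal is correct and follows essentially the same route as the paper. Both prove $\mathcal{F}^F=\mathcal{M}(\ker(F))$ in each direction using (TR2), exactness of $F$ and Lemma~\ref{lemmaisozero}, and then cite the fact that $\mathcal{M}(\S)$ is a multiplicative system compatible with the triangulation; your explicit appeal to Theorem~\ref{teorchar2ker1}, to know that $\ker(F)$ is a triangulated subcategory, spells out a step the paper leaves implicit.
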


\begin{proof}
    Let $X\aar{f} Y$ be a morphism in $\T$ such that $F(f)$ is an isomorphism in $\U$. Since $\T$ is triangulated there exists an exact triangle in $\T$ of the form
    $$X \aar{f} Y \aar{} Z \aar{} \Sigma X.$$
    Since $F$ is an exact functor, the triangle
     $$F(X) \aar{F(f)} F(Y) \aar{} F(Z) \aar{} \Sigma F(X)$$
     is exact in $\U$. Since $F(f)$ is an isomorphism by hypothesis, by \lemx \ref{lemmaisozero} we have that $F(Z)$ is $0$. And thus $Z\in \ker(F)$.

     Let now $X\aar{f} Y$ be a morphism in $\T$ that sits in an exact triangle
     $$X \aar{f} Y \aar{} Z \aar{} \Sigma X$$
     with $Z\in \ker(F)$. Then the triangle
     $$F(X) \aar{F(f)} F(Y) \aar{} 0 \aar{} \Sigma F(X)$$
     is exact in $\U$. And thus, again by \lemx\ref{lemmaisozero}, $F(f)$ is an isomorphism. So $f\in \mathcal{F}^F$. We have thus shown that $\mathcal{F}^F=\mathcal{M}(\ker(F))$ and so we conclude that $\mathcal{F}^F$ is a multiplicative system compatible with the triangulation.
    \end{proof}

\begin{theorem} \label{teorchar2ker2coker}
An exact functor $F\:\S \to \T$ between triangulated categories is a 2-kernel if and only if the following conditions hold
\begin{itemize}
\item [(FF)]$F$ is a fully faithful functor;
\item [(C1)] the replete essential image of $F$ is a thick triangulated subcategory of its codomain.
\end{itemize}
An exact functor $F\:\T \to \U$ between triangulated categories is a 2-cokernel if and only if the following conditions hold
\begin{itemize}
\item [(ES)]$F$ is essentially surjective on objects (as a functor);
\item [(C2)] for every $X,Y\in \T$ and every $F(X) \aar{g} F(Y)$ in $\U$, there exists a fraction \begin{cd}*[2][2] \& Z \& \\[-2ex]
	X \&\& Y
	\arrow["w"', from=1-2, to=2-1]
	\arrow["h", from=1-2, to=2-3]\end{cd} in $\T$ with respect to the multiplicative system $\mathcal{F}^F$ of \lemx\ref{lemmamultsystF} such that $F(h) \circ F(w)^{-1}=g$. Moreover, any two such fractions are equivalent.
\end{itemize}
\end{theorem}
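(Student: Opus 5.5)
For the $2$-kernel half I will run the argument of Theorem~\ref{thm:puppe-characterization} with the triangulated inputs. Suppose $F\:\S\to\T$ is a $2$-kernel. By Theorem~\ref{teorchar2ker1} it is equivalent over $\T$ to an inclusion $\ker(G)\hookrightarrow\T$ for some exact $G$. By Proposition~\ref{propequivinTriang}, equivalences in $\Triang$ are equivalences of underlying categories. Hence $F$ is fully faithful, and its replete essential image is $\ker(G)$, which is thick. This gives (FF) and (C1).

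Conversely, assume (FF) and (C1). Write $\mathcal K$ for the replete image and $Q\:\T\to\T/\mathcal K$ for the Verdier localization. Proposition~\ref{propVerdfacts} gives $\ker(Q)=\mathcal K$. Then $F$ corestricts to an exact functor $\S\to\mathcal K$ that is fully faithful and essentially surjective. By Proposition~\ref{propequivinTriang} this corestriction is an equivalence in $\Triang$; its shift isomorphism is the same as that of $F$. Composing with the $2$-kernel $\mathcal K=\ker(Q)\hookrightarrow\T$ from Theorem~\ref{teorchar2ker1} shows that $F$ is a $2$-kernel, since $2$-kernels are invariant under equivalence.

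For the $2$-cokernel half, let $F\:\T\to\U$ be exact and set $\K=\ker(F)$. Lemma~\ref{lemmamultsystF} identifies $\mathcal F^F$ with $\mathcal M(\K)$. Since $F$ annihilates $\K$, the Verdier universal property gives a factorization $F=\widehat F\c Q$ with $Q\:\T\to\T/\K$, strictly, as in Remark~\ref{remtriangleinTriang}. On a roof $(w,h)$, $\widehat F$ takes the value $F(h)F(w)^{-1}$. Because $Q$ is the identity on objects, (ES) says exactly that $\widehat F$ is essentially surjective. Condition (C2) says exactly that $\widehat F$ is bijective on hom-sets: the existence clause is fullness and the equivalence clause is faithfulness. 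For faithfulness, two fractions with the same $F$-image must already be equal in $\T[\mathcal F^F{}^{-1}]$. So (ES)+(C2) is equivalent to $\widehat F$ being an equivalence of categories, hence by Proposition~\ref{propequivinTriang} to its being an equivalence in $\Triang$.

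If $F$ is a $2$-cokernel, Proposition~\ref{propkeriskerofitscoker} makes it the $2$-cokernel of its own $2$-kernel $\K\hookrightarrow\T$. By Theorem~\ref{teorchar2coker1}, $Q$ is also a $2$-cokernel of that inclusion. Uniqueness of $2$-cokernels gives an equivalence $E\:\T/\K\to\U$ with $EQ\cong F$ invertibly. The universal property of $Q$, together with co-full faithfulness of $Q$ (Theorem~\ref{teorchar2coker1}), forces $E\cong\widehat F$. Therefore $\widehat F$ is an equivalence. Conversely, if $\widehat F$ is an equivalence in $\Triang$, then $F=\widehat F Q$ is a $2$-cokernel of $\K\hookrightarrow\T$, again by invariance under equivalence.

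The step I expect to need the most care is the faithfulness reading of (C2). "Any two such fractions are equivalent" must be taken with respect to the calculus of fractions for $\mathcal F^F$, so that it coincides with equality of morphisms in the localization. Checking this uses the fact that $\mathcal M(\K)$ admits a calculus of fractions (Lemma~\ref{lemmamultsystF}), so that every morphism of $\T/\K$ is a single roof $Q(h)Q(w)^{-1}$.
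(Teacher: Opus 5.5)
Your proposal is correct and follows essentially the paper's own argument. You reduce to the annihilation kernel and its Verdier quotient, read (FF)+(C1) as the corestriction onto a thick replete image being an equivalence, read (ES)+(C2) as the induced factor $\widehat F\colon\T/\ker(F)\to\U$ being essentially surjective and fully faithful, and lift equivalences from $\Cat$ to $\Triang$ via Proposition~\ref{propequivinTriang}. The only differences are organizational. You localize directly at the replete image instead of at $\langle\I\rangle$, which coincide under (C1). In the forward cokernel direction you use co-full faithfulness of $Q$ to show that the induced factor is isomorphic to the equivalence supplied by uniqueness of $2$-cokernels; the paper instead asserts directly that the factor is an equivalence and then checks (C2) on roofs.
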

\begin{proof}
    We prove the first part of the statement.

    Assume that $F\:\S\to \T$ is a 2-kernel in $\Triang$. We prove that it satisfies (FF) and (C1). By \corx\ref{cor2kerareff}, $F$ is a fully faithful functor. In fact, by \thex\ref{teorchar2ker1}, $F$ can be expressed, up to an equivalence $j$ in $\Triang$ (and an invertible 2-cell filling the triangle), as the full subcategory $i\:\ker(H)\hookrightarrow \T$ on all objects that are sent to a zero object by some exact functor $H\:\T\to \mathcal{V}$. Since $j$ is essentially surjective and $F\cong ij$, the replete essential image of $F$ is exactly the full subcategory $\ker(H)$ of $\T$. By \thex\ref{teorchar2ker1}, this subcategory is thick. Thus $F$ satisfies (C1).

    We now prove that if an exact functor $F\:\S\to \T$ satisfies (FF) and (C1) then it is a 2-kernel. By \prox\ref{propkeriskerofitscoker}, for $F$ to be a 2-kernel, it needs to be the 2-kernel of its 2-cokernel. By \thex\ref{teorchar2coker1}, the 2-cokernel of $F$ is given by the Verdier localization $\T/\langle \I \rangle$ of $\T$ by the smallest (replete) thick subcategory $\langle \I \rangle$ generated by the full subcategory $\I$ of $\T$ spanned by the objects in the image of $F$. Call $Q$ the quotient exact functor $\T\to \T/\langle \I \rangle$. By \prox\ref{propVerdfacts} and \thex\ref{teorchar2ker1}, the 2-kernel of $Q$ is given by the inclusion $\langle\I\rangle\hookrightarrow \T$. It suffices to prove that $F$ is equivalent in $\Triang$ to such inclusion. Since $Q$ is the 2-cokernel of $F$, we know that $Q\c F$ is isomorphic to a zero exact functor. So the universal property of the 2-kernel $\langle \I \rangle$ yields an exact functor $\overline{F}\:\S\to \langle \I \rangle$ and an invertible 2-cell in $\Triang$
    \begin{cd}[4]
        \S \& \T \\
	{\langle \I \rangle}
	\arrow["F", from=1-1, to=1-2]
	\arrow[xshift=-2ex, shift right=5, iso, "\theta"{inner sep=1ex},from=1-1, to=1-2]
	\arrow["{\overline{F}}"', dashed, from=1-1, to=2-1]
	\arrow[tail, from=2-1, to=1-2]
    \end{cd}
    By \prox\ref{propequivinTriang}, it remains to prove that $\overline{F}$ is an equivalence of categories. But, as a functor, $\overline{F}$ is isomorphic to the restriction of $F$ on the codomain to $\I$ followed by the inclusion of $\I$ into $\langle \I\rangle$. This is because $\langle \I\rangle\hookrightarrow \T$ is a fully faithful functor. By (C1), the inclusion $\I\hookrightarrow\langle\I\rangle$ is an equivalence of categories. And by (FF) the restriction of $F$ on the codomain to $\I$ is an equivalence of categories. So $\overline{F}$ is an equivalence of categories which is an exact functor, whence we conclude that $F$ is a 2-kernel.

    We prove the second part of the statement.

    Assume that $F\:\T \to \U$ is a 2-cokernel. We prove that it satisfies conditions $(ES)$ and (C2). By \thex\ref{teorchar2coker1} and \prox\ref{propkeriskerofitscoker}, $F$ is equivalent in $\Triang$ to the Verdier localization $Q\:\T\to \T/\ker(F)$ of $\T$ by the thick triangulated subcategory $\ker(F)\cont \T$. In fact, there exists a unique equivalence $\epsilon\:\T/\ker(F)\simeq \U$ in $\Triang$ such that $\epsilon\c Q=F$, by the universal property of the Verdier localization. Using \prox\ref{propequivinTriang}, since $Q$ is the identity on objects, $F$ satisfies (ES). We prove that $Q$ satisfies (C2). Then, since $\epsilon\c Q=F$, we will conclude that $F$ satisfies (C2) as well. So consider $X,Y\in \T$ and morphism $g\:Q(X)\to Q(Y)$ in $\T/\ker(F)$, i.e.\ a fraction \begin{cd}*[2][2] \& Z \& \\[-2ex]
	X \&\& Y
	\arrow["w"', from=1-2, to=2-1]
	\arrow["h", from=1-2, to=2-3]\end{cd} with respect to $\mathcal{M}(\ker(F))$. Note that $\ker(F)=\ker(Q)$, and so $\mathcal{M}(\ker(F))=\mathcal{M}(\ker(Q))=\mathcal{F}^Q$, by \lemx\ref{lemmamultsystF}. So it suffices to prove that $Q(h)\c Q(w)^{-1}=g$. But $Q(h)$ is the span \begin{cd}*[2][2] \& Z \& \\[-2ex]
	Z \&\& Y
	\arrow[equal, from=1-2, to=2-1]
	\arrow["h", from=1-2, to=2-3]\end{cd} and $Q(w)^{-1}$ is the span \begin{cd}*[2][2] \& Z \& \\[-2ex]
	X \&\& Z
	\arrow["w"', from=1-2, to=2-1]
	\arrow[equal, from=1-2, to=2-3]\end{cd}. Since the composition of two fractions does not depend on the choice of the Ore square, we can just use the square of four identities of $Z$ as Ore square. Then $Q(h)\c Q(w)^{-1}$ is precisely the starting fraction $g$. Finally, let \begin{cd}*[2][2] \& Z' \& \\[-2ex]
	X \&\& Y
	\arrow["w'"', from=1-2, to=2-1]
	\arrow["h'", from=1-2, to=2-3]\end{cd} be another fraction with respect to $\mathcal{F}^Q$ such that $Q(h')\c Q(w')^{-1}=g$. By the same argument we showed above, $Q(h')\c Q(w')^{-1}$ coincides with the starting fraction exhibited by $w'$ and $h'$, seen as a morphism in $\T/\ker(F)$. So such fraction is equal to $g$ as morphisms in $\T/\ker(F)$, which precisely means that it is equivalent to the fraction $g$ as fractions with respect to $\mathcal{M}(\ker(F))=\mathcal{F}^Q$. We conclude that $Q$ satisfies (C2), whence $F$ satisfies (C2) as well.

    We now prove that if an exact functor $F\:\T\to \U$ satisfies (ES) and (C2) then it is a 2-cokernel. By \prox\ref{propkeriskerofitscoker}, every 2-cokernel is the 2-cokernel of its 2-kernel. So for $F$ to be a 2-cokernel, it needs to be the 2-cokernel of its 2-kernel $\ker(F)\hookrightarrow \T$. Moreover, by \thex\ref{teorchar2coker1}, the Verdier localization $\T/\ker(F)$ is a representative of the 2-cokernel of the inclusion $\ker(F)\hookrightarrow \T$. So it suffices to prove that $F$ is equivalent in $\Triang$ to the quotient exact functor $Q:\T\to \T/\ker(F)$. Notice that $F$ sends every object of $\ker(F)$ to a zero object. So, by the universal property of the Verdier localization $\T/\ker(F)$, there exists a unique exact functor $\phi\:\T/\ker(F)\to \U$ such that $F=\phi\c Q$ (and the identity 2-cell is a 2-cell in $\Triang$). By \prox\ref{propequivinTriang}, in order to show that $\phi$ is an equivalence in $\Triang$, it just remains to show that $\phi$ is an equivalence in $\Cat$. Explicitly, it is well known that $\phi$ acts as follows. Given $X\in \T/\ker(F)$, $\phi(X)=F(X)$. Then given a morphism $\ell\:X\to Y$ in $\T/\ker(F)$, i.e.\ a fraction \begin{cd}*[2][2] \& Z \& \\[-2ex]
	X \&\& Y
	\arrow["w"', from=1-2, to=2-1]
	\arrow["h", from=1-2, to=2-3]\end{cd} in $\T$ with respect to the multiplicative system $\mathcal{M}(\ker(F))=\mathcal{F}^F$, we have that $\phi(\ell)=F(h)\c F(w)^{-1}$. We prove that $\phi$ is essentially surjective on objects and fully faithful. Given $A\in \U$, since $F$ satisfies (ES), there exists $X_A\in \T$ such that $F(X_A)\iso A$. But then $X_A\in \T/\ker(F)$ and $\phi(X_A)=F(X_A)\iso A$. So $\phi$ is essentially surjective on objects. Consider now $X,Y\in \T/\ker(F)$ and a morphism $g\:\phi(X)=F(X)\to F(Y)=\phi(Y)$ in $\U$. Since $F$ satisfies (C2), there exists a fraction \begin{cd}*[2][2] \& Z \& \\[-2ex]
	X \&\& Y
	\arrow["w"', from=1-2, to=2-1]
	\arrow["h", from=1-2, to=2-3]\end{cd} in $\T$ with respect to the multiplicative system $\mathcal{F}^F=\mathcal{M}(\ker(F))$ such that $F(h) \circ F(w)^{-1}=g$. Moreover, any two such fractions are equivalent. But this precisely means that there exists a unique morphism $\ell\:X\to Y$ in $\T/\ker(F)$ such that $\phi(\ell)=g$. So $\phi$ is fully faithful. We conclude that $\phi$ is an equivalence of categories which is also an exact functor. Whence $F$ is equivalent to $Q$ in $\Triang$ and is thus a 2-cokernel.
\end{proof}

\begin{remark}
    The characterization above gives concrete criteria for $2$-kernels and $2$-cokernels in $\Triang$, and hence for thick triangulated subcategories and Verdier localizations.
\end{remark}

\begin{theorem}\label{teor2kerclosed}
    2-kernels in $\Triang$ are closed under composition. Equivalently, given a chain of triangulated categories $\S \subseteq \T \subseteq \U$, where $\S$ is a thick triangulated subcategory of $\T$ and $\T$ is a thick triangulated subcategory of $\U$, we have that $\S$ is a thick triangulated subcategory of $\U$.
\end{theorem}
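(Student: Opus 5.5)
The plan is to prove the concrete statement about nested thick subcategories and then translate it into the $2$-categorical one. For the translation, Theorem~\ref{teorchar2ker1} and Theorem~\ref{teorchar2ker2coker} give that every $2$-kernel in $\Triang$ is equivalent over its codomain to the inclusion of a thick triangulated subcategory. Given composable $2$-kernels $A\to B\to C$, I replace them, up to equivalence and invertible exact $2$-cells, by inclusions $\S\hookrightarrow\T\hookrightarrow\U$ with $\S$ thick in $\T$ and $\T$ thick in $\U$. Proposition~\ref{propequivinTriang} and the invariance of $2$-kernels under equivalences and invertible $2$-cells then reduce closure under composition to showing that $\S$ is thick in $\U$. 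The converse direction of the equivalence is immediate from the same characterization: a composite of two thick inclusions is a composite of $2$-kernels.

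To show $\S$ is thick in $\U$, I verify (TS1)--(TS3) in $\U$ directly. Since $\T$ is a full triangulated subcategory of $\U$, its translation and exact triangles are those of $\U$ restricted to $\T$. So (TS1) for $\S$ in $\U$ is the same as (TS1) in $\T$.

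For (TS2), take an exact triangle $X\to Y\to Z\to\Sigma X$ of $\U$ with two of its vertices in $\S$. Then those two vertices lie in $\T$, so (TS2) for $\T$ in $\U$ puts the third vertex in $\T$. The whole triangle is then a triangle of $\T$, and it is exact in $\T$, since exact triangles of a full triangulated subcategory are the ambient ones lying inside it (by the subcategory convention of \cite{StacksTri}). Now (TS2) for $\S$ in $\T$ puts the third vertex in $\S$.

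For (TS3), let $Y$ be a retract in $\U$ of some $X\in\S$, via $\pi\iota=1_Y$. Then $X\in\T$, so (TS3) for $\T$ gives $Y\in\T$. The maps $\pi,\iota$ lie in $\T$ by fullness, so (TS3) for $\S$ in $\T$ gives $Y\in\S$.

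The main point needing care is the (TS2) step. It relies on the fact that a triangle of $\U$ with all vertices in $\T$ is exact in $\T$, which is what ``triangulated subcategory'' means here (the induced triangulation). I would state this explicitly rather than derive it. Repleteness of $\S$ in $\U$ follows from the remark after the definition of thick subcategories, so nothing else is needed.
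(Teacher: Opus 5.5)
Your proposal is correct and follows essentially the same route as the paper: you reduce to the concrete statement via Theorem~\ref{teorchar2ker1}, then check (TS1)--(TS3) by first landing in $\T$ (using thickness of $\T$ in $\U$) and then in $\S$ (using thickness of $\S$ in $\T$), with fullness carrying the triangle and the retraction diagram into $\T$. You are somewhat more explicit than the paper in two places: the $2$-categorical reduction, and (TS2) for an arbitrary pair of vertices, where the paper treats only the first two vertices and leaves rotation implicit.
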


\begin{proof}
The equivalence of the two statements is guaranteed by \thex \ref{teorchar2ker1}. We prove the second one. Let $i\:\S \hookrightarrow \T$ and $j\: \T \hookrightarrow \U$ be the inclusions and let $X\in \S$. Then
$$\Sigma^n_{\U}j(i(X))=j(\Sigma^n_{\T}i(X))\in j(i(\S))$$
because $j$ preserves the translation functor strictly and $\S$ is a triangulated subcategory of $\T$. Moreover, given an exact triangle in $\U$
$$X \aar{u} Y \aar{v} Z \aar{w} \Sigma_{\U} X$$
with $X,Y\in \S$, this corresponds to the exact triangle in $\U$
$$j(i(X)) \aar{u} j(i(Y)) \aar{v} Z \aar{w} \Sigma_{\U} j(i(X))$$
and since $\T$ is a triangulated subcategory of $\U$ we conclude that $Z\in \T$. So we have an exact triangle in $\T$
$$i(X) \aar{u} i(Y) \aar{v} Z \aar{w} \Sigma_{\T} i(X).$$
But $\S$ is a triangulated subcategory of $\T$ and thus $Z\in \S$. So we have shown that $\S$ is a triangulated subcategory of $\U$. Consider now a chain of morphisms in $\U$
$$X \aar{\pi} Y \aar{\iota} X$$
such that $\pi \c \iota= \id{Y}$ and assume $X\in \S$. Since $X\in \T$ and $\T$ is a thick triangulated subcategory of $\U$, we conclude that $Y\in \T$. But then the starting diagram is a diagram in $\T$ and $\S$ is a thick triangulated subcategory of $\T$, so we conclude that $Y\in \S$. This shows that $\S$ is a thick triangulated subcategory of $\U$.
\end{proof}

\begin{theorem}\label{teor2cokerclosed}
    2-cokernels in $\Triang$ are closed under composition. Equivalently, given two Verdier localizations $Q\: \T \to \T/\S$ and $Q'\:\T /\S \to (\T/\S)/\U$, we have that $Q' \c Q\:\T \to (\T/\S)/\U$ is a Verdier localization of $\T$.
\end{theorem}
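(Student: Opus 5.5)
The plan is to copy the argument used for composition of $2$-cokernels in Theorem~\ref{di:thm:main}, replacing the exact quotient universal property by the universal property of Verdier localization together with Lemma~\ref{lem:puppe-nat}. By Theorem~\ref{teorchar2coker1} and Proposition~\ref{propkeriskerofitscoker}, every $2$-cokernel is equivalent under its domain to a Verdier localization at a thick triangulated subcategory. Composition of a $1$-cell with an equivalence preserves being a $2$-cokernel. So it suffices to treat $Q\:\T\to\T/\S$ and $Q'\:\T/\S\to(\T/\S)/\U$ with $\S\subseteq\T$ thick and $\U\subseteq\T/\S$ thick. Set $\M=Q^{-1}(\U)=\{X\in\T\mid QX\in\U\}$.

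First, $\M$ is a thick triangulated subcategory containing $\S$. The cleanest route is to note that $\M=\ker(Q'Q)$: we have $Q'QX\cong0$ if and only if $QX\in\ker Q'=\U$, by Proposition~\ref{propVerdfacts} applied to the thick $\U$. Theorem~\ref{teorchar2ker1} then gives thickness of $\M$, and $\S=\ker Q\subseteq\M$.

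Second, I would show that $Q'Q$ is a $2$-cokernel of $\M\hookrightarrow\T$. For every triangulated $\X$ I aim to establish equivalences of full hom-categories
\begin{align*}
 \Triang((\T/\S)/\U,\X)
 &\simeq\{R\in\Triang(\T/\S,\X)\mid R(\U)\cong0\}\\
 &\simeq\{H\in\Triang(\T,\X)\mid H(\M)\cong0\},
\end{align*}
both induced by precomposition. The first is the $2$-cokernel property of $Q'$ (Theorem~\ref{teorchar2coker1}). For the second, take $H$ annihilating $\M$. Since $\S\subseteq\M$, $H$ factors through $Q$ as an exact $R$ with $RQ=H$. Every object of $\U$ has the form $QM$ with $M\in\M$, because $Q$ is the identity on objects, so $R$ annihilates $\U$. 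Conversely $RQ$ annihilates $\M$ by definition of $\M$. Full faithfulness on exact natural transformations follows from co-full faithfulness of $Q$ in $\Triang$ (Theorem~\ref{teorchar2coker1} with Proposition~\ref{propffinCatisffinTriang}). Composing these equivalences shows that $Q'Q$ satisfies the universal property \eqref{eq:cokernelUP} for $\M\hookrightarrow\T$. By uniqueness of $2$-cokernels, $Q'Q$ is then equivalent under $\T$ to the Verdier localization $\T\to\T/\M$, which is the equivalent formulation in the statement.

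The main obstacle is bookkeeping with the shift isomorphisms. I need the factorizations to be strictly compatible, with $\overline H Q=H$ on the nose as exact functors as in Remark~\ref{remtriangleinTriang}, so that the displayed functors really are equivalences of hom-categories in $\Triang$ and not merely in $\Cat$. I would handle this as follows:
\begin{itemize}
\item use the identity-on-objects fraction models;
\item for the second equivalence, check essential surjectivity up to an exact invertible $2$-cell;
\item invoke Lemma~\ref{lemmanattozeroisexact} for the nullhomotopies.
\end{itemize}
The same argument works verbatim for $\TriAll$.
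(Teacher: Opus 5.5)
Your proof is correct, but it takes a different route from the paper's proof of this theorem.

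\textbf{The paper's route.} The paper works with roofs. It invokes the characterization of Theorem~\ref{teorchar2ker2coker} and checks directly that a composite $G\circ F$ of $2$-cokernels is essentially surjective and satisfies (C2).
\begin{enumerate}
\item For existence, it takes a roof representing a morphism $GF(X)\to GF(Y)$, lifts it back along $F$ using (ES) and (C2) for $F$, and composes with an Ore square.
\item For uniqueness, it runs a long common-refinement argument with the multiplicative systems $\F^F\subseteq\F^{G\circ F}$. This shows that any two representing roofs in $\T$ are equivalent.
\end{enumerate}

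\textbf{Your route.} You never use fractions. After passing to normal forms, you exhibit $Q'Q$ as a $2$-cokernel of the inclusion of $\M=Q^{-1}(\U)=\ker(Q'Q)$. You do this by composing the hom-category equivalence for $Q'$ with the restriction of the one for $Q$. The only inputs are $\ker Q'=\U$ (Proposition~\ref{propVerdfacts}), the fact that $Q$ is the identity on objects, and co-full faithfulness of $Q$. This is exactly the hom-category argument of Theorem~\ref{di:thm:main} and Theorem~\ref{thm:criterion}. The paper applies that argument to Verdier quotients only later, in Theorem~\ref{thm:triangulated}.

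\textbf{What each buys.} Your route is far shorter, and it identifies the composite explicitly as the localization at $Q^{-1}(\U)$. In fact, iterating the strict unique factorizations of Definition~\ref{defVerdloc} shows that $Q'Q$ satisfies that universal property for $\M$ on the nose. So the shift bookkeeping you flag is already handled by Remark~\ref{remtriangleinTriang}. The paper's route instead gives an intrinsic description of the composite. Every morphism between images is represented by a single roof in $\T$ with denominator in $\F^{G\circ F}$, and equality is detected in $\T$. This fits the paper's characterization of $2$-cokernels through (ES) and (C2).
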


\begin{proof}
    The equivalence of the two statements is guaranteed by \thex \ref{teorchar2coker1}. We prove the first one. Let $F\:\T \to \U$ and $G\:\U \to \V$ be 2-cokernels. Thanks to \thex \ref{teorchar2ker2coker}, it suffices to prove that the composite functor $G \c F$ is essentially surjective on objects and satisfies condition (C2). The fact that $G\c F$ is essentially surjective on objects immediately follows from the fact that both $F$ and $G$ are. Consider then $X,Y\in \T$ and a morphism $g\: G(F(X)) \to G(F(Y))$ in $\V$. Since $G$ is a 2-cokernel, there exists a fraction in $\U$ \begin{cd}*[2][2] \& R \& \\[-2ex]
	F(X)\&\& F(Y)
	\arrow["w"', from=1-2, to=2-1]
	\arrow["f", from=1-2, to=2-3]\end{cd} with respect to $\mathcal{F}^G$ such that $G(f) \c G(w)^{-1}=g$. Furthermore, since $F$ is a 2-cokernel and thus satisfies (ES) and (C2), there exist $R'\in\T$ together with an isomorphism $j\: F(R') \aiso{} R$ and fractions in $\T$ \begin{cd}*[2][2] \& P^{w} \& \\[-2ex]
	R'\&\& X
	\arrow["u"', from=1-2, to=2-1]
	\arrow["h", from=1-2, to=2-3]\end{cd} and \begin{cd}*[2][2] \& P^{f} \& \\[-2ex]
	R'\&\& Y
	\arrow["v"', from=1-2, to=2-1]
	\arrow["k", from=1-2, to=2-3]\end{cd} with respect to $\F^{F}$ such that
    $$F(h) \c F(u)^{-1}= w\c j \quad \text{ and } \quad F(k) \c F(v)^{-1}= f\c j$$
    Notice that $h\in \F^{G\c F}$ since $G(F(h))=G(w) \c G(j) \c G(F(u))$. Moreover, also $u,v\in \mathcal{F}^{G \c F}$. So the two fractions above can be thought as fractions with respect to $\F^{G\c F}$, and the first one is an invertible fraction. We can thus compose the second fraction with the inverse of the first: we obtain a fraction
    \begin{cd}[4][4]
    \&\& Q \&\& \\
	\& {P^{w}} \&\& {P^{f}} \\
	X \&\& {R'} \&\& Y
	\arrow["{\tilde{v}}"', from=1-3, to=2-2]
	\arrow["{\tilde{u}}", from=1-3, to=2-4]
	\arrow["h"', from=2-2, to=3-1]
	\arrow["u", from=2-2, to=3-3]
	\arrow["v"', from=2-4, to=3-3]
	\arrow["k", from=2-4, to=3-5]
    \end{cd}
    where $\tilde{u}$ and $\tilde{v}$ are given by the Ore condition, such that\vsep[-1]
    \begin{center}
		\linesep{1.3}
		\begin{tabular}{LL}
			G(F(k))\c G(F(\tilde{u}))\c G(F(\tilde{v}))^{-1}\c G(F(h))^{-1}=\\
            =G(F(k))\c G(F(v))^{-1}\c G(F(u))\c G(F(h))^{-1}=g
		\end{tabular}\vsep[-1]
	\end{center}
    It remains to prove the uniqueness of such a fraction. So consider two fractions \begin{cd}*[2][2] \& D \& \\[-2ex]
	X\&\& Y
	\arrow["l"', from=1-2, to=2-1]
	\arrow["r", from=1-2, to=2-3]\end{cd} and \begin{cd}*[2][2] \& D' \& \\[-2ex]
	X\&\& Y
	\arrow["l'"', from=1-2, to=2-1]
	\arrow["r'", from=1-2, to=2-3]\end{cd} with respect to $\F^{G\c F}$ such that
    $$G(F(r))\c G(F(l))^{-1}=g=G(F(r'))\c G(F(l'))^{-1}$$
    The functor $F$ maps these two fractions to equivalent fractions with respect to $\F^G$, since $G$ is a 2-cokernel and thus satisfies (C2) by \thex\ref{teorchar2ker2coker}. So there exists a fraction \begin{cd}*[2][2] \& P \& \\[-2ex]
	F(X)\&\& F(Y)
	\arrow["\overline{l}"', from=1-2, to=2-1]
	\arrow["\overline{r}", from=1-2, to=2-3]\end{cd} with respect to $\F^G$ and morphisms $a\:P\to F(D)$ and $b\:P\to F(D')$ such that the following diagram commutes:
    \begin{cd}[4.5]
        \& {F(D)} \& \\
	{F(X)} \& P \& {F(Y)} \\
	\& {F(D')}
	\arrow["{F(l)}"', from=1-2, to=2-1]
	\arrow["{F(r)}", from=1-2, to=2-3]
	\arrow["a"', from=2-2, to=1-2]
	\arrow["{\overline{l}}", from=2-2, to=2-1]
	\arrow["{\overline{r}}"', from=2-2, to=2-3]
	\arrow["b", from=2-2, to=3-2]
	\arrow["{F(l')}", from=3-2, to=2-1]
	\arrow["{F(r')}"', from=3-2, to=2-3]
    \end{cd}
    Then since $F$ is a 2-cokernel and thus satisfies (ES) and (C2), there exists $P'\in \T$ together with an isomorphism $j'\:F(P')\iso P$ and fractions in $\T$
    \begin{cd}*[2][2] \& N^{a} \& \\[-2ex]
	P'\&\& D
	\arrow["s^a"', from=1-2, to=2-1]
	\arrow["t^a", from=1-2, to=2-3]\end{cd} and \begin{cd}*[2][2] \& N^{b} \& \\[-2ex]
	P'\&\& D'
	\arrow["s^b"', from=1-2, to=2-1]
	\arrow["t^b", from=1-2, to=2-3]\end{cd} with respect to $\F^{F}$ such that $F(t^a) \c F(s^a)^{-1}= a\c j'$ and $F(t^b) \c F(s^b)^{-1}= b\c j'$. Notice that $t^a\in \F^{G\c F}$ because $G(a)$ is an isomorphism, since both $G(\overline{l})$ and $G(F(l))$ are isomorphisms. By the commutative diagram drawn above, both the composites of fractions
    \begin{eqD*}
        \begin{cd}*[3][3]
    \& {N^a} \&\& D \& \\
	{P'} \&\& D \&\& X
	\arrow["{s^a}"', from=1-2, to=2-1]
	\arrow["{t^a}", from=1-2, to=2-3]
	\arrow[equal, from=1-4, to=2-3]
	\arrow["l", from=1-4, to=2-5]
    \end{cd}
    \quad \text{ and }\quad
    \begin{cd}*[3][3]
        {P'} \&\& {D'} \&\& X \\
	\& {N^b} \&\& {D'}
	\arrow["{s^b}", from=2-2, to=1-1]
	\arrow["{t^b}"', from=2-2, to=1-3]
	\arrow[equal, from=2-4, to=1-3]
	\arrow["{l'}"', from=2-4, to=1-5]
    \end{cd}
    \end{eqD*}
    capture $\overline{l}\c j'$ when applying $F$. Since $F$ is a 2-cokernel, these two composites of fractions must be equivalent with respect to $\F^F$. So there exists a fraction \begin{cd}*[2][2] \& \Phi \& \\[-2ex]
	P'\&\& X
	\arrow["k"', from=1-2, to=2-1]
	\arrow["h", from=1-2, to=2-3]\end{cd} with respect to $\F^F$ and morphisms $\phi^a\:\Phi\to N^a$ and $\phi^b\:\Phi\to N^b$ such that the following diagram commutes:
    \begin{cd}[1.5][3]
    \&\& {N^a} \&\& \\
	\& {N^a} \&\& D \\
	{P'} \&\& \Phi \&\& X \\
	\& {N^b} \&\& {D'} \\
	\&\& {N^b}
	\arrow[equal, from=1-3, to=2-2]
	\arrow["{t^a}", from=1-3, to=2-4]
	\arrow["{s^a}"', from=2-2, to=3-1]
	\arrow["l", from=2-4, to=3-5]
	\arrow["{\phi^a}"', from=3-3, to=1-3]
	\arrow["k"', from=3-3, to=3-1]
	\arrow["h", from=3-3, to=3-5]
	\arrow["{\phi^b}", from=3-3, to=5-3]
	\arrow["{s^b}", from=4-2, to=3-1]
	\arrow["{l'}"', from=4-4, to=3-5]
	\arrow[equal, from=5-3, to=4-2]
	\arrow["{t^b}"', from=5-3, to=4-4]
    \end{cd}
    Notice that $\phi^a\in \F^F$, since $F(k)$ and $F(s^a)$ are both isomorphisms. Intuitively, $\Phi$ is allowing us to dominate both $l$ and $l'$ at once. But it does not necessarily play well with $r$ and $r'$. Similarly, using $r$ and $r'$ in place of $l$ and $l'$, we obtain a fraction \begin{cd}*[2][2] \& \Psi \& \\[-2ex]
	P'\&\& Y
	\arrow["k'"', from=1-2, to=2-1]
	\arrow["h'", from=1-2, to=2-3]\end{cd} with respect to $\F^F$ and morphisms $\psi^a\:\Psi\to N^a$ and $\psi^b\:\Psi\to N^b$ such that the following diagram commutes:
    \begin{cd}[1.5][3]
        \&\& {N^a} \&\& \\
	\& {N^a} \&\& D \\
	{P'} \&\& \Psi \&\& Y \\
	\& {N^b} \&\& {D'} \\
	\&\& {N^b}
	\arrow[equal, from=1-3, to=2-2]
	\arrow["{t^a}", from=1-3, to=2-4]
	\arrow["{s^a}"', from=2-2, to=3-1]
	\arrow["r", from=2-4, to=3-5]
	\arrow["{\psi^a}"', from=3-3, to=1-3]
	\arrow["{k'}"', from=3-3, to=3-1]
	\arrow["{h'}", from=3-3, to=3-5]
	\arrow["{\psi^b}", from=3-3, to=5-3]
	\arrow["{s^b}", from=4-2, to=3-1]
	\arrow["{r'}"', from=4-4, to=3-5]
	\arrow[equal, from=5-3, to=4-2]
	\arrow["{t^b}"', from=5-3, to=4-4]
    \end{cd}
    And $\psi^a\in \F^F$. We can then apply the Ore condition with respect to the multiplicative system $\F^F$ to obtain a square
    \begin{cd}
    \Lambda \& \Phi \\
	\Psi \& {N^a}
	\arrow["{\widetilde{\psi^a}}", from=1-1, to=1-2]
	\arrow["{\widetilde{\phi^a}}"', from=1-1, to=2-1]
	\arrow["{\phi^a}", from=1-2, to=2-2]
	\arrow["{\psi^a}"', from=2-1, to=2-2]
    \end{cd}
    with $\widetilde{\psi^a}\in \F^F$. Now, notice that $s^b\in \F^F$ coequalizes $\phi^b\c \widetilde{\psi^a}$ and $\psi^b\c \widetilde{\phi^a}$. Since $\F^F$ is a multiplicative system, there exists $\Xi\in \T$ and $\xi\:\Xi\to \Lambda$ in $\F^F$ such that
    \begin{cd}[4]
        \& \Lambda \& \Phi \& \\
	\Xi \&\&\& {N^b} \\
	\& \Lambda \& \Psi
	\arrow["{\widetilde{\psi^a}}", from=1-2, to=1-3]
	\arrow["{\phi^b}", from=1-3, to=2-4]
	\arrow["\xi", from=2-1, to=1-2]
	\arrow["\xi"', from=2-1, to=3-2]
	\arrow["{\widetilde{\phi^a}}"', from=3-2, to=3-3]
	\arrow["{\psi^b}"', from=3-3, to=2-4]
    \end{cd}
    Therefore, $\Xi$ provides a fraction with respect to $\F^{G\c F}$ that dominates both the starting fractions
    \begin{cd}*[2][2] \& D \& \\[-2ex]
	X\&\& Y
	\arrow["l"', from=1-2, to=2-1]
	\arrow["r", from=1-2, to=2-3]\end{cd} and \begin{cd}*[2][2] \& D' \& \\[-2ex]
	X\&\& Y
	\arrow["l'"', from=1-2, to=2-1]
	\arrow["r'", from=1-2, to=2-3]\end{cd}
    Indeed, the following diagram commutes:
    \begin{cd}[4]
        \&\&\&\& X \&\&\&\& \\
	{D'} \& {N^b} \& \Phi \& \Lambda \& \Xi \& \Lambda \& \Phi \& {N^a} \& D \\
	\&\&\&\& Y
	\arrow["{l'}",,bend left=10, from=2-1, to=1-5]
	\arrow["{r'}"',bend right=10, from=2-1, to=3-5]
	\arrow["{t^b}"', from=2-2, to=2-1]
	\arrow["{\phi^b}"', from=2-3, to=2-2]
	\arrow["{\widetilde{\psi^a}}"', from=2-4, to=2-3]
	\arrow["\xi"', from=2-5, to=2-4]
	\arrow["\xi", from=2-5, to=2-6]
	\arrow["{\widetilde{\psi^a}}", from=2-6, to=2-7]
	\arrow["{\phi^a}", from=2-7, to=2-8]
	\arrow["{t^a}", from=2-8, to=2-9]
	\arrow["l"',bend right=10, from=2-9, to=1-5]
	\arrow["r",bend left=10, from=2-9, to=3-5]
    \end{cd}
    And the composite $\Xi\aar{\xi}\Lambda\aar{\widetilde{\psi^a}}\Phi\aar{\phi^a}N^a\aar{t^a}D\aar{l}X$ is in $\F^{G\c F}$ since $\xi,\widetilde{\psi^a},\phi^a\in \F^F$ and $t^a,l\in \F^{G\c F}$.
\end{proof}

We prove that more is true. The 2-category $\Triang$ actually satisfies a 2-dimensional generalization of the fundamental concept of homological category, in the sense of Grandis \cite{Grandis13}.

We use Definition~\ref{def:homological}; in particular its conditional axiom \textup{(H3)}, rather than unconditional interchange, is required. As explained in Section~\ref{sec:framework}, $2$-Puppe exactness implies $2$-homologicity.

We will need a characterization of those exact functors between triangulated categories that factorize (up to isomorphism) as a 2-cokernel followed by a 2-kernel.

\begin{theorem}\label{theorcharactVerdierfunctors}
   An exact functor $F\:\T \to \U$ between triangulated categories factorizes (up to isomorphism) as a 2-cokernel followed by a 2-kernel if and only if the following conditions hold:
\begin{itemize}
\item [(C1)] the replete essential image of $F$ is a thick triangulated subcategory of its codomain;
\item [(C2)] for every $X,Y\in \T$ and every $F(X) \aar{g} F(Y)$ in $\U$, there exists a fraction \begin{cd}*[2][2] \& Z \& \\[-2ex]
	X \&\& Y
	\arrow["w"', from=1-2, to=2-1]
	\arrow["h", from=1-2, to=2-3]\end{cd} in $\T$ with respect to the multiplicative system $\mathcal{F}^F$ of \lemx\ref{lemmamultsystF} such that $F(h) \circ F(w)^{-1}=g$. Moreover, any two such fractions are equivalent.
\end{itemize}
\end{theorem}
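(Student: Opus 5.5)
The plan is to reduce this characterization to the two halves of Theorem~\ref{teorchar2ker2coker}, applied to the factorization of $F$ through the Verdier localization at its kernel. Put $\K=\ker(F)$, which is thick by Theorem~\ref{teorchar2ker1}, and let $Q\colon\T\to\T/\K$ be the Verdier localization. Since $F$ annihilates $\K$, Definition~\ref{defVerdloc} gives a unique exact $\phi\colon\T/\K\to\U$ with $F=\phi Q$. By Theorem~\ref{teorchar2coker1}, $Q$ is a $2$-cokernel. Hence $F$ factors as a $2$-cokernel followed by a $2$-kernel as soon as $\phi$ is a $2$-kernel. By the first half of Theorem~\ref{teorchar2ker2coker}, this holds exactly when $\phi$ is fully faithful and has thick replete essential image.

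For the ``if'' direction, I would first note that $Q$ is the identity on objects. Therefore the replete essential image of $\phi$ equals that of $F$, and (C1) transfers verbatim. For full faithfulness, take $X,Y$ and $g\colon\phi X=FX\to FY=\phi Y$. Lemma~\ref{lemmamultsystF} gives $\mathcal F^F=\mathcal M(\K)$. So a fraction with respect to $\mathcal F^F$ is precisely a morphism $X\to Y$ of $\T/\K$, and its image under $\phi$ is $F(h)F(w)^{-1}$. Existence of the fraction in (C2) then gives fullness, and the equivalence of any two such fractions gives faithfulness. This is exactly the argument in the last paragraph of the proof of Theorem~\ref{teorchar2ker2coker}, without the essential surjectivity step.

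For the ``only if'' direction, suppose $F\cong me$ with $e$ a $2$-cokernel and $m$ a $2$-kernel. Proposition~\ref{prop:normal-factor} identifies $e$, up to equivalence, with $\twocoker(\twoker F)$, which is $Q$ by Theorems~\ref{teorchar2ker1} and~\ref{teorchar2coker1}. It also gives $F\cong m'Q$ with $m'$ a $2$-kernel, and the universal property forces $m'\cong\phi$. I would spell this out concretely. A fully faithful $m$ reflects zero objects, so $\ker(e)=\ker(F)=\K$. Then $e\simeq Q$ under $\T$ by uniqueness of $2$-cokernels (Proposition~\ref{propkeriskerofitscoker}). Thus $\phi$ is isomorphic to $m$ composed with an equivalence, and by Corollary~\ref{cor2kerareff} and Theorem~\ref{teorchar2ker2coker} it is fully faithful with thick replete image. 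Reading the ``if'' computation backwards, full faithfulness of $\phi$ yields (C2) and the image condition yields (C1).

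The main obstacle is bookkeeping rather than mathematics. I must make sure that replacing $F$ by an isomorphic functor, or $\phi$ by an equivalent one, does not change $\mathcal F^F$, the fraction condition, or the replete image. The first two are invariant because invertibility of $F(f)$ is isomorphism-invariant, and the image is replete by definition. I must also make sure that Proposition~\ref{propequivinTriang} lets me move between equivalences in $\Cat$ and in $\Triang$ wherever needed.
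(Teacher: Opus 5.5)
Your proof is correct, but it is organized differently from the paper's.

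For the ``if'' direction the paper uses the \emph{image} factorization. It corestricts $F$ to the thick closure $\langle F(\T)\rangle$, into which the full image includes as an equivalence by (C1). It takes the inclusion $J\colon\langle F(\T)\rangle\hookrightarrow\U$ as the $2$-kernel. It then shows that the corestriction is a $2$-cokernel via the \emph{second} half of Theorem~\ref{teorchar2ker2coker}, using (ES) and $\mathcal F^F=\mathcal F^{\overline F}$, which holds because $J$ is conservative. You instead use the \emph{coimage} factorization $F=\phi Q$ through $\T/\ker(F)$ and the \emph{first} half of that theorem. There (C2) is read directly as full faithfulness of $\phi$, and (C1) transfers because $Q$ is the identity on objects.

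For the ``only if'' direction the paper never compares factorizations. Given $F\cong HG$, it checks (C1) and (C2) for $HG$ by hand, using that $G$ satisfies (ES) and (C2) and $H$ satisfies (FF) and (C1). This computation is also what yields the corollary recorded right after the theorem. You instead use uniqueness of normal factorizations (Propositions~\ref{propkeriskerofitscoker} and~\ref{prop:normal-factor}), plus co-full faithfulness of $Q$, to identify the given factorization with $\phi Q$ up to equivalence. So both of your directions collapse to the single statement that $\phi$ is fully faithful with thick replete image.

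Your route is the exact triangulated analogue of the $2$-normality criterion in Theorem~\ref{thm:puppe-characterization}. It shows (C2) to be precisely full faithfulness of the normal comparison of Proposition~\ref{prop:normal-factor}. It also makes most of your bookkeeping worries moot, since you never replace $F$ by $me$ when checking (C1) and (C2). The paper's route is more elementary, since it avoids the general normal-factorization machinery. As a byproduct it gives stability of (C1) under essentially surjective precomposition and of (C2) under fully faithful postcomposition.
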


\begin{proof}
    Let $F\: \T \to \U$ be an exact functor such that there exists an invertible 2-cell
    \begin{cd}[4]
        \T \&\& \U \\
	\& \V
	\arrow["F", from=1-1, to=1-3]
	\arrow[""{name=0, anchor=center, inner sep=0}, "G"', from=1-1, to=2-2]
	\arrow[""{name=1, anchor=center, inner sep=0}, "H"', from=2-2, to=1-3]
	\arrow["\tau"'{inner sep=1ex},iso, shift left=2ex,from=0, to=1]
    \end{cd}
    in $\Triang$ where $G$ is a 2-cokernel and $H$ is a 2-kernel. Let $X$ be an object in the smallest thick triangulated subcategory generated by the image of $H\c G$. Notice that $X$ is then also in the smallest thick triangulated subcategory generated by the image of $H$. Since $H$ satisfies condition (C1) by \thex \ref{teorchar2ker2coker}, there exists $X'\in \V$ such that $H(X')$ is isomorphic to $X$. Moreover, since $G$ is essentially surjective on objects by \thex \ref{teorchar2ker2coker}, there exists $X''\in \T$ such that $X'$ is isomorphic to $G(X'')$. But then $X$ is isomorphic to $H(G(X''))$. So $H \c G$ satisfies condition (C1) and hence $F$ satisfies it as well. Indeed, the smallest (replete) thick triangulated subcategory containing the image of $F$ is the same as the smallest (replete) thick triangulated subcategory containing the image of $H\c G$, by repleteness. We now prove that $F$ satisfies (C2). Let $X,Y\in \T$ and let $g\: H(G(X)) \to H(G(Y))$ be a morphism in $\U$. Since $H$ is fully faithful, there exists a morphism $g'\: G(X) \to G(Y)$ such that $H(g')=g$. Since $G$ satisfies the condition (C2), there exists a fraction
\begin{cd}*[2][2] \& D \& \\[-2ex]
	X \&\& Y
	\arrow["w"', from=1-2, to=2-1]
	\arrow["f", from=1-2, to=2-3]\end{cd}
    with $w\in \F^{G}$ such that $G(f) \c G(w)^{-1}=g'$. But then $w\in \F^{H\c G}$ and $H(G(f)) \c H(G(w))^{-1}=g$. Consider now two fractions \begin{cd}*[2][2] \& D \& \\[-2ex]
	X \&\& Y
	\arrow["w"', from=1-2, to=2-1]
	\arrow["f", from=1-2, to=2-3]\end{cd} and \begin{cd}*[2][2] \& D' \& \\[-2ex]
	X \&\& Y
	\arrow["w'"', from=1-2, to=2-1]
	\arrow["f'", from=1-2, to=2-3]\end{cd}
    such that
    $$H(G(f)) \c H(G(w))^{-1}=g= H(G(f')) \c H(G(w'))^{-1}.$$
    Since $H$ is fully faithful, we obtain $w\in \F^{G}$ and $w'\in \F^{G}$ and $G(f) \c G(w)^{-1}=G(f') \c G(w')^{-1}$. Since $G$ satisfies condition (C2), we conclude that the two fractions are equivalent for $\F^G$ and thus they are equivalent for $\F^{H \c G}$. So $H\c G$ satisfies condition (C2). And it is straightforward to see that then $F$ satisfies (C2) as well, using the invertible natural transformation $\tau$. We have thus proved that $F$ satisfies both (C1) and (C2).

    Let now $F\: \T \to \U$ be an exact functor that satisfies (C1) and (C2). Consider $F(\T)$ the full subcategory of $\U$ spanned by the objects in the image of $F$, and consider $\langle F(\T) \rangle$ the smallest (replete) thick triangulated subcategory generated by $F(\T)$. By condition (C1), the inclusion $F(\T)\hookrightarrow \langle F(\T) \rangle$ is an equivalence of categories. Notice that $F$ factorizes as a functor as the composite
    $$\T \aar{\widetilde{F}} F(\T) \xhookrightarrow{I}\langle F(\T)\rangle \xhookrightarrow{J} \U$$
    where $\widetilde{F}$ is the restriction of $F$ on the codomain. Furthermore, $\langle F(\T)\rangle$ is a triangulated category, as it is a thick triangulated subcategory of $\U$, and it is straightforward to see that $F$ actually factorizes in $\Triang$ as
    $$\T \aar{\overline{F}} \langle F(\T)\rangle \xhookrightarrow{J} \U$$
    where $\overline{F}=I\c \widetilde{F}$. Of course, $J$ is a 2-kernel in $\Triang$, by \thex\ref{teorchar2ker1}. It then suffices to prove that $\T \aar{\overline{F}} \langle F(\T) \rangle$ is a 2-cokernel. Surely, $\overline{F}$ is essentially surjective on objects because it is the composite of an essentially surjective on object functor followed by an equivalence. So, by \thex \ref{teorchar2ker2coker}, it just remains to prove that $\overline{F}$ satisfies condition (C2). Let then $X,Y\in \T$ and consider $g\: \overline{F}(X)=F(X) \to F(Y)=\overline{F}(Y)$ in $\langle F(\T) \rangle\subseteq \U$. Since $F$ satisfies (C2), there exists a unique fraction  \begin{cd}*[2][2] \& D \& \\[-2ex]
	X \&\& Y
	\arrow["w"', from=1-2, to=2-1]
	\arrow["f", from=1-2, to=2-3]\end{cd} in $\T$ with respect to $\F^{F}$ such that $F(f)\c F(w)^{-1}=g$. But $\F^{F}=\F^{\overline{F}}$, since $J$ is conservative (being fully faithful). And $\overline{F}(f)\c \overline{F}(w)^{-1}=F(f)\c F(w)^{-1}=g$. Moreover, the fraction above is the unique such with respect to $\mathcal{F}^{\overline{F}}$, since $\F^{F}=\F^{\overline{F}}$. This proves that $\overline{F}$ is a 2-cokernel and thus that $F$ factorizes as a 2-cokernel followed by a 2-kernel.
    \end{proof}

The proof of \thex \ref{theorcharactVerdierfunctors} actually shows the following interesting result.

\begin{corollary}
Let $G\:\T \to \V$ and $H\: \V \to \U$ be exact functors between triangulated categories and consider the composite $H\c G$. The following facts hold:
\begin{enumerate}
    \item if $G$ is essentially surjective on objects and $H$ satisfies (C1), then $H\c G$ satisfies (C1);
    \item if $G$ satisfies (C2) and $H$ is fully faithful, then $H\c G$ satisfies (C2).
\end{enumerate}
\end{corollary}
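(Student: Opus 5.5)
The plan is to extract the two assertions from the first half of the proof of Theorem~\ref{theorcharactVerdierfunctors}, replacing the hypotheses ``$G$ is a $2$-cokernel'' and ``$H$ is a $2$-kernel'' by the weaker conditions actually used there. The two parts are independent, and each is a direct diagram chase on objects or on fractions.

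For (1), let $X$ lie in the smallest replete thick triangulated subcategory $\langle (H\c G)(\T)\rangle$ generated by the image of $H\c G$. Since $(H\c G)(\T)\subseteq H(\V)$, this subcategory is contained in $\langle H(\V)\rangle$. By (C1) for $H$, that subcategory is the replete essential image of $H$, so $X\cong H(X')$ for some $X'\in\V$. Essential surjectivity of $G$ gives $X''\in\T$ with $G(X'')\cong X'$, and hence $X\cong HG(X'')$. Thus $\langle (H\c G)(\T)\rangle$ equals the replete essential image of $H\c G$, which is therefore thick. This is (C1) for $H\c G$.

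For (2), we first record that $\F^G=\F^{H\c G}$. A fully faithful functor reflects isomorphisms, so $HG(w)$ is invertible exactly when $G(w)$ is. This is the key observation, and it is the only point needing care, since (C2) for $G$ and for $H\c G$ refer to different multiplicative systems, which must first be identified.

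With this in hand, take $g\:HG(X)\to HG(Y)$. Fullness of $H$ gives $g'\:G(X)\to G(Y)$ with $H(g')=g$. Condition (C2) for $G$ then supplies a fraction $(w,f)$ with respect to $\F^G=\F^{H\c G}$ such that $G(f)\c G(w)^{-1}=g'$, and applying $H$ yields $HG(f)\c HG(w)^{-1}=g$. For uniqueness, suppose two fractions satisfy $HG(f)\c HG(w)^{-1}=HG(f')\c HG(w')^{-1}$. Faithfulness of $H$ gives $G(f)\c G(w)^{-1}=G(f')\c G(w')^{-1}$, because $H$ preserves inverses and composites. So the two fractions are equivalent with respect to $\F^G$, which is the same system as $\F^{H\c G}$, and this gives uniqueness in (C2) for $H\c G$.
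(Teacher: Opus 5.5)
Your proof is correct and follows the same route as the paper, which extracts exactly these two arguments from the first half of the proof of Theorem~\ref{theorcharactVerdierfunctors}. For (1), essential surjectivity of $G$ identifies the replete essential images of $H\circ G$ and $H$. For (2), full faithfulness of $H$ identifies $\mathcal{F}^G$ with $\mathcal{F}^{H\circ G}$ and transfers both the existence and the equality of representing fractions; your explicit identification of the two multiplicative systems is precisely the step the paper's terse proof relies on.
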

\begin{proof}
For (1), essential surjectivity of $G$ identifies the replete essential images of $HG$ and $H$. For (2), full faithfulness of $H$ identifies maps $HG(X)\to HG(Y)$ with maps $G(X)\to G(Y)$ and reflects isomorphisms. Hence the denominator classes for $G$ and $HG$ agree and the existence and equality of their representing fractions agree. This proves both claims.
\end{proof}

\begin{remark}
    Once again, \thex \ref{theorcharactVerdierfunctors} is surprisingly close to what we presented for abelian (and Puppe exact) categories (see \thex\ref{thm:puppe-characterization}).
\end{remark}

This motivates the following definition.

\begin{definition}
    We call \dfn{Verdier functor} an exact functor between triangulated categories that factorizes up to isomorphism as a 2-cokernel followed by a 2-kernel.
\end{definition}

\begin{remark}
    \thex\ref{theorcharactVerdierfunctors} yields a characterization of Verdier functors. Notice that, moreover, we have actually proved that every Verdier functor can be written as exactly equal to a 2-cokernel followed by a 2-kernel.
\end{remark}

\begin{theorem}\label{thm:tri-direct}
    The 2-category $\Triang$ is 2-homological.
\end{theorem}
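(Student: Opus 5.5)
We verify the axioms of Definition~\ref{def:homological} one at a time, relying on the results already proved in this section. For $2$-pointedness, Proposition~\ref{prop:tri-strongzero} shows that $\0$ is a strong bizero object. For (H1), Theorems~\ref{teorchar2ker1} and~\ref{teorchar2coker1} give every exact functor a $2$-kernel $\ker(F)\hookrightarrow\T$ and a $2$-cokernel $\U\to\U/\langle\I\rangle$. For (H2), closure of $2$-kernels under composition is Theorem~\ref{teor2kerclosed} and closure of $2$-cokernels is Theorem~\ref{teor2cokerclosed}. The only new content is therefore the conditional homology axiom (H3).

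For (H3), we first reduce to normal forms. Let $m:\M'\to\T$ be a $2$-kernel and $e:\T\to\V$ a $2$-cokernel, with $\twoker(e)$ factoring through $m$ up to invertible $2$-cell. By Theorem~\ref{teorchar2ker1}, $m$ is equivalent over $\T$ to the inclusion $i:\M\hookrightarrow\T$ of a thick subcategory. By Theorem~\ref{teorchar2coker1} and Proposition~\ref{propkeriskerofitscoker}, $e$ is equivalent under $\T$ to the Verdier localization $q:\T\to\T/\N$ with $\N=\ker(e)$ thick. Being $2$-normal is invariant under such equivalences and invertible $2$-cells, so it suffices to show that $qi$ is $2$-normal. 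The factorization hypothesis says that $\N\hookrightarrow\T$ is isomorphic to $i\circ u$ for some $u$. Since $i$ is a replete full inclusion, this gives $\N\subseteq\M$. Because $\N$ is thick in $\T$, it is also thick in $\M$.

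Rather than building the factorization $\M\to\M/\N\to\T/\N$ and proving directly that the second arrow is fully faithful, I would check conditions (C1) and (C2) of Theorem~\ref{theorcharactVerdierfunctors} for $F=qi$.

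For (C2), note first that $\mathcal F^{qi}$ consists of the maps of $\M$ whose cone lies in $\N$, and all such cones lie in $\M$. Take $g:qX\to qY$ with $X,Y\in\M$, and represent it by a roof $X\xleftarrow{w}Z\xrightarrow{h}Y$ in $\T$ with $\mathrm{cone}(w)\in\N\subseteq\M$. Rotating the triangle on $w$ gives a triangle containing $X$ and $\mathrm{cone}(w)$, so $Z\in\M$ by (TS2). The roof therefore lies in $\M$ with $w\in\mathcal F^{qi}$. For uniqueness, suppose two roofs in $\M$ are equivalent in $\T$. The dominating roof $X\leftarrow Z''$ has its denominator in $\mathcal M(\N)$, so by the same argument $Z''\in\M$. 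The equivalence is thus witnessed inside $\M$, which gives (C2).

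The main obstacle is (C1): we must show that the replete essential image of $qi$, namely the objects of $\T/\N$ isomorphic to some $qM$ with $M\in\M$, is thick in $\T/\N$. I would prove that this image equals $\ker(p)$ for the induced exact functor $p:\T/\N\to\T/\M$, which is thick by Theorem~\ref{teorchar2ker1}. The inclusion of the image in $\ker(p)$ is immediate. For the converse, take $X\in\T$ with $pqX\cong0$. By Proposition~\ref{propVerdfacts} applied to $\M$, this means $X\in\ker(Q_\M)=\M$, since $\M$ is thick. Objects of $\T/\N$ are objects of $\T$, so $X=qX$ lies in the image. This proves (C1), and Theorem~\ref{theorcharactVerdierfunctors} then shows that $qi$ is a Verdier functor, that is, $2$-normal. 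The delicate points are justifying $pq=Q_\M$, which follows from the universal property in Definition~\ref{defVerdloc} together with $\N\subseteq\M$, and tracking repleteness throughout.
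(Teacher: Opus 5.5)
Your proof is correct, but for (H3) it takes a different route from the paper's direct proof.

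\textbf{How the paper argues.} The paper does not pass to normal forms. It keeps an arbitrary $2$-kernel $G$ and $2$-cokernel $H$ and works only with their characterizations from Theorem~\ref{teorchar2ker2coker}.
\begin{itemize}
\item For (C2) of $H\circ G$ it uses the same roof-closure observation you do: the cone of a denominator lies in $\ker H$, which lies in the image of $G$, so the apex of the roof, and of any dominating roof, lies in that image.
\item For (C1) it checks by hand that the image of $H\circ G$ is closed under shifts, biproducts, direct summands and cones. Summands use the complement of a retraction in a triangulated category. Cones use the fact that every exact triangle of the Verdier quotient lifts to one of $\V$, with (C2) for $H$ transporting the isomorphisms back into the image of $G$.
\end{itemize}

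\textbf{Where your argument differs.} You identify the replete essential image of $qi$ with $\ker(p)$ for the induced $p:\T/\N\to\T/\M$. You use $pq=Q_{\M}$ and $\ker(Q_{\M})=\M$ from Proposition~\ref{propVerdfacts}, then thickness of kernels from Theorem~\ref{teorchar2ker1}. This replaces the case analysis with a short structural argument. It also exhibits the second factor of the normal factorization directly as the $2$-kernel of $p$.

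\textbf{Comparison.} Your approach is essentially the argument the paper gives later in its second, criterion-based proof: Proposition~\ref{prop:Verdiernested} feeding into Theorem~\ref{thm:criterion}. There the roof-closure condition of Lemma~\ref{lem:roofs} plays exactly the role of your (C2) check. The paper's direct version stays inside the roof calculus and applies verbatim to any representatives $G,H$, without reducing to the pair $i,q$. Yours needs the normal-form reduction and the strict factorization $pq=Q_{\M}$, but is shorter and yields the third-isomorphism picture at no extra cost.
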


\begin{proof}
    The existence of all 2-kernels and all 2-cokernels in $\Triang$ is guaranteed by \thex \ref{teorchar2ker1} and \thex \ref{teorchar2coker1} respectively. Furthermore, the closure of 2-kernels and 2-cokernels under composition is guaranteed by \thex \ref{teor2kerclosed} and \thex \ref{teor2cokerclosed} respectively. So it remains to prove that $\Triang$ satisfies $(H3)$. Let $F\:\T \to \U$ be an exact functor between triangulated categories such that $F$ is isomorphic in $\Triang$ to the composite of a 2-kernel $G$ followed by a 2-cokernel $H$ and suppose that the 2-kernel of $H$ factors through $G$ (up to isomorphism) as shown in the following diagram.
        \begin{cd}
            \S \&\& \\[-1ex]
	\T \arrow[rr, bend right=40, "F"'] \& \V \& \U
	\arrow[dashed, from=1-1, to=2-1]
	\arrow["{\ker(H)}"{inner sep=0.3ex}, from=1-1, to=2-2]
	\arrow["G", tail, from=2-1, to=2-2]
	\arrow[shift left=5,xshift=-2ex,iso, from=2-1, to=2-2]
	\arrow[shift right=5,iso,from=2-1, to=2-3]
	\arrow["H", two heads, from=2-2, to=2-3]
        \end{cd}
    We need to prove that $H\c G$ can be factorized up to isomorphism as a 2-cokernel followed by a 2-kernel. Thanks to \thex \ref{theorcharactVerdierfunctors}, it suffices to prove that $H\c G$ satisfies (C1) and (C2). Let $X,Y\in \T$ and let $g\:H(G(X)) \to H(G(Y))$. Since $H$ satisfies (C2), there exists a fraction in $\V$ \begin{cd}*[2][2] \& D \& \\[-2ex]
	G(X) \&\& G(Y)
	\arrow["w"', from=1-2, to=2-1]
	\arrow["f", from=1-2, to=2-3]\end{cd} with respect to $\F^{H}$ such that $H(f)\c H(w)^{-1}=g$. By \lemx \ref{lemmamultsystF}, $\F^H=\M(\S)$ and so there exists an exact triangle in $\V$
    $$D \aar{w} G(X) \aar{v} Z \aar{} \Sigma D$$
    with $Z\in \S$. Thanks to the fact that $\ker(H)$ factors through $G$ up to isomorphism, there exists an object $Z'\in \T$ together with an isomorphism $j\: Z \aiso{} G(Z')$. As a consequence, by definition of triangulated subcategory, $D$ belongs to the smallest thick triangulated subcategory of $\V$ containing the image of $G$ and so, since $G$ satisfies (C1), there exist an object $D'$ together with an isomorphism $i\:D\iso G(D')$ for some $D'\in \T$. Then, since $G$ is fully faithful, there exist morphisms $D' \aar{w'} X$ and $D'\aar{f'} Y$ such that $w\c i^{-1}=G(w')$ and $f \c i^{-1}=G(f')$. Then $H(G(w'))$ is an isomorphism, since $w\in \F^H$, and so
    \begin{cd}*[2][2] \& D' \& \\[-2ex]
	X \&\& Y
	\arrow["w'"', from=1-2, to=2-1]
	\arrow["f'", from=1-2, to=2-3]\end{cd} is a fraction with respect to $\F^{H\c G}$ such that
    $$H(G(f')) \c H(G(w'))^{-1}=H(f) \c H(w)^{-1}=g.$$
    Consider now two fractions in $\T$ \begin{cd}*[2][2] \& R \& \\[-2ex]
	X \&\& Y
	\arrow["w"', from=1-2, to=2-1]
	\arrow["f", from=1-2, to=2-3]\end{cd} and \begin{cd}*[2][2] \& R' \& \\[-2ex]
	X \&\& Y
	\arrow["w'"', from=1-2, to=2-1]
	\arrow["f'", from=1-2, to=2-3]\end{cd} with respect to $\F^{H\c G}$, such that
    $$H(G(f)) \c H(G(w))^{-1}=g=H(G(f')) \c H(G(w'))^{-1}.$$
    Since $H$ satisfies (C2), applying $G$ to the previous two fractions yields the two equivalent fractions with respect to $\F^H$ shown in the following diagram
    \begin{cd}[4][5]
        \& {G(R)} \& \\
	{G(X)} \& K \& {G(Y)} \\
	\& {G(R')}
	\arrow["{G(w)}"', from=1-2, to=2-1]
	\arrow["{G(f)}", from=1-2, to=2-3]
	\arrow["a"', from=2-2, to=1-2]
	\arrow[from=2-2, to=2-1]
	\arrow[from=2-2, to=2-3]
	\arrow["b", from=2-2, to=3-2]
	\arrow["{G(w')}", from=3-2, to=2-1]
	\arrow["{G(f')}"', from=3-2, to=2-3]
    \end{cd}
    where $G(w),G(w)\c a,G(w')\in \F^H$. Using that $\ker(H)$ factors through $G$ and that $G(w)\c a\in \F^H$, we  conclude that there exist $K'\in \T$ and an isomorphism $i\: K \aiso{} G(K')$ (with a similar argument to the one used earlier in the proof). Since $G$ is fully faithful by \thex \ref{teorchar2ker2coker}, we obtain a diagram
    \begin{cd}[4]
        \& R \& \\
	X \& K' \& Y \\
	\& R'
	\arrow["w"', from=1-2, to=2-1]
	\arrow["f", from=1-2, to=2-3]
	\arrow["{a'}"', from=2-2, to=1-2]
	\arrow[from=2-2, to=2-1]
	\arrow[from=2-2, to=2-3]
	\arrow["{b'}", from=2-2, to=3-2]
	\arrow["{w'}", from=3-2, to=2-1]
	\arrow["{f'}"', from=3-2, to=2-3]
    \end{cd}
    where $G(a')=a\c i^{-1}$ and $G(b')=b\c i^{-1}$, after transporting the displayed representatives through $i$.
    Since $G(w\c a')=G(w) \c a\c i^{-1}\in\F^H$, we have that $w\c a'\in \F^{H\c G}$ and thus the two starting fractions with respect to $\F^{H\c G}$ are equivalent. This concludes the proof that $H\c G$ satisfies (C2).

    We now prove that $H\c G$ satisfies condition (C1). We first record the inference used repeatedly below: if $H(V)\cong H(G(T))$, a representing roof has both legs in $\mathcal F^H$; their cones lie in $\ker H$, which is contained in the replete image of $G$. Two applications of closure under triangles therefore put $V$ in that image. We do so by showing that the image of $H\c G$ is closed up to isomorphism under all the operations involved in the construction of the smallest thick triangulated subcategory containing it. The fact that the image of $H\c G$ is closed up to isomorphism under shifts and finite coproducts is simply guaranteed by the fact that both $G$ and $H$ are exact functors between triangulated categories and thus they preserve shifts (up to isomorphism) and biproducts. We now show that the image of $H\c G$ is closed under direct summands. In a triangulated category every retraction $B\rightleftarrows A$ admits a complement: complete its section to a triangle, use exactness of $\operatorname{Hom}(-,B)$ to see that the connecting map is zero, and the splitting lemma identifies the triangle with $B\to B\oplus C\to C\to\Sigma B$ \cite[Section 13.4]{StacksDerived}. Thus this verifies closure under all existing retracts, not merely chosen decompositions. Let $A,B\in\U$ such that $A \oplus B$ is isomorphic to $H(G(C))$ for some $C\in \T$. Since $H$ is essentially surjective on objects, choose $A',B'\in\V$ with $H(A')\cong A$ and $H(B')\cong B$. As $H$ preserves biproducts, these choices give
    $$H(G(C)) \iso H(A') \oplus H(B') \iso H(A' \oplus B').$$
    Moreover, since $H$ satisfies (C2), there exists a fraction \begin{cd}*[2][2] \& D \& \\[-2ex]
	G(C) \&\& {A'\oplus B'}
	\arrow["w"', from=1-2, to=2-1]
	\arrow["f", from=1-2, to=2-3]\end{cd} with respect to $\F^H$ such that $H(f) \c H(w)^{-1}$ is an isomorphism. But $H(w)$ is an isomorphism because $w\in \F^H$ and so $H(f)$ is an isomorphism. This implies that $f\in \F^H$. Since $w\in \F^H$ and $G$ satisfies (C1), we have that $D\aiso{} G(E)$ for some $E\in \T$. So there exists $P\in \T$ such that $A'\oplus B' \aiso{} G(P)$. Since the image of $G$ is a thick triangulated subcategory, it is closed under direct summands, so there exist  $A''$ and $B''$ in $\T$ such that $A'\iso G(A'')$, $B' \aiso{} G(B'')$ and $G(A'' \oplus B'')\aiso{} A' \oplus B'$. Thus we conclude that
    $$H(G(A'')) \oplus H(G(B'')) \aiso{} H(G(A''\oplus B'')) \aiso{} H(A' \oplus B') \aiso{} H(G(C)).$$
    In particular $H(G(A''))\cong A$ and $H(G(B''))\cong B$, by the choices of $A',B'$. This shows that the image of $H\c G$ is closed under direct summands. It remains to prove that it is closed under cones. Consider an exact triangle in $\U$
 $$A \aar{} B \aar{} C \aar{} \Sigma A$$
 such that $A\aiso{} H(G(A'))$ and $B\aiso{} H(G(B'))$. We need to prove that there exists an object $C'\in\T$ such that $C\aiso{} H(G(C'))$. We have an isomorphism of exact triangles
 \begin{cd}[5][5]
     A \& B \& C \& {\Sigma A} \\
	{H(G(A'))} \& {H(G(B'))} \& C \& {\Sigma H(G(A'))}
	\arrow[from=1-1, to=1-2]
	\arrow[from=1-1, to=2-1, iso]
	\arrow[from=1-2, to=1-3]
	\arrow[from=1-2, to=2-2, iso]
	\arrow[from=1-3, to=1-4]
	\arrow[equals, from=1-3, to=2-3]
	\arrow[from=1-4, to=2-4, iso]
	\arrow[from=2-1, to=2-2]
	\arrow[from=2-2, to=2-3]
	\arrow[from=2-3, to=2-4]
 \end{cd}
 Since $H$ is essentially surjective on objects, there exists $C'\in \V$ such that $C\aiso{} H(C')$. Moreover, since $H$ is a 2-cokernel (and thus a Verdier localization), all exact triangles in $\U$ are (up to isomorphism) images of exact triangles in $\V$. So there exists an exact triangle in $\V$
 $$L \aar{} M \aar{} {N} \aar{} \Sigma L$$
 together with an isomorphism of exact triangles
 \begin{cd}
     {H(L)} \& {H(M)} \& {H(N)} \& {\Sigma H(L)} \\
	{H(G(A'))} \& {H(G(B'))} \& {H(C')} \& {\Sigma H(G(A'))}
	\arrow[from=1-1, to=1-2]
	\arrow[from=1-1, to=2-1,iso]
	\arrow[from=1-2, to=1-3]
	\arrow[from=1-2, to=2-2, iso]
	\arrow[from=1-3, to=1-4]
	\arrow[iso, from=1-3, to=2-3]
	\arrow[from=1-4, to=2-4, iso]
	\arrow[from=2-1, to=2-2]
	\arrow[from=2-2, to=2-3]
	\arrow[from=2-3, to=2-4]
 \end{cd}
 Consider now the isomorphism $H(L) \aiso{} H(G(A'))$. Since $H$ satisfies (C2) there exists a fraction \begin{cd}*[2][2] \& D \& \\[-2ex]
	L \&\& {G(A')}
	\arrow["w"', from=1-2, to=2-1]
	\arrow["f", from=1-2, to=2-3]\end{cd}
 with respect to $\F^H$ such that $H(f) \c H(w)^{-1}$ coincides with the starting isomorphism. Thanks to the fact that $G$ satisfies (C1) and that $\ker(H)$ factors through $G$ by hypothesis we conclude that there exists $L'\in \T$ such that $L \aiso{} G(L')$ (by the same argument used earlier in the proof). And analogously we conclude that there exists $M'\in \T$ such that $M \aiso{} G(M')$. So we have an exact triangle in $\V$
 $$G(L') \aar{} G(M') \aar{} N \aar{} \Sigma G(L').$$
 Since $G$ satisfies (C1), this implies that there exists $N'\in \T$ such that $N \aiso{} G(N')$. And thus we have
 $$C \aiso{} H(C') \aiso{} H(N) \aiso{} H(G(N')).$$
 This proves that the image of $H\c G$ is closed under cones up to isomorphism and thus the statement is proved.
\end{proof}

\begin{example}\label{ex:tri-not-diexact}
    The following counterexample shows that $\Triang$ is not
    $2$-di-exact in the sense of Definition~\ref{def:2-diexact}.
    It also shows that $2$-normal functors are not closed under
    composition. As a consequence, $\Triang$ is not $2$-Puppe exact.

    The strategy will be to find an exact functor $F\:\T\to \V$ between triangulated categories that satisfies the following three conditions:
    \begin{enum}
        \item $F$ is a composite $\T\xhookrightarrow{G} \U \aar{H}\V$ where $G$ is an inclusion of a thick triangulated subcategory $\T$ into $\U$ and $H$ is the Verdier localization of $\U$ by a thick triangulated subcategory $\S$ of $\U$;
        \item the 2-kernel of $F$ is $\0$;
        \item $F$ is not fully faithful.
    \end{enum}
    Indeed, we will then be able to deduce that $F$ cannot be written (up to isomorphism) as a composite of a 2-cokernel $H'$ followed by a 2-kernel $G'$, although $F$ is a composite of a 2-kernel followed by a 2-cokernel (see \thex\ref{teorchar2ker1} and \thex\ref{teorchar2coker1}). This is because if it were possible to write $F\iso G'\c H'$ we would have that $H'=\coker(\ker(H'))$ by \prox\ref{propkeriskerofitscoker} and
    $$\ker(H')=\ker(G'\c H')=\ker(F)=0$$
    Whence $H'$ is the 2-cokernel of a zero exact functor and must then be an equivalence. And then $F\iso G'\c H'$ would be fully faithful because $G'$ is so, contradicting the condition above.

    So we build an exact functor that satisfies the three conditions listed above.

    Let $k$ be a field and $Q$ be the directed quiver with three vertices:
    $$1 \aar{\alpha} 2 \aar{\beta} 3$$
    Let $A = kQ$ be the path algebra of this quiver, and consider the abelian category $\mod(A)$ of finite dimensional left $A$-modules. With the path-multiplication convention for which left modules correspond to representations in the displayed direction, $\mod(A)$ is equivalent to the category of finite-dimensional $k$-linear representations of $Q$ (\cite[Chapter III]{ASS}). We define $\U$ to be the bounded derived category
    $$\U:=\operatorname{D^b}(\mod(A))$$
    of the abelian category $\mod(A)$. Notice that the category $\mod(A)$ naturally embeds into the derived category $\U$. Consider the simple $A$-modules $S_1,S_2,S_3$ corresponding to the three vertices $1,2,3$ of the quiver, respectively. These are the quiver representations $(k \to 0 \to 0)$, $(0 \to k \to 0)$ and $(0 \to 0 \to k)$ respectively. We can then view $S_1, S_2, S_3$ inside the derived category $\U$ and consider the smallest thick subcategory $\langle S_2 \rangle$ of $\U$ generated by $S_2$, that we call $\S$, and the smallest thick subcategory $\langle S_1, S_3 \rangle$ of $\U$ generated by $S_1$ and $S_3$, that we call $\T$. We define the exact functor $F$ to be the composite
    $$\T\xhookrightarrow{G} \U \aar{H} \U/\S$$
    where $G$ is the inclusion of the thick triangulated subcategory $\T$ into $\U$ and $H$ is the Verdier localization of $\U$ by the thick triangulated subcategory $\S$. Clearly, $F$ is an exact functor that satisfies condition (i).

    We show that $F$ satisfies condition (ii). By \thex\ref{teorchar2ker1} and \prox\ref{propVerdfacts}, the 2-kernel of $F$ is given by (the full subcategory of $\U$ spanned by)
    $$\ker(F)=\set{X\in \T}{F(X)\iso 0}=\set{X\in \U}{H(X)\iso 0}\cap \T=\S\cap \T$$
    So it suffices to show that $\S\cap \T\iso\0$. Now, the path algebra $A = kQ$ of a quiver without oriented cycles is a hereditary algebra (by \cite[Chapter VII]{ASS}), which means that its global dimension is $\le 1$ (see \cite[Chapter VII]{ASS}). As a consequence (see \cite{ChenRingel}), in the derived category $\U=\operatorname{D^b}(\mod(A))$, every complex $X^{\bullet}$ is (non-canonically) isomorphic to the direct sum of its shifted cohomology modules:
    \begin{equation}\label{eqcomplex}
        X^{\bullet} \cong \bigoplus_{i \in \mathbb{Z}} \operatorname{H}^i(X^{\bullet})[-i]
    \end{equation}
    We can use this to deduce that every object $X \in \S=\langle S_2 \rangle$ has cohomology modules $\operatorname{H}^i(X)\in \mod(A)$ supported entirely on $\{2\}$. Indeed, $S_2$ satisfies this, and this property is preserved by all operations involved in taking the smallest thick subcategory generated by $S_2$, since evaluation at each vertex is exact, so the long exact cohomology sequence, shifts and retracts preserve this support condition. Analogously, every object $Y\in \T=\langle S_1,S_3 \rangle$ has cohomology modules $\operatorname{H}^i(Y)\in \mod(A)$ supported entirely on $\{1,3\}$. So every object in the intersection $\S\cap \T$ must have cohomology modules $\operatorname{H}^i(X)=0$ for every $i$. By \refs{eqcomplex}, we deduce that $\S\cap \T\iso\0$.

    It remains to prove that $F$ satisfies condition (iii). Consider $S_1$ and $S_3[1]$, which are both in $\T$. We prove that
    $$\HomC{\T}{S_1}{S_3[1]}=\HomC{\U}{S_1}{S_3[1]}=\{0\}$$
    while
    $$\HomC{\U/\S}{S_1}{S_3[1]}\neq\{0\}$$
    For this, we use that morphisms between (shifts of) modules in the derived category $\U=\operatorname{D^b}(\mod(A))$ are described by $\operatorname{Ext}$ groups:
    $$\HomC{\U}{S_1}{S_3[1]}=\operatorname{Ext}^1_A(S_1,S_3)$$
    Take any extension between $S_1$ and $S_3$:
    $$0 \to S_3 \to M \to S_1 \to 0$$
    Since dimension vectors of quiver representations are additive across short exact sequences, the dimension vector of $M$ must be the sum of the dimension vectors of $S_3$ and $S_1$. So since $\dim(S_3) = (0, 0, 1)$ and $\dim(S_1) = (1, 0, 0)$, we have $\dim(M) = (1, 0, 1)$. But this means that the quiver representation $M$ is forced to be of the form
    $$(k \xrightarrow{!} 0 \xrightarrow{!} k)$$
    And this is exactly the trivial split extension given by the direct sum $S_1 \oplus S_3$. Thus $\HomC{\U}{S_1}{S_3[1]}=\{0\}$. Intuitively, this is given by the fact that there is no arrow from $1$ to $3$ in the quiver $Q$.

    Finally, we build a non-zero morphism $S_1\to S_3[1]$ in $\U/\S$, i.e.\ a non-zero fraction \begin{cd}*[2][2] \& W \& \\[-2ex]
	S_1 \&\& S_3[1]
	\arrow["v"', from=1-2, to=2-1]
	\arrow["g", from=1-2, to=2-3]\end{cd}
    in $\U$ with respect to $\mathcal{M}(\S)$. The arrow $\alpha$ in the quiver has an associated quiver representation $E=(k\aar{\id{}} k \to 0)$, which is an extension of $S_2$ and $S_1$. The short exact sequence $0 \to S_2 \to E \xrightarrow{v} S_1 \to 0$, where $v$ projects the first vector space and forgets the second, yields an exact triangle
    $$S_2 \to E \xrightarrow{v} S_1 \to S_2[1]$$
    in $\U$. But then the cone of $v$ is precisely $S_2[1]\in \S$, whence $v\in \mathcal{M}(\S)$. We still need to build a morphism $g\:E\to S_3[1]$ in $\U$. But this corresponds with an extension
    $$0 \to S_3 \to P \to E \to 0$$
    in $\mod(A)$. We observe that $P=(k\aar{\id{}} k \aar{\id{}} k)$ yields such an extension. More precisely, the morphism $g$ in $\U$ that we need is the connecting morphism in the exact triangle
    $$S_3 \to P \to E \xrightarrow{g} S_3[1]$$
    associated to the extension above. It just remains to prove that the fraction \linebreak {\begin{cd}*[2][2] \& E \& \\[-2ex]
	S_1 \&\& S_3[1]
	\arrow["v"', from=1-2, to=2-1]
	\arrow["g", from=1-2, to=2-3]\end{cd}} that we built is non-zero. Equivalently, we need to prove that $g$, viewed as a morphism in $\U/\S$, is non-zero, since $v$ becomes invertible in the localization. But, by \cite[Lemma 13.6.7]{StacksTri}, this holds if and only if in $\U$ the morphism $g$ does not factor through an object of $\S$. Notice that $g$ is non-zero in $\U$, since the extension it corresponds to is not split, since a section $E\to P$ would have identity components at vertices $1,2$, while commutativity at the arrow $2\to3$ would force its component at vertex $2$ to be zero. Assume by contradiction that $g$ factors in $\U$ as $E\aar{a} N\to S_3[1]$ with $N\in \S$. We show that the only possible map $a\:E\to N$ is the zero morphism, which is a contradiction by the above argument. By what we showed above,
    \begin{equation}\label{eqN1234}
        N \cong \bigoplus_{i \in \mathbb{Z}} \operatorname{H}^i(N)[-i]
    \end{equation}
    We have also already showed that $N\in \S$ has cohomology modules $\operatorname{H}^i(N)\in \mod(A)$ supported entirely on $\{2\}$. Whence such cohomology modules must be finite direct sums of $S_2$. Since the Hom functor is additive and \refs{eqN1234} holds, it suffices to prove that
    $$\HomC{\U}{E}{S_2[j]}\iso\operatorname{Ext}^j_A(E,S_2)=\{0\}$$
    for every $j$. For $j<0$, negative-degree derived morphisms between modules vanish (we use $\operatorname{Ext}^j=0$ in these degrees). For $j=0$, $\operatorname{Hom}_A(E,S_2)=0$: commutativity at the arrow $1\to2$ forces the component at vertex $2$ to be zero. For $j\geqslant 2$, it holds because the global dimension of $A$ is $\leqslant 1$. It remains to prove that $\operatorname{Ext}^1_A(E,S_2)=\{0\}$. So consider an extension
    $$0 \to S_2 \to M \to E \to 0$$
    in $\mod(A)$. We show that it must be a split extension. Since dimension vectors are additive across short exact sequences and $\dim(S_2) = (0, 1, 0)$ and $\dim(E) = (1, 1, 0)$, we have that $\dim(M)=(1, 2, 0)$. So the quiver representation $M$ is of the form
    $$M = (k \xrightarrow{\alpha} k^2 \xrightarrow{0} 0)$$
    Write the quotient $M\to E$ at vertices $1,2$ as $p_1:k\to k$ and $p_2:k^2\to k$. Choose $u\in k$ with $p_1(u)=1$ and put $v=\alpha(u)$. Commutativity gives $p_2(v)=1$. The maps $1\mapsto u$ and $1\mapsto v$, with zero component at vertex $3$, form a section $E\to M$. Hence the extension splits, and $\operatorname{Ext}^1_A(E,S_2)=0$.
\end{example}

\section{A general quotient criterion for structured pointed categories}\label{sec:criterion}\label{subsec:criterion}

The direct di-exact and triangulated proofs establish the same
formal pattern with different inputs. We now formulate that pattern
using a specified $2$-functor to pointed categories.

\subsection{The ambient data and normal subcategories}

Recall that $\Pt$ has categories with a zero object as objects,
zero-object-preserving functors as $1$-cells, and all natural
transformations as $2$-cells. A \emph{$2$-category over $\Pt$} is
a $2$-category $\Lcat$ together with a specified $2$-functor
\[
 U:\Lcat\longrightarrow\Pt.
\]
It is \emph{locally faithful} if, for every $\C,\D$, the functor
\[
 U_{\C,\D}:\Lcat(\C,\D)
 \longrightarrow \Pt(U\C,U\D)
\]
is faithful. This requires injectivity on each set of parallel
$2$-cells; it does not require injectivity on objects or $1$-cells,
or fullness on $2$-cells. Write
\[
 \underline{\C}:=U(\C),\qquad
 \underline{F}:=U(F),\qquad
 \underline{\alpha}:=U(\alpha).
\]
An object of this specified $\Lcat$, with underlying category
$\underline{\C}$, is what we mean by a \emph{structured pointed
category}. Its morphisms and transformations are the $1$- and
$2$-cells of $\Lcat$.

Throughout this section, assume that $\Lcat$ is $2$-pointed in the
sense of Definition~\ref{def:2pointed}, with strong bizero object $0$,
that $U$ is locally faithful, and that
\[
 U(0)\simeq\mathbf{1}
\]
in $\Pt$, where $\mathbf{1}$ is the one-object, one-morphism category.
All categories are subject to the size convention of
Section~\ref{sec:framework}.

\begin{lemma}[Detection of null $1$-cells]\label{lem:criterion-null}
A $1$-cell $F:\C\to\D$ is null if and only if
$\underline{F}(C)$ is a zero object of $\underline{\D}$ for every
$C\in\underline{\C}$.
\end{lemma}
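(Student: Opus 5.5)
The forward direction is short. Suppose $F$ is null, so $F\cong ZP$ with $P:\C\to0$ and $Z:0\to\D$. Applying the $2$-functor $U$ turns the invertible $2$-cell into a natural isomorphism $\underline F\cong\underline Z\,\underline P$. Since $U(0)\simeq\mathbf 1$, every object $\underline P(C)$ is isomorphic to the unique object of that category up to isomorphism. That object is a zero object of $U(0)$, because $U(0)$ is pointed and equivalent to $\mathbf1$. The $1$-cell $\underline Z$ is a $1$-cell of $\Pt$, so it preserves zero objects. Hence $\underline Z\,\underline P(C)$ is a zero object of $\underline\D$, and so is $\underline F(C)$, since zero objects are closed under isomorphism.

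For the converse I plan to use the $2$-pointedness of $\Lcat$ together with local faithfulness of $U$, and not try to build an isomorphism directly in $\Lcat$. Fix any null $Z':\C\to\D$, for example the composite through $0$. By Definition~\ref{def:2pointed} there are unique $2$-cells $\alpha:F\Rightarrow Z'$ and $\beta:Z'\Rightarrow F$. The composite $\beta\alpha$ is an endo-$2$-cell of $F$, and the goal is to show $\beta\alpha=1_F$; the equation $\alpha\beta=1_{Z'}$ is automatic because $Z'$ is a zero object of $\Lcat(\C,\D)$. Since $U_{\C,\D}$ is faithful, it suffices to prove $\underline\beta\,\underline\alpha=1_{\underline F}$ in $\Pt(\underline\C,\underline\D)$.

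To check this, compute componentwise at each $C\in\underline\C$. The component $(\underline\beta\,\underline\alpha)_C$ is an endomorphism of $\underline F(C)$, and by hypothesis $\underline F(C)$ is a zero object. A zero object has only one endomorphism, so this component is the identity. Therefore $\underline\beta\,\underline\alpha=1$, hence $\beta\alpha=1_F$. So $\alpha$ is invertible, $F\cong Z'$, and $F$ is null.

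The only delicate point is the converse direction. There one must not need fullness of $U$ on $2$-cells, and one must not need the $2$-cells in $\Lcat$ to be arbitrary natural transformations. The plan avoids both: it takes the existing unique $2$-cells from strong pointedness and uses faithfulness only to verify an equation between them. I would also check that the forward direction uses only pseudo-functoriality of $U$ on invertible $2$-cells and the assumption $U(0)\simeq\mathbf1$, and neither step needs more than that.
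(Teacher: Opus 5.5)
Your proof is correct and follows the paper's argument. The forward direction pushes the factorization through $U(0)\simeq\mathbf{1}$, and the converse takes the unique $2$-cells $\alpha:F\Rightarrow z$ and $\beta:z\Rightarrow F$ from strong pointedness, then uses local faithfulness of $U$ and the componentwise equation on zero objects to get $\beta\alpha=1_F$. The only cosmetic difference is that you obtain $\alpha\beta=1_z$ directly from $z$ being a zero object of its hom-category, whereas the paper also derives it via faithfulness.
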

\begin{proof}
If $F$ is null, then $\underline{F}$ factors through $U(0)$ up to
natural isomorphism. Every object of $U(0)$ is a zero object, and
its image under a $1$-cell of $\Pt$ is again a zero object.

Conversely, suppose that $\underline{F}$ sends every object to zero.
Choose a parallel null $1$-cell $z:\C\to\D$. Strong pointedness
gives unique $2$-cells $\alpha:F\Rightarrow z$ and
$\beta:z\Rightarrow F$. Both underlying functors are zero-valued,
so the components of $\underline{\alpha}$ and
$\underline{\beta}$ are mutually inverse morphisms between zero
objects. Hence
\[
 U(\beta\alpha)=1_{U(F)},\qquad
 U(\alpha\beta)=1_{U(z)}.
\]
Local faithfulness gives $\beta\alpha=1_F$ and
$\alpha\beta=1_z$. Thus $F\iso z$ is null.
\end{proof}

For each object $\C$, specify a family $\mathfrak{N}(\C)$ of full
replete subcategories of $\underline{\C}$, called \emph{normal}.
For every $\N\in\mathfrak{N}(\C)$, specify an object $\C_{\N}$
and a $1$-cell
\[
 i_{\N}^{\C}:\C_{\N}\longrightarrow\C,
 \qquad
 U(\C_{\N})=\N,
 \qquad
 U(i_{\N}^{\C})=(\N\hookrightarrow\underline{\C}).
\]
These equalities are part of the chosen data. The $1$-cells
$i_{\N}^{\C}$ are called \emph{normal inclusions}.

Intersections, containment, inverse images, and collections of
objects below refer to the underlying categories. Thus, for
$F:\B\to\C$ and $\N\subseteq\underline{\C}$,
$\underline{F}^{-1}(\N)$ denotes the full subcategory of
$\underline{\B}$ on the objects $B$ with $\underline{F}(B)\in\N$.
Define the \emph{underlying annihilator} of $F$ by
\[
 \mathcal{K}_U(F)
 :=\{B\in\underline{\B}\mid\underline{F}(B)\iso0\},
\]
again regarded as a full subcategory. For
$\N\in\mathfrak{N}(\C)$, let
\[
 \Ann_{\N}(\C,\X)
 :=\{H\in\Lcat(\C,\X)\mid H i_{\N}^{\C}
       \text{ is null}\}
\]
be the full subcategory on the indicated $1$-cells.
By Lemma~\ref{lem:criterion-null}, this condition says exactly
that $\underline{H}$ sends every object of $\N$ to zero.

\subsection{The quotient criterion}

\begin{theorem}[Normal-subcategory quotient criterion]\label{thm:criterion}
Let $(\Lcat,U)$ and the normal-subcategory data be as above.
Suppose the following hold.
\begin{enumerate}[label=\textup{(Q\arabic*)},leftmargin=16mm]
\item Every normal subcategory contains the zero objects.
Normal subcategories are closed under arbitrary intersections,
including the empty intersection $\underline{\C}$, and under
inverse images along underlying functors of $1$-cells.
Normality is transitive: if
$\M\in\mathfrak{N}(\C)$ and
$\N\in\mathfrak{N}(\C_{\M})$, then
$\N\in\mathfrak{N}(\C)$, using
$U(\C_{\M})=\M\subseteq\underline{\C}$.
Furthermore, if $\N,\M\in\mathfrak{N}(\C)$ and
$\N\subseteq\M$, then
$\N\in\mathfrak{N}(\C_{\M})$.
For a collection $\Scl$ of objects of $\underline{\C}$, write
$\cl_{\C}(\Scl)$ for the intersection of all normal
subcategories containing $\Scl$.

\item For every $1$-cell $F:\B\to\C$,
$\mathcal{K}_U(F)$ belongs to $\mathfrak{N}(\B)$, and its
specified normal inclusion is a $2$-kernel of $F$.
Explicitly, postcomposition induces an equivalence
\[
 \Lcat(\X,\B_{\mathcal{K}_U(F)})
 \simeq
 \{G\in\Lcat(\X,\B)\mid FG\text{ is null}\},
\]
pseudonatural in $\X$, where the right-hand side is full.

\item For every $\N\in\mathfrak{N}(\C)$ there are an object
$\C/\N$ and a $1$-cell $q_{\N}^{\C}:\C\to\C/\N$ such that
\[
 \operatorname{Ob}U(\C/\N)=\operatorname{Ob}\underline{\C},
 \qquad
 \operatorname{Ob}U(q_{\N}^{\C})=\operatorname{id},
 \qquad
 \mathcal{K}_U(q_{\N}^{\C})=\N.
\]
Precomposition induces an equivalence
\begin{equation}\label{eq:normalquotientUP}
 \Lcat(\C/\N,\X)
 \simeq \Ann_{\N}(\C,\X),
\end{equation}
pseudonatural in $\X$. This is a statement about entire
hom-categories, including all their $2$-cells.

\item Let $\N\subseteq\M$ belong to $\mathfrak{N}(\C)$.
Put $\widehat{\M}:=\C_{\M}$. The restriction clause of
(Q1) gives $\N\in\mathfrak{N}(\widehat{\M})$, so (Q3)
supplies $q_{\N}^{\widehat{\M}}$. Moreover,
$q_{\N}^{\C}i_{\M}^{\C}$ annihilates $\N$ and therefore
induces a $1$-cell
\[
 j:\widehat{\M}/\N\longrightarrow\C/\N,
 \qquad
 j q_{\N}^{\widehat{\M}}
 \iso q_{\N}^{\C}i_{\M}^{\C}.
\]
The underlying functor $U(j)$ is fully faithful.
This requirement is independent of the choice of descent $j$.
In addition, $U$ reflects equivalences: whenever a $1$-cell
$v$ has $U(v)$ an equivalence of categories, $v$ is an
equivalence in $\Lcat$.
\end{enumerate}
Then $\Lcat$ is $2$-homological. For every $F:\B\to\C$, put
\[
 F\B:=\{\underline{F}(B)\mid B\in\operatorname{Ob}\underline{\B}\}.
\]
A $2$-cokernel of $F$ is
\begin{equation}\label{eq:generalcoker}
 \twocoker(F)=
 \bigl(q_{\cl_{\C}(F\B)}^{\C}:\C\longrightarrow
       \C/\cl_{\C}(F\B)\bigr).
\end{equation}
The $2$-kernels are precisely the normal inclusions up to
equivalence, and the $2$-cokernels are precisely the quotient
$1$-cells in \textup{(Q3)} up to equivalence.
\end{theorem}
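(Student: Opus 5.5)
The plan mirrors the final part of the proof of Theorem~\ref{di:thm:main}, now using (Q1)--(Q4) as black boxes. The order is: the $2$-cokernel formula, the normal forms of $2$-kernels and $2$-cokernels, the two composition laws, and finally (H3).

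\emph{Existence and normal forms.} Existence of $2$-kernels is (Q2). For $2$-cokernels, fix $F:\B\to\C$ and put $\N=\cl_{\C}(F\B)$, which is normal by (Q1). A $1$-cell $H:\C\to\X$ annihilates $F$ if and only if $\mathcal{K}_U(H)\supseteq F\B$, by Lemma~\ref{lem:criterion-null}. Since $\mathcal{K}_U(H)$ is normal by (Q2), this holds if and only if $\N\subseteq\mathcal{K}_U(H)$, that is, if and only if $H\in\Ann_{\N}(\C,\X)$. The full subcategory of $\Lcat(\C,\X)$ on $1$-cells annihilating $F$ is therefore $\Ann_{\N}(\C,\X)$, and (Q3) gives \eqref{eq:generalcoker}. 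Conversely, every normal inclusion $i_{\N}^{\C}$ is a $2$-kernel, because $\mathcal{K}_U(q^{\C}_{\N})=\N$ and (Q2) applies to $q^{\C}_{\N}$. Every $q_{\N}^{\C}$ is the $2$-cokernel of $i_{\N}^{\C}$, since $\cl_{\C}(\N)=\N$. Uniqueness up to equivalence then gives the two ``precisely'' assertions.

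\emph{Composition laws.} For $2$-kernels, reduce up to equivalence to $\C_{\M,\N}\to\C_{\M}\to\C$. By transitivity in (Q1), $\N\in\mathfrak N(\C)$. The composite has underlying functor $\N\hookrightarrow\underline\C$ and factors through $i_{\N}^{\C}$ by (Q2), since $q^{\C}_{\N}$ annihilates it. The comparison has identity underlying functor, so it is an equivalence because $U$ reflects equivalences (Q4).

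For $2$-cokernels, take $q^{\C}_{\N}$ followed by $q^{\C/\N}_{\Pcal}$. Set $\M=\underline{q}^{-1}(\Pcal)$, which is normal by (Q1) and contains $\N$. Because $q$ is the identity on objects, a $1$-cell $H:\C/\N\to\X$ annihilates $\Pcal$ if and only if $Hq$ annihilates $\M$. Chaining the two equivalences \eqref{eq:normalquotientUP} exhibits the composite as a $2$-cokernel of $i^{\C}_{\M}$, with all $2$-cells retained because those equivalences are between entire hom-categories.

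\emph{(H3).} Up to equivalence, take $i=i^{\C}_{\M}$ followed by $q=q^{\C}_{\N}$. Since $\mathcal{K}_U(q)=\N$, the hypothesis that the $2$-kernel factors through $i$ means exactly $\N\subseteq\M$, after applying $U$ and using repleteness. Then $qi\iso j\,q_{\N}^{\widehat\M}$ as in (Q4), and the first factor is a $2$-cokernel. It remains to show that $j$ is a $2$-kernel; I expect this to be the main obstacle.

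Let $p:\C/\N\to\C/\M$ be the descent of $q^{\C}_{\M}$. Then $\mathcal{K}_U(p)=\M$ on objects, since $q^{\C}_{\M}$ annihilates exactly $\M$. The functor $U(j)$ is the identity on objects, so $U(j)$ is fully faithful with replete image $\mathcal{K}_U(p)$. Moreover $pj$ is null by Lemma~\ref{lem:criterion-null}, so $j$ factors as $j\iso k\,v$, where $k$ is the normal inclusion of $\mathcal{K}_U(p)$ given by (Q2). Applying $U$, the relation $U(k)U(v)\iso U(j)$ with $U(k)$ fully faithful shows that $U(v)$ is an equivalence of categories. By reflection of equivalences in (Q4), $v$ is an equivalence, so $j$ is a $2$-kernel.

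The delicate points are in this last step. One has to transport $U(j)$ along the isomorphism $jq\iso qi$; this is harmless because only objects and full faithfulness are used. One also has to check that the annihilator of $p$ is exactly the image of $\widehat\M$, which needs repleteness of $\M$.
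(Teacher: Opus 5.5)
Your proposal is correct and follows the paper's proof essentially step for step. It uses the same annihilator argument for \eqref{eq:generalcoker}, the same inverse-image argument for composing $2$-cokernels, and the same ``factor through the specified normal inclusion via (Q2), then reflect equivalences via (Q4)'' argument both for composing $2$-kernels and for showing that $j$ is a $2$-kernel of $p$; the paper isolates that last argument once as a general consequence, whereas you inline it twice. The only imprecision is that in the $2$-kernel composition the comparison's underlying functor is isomorphic to the identity, not equal to it, which is all that reflection of equivalences needs.
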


\begin{proof}
\emph{Existence and normal forms.}
Condition (Q2) gives all $2$-kernels. For $F:\B\to\C$ and
$H:\C\to\X$, Lemma~\ref{lem:criterion-null} gives
\[
 HF\text{ is null}
 \quad\Longleftrightarrow\quad
 F\B\subseteq\mathcal{K}_U(H).
\]
The subcategory $\mathcal{K}_U(H)$ is normal by (Q2).
Consequently the latter condition is equivalent to
\[
 \cl_{\C}(F\B)\subseteq\mathcal{K}_U(H),
\]
or equivalently to
$H i_{\cl_{\C}(F\B)}^{\C}$ being null.
These conditions define the same full subcategory of
$\Lcat(\C,\X)$. Thus (Q3) proves
\eqref{eq:generalcoker}, including its universal property on
$2$-cells.

By (Q2) and the annihilator equality in (Q3), every normal
inclusion $i_{\N}^{\C}$ is a $2$-kernel of $q_{\N}^{\C}$.
By \eqref{eq:normalquotientUP}, every $q_{\N}^{\C}$ is a
$2$-cokernel of $i_{\N}^{\C}$. The converse normal forms follow
from the preceding constructions and uniqueness of representing
objects up to equivalence.

We record a consequence that makes the subsequent uses of these
normal forms precise. Suppose that $h:\B\to\C$ has fully
faithful underlying functor and that its full replete essential
image $\Scl$ belongs to $\mathfrak{N}(\C)$. The composite
$q_{\Scl}^{\C}h$ is null by Lemma~\ref{lem:criterion-null}.
Since $i_{\Scl}^{\C}$ is its quotient's $2$-kernel, there are
$t:\B\to\C_{\Scl}$ and an invertible $2$-cell
\[
 i_{\Scl}^{\C}t\iso h.
\]
The functor $U(t)$ is fully faithful and essentially surjective.
By the final clause of (Q4), $t$ is an equivalence in $\Lcat$.
Hence $h$ is a $2$-kernel, indeed a $2$-kernel of
$q_{\Scl}^{\C}$. More generally, the same argument shows that
$h$ is a $2$-kernel of any $p:\C\to\D$ satisfying
$\mathcal{K}_U(p)=\Scl$.
Conversely, every $2$-kernel has fully faithful underlying functor
and normal full replete essential image, by its normal form.

\emph{Composition of $2$-kernels.}
Let $k:\A\to\B$ and $m:\B\to\C$ be $2$-kernels.
Choose a normal form
\[
 m\iso i_{\M}^{\C}e,
 \qquad e:\B\longrightarrow\C_{\M}
 \quad\text{an equivalence}.
\]
If $k$ is a $2$-kernel of $f$ and $d$ is a quasiinverse of $e$,
its universal property shows that $ek$ is a $2$-kernel of $fd$.
Thus its underlying functor is fully faithful
and its full replete essential image $\Pcal$ is normal in
$U(\C_{\M})=\M$. Transitivity in (Q1) makes $\Pcal$ normal
in $\underline{\C}$. Since $\M$ is full and replete in
$\underline{\C}$, the underlying functor of $mk$ is fully faithful
with full replete essential image $\Pcal$. The preceding
consequence therefore makes $mk$ a $2$-kernel.

\emph{Composition of $2$-cokernels.}
Using normal forms and transporting across the equivalence at
the intermediate object, it suffices to consider
\[
 \C\xrightarrow{q}\C/\N
 \xrightarrow{r}(\C/\N)/\Scl,
 \qquad
 q=q_{\N}^{\C},\quad r=q_{\Scl}^{\C/\N},
\]
where $\Scl\in\mathfrak{N}(\C/\N)$. Put
\begin{equation}\label{eq:Ppreimage}
 \Pcal:=\underline{q}^{-1}(\Scl).
\end{equation}
This is normal by (Q1), and it contains $\N$, because
$\underline{q}$ sends $\N$ to zero and $\Scl$ contains the
zero objects. Since $\underline{q}$ is the identity on objects,
every object of $\Scl$ is $\underline{q}(P)$ for an object
$P\in\Pcal$.

Under the equivalence induced by $q$, a $1$-cell
$T:\C/\N\to\X$ annihilates $\Scl$ if and only if $Tq$
annihilates $\Pcal$, by Lemma~\ref{lem:criterion-null} and
the preceding object-surjectivity. Conversely, every $H$ that
annihilates $\Pcal$ annihilates $\N$, so it descends through
$q$ by (Q3); its descendant annihilates $\Scl$ by the same
argument. Restricting the full hom-category equivalences in (Q3)
therefore gives
\[
 \Lcat((\C/\N)/\Scl,\X)
 \simeq \Ann_{\Scl}(\C/\N,\X)
 \simeq \Ann_{\Pcal}(\C,\X),
\]
with composite induced by precomposition with $rq$.
Thus $rq$ is a $2$-cokernel of $i_{\Pcal}^{\C}$, and
\begin{equation}\label{eq:iteratedgeneral}
 (\C/\N)/\Scl\simeq\C/\Pcal
\end{equation}
compatibly with the quotient $1$-cells from $\C$.

\emph{The homology axiom.}
Use normal forms to replace the $2$-kernel and $2$-cokernel
in (H3) by
\[
 m=i_{\M}^{\C}:\widehat{\M}\longrightarrow\C,
 \qquad
 e=q_{\N}^{\C}:\C\longrightarrow\C/\N,
 \qquad \widehat{\M}=\C_{\M}.
\]
The omitted equivalences do not affect the containment hypothesis
or the existence of a $2$-normal factorization.
The hypothesis that a $2$-kernel of $e$ factors through $m$
implies $\N\subseteq\M$: apply $U$ to the factorization of
$i_{\N}^{\C}$ through $i_{\M}^{\C}$ and use repleteness
of $\M$.

The quotient $q_{\M}^{\C}$ annihilates $\N$, so (Q3) gives
\[
 p:\C/\N\longrightarrow\C/\M,
 \qquad p q_{\N}^{\C}\iso q_{\M}^{\C}.
\]
Because $U(q_{\N}^{\C})$ is the identity on objects,
an object of $U(\C/\N)$, labelled by
$C\in\underline{\C}$, belongs to $\mathcal{K}_U(p)$
exactly when $C\in\M$.
Let
\[
 j:\widehat{\M}/\N\longrightarrow\C/\N
\]
be the descent in (Q4). Its underlying functor is fully faithful,
and its full replete essential image is exactly
$\mathcal{K}_U(p)$: the comparison
$j q_{\N}^{\widehat{\M}}\iso q_{\N}^{\C}i_{\M}^{\C}$
identifies its image objects up to isomorphism, and
$U(q_{\N}^{\widehat{\M}})$ is the identity on objects.
By (Q2), $\mathcal{K}_U(p)$ is normal. The consequence proved
above therefore makes $j$ a $2$-kernel of $p$.
Hence
\begin{equation}\label{eq:generalhomology}
 \widehat{\M}\xrightarrow{q_{\N}^{\widehat{\M}}}
 \widehat{\M}/\N\xrightarrow{j}\C/\N
\end{equation}
is a $2$-cokernel followed by a $2$-kernel and agrees with $em$
up to invertible $2$-cell. This proves (H3), and completes the
proof of $2$-homologicality.
\end{proof}

\subsection{The third isomorphism and the subquotient square}

\begin{corollary}[The third isomorphism and subquotient square]\label{cor:third}
Under the hypotheses of Theorem~\ref{thm:criterion}, let
$\N\subseteq\M$ belong to $\mathfrak{N}(\C)$ and put
$\widehat{\M}=\C_{\M}$. Use the notation
\[
 i=i_{\M}^{\C},\qquad
 a=q_{\N}^{\widehat{\M}},\qquad
 b=q_{\N}^{\C},
 \qquad \theta:bi\xRightarrow{\sim}ja
\]
for the induced square and its comparison. Then
$p:\C/\N\to\C/\M$, characterized by
$pb\iso q_{\M}^{\C}$, is a $2$-cokernel of $j$.
If $\mathcal{R}:=\mathcal{K}_U(p)$, then $\mathcal{R}$ is
the normal full replete essential image of $U(j)$ and
\begin{equation}\label{eq:third}
 (\C/\N)/\mathcal{R}\simeq\C/\M.
\end{equation}
When this equivalence is written as
$(\C/\N)/(\widehat{\M}/\N)\simeq\C/\M$, the denominator
means the normal subcategory $\mathcal{R}$, not a literal
identification of the domain of $j$ with a full subcategory.
The square
\begin{equation}\label{eq:subquotientsquare}
\begin{tikzcd}[column sep=large,row sep=large]
 \widehat{\M} \arrow[r,tail,"i"] \arrow[d,"a"'] &
 \C \arrow[d,"b"] \\
 \widehat{\M}/\N \arrow[r,tail,"j"'] & \C/\N
\end{tikzcd}
\end{equation}
with comparison $\theta$ is both a bipullback and a bipushout
in $\Lcat$.
\end{corollary}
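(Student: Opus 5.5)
The plan is to reuse the machinery already developed in the proof of Theorem~\ref{thm:criterion}, which showed that $j$ is a $2$-kernel of $p$ with full replete essential image $\mathcal{R}=\mathcal{K}_U(p)$, and that $\mathcal{R}$ is normal by (Q2). The corollary has three parts, and I will prove them in the order: the cokernel property of $p$, the third isomorphism, and the square.

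\emph{Cokernel and third isomorphism.} Let $T:\C/\N\to\X$. By Lemma~\ref{lem:criterion-null}, $Tj$ is null exactly when $\underline{T}$ kills the image of $U(j)$. Since $\mathcal{R}$ is the replete image of $U(j)$, this is the same as $T\in\Ann_{\mathcal{R}}(\C/\N,\X)$. Because $U(b)$ is the identity on objects, this is in turn the same as $Tb$ annihilating $\M$. I will then chain the hom-category equivalences of (Q3):
\[
 \Lcat(\C/\M,\X)\simeq\Ann_{\M}(\C,\X)\simeq\Ann_{\mathcal{R}}(\C/\N,\X),
\]
where the second equivalence is the restriction of $\Lcat(\C/\N,\X)\simeq\Ann_{\N}(\C,\X)$ to a full subcategory. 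The composite is precomposition with $p$, by the isomorphism $pb\iso q_{\M}^{\C}$. Hence $p$ is a $2$-cokernel of $j$. Comparing with the $2$-cokernel $q_{\mathcal{R}}^{\C/\N}$ of $i_{\mathcal{R}}$, and noting that $j$ and $i_{\mathcal{R}}$ differ by an equivalence as shown in the theorem's proof, uniqueness of representing objects gives \eqref{eq:third}. This is the same argument as the composition-of-cokernels step, with $\Pcal=\M$.

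\emph{Bipushout.} A pseudococone consists of $H:\C\to\X$, $K:\widehat{\M}/\N\to\X$ and an invertible $\phi:Hi\iso Ka$. Since $Ka$ kills $\N$, so does $H$, and by (Q3) $H\iso\overline{H}b$. Then $\overline{H}ja\iso\overline{H}bi\iso Hi\iso Ka$, and full faithfulness of precomposition with $a$ (from (Q3) for $\widehat{\M}$) lifts this to a unique $\overline{H}j\iso K$. For $2$-cells, I will lift along $b$ and check the cocone compatibility after precomposing with $a$, where faithfulness of $(-)\circ a$ reduces it to the given equation. This mirrors the argument of Theorem~\ref{di:thm:main}.

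\emph{Bipullback.} A pseudocone consists of $U':\X\to\C$, $V:\X\to\widehat{\M}/\N$ and $\psi:bU'\iso jV$. Then $pbU'\iso pjV$ is null, since $pj$ is null, so $q_{\M}^{\C}U'$ is null and $U'$ factors as $i\widetilde{U}$ by (Q2). It remains to produce $a\widetilde{U}\iso V$.

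This is the step I expect to be the main obstacle. The plan is to use that $j$ is a $2$-kernel and hence representably fully faithful (Proposition~\ref{prop:expanded-up}). The isomorphism $ja\widetilde{U}\iso bi\widetilde{U}\iso bU'\iso jV$ then lifts uniquely to $a\widetilde{U}\iso V$, and compatibility with $\theta$ and $\psi$ is automatic from uniqueness of the lift. For $2$-cells between cones, $i$ is representably fully faithful, so compatible pairs $(\alpha,\beta)$ with $b\alpha$ matching $j\beta$ determine a unique $\gamma$ with $i\gamma=\alpha$. Faithfulness of postcomposition with $j$ then forces $a\gamma=\beta$. The care needed here lies in tracking $\theta$ through the pasting equations; the underlying-category checks themselves are routine. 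If that bookkeeping becomes awkward, I will instead use local faithfulness of $U$ to verify the pasting equalities on components.
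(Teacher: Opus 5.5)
Your proposal is correct and follows essentially the same route as the paper. The cokernel and third-isomorphism parts restrict the (Q3) equivalence for $b$ to the functors annihilating $\M$; you identify that subcategory via the replete image $\mathcal{R}$ and the identity-on-objects property of $U(b)$, whereas the paper uses object-surjectivity of $U(a)$. The bipullback and bipushout lift the comparison isomorphisms and $2$-cells through the representable full faithfulness of postcomposition with $i$ and $j$ and of precomposition with $a$ and $b$, exactly as in the paper.
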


\begin{proof}
The theorem's proof gives that $j$ is a $2$-kernel of $p$,
that $\mathcal{R}$ is its normal full replete essential image,
and that $i$ is a $2$-kernel of $pb\iso q_{\M}^{\C}$.

\emph{The $2$-cokernel and the third isomorphism.}
For $H:\C/\N\to\X$, Lemma~\ref{lem:criterion-null} and
object-surjectivity of $U(a)$ give
\[
 Hj\text{ is null}
 \quad\Longleftrightarrow\quad
 Hja\text{ is null}
 \quad\Longleftrightarrow\quad
 Hbi\text{ is null}.
\]
Since $\N\subseteq\M$, (Q3) therefore restricts to an
equivalence between the full category of $H$ annihilating $j$
and $\Ann_{\M}(\C,\X)$. The latter is represented by
$\C/\M$, with the corresponding functor induced by
precomposition with $p$, because $pb\iso q_{\M}^{\C}$.
Thus $p$ is a $2$-cokernel of $j$.
By Lemma~\ref{lem:criterion-null}, annihilating $j$ is also
equivalent to annihilating its full replete essential image
$\mathcal{R}$. Applying (Q3) and uniqueness of representing
objects proves \eqref{eq:third}, compatibly with the $1$-cells
from $\C/\N$.

\emph{The bipullback.}
Fix $\X$. A cone consists of $u:\X\to\C$,
$v:\X\to\widehat{\M}/\N$, and an invertible $2$-cell
$\alpha:bu\Rightarrow jv$.
A morphism to $(u',v',\alpha')$ is a pair of $2$-cells
$\eta:u\Rightarrow u'$ and $\nu:v\Rightarrow v'$ satisfying
$(j\nu)\circ\alpha=\alpha'\circ(b\eta)$.
Since $pj$ is null, $pbu$ is null. The $2$-kernel property of
$i$ gives $t:\X\to\widehat{\M}$ and an invertible $2$-cell
$\lambda:it\Rightarrow u$.
The composite
\[
 jat\xRightarrow{\theta^{-1}t}bit
 \xRightarrow{b\lambda}bu
 \xRightarrow{\alpha}jv
\]
lifts uniquely to an invertible $2$-cell $at\Rightarrow v$,
because postcomposition with the $2$-kernel $j$ is fully
faithful on each hom-category. Hence every cone is isomorphic
to one induced by $t$.

For morphisms between cones induced by $t,t'$, full faithfulness
of postcomposition with $i$ uniquely lifts the component into
$\C$ to a $2$-cell $t\Rightarrow t'$. The compatibility
equation for a cone morphism, and faithfulness of
postcomposition with $j$, force its other component to be the
whiskering of this same $2$-cell with $a$.
Thus the induced functor from $\Lcat(\X,\widehat{\M})$ to
the cone category is fully faithful and essentially surjective.
These functors are pseudonatural in $\X$, proving the bipullback
property.

\emph{The bipushout.}
Fix $\X$. A cocone consists of $H:\C\to\X$,
$K:\widehat{\M}/\N\to\X$, and an invertible $2$-cell
$\sigma:Hi\Rightarrow Ka$.
A morphism to $(H',K',\sigma')$ is a pair of $2$-cells
$\eta:H\Rightarrow H'$ and $\nu:K\Rightarrow K'$ satisfying
$(\nu a)\circ\sigma=\sigma'\circ(\eta i)$.
Since $U(a)$ sends every object of $\N$ to zero, the
comparison $\sigma$ shows that $\underline{H}$ sends every
object of $\N\subseteq\underline{\C}$ to zero.
Lemma~\ref{lem:criterion-null} and (Q3) give
$T:\C/\N\to\X$ and an invertible $2$-cell
$\varepsilon:Tb\Rightarrow H$.
The composite
\[
 (Tj)a\xRightarrow{T\theta^{-1}}Tbi
 \xRightarrow{\varepsilon i}Hi
 \xRightarrow{\sigma}Ka
\]
lifts uniquely to an invertible $2$-cell $Tj\Rightarrow K$,
because precomposition with the $2$-cokernel $a$ is fully
faithful. This makes the given cocone isomorphic to one induced
by $T$.

For morphisms between cocones induced by $T,T'$, full
faithfulness of precomposition with $b$ uniquely lifts the
component on $\C$ to a $2$-cell $T\Rightarrow T'$.
The cocone compatibility equation and faithfulness of
precomposition with $a$ identify its other component with
the whiskering of this $2$-cell with $j$.
Thus the induced functor from $\Lcat(\C/\N,\X)$ to the
cocone category is fully faithful and essentially surjective,
pseudonaturally in $\X$. This proves the bipushout property.
\end{proof}

\begin{remark}[What the criterion packages]\label{rem:criterion-role}
The containment $\N\subseteq\M$ in
\eqref{eq:generalhomology} is the hypothesis used in the direct
proof of Theorem~\ref{di:thm:main} and, in roof language, in
Theorem~\ref{thm:tri-direct}. The criterion abstracts their final
composition and homology arguments.
Its hypotheses refer to the specified $2$-functor $U$, the
chosen normal inclusions, and the quotient universal properties
inside $\Lcat$. In particular, the final clause of (Q4) is an
explicit hypothesis, not a consequence of local faithfulness.
The criterion neither constructs the quotient objects nor proves
that their quotient functors have the properties required of
$1$-cells of $\Lcat$; these remain inputs in each application.
It does not assert that $U$ preserves $2$-cokernels.
\end{remark}

\section{Pointed categories and pointed ideal quotients}\label{sec:pointed}

We apply the criterion first to categories with only a zero object
and zero-object-preserving functors. Here the quotient is an explicit
ideal quotient, rather than an exact or Verdier localization.

\subsection{The strong bizero and all 2-kernels}

In a pointed category, an object $X$ is zero if and only if $1_X=0$:
this equation forces every morphism to or from $X$ to be zero. Functors
preserving zero objects preserve zero morphisms and retraction diagrams.
A retract of a zero object is zero.

Let $\Zcat$ be the category with one object and one morphism.

\begin{proposition}\label{prop:pointedzero}
The category $\Zcat$ is a strong bizero, hence a $2$-zero object,
in $\Pt$. A $1$-cell is null exactly when it takes every object to
a zero object. For every zero-preserving $F:\C\to\D$ and every
parallel null $Z:\C\to\D$, there is exactly one natural
transformation $F\Rightarrow Z$ and exactly one $Z\Rightarrow F$.
Thus the null functors are precisely the zero objects of the
hom-categories of $\Pt$.
\end{proposition}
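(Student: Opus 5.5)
The plan is to check Definition~\ref{def:2pointed} directly for $\Zcat$, in the order: bizero property, then the description of null $1$-cells, then the uniqueness of $2$-cells in both directions. First, $\Zcat$ is a bizero. For a pointed category $\C$, every functor $\C\to\Zcat$ is the constant one, and it preserves zero objects. Between two such functors there is exactly one natural transformation, the identity, so $\Pt(\C,\Zcat)$ is the terminal category. A zero-preserving functor $\Zcat\to\C$ amounts to choosing a zero object $Z$ of $\C$, and its only morphism goes to $1_Z$. Any two choices $Z,Z'$ are related by the unique morphisms $Z\to Z'$ and $Z'\to Z$, which are mutually inverse. These morphisms are the only candidate components, and naturality is vacuous, so each hom-set of $\Pt(\Zcat,\C)$ is a singleton and every morphism is invertible. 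Hence $\Pt(\Zcat,\C)\simeq\mathbf 1$.

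Next, identify the null $1$-cells. A functor factoring through $\Zcat$ is constant at a zero object, so any functor isomorphic to it takes every object to a zero object, because isomorphs of zero objects are zero. For the converse, suppose $\underline F(C)$ is zero for every $C$, and fix a zero object $Z$ of $\D$. The unique morphisms $F(C)\to Z$ are isomorphisms, and they are automatically natural, because both sides of each naturality square are maps into a zero object. So $F$ is isomorphic to the constant functor at $Z$, which factors through $\Zcat$.

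Finally, fix any zero-preserving $F:\C\to\D$ and a parallel null $Z$. A transformation $F\Rightarrow Z$ has components $F(C)\to Z(C)$. Each target $Z(C)$ is a zero object, so each component is forced, and the naturality squares commute automatically since they end at a zero object. Dually, the components $Z(C)\to F(C)$ are forced, and the squares commute because they start at a zero object. All natural transformations are admitted as $2$-cells of $\Pt$, so there is exactly one $2$-cell in each direction, which is the strong property. The last sentence of the proposition then follows from Lemma~\ref{lem:localpointedness}.

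The only point needing care is the converse direction of the null characterization, namely producing a natural isomorphism rather than mere objectwise isomorphisms. This is handled by the same observation as the uniqueness step: any family of maps into or out of zero objects is automatically natural.
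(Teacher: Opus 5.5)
Your proof is correct and follows essentially the same route as the paper. You verify the bizero property through the unique functor into $\Zcat$ and the indiscrete category of zero-object selections, identify null functors as the zero-valued ones via the unique maps to a chosen zero object, and get the two-sided uniqueness of $2$-cells from forced components whose naturality squares end at or start from a zero object; the only cosmetic difference is that you cite Lemma~\ref{lem:localpointedness} for the final sentence, where the paper simply observes that this is exactly Definition~\ref{def:2pointed}.
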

\begin{proof}
There is exactly one functor $\C\to\Zcat$. A zero-preserving functor
$\Zcat\to\C$ selects a zero object of $\C$, and any two selections
have exactly one natural transformation between them. The latter is
invertible. A zero-valued functor is naturally isomorphic to a constant
zero functor, using the unique maps between zero objects. Conversely,
factoring through $\Zcat$ up to isomorphism implies being zero-valued.
Let now $F:\C\to\D$ be any zero-preserving functor and let
$Z:\C\to\D$ be zero-valued. For each $X$, there are unique maps
$F(X)\to Z(X)$ and $Z(X)\to F(X)$. They are the components of
natural transformations: for $u:X\to Y$, the first naturality
square commutes because its composites have zero codomain $Z(Y)$,
and the second because its composites have zero domain $Z(X)$.
Every transformation in either direction has these components,
proving existence and uniqueness in both directions. This is exactly
Definition~\ref{def:2pointed}.
\end{proof}

\begin{lemma}[Full annihilation kernels]\label{lem:annihilation}
For a zero-preserving functor $F:\C\to\D$, the inclusion
\begin{equation}\label{eq:KF}
 k_F:\K_F\hookrightarrow\C,
 \qquad \Ob(\K_F)=\{X\in\C\mid F(X)\iso0\},
\end{equation}
is a 2-kernel in $\Pt$. The subcategory $\K_F$ is full, replete,
pointed, and closed under retracts in $\C$.

More generally, let $(\Lcat,U)$ satisfy the ambient assumptions of
Section~\ref{sec:criterion}, and let $F:\B\to\C$ be a
$1$-cell. Suppose that there are an object $\B'$ and a
$1$-cell $i:\B'\to\B$ such that
\[
 U(\B')=\mathcal K_U(F),\qquad
 U(i)=(\mathcal K_U(F)\hookrightarrow U(\B)).
\]
Assume, for every object $\X$, that each $G:\X\to\B$ with
$FG$ null admits $\overline G:\X\to\B'$ and an invertible
$2$-cell $i\overline G\iso G$, and that, for all
$H,H':\X\to\B'$, every $2$-cell $iH\Rightarrow iH'$
is $i\beta$ for a unique $\beta:H\Rightarrow H'$.
Then $i$ is a $2$-kernel of $F$.
\end{lemma}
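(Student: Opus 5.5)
I would prove the general clause first and then obtain the statement about $\Pt$ as the special case $\Lcat=\Pt$ with $U$ the identity. The general clause is essentially Proposition~\ref{prop:expanded-up}. Condition (K1) requires $Fi$ to be null. By Lemma~\ref{lem:criterion-null} this amounts to $\underline{F}$ sending every object of $U(\B')=\mathcal K_U(F)$ to a zero object, and that holds by the definition of the underlying annihilator, since $U(Fi)=\underline F\circ(\mathcal K_U(F)\hookrightarrow\underline\B)$. Condition (K2) is the assumed factorization $i\overline G\iso G$, and (K3) is the assumed unique lifting of $2$-cells. Proposition~\ref{prop:expanded-up} then gives that postcomposition with $i$ is an equivalence onto the full subcategory of those $G$ with $FG$ null. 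Pseudonaturality in $\X$ holds because this functor is given by whiskering, which commutes with precomposition.

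For the concrete case I would take $\K_F$ as in \eqref{eq:KF} and check the structural claims directly. Fullness is by definition. Repleteness follows because $F$ preserves isomorphisms and zero objects are closed under isomorphism. $\K_F$ is pointed because $F$ preserves zero objects, so $F(0)\iso 0$, and a zero object of $\C$ lying in a full subcategory remains a zero object there. For retracts, if $Y$ is a retract of $X\in\K_F$, then $F(Y)$ is a retract of $F(X)\iso0$, and a retract of a zero object is zero, as remarked at the start of this section. The inclusion $k_F$ preserves zero objects, so it is a $1$-cell of $\Pt$.

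It remains to verify the hypotheses of the general clause with $U=\mathrm{id}$. The ambient assumptions hold by Proposition~\ref{prop:pointedzero}: identity local faithfulness is trivial and $U(\Zcat)=\mathbf 1$. Let $G:\X\to\C$ be zero-preserving with $FG$ null. By Proposition~\ref{prop:pointedzero}, $FG$ is zero-valued, so $G$ lands in $\K_F$. The corestriction $\overline G$ preserves zero objects, since zero objects of $\C$ inside $\K_F$ are zero there, and it satisfies $k_F\overline G=G$ on the nose. For $2$-cells, a natural transformation $k_FH\Rightarrow k_FH'$ has components that are morphisms of $\C$ between objects of $\K_F$. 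By fullness these are morphisms of $\K_F$, and naturality holds in $\K_F$ because the inclusion is faithful. Uniqueness follows from faithfulness of the inclusion.

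There is no real obstacle here. The only point requiring care is that "null" in the hom-category universal property means isomorphic to a $1$-cell factoring through $\Zcat$, rather than zero-valued. The equivalence of the two notions is exactly Lemma~\ref{lem:criterion-null} in general, or Proposition~\ref{prop:pointedzero} in $\Pt$, so I would invoke that explicitly in both directions.
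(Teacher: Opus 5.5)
Your proof is correct and follows the paper's argument. In both, nullness of $Fi$ comes from Lemma~\ref{lem:criterion-null}, the assumed factorization and $2$-cell lifting make postcomposition with $i$ essentially surjective and fully faithful, and in $\Pt$ one corestricts and lifts transformations componentwise through the full inclusion; the only difference is that the paper treats the $\Pt$ case directly, getting even an isomorphism of functor categories, rather than deducing it from the general clause with $U=\mathrm{id}$ as you do.
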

\begin{proof}
Preservation of zero objects, isomorphisms, and retractions proves the
closure assertions. For any $\X$, a zero-preserving $G:\X\to\C$
has $FG\iso0$ exactly when all its values lie in $\K_F$. Then $G$
corestricts uniquely to $\K_F$. Every natural transformation between two
such functors lifts uniquely, component by component, since the inclusion
is full. This gives the isomorphism of functor categories in
\eqref{eq:kernelUP}, natural under precomposition.

In the general case, $Fi$ is null by
Lemma~\ref{lem:criterion-null}. The stated factorization and lifting
assumptions make postcomposition with $i$ essentially surjective and
fully faithful onto the full subcategory of $\Lcat(\X,\B)$
annihilated by $F$. These functors commute with precomposition, so
\eqref{eq:kernelUP} holds pseudonaturally in $\X$.
\end{proof}

\subsection{Pointed quotients and retract closure}

For a collection $\Scl$ of objects of $\C$, define a two-sided ideal
of the underlying category by
\begin{equation}\label{eq:pointedideal}
 I_{\Scl}(X,Y)=\{0_{X,Y}\}\cup
 \{ba\mid X\xrightarrow{a}S\xrightarrow{b}Y,\ S\in\Scl\}.
\end{equation}
It is closed under arbitrary pre- and postcomposition. Let
$\pq{\C}{\Scl}$ have the same objects as $\C$, with each hom-set
obtained by identifying all members of $I_{\Scl}(X,Y)$ to one element
and making no other identifications. Explicitly,
\begin{equation}\label{eq:Rees}
 [f]=[g]\quad\Longleftrightarrow\quad
 f=g\ \text{or}\ \bigl(f,g\in I_{\Scl}(X,Y)\bigr).
\end{equation}
Composition $[g][f]=[gf]$ is well-defined by the ideal property. The
original zero object remains a zero object. We call this the
\emph{pointed quotient}; it is the object-generated Rees quotient of
the category. The definition \eqref{eq:Rees}, rather than a choice of
terminology, specifies the construction completely.

Write $\ret_{\C}(\Scl)$ for the full replete subcategory of retracts
of objects of $\Scl$ together with the zero objects; equivalently, of
retracts of $\Scl\cup\{0\}$. A retract here is an existing diagram
$X\xrightarrow{a}S\xrightarrow{b}X$ with $ba=1_X$.

\begin{proposition}\label{prop:pointedquotient}
The quotient $q:\C\to\pq{\C}{\Scl}$ is zero-preserving and has the
following properties.
\begin{enumerate}[label=\textup{(\roman*)}]
\item $q(X)\iso0$ if and only if $X\in\ret_{\C}(\Scl)$.
\item $I_{\Scl}=I_{\ret_{\C}(\Scl)}$.
\item For every pointed category $\X$, precomposition gives an
isomorphism of categories
\begin{equation}\label{eq:pointedquotientUP}
 \Pt(\pq{\C}{\Scl},\X)\ \iso\
 \{H\in\Pt(\C,\X)\mid H(S)\iso0\text{ for all }S\in\Scl\},
\end{equation}
where the right-hand side contains all natural transformations.
\end{enumerate}
\end{proposition}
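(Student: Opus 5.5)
I would first record the elementary facts about the Rees quotient \eqref{eq:Rees}. Composition is well defined because $I_{\Scl}$ is a two-sided ideal. The class $[0_{X,Y}]$ equals the whole ideal class, so $[0]$ is absorbing. Hence the image $q(0)$ of a zero object $0$ of $\C$ is still zero: its hom-sets were singletons, and identification cannot enlarge them. Thus $q$ is zero-preserving, and a morphism $[f]$ is zero in $\pq{\C}{\Scl}$ exactly when $f\in I_{\Scl}$.

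For (i), I would use the criterion from the start of this section: $q(X)\iso0$ iff $[1_X]=[0]$ iff $1_X\in I_{\Scl}(X,X)$. By definition this means either $1_X=0$, so that $X$ is zero, or $1_X=ba$ with $X\xrightarrow{a}S\xrightarrow{b}X$ and $S\in\Scl$, so that $X$ is a retract of an object of $\Scl$. The two cases together are exactly membership in $\ret_{\C}(\Scl)$.

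For (ii), the inclusion $I_{\Scl}\subseteq I_{\ret_{\C}(\Scl)}$ is clear. Conversely, I would take a morphism $X\xrightarrow{a}R\xrightarrow{b}Y$ with $R$ a retract $R\xrightarrow{s}S\xrightarrow{r}R$ of some $S\in\Scl$, so $rs=1_R$. Then $ba=(br)(sa)$ factors through $S$. If instead $R$ is zero, then $ba=0$. Repleteness of $\ret_{\C}(\Scl)$ is covered because isomorphic copies of retracts are retracts.

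For (iii), I would construct the inverse to precomposition explicitly. Let $H$ annihilate $\Scl$. Then $H$ kills every morphism factoring through some $S\in\Scl$, since $H(ba)=H(b)H(a)$ factors through $H(S)\iso0$, and $H(0)=0$. So $H$ sends each $I_{\Scl}(X,Y)$ to the single zero morphism. It therefore factors uniquely on the nose as $\overline H q=H$, with $\overline H$ equal to $H$ on objects, and $\overline H$ preserves the zero object. Conversely, every $T$ out of the quotient gives $Tq$ annihilating $\Scl$, because $q(S)\iso0$ by (i). These assignments are mutually inverse on $1$-cells, since $q$ is bijective on objects and surjective on morphisms.

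For $2$-cells, a natural transformation $Tq\Rightarrow T'q$ has components indexed by the same objects. Its naturality squares for the arrows $[f]$ are exactly those for $f$, so it is natural as a transformation $T\Rightarrow T'$. This holds without any invertibility assumption, which gives an isomorphism and not merely an equivalence. The only step needing care is checking that the quotient's zero object and the factorization respect zero morphisms. I expect no real obstacle beyond bookkeeping in (ii); the key point is the surjectivity of $q$ on morphisms.
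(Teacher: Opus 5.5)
Your proposal is correct and follows the paper's proof essentially step for step. Part (i) goes through $[1_X]=[0]\iff 1_X\in I_{\Scl}(X,X)$; part (ii) rewrites a factorization through a retract as one through $S$, via $ba=(br)(sa)$; and part (iii) descends functors and natural transformations with unchanged object data, using that $q$ is the identity on objects and surjective on hom-sets. The only difference is that you spell out some bookkeeping the paper leaves implicit, such as why $q$ preserves the zero object and why zero morphisms of the quotient are exactly the classes of $I_{\Scl}$.
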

\begin{proof}
An object $q(X)$ is zero exactly when $[1_X]=[0]$. By
\eqref{eq:Rees}, either $1_X=0$, or $1_X$ factors through an object
of $\Scl$. These alternatives say exactly that $X$ is a zero object
or the indicated retract, proving (i). Any morphism factoring through a
retract of $S$ also factors through $S$. This proves (ii).

If $H$ annihilates $\Scl$, it sends every element of
$I_{\Scl}(X,Y)$ to zero. Hence
$\overline H(X)=H(X)$, $\overline H([f])=H(f)$ defines a unique
zero-preserving factor $\overline Hq=H$. Conversely, every such factor
annihilates $\Scl$ by (i). A transformation between two composites
with $q$ descends with unchanged components: every quotient morphism
is represented by a morphism of $\C$, so its naturality is already
known. The descent is unique because $q$ is the identity on objects.
This proves (iii), naturally in $\X$.
\end{proof}

Call a full replete pointed subcategory \emph{retract-closed} when it
contains every retract in the ambient category of any of its objects.
For a zero-preserving $F:\B\to\C$, the preceding proposition gives
\begin{equation}\label{eq:pointedcoker}
 \twocoker(F)=
 \bigl(\C\longrightarrow\pq{\C}{\ret_{\C}(F\B)}\bigr).
\end{equation}
Thus all 2-kernels and 2-cokernels already exist in $\Pt$.
Moreover, every retract-closed inclusion $\N\hookrightarrow\C$ is
the 2-kernel of its pointed quotient, and the quotient is the 2-cokernel
of that inclusion. Together with uniqueness of 2-kernels, this shows:
\begin{equation}\label{eq:pointednormalforms}
 \begin{split}
 \text{2-kernels in }\Pt
 &\ \longleftrightarrow\ \text{retract-closed full pointed inclusions},\\
 \text{2-cokernels in }\Pt
 &\ \longleftrightarrow\ \text{their pointed quotients},
 \end{split}
\end{equation}
where the correspondences are understood up to equivalence.

\subsection{Application of the criterion}

\begin{theorem}\label{thm:pointed}
The 2-category $\Pt$ is 2-pointed and 2-homological.
\end{theorem}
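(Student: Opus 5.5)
We apply Theorem~\ref{thm:criterion} to $\Lcat=\Pt$ with $U$ the identity $2$-functor. $U$ is trivially locally faithful, and $U(\Zcat)=\Zcat$ is the one-morphism category. Proposition~\ref{prop:pointedzero} already gives $2$-pointedness. For each pointed $\C$, we take $\mathfrak N(\C)$ to be the retract-closed full replete pointed subcategories. The normal inclusion $i_\N^\C$ is the literal inclusion, with $\C_\N=\N$ and its inherited zero object. The quotient $q_\N^\C$ is the pointed quotient $\C\to\pq{\C}{\N}$ of \eqref{eq:Rees}. The proof then reduces to checking (Q1)--(Q4).

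For (Q1), the zero objects lie in every normal subcategory by definition. An intersection of retract-closed full replete subcategories containing zero is again one; the empty intersection is $\C$. For inverse images along a zero-preserving $F$, note that $F$ preserves retraction diagrams and isomorphisms. If $X$ is a retract of $Y$ with $F(Y)\in\N$, then $F(X)$ is a retract of $F(Y)$ and so lies in $\N$. Transitivity holds because a retract in $\C$ of an object of $\N\subseteq\M$ lies first in $\M$, and its retraction diagram then lies in $\M$ by fullness, so it lies in $\N$. Restriction, $\N\subseteq\M\Rightarrow\N\in\mathfrak N(\M)$, is immediate.

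(Q2) is Lemma~\ref{lem:annihilation}, since $\mathcal K_U(F)=\K_F$ is retract-closed. For (Q3), the pointed quotient is identity on objects. By Proposition~\ref{prop:pointedquotient}(i), $\mathcal K_U(q_\N^\C)=\ret_\C(\N)=\N$ for retract-closed $\N$, and (iii) gives the universal property \eqref{eq:normalquotientUP} as an isomorphism of full hom-categories, natural in $\X$. The final clause of (Q4) is trivial: a $1$-cell of $\Pt$ that is an equivalence of categories has a quasi-inverse that preserves zero objects, because equivalences preserve zero objects. The unit and counit are then natural transformations, and these are all admitted as $2$-cells.

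The main point is the full faithfulness clause of (Q4). Let $\N\subseteq\M$. The descended functor $j:\pq{\M}{\N}\to\pq{\C}{\N}$ sends $[f]$ to $[f]$. We must show that for $X,Y\in\M$, the hom-set of $\pq{\M}{\N}$ agrees with that of $\pq{\C}{\N}$. Both quotients identify exactly the morphisms in $I_\N(X,Y)$, and the underlying hom-sets agree since $\M$ is full. So the issue reduces to showing that the ideal computed in $\M$ equals the ideal computed in $\C$. This holds because a factorization $X\to S\to Y$ with $S\in\N$ already lies in $\M$: $S\in\N\subseteq\M$, and $\M$ is full. Hence $j$ is bijective on hom-sets. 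With (Q1)--(Q4) verified, Theorem~\ref{thm:criterion} yields that $\Pt$ is $2$-homological, with normal forms \eqref{eq:pointednormalforms}.
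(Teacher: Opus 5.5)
Your proposal is correct and follows the paper's proof essentially step by step. Like the paper, you take $U$ to be the identity, use retract-closed full replete pointed subcategories as the normal data, and cite Lemma~\ref{lem:annihilation} and Proposition~\ref{prop:pointedquotient} for (Q2)--(Q3). You also obtain (Q4) in the same way, from the equality $I_{\N}^{\M}(X,Y)=I_{\N}^{\C}(X,Y)$ forced by fullness of $\M$.
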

\begin{proof}
Take $U=1_{\Pt}$ and realize each normal subcategory by
itself with its pointed structure and inclusion. Then $U$
is locally faithful, sends the one-object zero category to
$\mathbf1$, and reflects equivalences.

Use retract-closed full replete pointed subcategories as the normal
subcategories in Theorem~\ref{thm:criterion}. Intersections, inverse
images, and restriction to a full pointed subcategory preserve the
required properties. For transitivity, a retract in $\C$ of an object
of $\N\subseteq\M\subseteq\C$ first belongs to $\M$; fullness
puts its retraction diagram in $\M$, so it then belongs to $\N$.
The smallest normal subcategory on $\Scl$ is $\ret_{\C}(\Scl)$.
Thus (Q1) holds. Lemma~\ref{lem:annihilation} gives (Q2), and
Proposition~\ref{prop:pointedquotient} gives (Q3).

For (Q4), let $\N\subseteq\M\subseteq\C$ be normal and
$X,Y\in\M$. Every factorisation $X\to N\to Y$ with $N\in\N$
lies in $\M$ by fullness. Hence
\[
 I_{\N}^{\M}(X,Y)=I_{\N}^{\C}(X,Y).
\]
The hom-set equivalence relations \eqref{eq:Rees} restrict identically,
so $\pq{\M}{\N}\to\pq{\C}{\N}$ is fully faithful.
Proposition~\ref{prop:pointedzero} provides strong pointedness,
so the criterion applies.
\end{proof}

\section{Additive categories and the reusable localization step}\label{sec:additive}

\subsection{What is inherited from the pointed case}

An additive category is a preadditive category with finite biproducts,
including a zero object; see \cite{StacksAdd}. An additive functor
preserves zero objects and finite biproducts. Thus there is a forgetful
2-functor $\Add\to\Pt$. The one-object zero category is again a
strong bizero, by Proposition~\ref{prop:pointedzero}. Indeed,
zero-valued functors are additive, and the unique natural
transformations from any additive functor to a parallel zero-valued
functor, and in the reverse direction, are allowed $2$-cells of
$\Add$. Hence every null additive functor is a zero object in its
hom-category.

For an additive $F:\A\to\B$, the pointed annihilator $\K_F$ is
closed under finite biproducts, since
$F(X\oplus Y)\iso F(X)\oplus F(Y)$. It has its induced additive
structure, and additive functors landing in it corestrict additively.
All natural transformations still lift through the full inclusion.
Lemma~\ref{lem:annihilation} therefore gives the 2-kernel in $\Add$
without a new 2-dimensional argument.

The quotient, however, must change: collapsing only an ideal's zero
class does not in general respect addition. We now replace the pointed
quotient by the additive quotient. For a standard reference to the construction
see \cite[Recollection 2.7]{BalmerDA}.

\subsection{Additive ideal quotients}

For a collection $\Scl\subseteq\Ob(\A)$, let $[\Scl]_{\A}(X,Y)$
be the subgroup of $\A(X,Y)$ of all sums
\begin{equation}\label{eq:addideal}
 \sum_{i=1}^{n} b_i a_i,
 \qquad X\xrightarrow{a_i}S_i\xrightarrow{b_i}Y,
 \quad S_i\in\Scl.
\end{equation}
This is the two-sided additive ideal generated by the identities of
objects of $\Scl$. The quotient $\aq{\A}{\Scl}$ has the same
objects as $\A$ and hom-groups
\begin{equation}\label{eq:addquotient}
 (\aq{\A}{\Scl})(X,Y)=\A(X,Y)/[\Scl]_{\A}(X,Y).
\end{equation}
Composition descends by bilinearity; the equations defining biproducts
descend as well. Hence it is additive and its quotient functor is
additive, the identity on objects, and surjective on hom-groups.

Let $\add_{\A}(\Scl)$ be the full replete subcategory of retracts of
finite biproducts of objects of $\Scl$, including the empty biproduct.
It is the least retract-closed additive subcategory containing $\Scl$.

\begin{lemma}\label{lem:additivequotient}
For the additive quotient $a:\A\to\aq{\A}{\Scl}$,
\begin{equation}\label{eq:additivezero}
 a(X)\iso0\quad\Longleftrightarrow\quad
 1_X\in[\Scl]_{\A}(X,X)
 \quad\Longleftrightarrow\quad X\in\add_{\A}(\Scl).
\end{equation}
Moreover $[\Scl]_{\A}=[\add_{\A}(\Scl)]_{\A}$, and for every
additive $\X$, precomposition gives an isomorphism
\begin{equation}\label{eq:additiveUP}
 \Add(\aq{\A}{\Scl},\X)\iso
 \{H\in\Add(\A,\X)\mid H(\Scl)\iso0\},
\end{equation}
including arbitrary natural transformations.
\end{lemma}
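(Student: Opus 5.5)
I will prove the three parts of Lemma~\ref{lem:additivequotient} in the order: the chain \eqref{eq:additivezero}, then the ideal equality, then the universal property \eqref{eq:additiveUP}. The approach mirrors Proposition~\ref{prop:pointedquotient}, with the pointed ideal $I_{\Scl}$ replaced by the additive ideal $[\Scl]_{\A}$ of \eqref{eq:addideal}.

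\emph{The zero criterion.}
Since $a$ is additive, it preserves zero objects. Hence $a(X)\iso0$ exactly when $[1_X]=[0]$ in $(\aq{\A}{\Scl})(X,X)$. By \eqref{eq:addquotient} this means $1_X\in[\Scl]_{\A}(X,X)$, which is the first equivalence.

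For the second equivalence, suppose first that $1_X=\sum_{i=1}^n b_ia_i$ with $a_i\colon X\to S_i$ and $b_i\colon S_i\to X$. Put $S=\bigoplus_i S_i$ with injections $\iota_i$ and projections $\pi_i$, and set
\[
 a=\sum_i\iota_ia_i\colon X\to S,\qquad b=\sum_i b_i\pi_i\colon S\to X.
\]
Then $ba=\sum_i b_ia_i=1_X$, so $X$ is a retract of a finite biproduct of objects of $\Scl$. For $n=0$ the empty sum gives $1_X=0$, so $X$ is zero. This shows $X\in\add_{\A}(\Scl)$. Conversely, if $X$ is a retract $ba=1_X$ of $S=\bigoplus_i S_i$, then
\[
 1_X=b\Bigl(\sum_i\iota_i\pi_i\Bigr)a=\sum_i(b\iota_i)(\pi_ia),
\]
which lies in $[\Scl]_{\A}(X,X)$. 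Repleteness causes no difficulty, since isomorphic objects are retracts of each other.

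\emph{Equality of ideals.}
The inclusion $[\Scl]_{\A}\subseteq[\add_{\A}(\Scl)]_{\A}$ is clear. For the reverse inclusion, let $f=b'a'$ factor through some $T\in\add_{\A}(\Scl)$. By the second equivalence, $1_T\in[\Scl]_{\A}(T,T)$. Then $f=b'1_Ta'$ lies in $[\Scl]_{\A}$, because this subgroup is a two-sided ideal. Sums of such $f$ stay in it since it is a subgroup.

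\emph{The universal property.}
Let $H$ be additive with $H(S)\iso0$ for all $S\in\Scl$. Then $H$ kills every $b_ia_i$, since each factors through a zero object. By additivity of $H$ on hom-groups, $H$ kills all of $[\Scl]_{\A}$. Hence $\overline H([f])=H(f)$ is well defined, and $\overline H$ is additive because $a$ is surjective on hom-groups. It is the unique functor with $\overline Ha=H$, since $a$ is the identity on objects and full.

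Conversely, if $H=Ka$ with $K$ additive, then $H$ annihilates $\Scl$ by \eqref{eq:additivezero}.

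For $2$-cells, let $\alpha$ be a natural transformation between two such composites. Keep the same components for the descended transformation. Naturality for $[f]$ is naturality of $\alpha$ for the representative $f$, because $a$ is full. Uniqueness holds because $a$ is bijective on objects. Thus precomposition is bijective on both $1$-cells and $2$-cells, giving the stated isomorphism of categories, and it commutes with postcomposition.

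\emph{Main obstacle.}
The only nonformal step is the passage from a sum $\sum b_ia_i=1_X$ to a single retraction through a biproduct. This is exactly where finite biproducts are used, and I expect everything else to be routine.
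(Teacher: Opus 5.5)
Your proof is correct and follows essentially the same route as the paper: the column/row identity $ba=\sum_i b_ia_i$ for the retract characterization, absorption of a factorization through a retract of a finite biproduct into the ideal, and descent of functors and of arbitrary natural transformations because the quotient is the identity on objects and full. The only point to tighten is your justification of the first equivalence. It rests on the fact that an object of an additive category is zero exactly when its identity equals the zero morphism, which in the quotient is $[0]$, rather than on $a$ preserving zero objects; the paper leaves this step implicit.
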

\begin{proof}
An identity in the ideal can be written as $1_X=\sum_i b_i a_i$.
The column $(a_i):X\to\bigoplus_i S_i$ and the row
$(b_i):\bigoplus_i S_i\to X$ exhibit $X$ as a retract. The converse
is the same matrix equation. Factoring through a retract of a biproduct
produces a sum of the form \eqref{eq:addideal}, proving equality of the
ideals.

An additive $H$ annihilates $\Scl$ exactly when it kills the displayed
ideal. Thus it descends uniquely by $\overline H([f])=H(f)$.
The natural-transformation argument of
Proposition~\ref{prop:pointedquotient}(iii) applies verbatim: the quotient
is the identity on objects and full, so naturality on a representative
is exactly naturality on its class. This proves \eqref{eq:additiveUP}.
\end{proof}

\subsection{Additive homologicity from the pointed proof}

\begin{theorem}\label{thm:additive}
The 2-category $\Add$ is 2-pointed and 2-homological. For
$F:\A\to\B$,
\begin{equation}\label{eq:additivecoker}
 \twocoker(F)=
 \bigl(\B\longrightarrow\aq{\B}{\add_{\B}(F\A)}\bigr).
\end{equation}
A 1-cell is a 2-kernel precisely when it is fully faithful and its
essential image is retract-closed. Every 2-cokernel is, up to equivalence,
an additive quotient by a retract-closed full additive subcategory.
\end{theorem}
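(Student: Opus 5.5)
We will apply Theorem~\ref{thm:criterion} with $U:\Add\to\Pt$ the forgetful $2$-functor, exactly as in the proof of Theorem~\ref{thm:pointed}, replacing the pointed quotient by the additive ideal quotient of Lemma~\ref{lem:additivequotient}. Since $\Add$ has all natural transformations as $2$-cells, $U$ is locally faithful (indeed locally fully faithful), and it sends the one-object zero category to $\mathbf1$. Strong pointedness was recorded at the start of this section. For reflection of equivalences in (Q4), we show that if an additive functor $v$ is an equivalence of categories, then any pseudoinverse is additive, because equivalences preserve finite biproducts. Unit and counit are then admissible $2$-cells, since all natural transformations are allowed.

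The normal subcategories will be the retract-closed full replete additive subcategories, that is, those closed under finite biproducts and retracts and containing zero. Each such $\N$ is realized by itself, with its inherited additive structure and its inclusion. For (Q1) we check the following. Intersections preserve biproduct and retract closure. Inverse images along additive functors are normal, because additive functors preserve biproducts and retraction diagrams. Transitivity holds because a retract in $\C$ of an object of $\N\subseteq\M$ lies first in $\M$ and then, by fullness, in $\N$. Restriction to $\M$ is immediate. The closure operator is $\cl_{\C}(\Scl)=\add_{\C}(\Scl)$. Condition (Q2) is the additive annihilator discussion preceding Lemma~\ref{lem:additivequotient}, via Lemma~\ref{lem:annihilation}. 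Condition (Q3) is Lemma~\ref{lem:additivequotient}, and for normal $\N$ equation~\eqref{eq:additivezero} gives $\mathcal K_U(a)=\add_{\C}(\N)=\N$.

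The step we expect to be the main obstacle, though it is still short, is the full faithfulness in (Q4). For $\N\subseteq\M\subseteq\C$ normal and $X,Y\in\M$, we must show that $[\N]_{\M}(X,Y)=[\N]_{\C}(X,Y)$. This holds because each summand $b_ia_i$ in \eqref{eq:addideal} factors through an object $S_i\in\N\subseteq\M$, and this factorization lies in $\M$ by fullness. Hence the quotient hom-groups coincide, and the descended functor $j$ is literally the identity on these hom-groups, so it is fully faithful.

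Theorem~\ref{thm:criterion} then yields $2$-homologicity together with the formula \eqref{eq:additivecoker} from \eqref{eq:generalcoker}. It also gives the normal forms. A $2$-kernel is, up to equivalence, a normal inclusion, and conversely a fully faithful functor with retract-closed additive replete essential image is a $2$-kernel by the consequence recorded in the proof of the criterion. Here the image is automatically closed under biproducts because $F$ is additive and fully faithful. Finally, $2$-cokernels are the additive quotients by normal subcategories, up to equivalence.
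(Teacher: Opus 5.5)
Your proposal is correct and follows the paper's proof essentially step for step. It uses the same forgetful $2$-functor $U:\Add\to\Pt$ with retract-closed full replete additive subcategories as normal subcategories, gets (Q2) from the inherited annihilation kernels, (Q3) from Lemma~\ref{lem:additivequotient}, and (Q4) from $[\N]_{\M}(X,Y)=[\N]_{\C}(X,Y)$ together with additivity of quasi-inverses, and reads the normal forms off Theorem~\ref{thm:criterion}. The only detail you leave implicit is biproduct closure in the transitivity clause of (Q1), which holds because biproduct diagrams in a full additive subcategory are ambient biproduct diagrams, as the paper notes.
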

\begin{proof}
Use the canonical forgetful $2$-functor $U:\Add\to\Pt$.
It is locally faithful and sends the one-object zero category
to $\mathbf1$; the inherited additive categories and their
inclusions realize the normal-subcategory data.

Take the normal subcategories to be the retract-closed full replete
additive subcategories. The proof of (Q1) in the pointed case still
applies, with finite biproduct closure added. Inverse images preserve
this extra condition because the functors are additive. For transitivity,
biproduct diagrams in a full additive subcategory are ambient biproduct
diagrams, so the same inclusions are additive. The closure operation
is $\add_{\A}$.

The preceding subsection on inherited kernels gives (Q2).
Lemma~\ref{lem:additivequotient} gives (Q3). For (Q4), with
$\N\subseteq\M\subseteq\A$ and $X,Y\in\M$, every factorisation
through an object of $\N$ is internal to $\M$, so
\begin{equation}\label{eq:addrestriction}
 [\N]_{\M}(X,Y)=[\N]_{\A}(X,Y).
\end{equation}
Consequently $\aq{\M}{\N}\to\aq{\A}{\N}$ is fully faithful.
An additive functor that is an equivalence of underlying
categories admits an additive quasi-inverse. Hence $U$ reflects
equivalences, and all hypotheses of Theorem~\ref{thm:criterion}
hold.

The normal-form characterisation follows from that theorem; the
essential image of a fully faithful additive functor is already closed
under finite biproducts, up to isomorphism. Its remaining normality
condition is exactly retract closure.
\end{proof}

In particular the additive homology square is
\begin{equation}\label{eq:additivesquare}
\begin{tikzcd}[column sep=large,row sep=large]
 \M\arrow[r,tail]\arrow[d] & \A\arrow[d]\\
 \aq{\M}{\N}\arrow[r,tail] & \aq{\A}{\N}.
\end{tikzcd}
\end{equation}
Both rows are 2-kernels, both columns are 2-cokernels, and the square
is a bipullback and a bipushout. Closure under composition requires
no further ideal calculation: it is already covered by the common
proof, including all 2-cells.

\subsection{Two localization lemmas}\label{subsec:localization}

To pass from the additive case to the abelian and triangulated cases,
we need to invert additional morphisms. We record the exact extra
arguments. Ordinary localization and the calculus of fractions used
here are classical; see \cite[Chapter I]{GZ} and \cite{StacksLoc}.
The proofs below supply the natural-transformation and full-subcategory
features required for our 2-dimensional application.

\begin{lemma}[Natural transformations descend through localization]\label{lem:localizationNat}
Let $L:\C\to\C[W^{-1}]$ be the ordinary localization, in its
identity-on-objects model. For every category $\X$, precomposition
is fully faithful and induces an equivalence
\[
 \Cat(\C[W^{-1}],\X)\simeq
 \{H:\C\to\X\mid H(w)\text{ is invertible for all }w\in W\},
\]
where the right-hand side is full on all natural transformations.
Whenever an additional componentwise equation defines allowed 2-cells,
the same result restricts to those 2-cells if the equation can be checked
on objects of $\C$.
\end{lemma}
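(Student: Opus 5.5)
The plan is to work entirely in the explicit identity-on-objects model of $\C[W^{-1}]$ from Lemma~\ref{lem:puppe-nat}. In that model the arrows are equivalence classes of finite zigzag words in arrows of $\C$ and formal inverses $\bar w$ for $w\in W$. The words are taken modulo the composition relations of $\C$ and the two inverse equations $w\bar w=1$, $\bar w w=1$.

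The first step is the object-level part of the equivalence. Precomposition with $L$ lands in the $W$-inverting functors, because $L(w)$ is invertible. Conversely, a $W$-inverting $H$ extends to $\overline H$ by sending $\bar w$ to $H(w)^{-1}$. Every relation is respected, so $\overline H$ is well defined and $\overline H L=H$ on the nose. Uniqueness holds because the generators are fixed. This already gives essential surjectivity, indeed surjectivity on objects.

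The second step, full faithfulness on arbitrary natural transformations, is the main point, and I would copy the argument of Lemma~\ref{lem:puppe-nat}. Take $R,S:\C[W^{-1}]\to\X$ and $\alpha:RL\Rightarrow SL$. Since $L$ is the identity on objects, any lift must use the components $\alpha_X$, which gives uniqueness. For existence:
\begin{itemize}
\item Naturality along $L(f)$ is the hypothesis.
\item For $w:X\to Y$ in $W$, multiply $S(Lw)\alpha_X=\alpha_Y R(Lw)$ on the left by $S(Lw)^{-1}$ and on the right by $R(Lw)^{-1}$. This gives naturality along $L(w)^{-1}$, and no invertibility of $\alpha$ is needed.
\item Naturality is closed under composition of arrows, so it holds along every word. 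Since it is a property of arrows, it descends to equivalence classes automatically.
\end{itemize}
Faithfulness of precomposition is again immediate from the identity on objects.

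The final clause asks that the equivalence restrict to allowed $2$-cells defined by an additional componentwise equation checkable on objects of $\C$. Here I would observe that the descended transformation has literally the same components as $\alpha$, and both sides of the equation are evaluated at objects of $\C$, which coincide with objects of $\C[W^{-1}]$. So $\overline\alpha$ satisfies the equation exactly when $\alpha$ does. Hence the fully faithful functor restricts to the corresponding sub-hom-categories, which are non-full on $2$-cells, and stays fully faithful there. Essential surjectivity on $1$-cells is unaffected.

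I expect the only real subtlety to be making precise what ``can be checked on objects of $\C$'' means. Shift-compatibility in the triangulated case, for example, involves components at $\Sigma X$ and the structure isomorphisms $\mu$. I would handle this by requiring that the equation's data, such as the shift functor and the comparisons $\mu$, be defined on $\C$ and transported along $L$. Then the equation for $\overline\alpha$ at $X$ is literally the equation for $\alpha$ at $X$.
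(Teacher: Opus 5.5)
Your proposal is correct and follows the paper's proof. The paper likewise obtains the full functor-category equivalence from the identity-on-objects argument of Lemma~\ref{lem:puppe-nat}: extension along formal inverses, naturality for $L(w)^{-1}$ by multiplying by the inverse images, and closure under composition of words. It then notes that the descended transformation has literally the same components, so a componentwise equation holds before descent if and only if it holds after. Your remark that structural data such as $\Sigma$ and the comparisons $\mu$ must be transported along $L$ matches how the paper applies the lemma in Lemma~\ref{lem:VerdierUP}, where it chooses a model with $q_{\N}\Sigma_{\T}=\Sigma q_{\N}$.
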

\begin{proof}
The equivalence on full functor categories is
Lemma~\ref{lem:puppe-nat}. In the identity-on-objects model,
the descended transformation has the same components as the
original one. Hence any specified componentwise equation
that can be checked on those objects holds before descent
if and only if it holds afterwards.
\end{proof}

\begin{lemma}[Restriction of roofs]\label{lem:roofs}
Let $\M\subseteq\C$ be full. Suppose $W$ admits a calculus of
fractions by roofs $X\xleftarrow{s}Z\xrightarrow{f}Y$, $s\in W$,
and $W_{\M}=W\cap\Mor(\M)$ admits the corresponding calculus in
$\M$. Suppose also
\begin{equation}\label{eq:roofclosed}
 s:Z\to X\in W,\quad X\in\M
 \quad\Longrightarrow\quad Z\in\M.
\end{equation}
Then $\M[W_{\M}^{-1}]\to\C[W^{-1}]$ is fully faithful.
\end{lemma}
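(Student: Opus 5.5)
The plan is to check fullness and faithfulness of the induced functor $J:\M[W_{\M}^{-1}]\to\C[W^{-1}]$ separately. In both localizations every morphism is represented by a roof, and $J$ sends the class of a roof $X\xleftarrow{s}Z\xrightarrow{f}Y$ in $\M$ to the class of the same roof in $\C$. This holds because $J$ is induced from $\M\hookrightarrow\C\to\C[W^{-1}]$, and that composite inverts $W_{\M}\subseteq W$. So the statement reduces to two assertions about roofs.

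\emph{Fullness.} Let $X,Y\in\M$ and take a morphism $X\to Y$ in $\C[W^{-1}]$, represented by a roof $X\xleftarrow{s}Z\xrightarrow{f}Y$ with $s\in W$. By \eqref{eq:roofclosed}, $Z\in\M$. Since $\M$ is full, both $s$ and $f$ lie in $\M$, and $s\in W\cap\Mor(\M)=W_{\M}$. Hence the same roof already represents a morphism of $\M[W_{\M}^{-1}]$, and $J$ sends it to the given class.

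\emph{Faithfulness.} Suppose two roofs $(s,f)$ and $(s',f')$ in $\M$ become equal in $\C[W^{-1}]$. By the calculus of fractions in $\C$, there is a common refinement: an object $Z''$ with maps $u:Z''\to Z$ and $u':Z''\to Z'$ such that $su=s'u'\in W$ and $fu=f'u'$. Because $su\in W$ has codomain $X\in\M$, \eqref{eq:roofclosed} gives $Z''\in\M$. Fullness then puts $u$, $u'$ and all the relevant composites in $\M$, with $su\in W_{\M}$. This is exactly the equivalence relation of roofs in $\M$, so the two classes already coincide in $\M[W_{\M}^{-1}]$.

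The one point that needs care is the exact form of the roof equivalence relation. The common refinement must be taken with its denominator $su$ in $W$, rather than with merely $u$ in $W$. I will use the standard formulation, as in \cite{StacksLoc}, in which this holds. With that formulation, condition \eqref{eq:roofclosed} applies directly, and the argument needs nothing beyond fullness of $\M$.
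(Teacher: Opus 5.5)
Your proof is correct and follows the paper's argument essentially verbatim. You get fullness by applying \eqref{eq:roofclosed} to the denominator of a roof in $\C[W^{-1}]$ and then using fullness of $\M$, and faithfulness by applying the same condition to the composite denominator $su=s'u'\in W$ of a common refinement, which is the same form of the roof equivalence relation the paper uses.
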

\begin{proof}
A morphism between $X,Y\in\M$ in the larger localization has a roof
as displayed. Condition \eqref{eq:roofclosed} puts $Z$ in $\M$;
fullness puts both arrows there, proving fullness after localization.
If two roofs from $\M$ become equal, the fraction calculus gives a
common refining roof with denominator $s':Z'\to X$ in $W$.
Again $Z'\in\M$, and all comparison arrows lie in $\M$ by
fullness. The same refinement therefore witnesses equality in the
smaller localization, proving faithfulness.
\end{proof}

We will also use the elementary equality criterion for this calculus:
for $f,g:X\to Y$ in $\C$,
\begin{equation}\label{eq:localequality}
 Lf=Lg\quad\Longleftrightarrow\quad
 fs=gs\text{ for some }s:Z\to X\text{ in }W.
\end{equation}
This is the identity-denominator case of the fraction equivalence;
see \cite[Lemma 4.27.14]{StacksLoc}.

\section{Abelian categories and Serre quotients}\label{sec:abelian}

The ordinary homological structure is due to Grandis
\cite[Section~1.9, p.~146]{Grandis92}.
Lemmas~\ref{lem:annihilation} and~\ref{lem:SerreUP}
identify its annihilation subcategories and exact quotients
through universal properties on full hom-categories.
Together with Proposition~\ref{prop:truncation} and strong
pointedness, these identifications also give the
$2$-homological conclusion from Grandis's result.
The argument below verifies the quotient criterion directly
and records the resulting subquotient square.

\subsection{Serre closure and inherited 2-kernels}

A \emph{Serre subcategory} of an abelian category $\A$ is a full replete
subcategory containing zero and closed under subobjects, quotients,
and extensions. It is abelian with the induced exact structure and
its inclusion is exact; see \cite[Lemma 12.10.2]{StacksSerre}.
It is in particular additive and retract-closed: biproducts are split
extensions, and retracts are subobjects. For a collection $\Scl$,
write $\Ser_{\A}(\Scl)$ for the least Serre subcategory containing it,
obtained by intersecting all such subcategories.

Let $F:\A\to\B$ be exact. Its additive 2-kernel $\K_F$ is Serre.
Indeed, applying $F$ to a short exact sequence
$0\to X\to Y\to Z\to0$ shows that $F(Y)=0$ forces
$F(X)=F(Z)=0$, and that $F(X)=F(Z)=0$ forces $F(Y)=0$.
Thus all the additive kernel data already exist with their required
abelian structure. An exact functor landing in $\K_F$ corestricts
exactly, because exactness there is induced from $\A$; all natural
transformations lift through its full inclusion. Consequently
Lemma~\ref{lem:annihilation} gives the 2-kernel in $\AbCat$.
This also proves that $\AbCat\to\Add$ preserves 2-kernels.
The one-object zero category is abelian and zero-valued functors
are exact. Since all natural transformations between exact functors
are allowed, Proposition~\ref{prop:pointedzero} supplies exactly one
$2$-cell in each direction between an exact functor and a parallel
null exact functor. Thus the same category is a strong bizero in
$\AbCat$ under Definition~\ref{def:2pointed}.

For later use, Serre subcategories have all the closure properties (Q1).
Intersections and restriction to a Serre subcategory have the claimed
properties directly. Inverse images are Serre because an exact functor
preserves short exact sequences. For transitivity, if
$\N\subseteq\M\subseteq\A$ are successive Serre inclusions, a
subobject or quotient in $\A$ of $N\in\N$ first lies in $\M$,
and then lies in $\N$. An ambient extension of objects of $\N$
first lies in $\M$, where its exact sequence is still exact, and
then belongs to $\N$.

\subsection{The classical localization input and its additive precursor}

For a Serre subcategory $\N\subseteq\A$, put
\begin{equation}\label{eq:SerreW}
 W_{\N}=\{s\in\Mor(\A)\mid\Ker(s),\Coker(s)\in\N\}.
\end{equation}
The classical Serre quotient theorem states that $W_{\N}$ admits a
calculus of left and right fractions, that
\begin{equation}\label{eq:Serrequ}
 \sq{\A}{\N}=\A[W_{\N}^{-1}]
\end{equation}
is abelian, and that $q_{\N}:\A\to\sq{\A}{\N}$ is exact and
universal among exact functors annihilating $\N$. References are
\cite[Chapter III, Section 1]{Gabriel} and
\cite[Lemma 12.10.6, Tag 02MS]{StacksSerreQuotient}.
We use this classical existence theorem, and now verify the additional
properties needed for 2-homologicity.

The additive quotient already constructed in Section~\ref{sec:additive}
gives a canonical factorisation
\begin{equation}\label{eq:Serreadditivefactor}
 \A\xrightarrow{a_{\N}}\aq{\A}{\N}
 \xrightarrow{\ell_{\N}}\sq{\A}{\N},
 \qquad q_{\N}=\ell_{\N}a_{\N}.
\end{equation}
Indeed $q_{\N}$ is additive and kills $\N$, so
Lemma~\ref{lem:additivequotient} supplies $\ell_{\N}$ together with
its full natural-transformation property. More precisely,
$\ell_{\N}$ exhibits the Serre quotient as the \emph{additive
localization} of $\aq{\A}{\N}$ at the images of $W_{\N}$.
To see this, precompose an additive functor out of $\aq{\A}{\N}$
with $a_{\N}$. Inverting the specified images is exactly inverting
$W_{\N}$, so it factors through the localization
\eqref{eq:Serrequ}. The factor preserves the finite biproduct
diagrams obtained by applying $q_{\N}$ to those of $\A$, because
its composite with $q_{\N}$ is the original additive functor.
Since $q_{\N}$ is additive and the identity on objects, these
supply biproducts for every finite family in $\sq{\A}{\N}$.
A functor between additive categories preserving finite biproducts
is additive. Lemmas~\ref{lem:additivequotient} and
\ref{lem:localizationNat} give the same statement on natural
transformations.

This factorisation is the precise reuse of the additive 2-cokernel;
$\ell_{\N}$ is generally necessary and is not asserted to be an
equivalence. In particular, the additive quotient need not itself be
an abelian category with an exact quotient functor.

\begin{lemma}[The full 2-dimensional Serre quotient property]\label{lem:SerreUP}
For a Serre $\N\subseteq\A$, the quotient has
$\Ker(q_{\N})=\N$, and precomposition induces
\begin{equation}\label{eq:SerreUP}
 \AbCat(\sq{\A}{\N},\B)\simeq
 \{H\in\AbCat(\A,\B)\mid H(\N)\iso0\}
\end{equation}
as an equivalence of categories with all natural transformations.
\end{lemma}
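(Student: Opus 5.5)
The plan is to combine the classical Serre quotient theorem, quoted just above, with the descent lemma for natural transformations (Lemma~\ref{lem:localizationNat}). The only genuinely new point is the $2$-dimensional clause. I will work in the identity-on-objects model $\sq{\A}{\N}=\A[W_{\N}^{-1}]$ of \eqref{eq:Serrequ}.

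First, the equality $\Ker(q_{\N})=\N$. The inclusion $\N\subseteq\Ker(q_{\N})$ holds because, for $X\in\N$, the map $0\to X$ lies in $W_{\N}$: its kernel is $0$ and its cokernel is $X$. For the converse, suppose $q_{\N}X\iso0$. Then $q_{\N}(1_X)=q_{\N}(0_{X,X})$. By the equality criterion \eqref{eq:localequality} for the right calculus of fractions, there is $s:Z\to X$ in $W_{\N}$ with $1_X s=0\cdot s$, so $s=0$. Since $\Ker s=Z$, $Z\in\N$. Since $\Coker s=X$ and $\Coker s\in\N$, we get $X\in\N$. This is the same argument as in Theorem~\ref{di:thm:quotient}(i), specialised to the abelian case.

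Second, the equivalence \eqref{eq:SerreUP} on objects. Let $H:\A\to\B$ be exact and annihilate $\N$. Then $H$ sends each $s\in W_{\N}$ to a morphism with zero kernel and zero cokernel, which is invertible in the abelian category $\B$. By the classical theorem, $H$ factors as $H=\overline H q_{\N}$ with $\overline H$ exact; in the identity-on-objects model this factorization is even strict. Conversely, if $R:\sq{\A}{\N}\to\B$ is exact, then $Rq_{\N}$ is exact because $q_{\N}$ is, and it annihilates $\N$ by the first step. So precomposition lands in the stated full subcategory and is essentially surjective, in fact surjective.

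Third, full faithfulness on all natural transformations, including noninvertible ones. Here I take exact $R,S:\sq{\A}{\N}\to\B$ and an arbitrary $\alpha:Rq_{\N}\Rightarrow Sq_{\N}$. By Lemma~\ref{lem:localizationNat} (equivalently Lemma~\ref{lem:puppe-nat}), $\alpha$ descends uniquely with the same components. Naturality with respect to the formal inverses $(q_{\N}s)^{-1}$ follows by multiplying the naturality square for $s$ by $R(q_{\N}s)^{-1}$ and $S(q_{\N}s)^{-1}$. Since $2$-cells in $\AbCat$ are all natural transformations, no extra compatibility condition has to be checked. Pseudonaturality in $\B$ is clear, because postcomposition commutes with precomposition.

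I expect the main obstacle to be a conceptual point rather than a technical one. The classical theorem is usually stated only as a $1$-dimensional universal property, up to isomorphism of functors. What this lemma needs is the stronger statement that the identity-on-objects localization model satisfies the full hom-category equivalence. This is handled by using the ordinary-localization model throughout, so that the descent lemma applies verbatim. Exactness of $\overline H$ is not reproved here; it is taken from the cited Serre quotient theorem.
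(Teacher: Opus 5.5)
Your proposal is correct and follows essentially the same route as the paper. The annihilator is computed via $0\to X\in W_{\N}$ and via the equality criterion, which produces a zero denominator $s$ whose cokernel is $X$; the factorization comes from the classical Serre quotient theorem; and Lemma~\ref{lem:localizationNat} gives descent of arbitrary natural transformations. The only difference is that the paper briefly justifies exactness of the factor, noting that the kernel and cokernel of $q(f)q(s)^{-1}$ are images under $q$ of those of $f$. You instead take exactness from the cited theorem, which is legitimate because that theorem is quoted as a universal property among exact functors.
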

\begin{proof}
If $X\in\N$, then $0\to X$ belongs to $W_{\N}$, so $q_{\N}X$
is zero. Conversely, if $q_{\N}X$ is zero, then
$q_{\N}(1_X)=q_{\N}(0)$. By \eqref{eq:localequality}, there is
$s:Z\to X$ in $W_{\N}$ with $s=0$. Its cokernel is $X$, so
$X\in\N$. Thus the quotient has the claimed annihilator.

The classical theorem gives the factorisation of any exact $H$
annihilating $\N$. For clarity, exactness of the factor is not an
extra assumption: a quotient morphism is $q(f)q(s)^{-1}$, so its
kernel and cokernel are, up to the source isomorphism $q(s)$, the
images under $q$ of the kernel and cokernel of $f$. Since $H$ and
$q$ are exact, the factor preserves these. Finally,
Lemma~\ref{lem:localizationNat} gives existence and uniqueness of the
descendant of every natural transformation. This proves
\eqref{eq:SerreUP}, pseudonaturally in $\B$.
\end{proof}

\begin{corollary}\label{cor:Serrecoker}
Every exact $F:\B\to\A$ has the 2-cokernel
\begin{equation}\label{eq:Serrecoker}
 \A\longrightarrow\sq{\A}{\Ser_{\A}(F\B)}.
\end{equation}
Thus the normal subcategories in $\AbCat$ are precisely the Serre
subcategories, and the normal quotients are precisely their Serre
quotients, up to equivalence.
\end{corollary}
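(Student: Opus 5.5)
The plan is to imitate the cokernel part of the proof of Theorem~\ref{thm:criterion}, with Lemma~\ref{lem:SerreUP} playing the role of (Q3). Put $\N=\Ser_{\A}(F\B)$.

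\emph{Step 1: annihilation conditions.} For an exact $H:\A\to\X$ in $\AbCat$, I will show that $HF$ is null if and only if $H(\N)\iso0$. By the strong bizero property recorded earlier (via Proposition~\ref{prop:pointedzero}), $HF$ is null exactly when $H$ sends every object of $F\B$ to zero, that is, when $F\B\subseteq\K_H$. The preceding subsection showed that $\K_H$ is a Serre subcategory. So this inclusion holds if and only if $\N\subseteq\K_H$, which is the condition $H(\N)\iso0$.

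\emph{Step 2: the universal property.} The two conditions in Step~1 cut out the same full subcategory of $\AbCat(\A,\X)$. Lemma~\ref{lem:SerreUP} then says that precomposition with $q_{\N}$ gives an equivalence from $\AbCat(\sq{\A}{\N},\X)$ onto this subcategory, including all natural transformations and pseudonaturally in $\X$. The composite $q_{\N}F$ is null because $\Ker(q_{\N})=\N\supseteq F\B$. This is precisely the universal property \eqref{eq:cokernelUP}, so \eqref{eq:Serrecoker} is a $2$-cokernel of $F$.

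\emph{Step 3: normal forms.} Every $2$-kernel is equivalent to an annihilation inclusion $\K_H\hookrightarrow\A$, and $\K_H$ is Serre. Conversely, let $\N$ be any Serre subcategory. Lemma~\ref{lem:SerreUP} gives $\Ker(q_{\N})=\N$, so the inclusion $\N\hookrightarrow\A$ is the $2$-kernel of $q_{\N}$. Applying Step~2 with $F$ this inclusion, and using $\Ser_{\A}(\N)=\N$, shows that $q_{\N}$ is the $2$-cokernel of the inclusion.

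For an arbitrary $2$-cokernel, Proposition~\ref{propkeriskerofitscoker} shows it is the $2$-cokernel of its own $2$-kernel. That $2$-kernel is a Serre inclusion, so the $2$-cokernel is equivalent to the corresponding Serre quotient, by uniqueness of representing objects up to equivalence.

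I expect no real obstacle here. The one point needing care is in Step~1: annihilating the generating objects must be equivalent to annihilating their Serre closure. This rests entirely on the fact, proved above, that the annihilator $\K_H$ of an exact functor is Serre.
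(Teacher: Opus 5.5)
Your proposal is correct and follows essentially the same route as the paper. The key step in both is that an exact functor annihilates $F\B$ if and only if it annihilates $\Ser_{\A}(F\B)$, because its annihilator is Serre; Lemma~\ref{lem:SerreUP} then gives the universal property, and its kernel calculation together with the annihilation $2$-kernels gives the normal forms, with you spelling out the nullness detection and the appeal to Proposition~\ref{propkeriskerofitscoker} that the paper leaves implicit.
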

\begin{proof}
An exact functor kills $F\B$ if and only if it kills its Serre closure,
because its annihilator is Serre. Apply Lemma~\ref{lem:SerreUP}.
The kernel calculation in that lemma and the previously constructed
2-kernels give both normal-form assertions.
\end{proof}

\subsection{Nested Serre subquotients and the homology axiom}

Let $\N\subseteq\M\subseteq\A$ be Serre subcategories. The
additive subquotient square \eqref{eq:additivesquare} already exists.
Applying the additional localizations gives a commutative diagram
\begin{equation}\label{eq:Serrelocdiagram}
\begin{tikzcd}[column sep=large,row sep=large]
 \aq{\M}{\N} \arrow[r,tail] \arrow[d,"\ell_{\N}^{\M}"'] &
 \aq{\A}{\N} \arrow[d,"\ell_{\N}^{\A}"]\\
 \sq{\M}{\N} \arrow[r,"j"'] & \sq{\A}{\N}.
\end{tikzcd}
\end{equation}
The bottom functor is exact by the Serre quotient property. It remains
to prove it is fully faithful.

\begin{proposition}\label{prop:Serresubquotient}
The induced exact functor
$j:\sq{\M}{\N}\to\sq{\A}{\N}$ is fully faithful. It is the
2-kernel of the induced exact functor
$p:\sq{\A}{\N}\to\sq{\A}{\M}$.
\end{proposition}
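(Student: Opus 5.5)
For full faithfulness of $j$ I would use the Restriction of roofs Lemma~\ref{lem:roofs}, applied with $\C=\A$, the full subcategory $\M$, and $W=W_{\N}$ as in \eqref{eq:SerreW}. The classical Serre quotient theorem supplies a calculus of left and right fractions for $W_{\N}$ in $\A$. Since $\N$ is Serre in $\M$ by the restriction clause of (Q1), it also supplies the calculus for $W_{\N}^{\M}$ in $\M$. Because kernels and cokernels in $\M$ are the ambient ones, we have $W_{\N}^{\M}=W_{\N}\cap\Mor(\M)$. By construction, $j$ is the comparison functor $\M[W_{\N}^{\M\,-1}]\to\A[W_{\N}^{-1}]$, since both quotients are the identity-on-objects localizations and $j$ is induced from the inclusion.

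The one remaining hypothesis is the roof-closure condition \eqref{eq:roofclosed}. Let $s:Z\to X$ lie in $W_{\N}$ with $X\in\M$. Factor $s$ through its image, which gives a short exact sequence $0\to\Ker s\to Z\to\im s\to 0$. Here $\im s$ is a subobject of $X$, so it lies in $\M$. Also $\Ker s\in\N\subseteq\M$. Extension closure of $\M$ then puts $Z$ in $\M$, and Lemma~\ref{lem:roofs} gives that $j$ is fully faithful.

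For the $2$-kernel assertion, the annihilator calculation of Lemma~\ref{lem:SerreUP} for $q_{\M}\cong p\,q_{\N}$ does the work. Since $q_{\N}$ is the identity on objects, an object $X$ of $\sq{\A}{\N}$ satisfies $pX\cong0$ exactly when $X\in\M$. The replete essential image of $j$ is therefore $\Ker p$, which is Serre because $p$ is exact. A fully faithful exact functor whose replete image is a Serre subcategory is equivalent, over $\sq{\A}{\N}$, to the full annihilation kernel of $p$. The corestriction to that kernel is fully faithful and essentially surjective and exact. Its quasi-inverse is exact, because kernels and cokernels are transported along equivalences. The kernel identification at the start of this section then makes $j$ a $2$-kernel of $p$.

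Alternatively, this second step can be routed through Theorem~\ref{thm:criterion}(Q4) as in the pointed and additive cases. Full faithfulness is precisely the input needed there, and the $2$-kernel conclusion is then the consequence recorded in that proof.

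I expect the main obstacle to be checking that the Serre fraction calculus really has the one-sided roof form required by Lemma~\ref{lem:roofs}, with equality witnessed by a refinement whose denominator again lands in $X$. If one-sided roofs cause difficulty, the fallback is to use the ternary description of Theorem~\ref{thm:puppe-localization}. In that description the intermediate objects $U,V$ are a subobject of $X$ and a quotient of $Y$, so they lie in $\M$ automatically. Moreover, common reductions use only intersections and pushouts of such objects, which is exactly the argument already used in Lemma~\ref{di:lem:nested}.
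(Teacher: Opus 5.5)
Your proposal is correct and follows the paper's proof essentially step for step. Both arguments obtain roof closure from the short exact sequence $0\to\Ker s\to Z\to\im s\to0$ and feed it into Lemma~\ref{lem:roofs}; both then identify the replete essential image of $j$ with the annihilator of $p$ and conclude using the fact that a fully faithful exact functor onto a Serre subcategory is an exact equivalence onto it. The obstacle you anticipate does not arise: the classical calculus of fractions for $W_{\N}$ already gives the one-sided roofs, with equality witnessed by a refinement whose denominator lands in $X$, which is exactly what Lemma~\ref{lem:roofs} needs.
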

\begin{proof}
For a morphism of $\M$, its kernel and cokernel computed in $\M$
are the same as in $\A$, up to the canonical isomorphisms. Thus
$W_{\N}^{\M}=W_{\N}^{\A}\cap\Mor(\M)$.
Consider $s:Z\to X$ in $W_{\N}^{\A}$ with $X\in\M$. Its image
is a subobject of $X$, so belongs to $\M$; its kernel belongs to
$\N\subseteq\M$. The short exact sequence
\[
 0\longrightarrow\Ker(s)\longrightarrow Z
 \longrightarrow\im(s)\longrightarrow0
\]
then puts $Z$ in $\M$. This is exactly the roof-closure condition
\eqref{eq:roofclosed}. Lemma~\ref{lem:roofs} proves full faithfulness
of $j$.

The functor $p$ exists because $q_{\M}$ kills $\N$. On objects,
$p(q_{\N}A)=q_{\M}A$, which is zero exactly when $A\in\M$.
Hence $j$ has the full annihilator of $p$ as its essential image.
A fully faithful exact functor onto a Serre subcategory is an exact
equivalence onto that subcategory: its inverse preserves the abelian
kernel and cokernel universal properties. The inherited 2-kernel
construction now identifies $j$ as $\twoker(p)$.
\end{proof}

\begin{theorem}\label{thm:abelian}
The 2-category $\AbCat$ is 2-pointed and 2-homological. For
$\N\subseteq\M\subseteq\A$ Serre, its homology factorisation is
\[
 \M\xrightarrow{\text{2-cokernel}}\sq{\M}{\N}
 \xrightarrow{\text{2-kernel}}\sq{\A}{\N}.
\]
Moreover
\[
 (\sq{\A}{\N})/_{\!\mathrm S}(\sq{\M}{\N})
 \simeq\sq{\A}{\M}.
\]
Here the denominator denotes the full replete essential image of
$\sq{\M}{\N}\to\sq{\A}{\N}$, equivalently the annihilator of
$\sq{\A}{\N}\to\sq{\A}{\M}$. The corresponding subquotient
square is a bipullback and a bipushout.
\end{theorem}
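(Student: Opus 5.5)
The plan is to apply Theorem~\ref{thm:criterion} with the forgetful $2$-functor $U:\AbCat\to\Pt$, taking the Serre subcategories as the normal subcategories. Each Serre subcategory is realized with its induced abelian structure and its exact inclusion. $U$ is locally faithful and sends the one-object zero category to $\mathbf1$. Strong pointedness of $\AbCat$ was recorded in the subsection on Serre closure, via Proposition~\ref{prop:pointedzero}.

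The hypotheses are then checked one by one.
\begin{enumerate}[label=(\roman*)]
\item (Q1): the closure properties of Serre subcategories listed in that subsection give intersections, inverse images, transitivity and restriction, with $\cl_{\A}=\Ser_{\A}$.
\item (Q2): the annihilator $\K_F$ of an exact $F$ is Serre, and Lemma~\ref{lem:annihilation} gives it as a $2$-kernel.
\item (Q3): Lemma~\ref{lem:SerreUP} supplies the identity-on-objects quotient $q_{\N}$ with $\Ker(q_{\N})=\N$ and the full hom-category equivalence, including all natural transformations.
\item (Q4): Proposition~\ref{prop:Serresubquotient} gives full faithfulness of $U(j)$; by the criterion's remark this does not depend on the choice of descent. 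For reflection of equivalences, I would argue as in Proposition~\ref{propequivinTriang}: an exact functor that is an equivalence of categories has a quasi-inverse which preserves kernels and cokernels, since equivalences transport universal properties. Unit and counit are then allowed $2$-cells, because every natural transformation is admitted.
\end{enumerate}

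With these in place, Theorem~\ref{thm:criterion} yields $2$-homologicity. Its proof of (H3) produces exactly the factorization $\M\to\sq{\M}{\N}\to\sq{\A}{\N}$: the first arrow is $q_{\N}^{\M}$, and the second is $j$, identified in Proposition~\ref{prop:Serresubquotient} as $\twoker(p)$.

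The third isomorphism and the subquotient square come directly from Corollary~\ref{cor:third}. In that corollary, the normal subcategory $\mathcal R=\mathcal K_U(p)$ is the full replete essential image of $U(j)$. This is the meaning of the denominator in the statement, and it gives $(\sq{\A}{\N})/_{\!\mathrm S}\mathcal R\simeq\sq{\A}{\M}$ together with the bipullback and bipushout properties.

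I expect the only delicate step to be (Q4). Full faithfulness needs the roof-closure condition \eqref{eq:roofclosed}, which Proposition~\ref{prop:Serresubquotient} already verifies through the short exact sequence $0\to\Ker s\to Z\to\im s\to0$. The remaining concern is making sure the reflection-of-equivalences clause is stated with exact quasi-inverses, rather than merely additive ones. Everything else is bookkeeping against the criterion.
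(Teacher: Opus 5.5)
Your proposal is correct and matches the paper's proof essentially step for step. Like the paper, you take $U:\AbCat\to\Pt$ with Serre subcategories as the normal data and obtain (Q1)--(Q2) from the Serre closure subsection, (Q3) from Lemma~\ref{lem:SerreUP}, and (Q4) from Proposition~\ref{prop:Serresubquotient}; reflection of equivalences comes from exact quasi-inverses, and the conclusions then follow from Theorem~\ref{thm:criterion} and Corollary~\ref{cor:third}.
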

\begin{proof}
Use the canonical locally faithful forgetful $2$-functor
$U:\AbCat\to\Pt$, which sends the one-object zero category
to $\mathbf1$. Realize each normal subcategory with its
inherited abelian structure and inclusion. An equivalence
of underlying categories preserves kernel and cokernel
universal properties, so an exact functor whose underlying
functor is an equivalence has an exact quasi-inverse.
Thus $U$ reflects equivalences.

The closure and kernel arguments in the first subsection establish
(Q1)--(Q2) of Theorem~\ref{thm:criterion}. Lemma~\ref{lem:SerreUP}
establishes (Q3), and Proposition~\ref{prop:Serresubquotient} establishes
(Q4). Strong pointedness was inherited from the pointed case. The
criterion and Corollary~\ref{cor:third} prove the assertions.
\end{proof}

\section{Linear categories over a commutative rig}\label{sec:rig-linear}

The additive and abelian results of Theorems~\ref{thm:additive}
and~\ref{thm:abelian} admit extensions to categories enriched in
semimodules over a fixed commutative rig. We first treat enriched
categories with a zero object, both with and without finite
biproducts, and then impose Puppe exactness. The latter construction
uses exact linear localizations and includes linear abelian
categories as its finite-biproduct case. The proofs again use the
quotient criterion of Theorem~\ref{thm:criterion}.

Fix a commutative rig $R$, that is, a commutative semiring with
$0$ and $1$. An $R$-linear category is a category whose hom-sets
are unital $R$-semimodules and whose composition is $R$-bilinear.
Thus each hom-set is a commutative monoid under addition, equipped
with an $R$-action preserving addition and zero in both variables.
No additive inverses are assumed. An $R$-linear functor preserves
the semimodule structures on hom-sets.

Write $\mathbf{Lin}_{R,0}$ for the $2$-category of $R$-linear
categories admitting a zero object, $R$-linear functors, and all
natural transformations. Write $\mathbf{Lin}_{R}^{\oplus}$ for
its full sub-$2$-category on the categories admitting finite
biproducts. The size convention of Section~\ref{sec:framework}
remains in force. The zero-object assumption in
$\mathbf{Lin}_{R,0}$ is additional to enrichment: zero morphisms
alone do not supply a zero object.

Every $R$-linear functor preserves zero objects, since
$X\iso0$ is equivalent to $1_X=0$. It also preserves every
existing finite biproduct: the equations
\[
 p_i\iota_j=\delta_{ij},\qquad
 \sum_i\iota_i p_i=1
\]
are preserved by linear functors and characterize a biproduct.
Here $\delta_{ij}$ denotes the appropriate identity or zero
morphism. Consequently both $2$-categories have canonical
locally faithful forgetful $2$-functors to $\Pt$.

For $R=\mathbb N$, an $R$-linear category is precisely a category
enriched in commutative monoids. This is linearity in the sense
of Lawvere~\cite{LawvereLinear}; requiring finite biproducts gives
the semiadditive case. For $R=\mathbb Z$, the categories in
$\mathbf{Lin}_{R}^{\oplus}$ are precisely additive categories.
We prove both versions simultaneously, so that additive
homologicity follows as a specialization.

\subsection{Linear ideal quotients without subtraction}

For a collection $\Scl$ of objects of $\C$, let
$[\Scl]_{\C}(X,Y)$ consist of all finite sums
\begin{equation}\label{eq:rig-ideal}
 \sum_{i=1}^{n} b_i a_i,
 \qquad
 X\xrightarrow{a_i}S_i\xrightarrow{b_i}Y,
 \quad S_i\in\Scl,
\end{equation}
including the empty sum. These are $R$-subsemimodules, stable
under composition on either side; scalars can be absorbed into
the factors. Thus $[\Scl]_{\C}$ is the two-sided $R$-linear
ideal generated by the identities of the objects of $\Scl$.

On each hom-set define
\begin{equation}\label{eq:rig-congruence}
 f\sim_{\Scl}g
 \quad\Longleftrightarrow\quad
 f+u=g+v
 \text{ for some }u,v\in[\Scl]_{\C}(X,Y).
\end{equation}
This is an equivalence relation compatible with addition,
scalar multiplication, and composition. For example, the
equations $f+u=g+v$ and $g+u'=h+v'$ imply
$f+(u+u')=h+(v'+v)$. Compatibility with composition follows
from bilinearity and the ideal property. Moreover,
$\sim_{\Scl}$ is the least such congruence identifying every
element of $[\Scl]_{\C}$ with zero: in any such congruence,
$f+u=g+v$ implies $f\sim g$.

Define $\C/\Scl$ to have the same objects as $\C$ and
hom-semimodules
\[
 (\C/\Scl)(X,Y):=\C(X,Y)/{\sim_{\Scl}}.
\]
Composition and identities are induced from $\C$. The quotient
functor $q_{\Scl}:\C\to\C/\Scl$ is $R$-linear, the identity
on objects, and surjective on hom-sets. It preserves a zero
object and all existing finite biproducts. In particular, the
quotient belongs to the same one of the two $2$-categories as
$\C$.

\begin{lemma}\label{lem:rig-quotient}
For every $X\in\C$,
\begin{equation}\label{eq:rig-annihilation}
 q_{\Scl}(X)\iso0
 \quad\Longleftrightarrow\quad
 1_X+u=v
 \text{ for some }u,v\in[\Scl]_{\C}(X,X).
\end{equation}
For either $\Lcat=\mathbf{Lin}_{R,0}$ or
$\Lcat=\mathbf{Lin}_{R}^{\oplus}$, precomposition induces an
isomorphism of categories
\begin{equation}\label{eq:rig-quotient-up}
 \Lcat(\C/\Scl,\D)\iso
 \{H\in\Lcat(\C,\D)\mid H(S)\iso0
                         \text{ for every }S\in\Scl\},
\end{equation}
where the right-hand side is full, retaining all natural
transformations.
\end{lemma}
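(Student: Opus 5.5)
The plan follows the pattern of Lemma~\ref{lem:additivequotient} and Proposition~\ref{prop:pointedquotient}(iii). The only new ingredient is that equality in the quotient is governed by the cancellation-free congruence \eqref{eq:rig-congruence}, not by subtraction.

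For \eqref{eq:rig-annihilation}, first note that $q_{\Scl}(X)\iso0$ holds exactly when $[1_X]=[0]$. This uses the criterion from the start of Section~\ref{sec:pointed}: an object is zero iff its identity equals the zero morphism, and this applies in the quotient because it is $R$-linear and hence has zero morphisms. By \eqref{eq:rig-congruence}, $1_X\sim_{\Scl}0$ means $1_X+u=0+v$ with $u,v\in[\Scl]_{\C}(X,X)$, which is exactly the displayed condition. A zero object of $\C$ stays zero in the quotient, so the quotient still lies in $\mathbf{Lin}_{R,0}$. Biproduct equations are preserved by the linear functor $q_{\Scl}$, so if $\C\in\mathbf{Lin}_R^{\oplus}$ then so is $\C/\Scl$; this is already recorded before the lemma.

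For \eqref{eq:rig-quotient-up}, the restriction functor lands in the right-hand side. Indeed, for $S\in\Scl$, the identity $1_S$ lies in $[\Scl]_{\C}(S,S)$, so $1_S\sim_{\Scl}0$ (take $u=0$, $v=1_S$), and any linear $\overline H$ therefore sends $q_{\Scl}S$ to zero. Conversely, let $H$ be linear with $H(S)\iso0$ for all $S\in\Scl$. Then $H$ kills every generator $b_ia_i$, since it factors through $H(S_i)\iso0$, and by additivity it kills all of $[\Scl]_{\C}$. Hence $f+u=g+v$ implies $H(f)=H(f)+H(u)=H(g)+H(v)=H(g)$. This is where the absence of subtraction is handled: we never need $H(f-g)$, only that $H$ is a monoid map killing the ideal. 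So $\overline H(X)=H(X)$, $\overline H[f]=H(f)$ is a well-defined $R$-linear functor with $\overline Hq_{\Scl}=H$. It is unique because $q_{\Scl}$ is the identity on objects and surjective on hom-sets. The same surjectivity shows that a linear functor out of the quotient is determined by its composite, so the functor on objects of the hom-categories is bijective.

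On $2$-cells the argument is the one in Proposition~\ref{prop:pointedquotient}(iii). Given $\alpha:\overline Hq_{\Scl}\Rightarrow\overline Kq_{\Scl}$, use the same components. Naturality for a class $[f]$ is naturality for its representative $f$, because $q_{\Scl}$ is full. Uniqueness is clear since the components are forced. Therefore precomposition is bijective on objects and fully faithful, i.e.\ an isomorphism of categories. Because $\mathbf{Lin}_R^{\oplus}$ is full in $\mathbf{Lin}_{R,0}$, both cases are covered by the same argument.

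No step is a real obstacle. The one point needing care is the well-definedness of $\overline H$ under the congruence: one has to phrase the argument through $f+u=g+v$ rather than through differences. This is already secured by the minimality remark after \eqref{eq:rig-congruence}, since $\ker$ of $H$ on hom-sets is a congruence identifying $[\Scl]_{\C}$ with zero.
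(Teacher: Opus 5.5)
Your proposal is correct and follows the paper's proof essentially step for step. You use $q_{\Scl}(X)\iso0$ iff $[1_X]=[0]$, descend $H$ because it kills the generated ideal and therefore respects $\sim_{\Scl}$, and descend natural transformations with unchanged components because $q_{\Scl}$ is the identity on objects and full.
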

\begin{proof}
The object $q_{\Scl}(X)$ is zero exactly when its identity is
zero. Applying~\eqref{eq:rig-congruence} to $1_X$ and $0$
gives~\eqref{eq:rig-annihilation}.

A linear functor $H$ annihilates $\Scl$ exactly when it sends
every element of $[\Scl]_{\C}$ to zero. It then respects
$\sim_{\Scl}$ and descends uniquely by
$\overline H([f])=H(f)$. Conversely, $q_{\Scl}$ annihilates
every object of $\Scl$, since its identity belongs to the ideal.
These constructions give the asserted bijection on functors.
For a natural transformation between two such functors, retain
its components on the unchanged object set. Naturality on a
class $[f]$ is precisely naturality on $f$, proving existence
and uniqueness of the descended transformation. The resulting
isomorphisms are natural in $\D$.
\end{proof}

\subsection{Normal subcategories and the quotient criterion}

\begin{definition}\label{def:rig-normal}
A full replete subcategory $\N\subseteq\C$ is \emph{normal}
if, for every $X\in\C$,
\begin{equation}\label{eq:rig-normal}
 \bigl(1_X+u=v\text{ for some }
       u,v\in[\N]_{\C}(X,X)\bigr)
 \quad\Longrightarrow\quad X\in\N.
\end{equation}
Write $\cl^{R}_{\C}(\Scl)$ for the intersection of all normal
full replete subcategories containing $\Scl$.
\end{definition}

Every normal subcategory contains the zero objects and is
closed under retracts and all existing finite biproducts of its
objects. Indeed, $1_0=0$, a retraction expresses an identity as
one composite through the subcategory, and a biproduct expresses
an identity as a finite sum of such composites. In each case
apply~\eqref{eq:rig-normal} with $u=0$. Thus a normal subcategory
has the inherited linear structure, and has finite biproducts
whenever $\C$ does. These inherited structures and their full
inclusions provide the chosen normal-subcategory data of
Theorem~\ref{thm:criterion}.

\begin{theorem}\label{thm:rig-linear}
For every commutative rig $R$, both $\mathbf{Lin}_{R,0}$ and
$\mathbf{Lin}_{R}^{\oplus}$ are $2$-pointed and
$2$-homological. Their $2$-kernels are, up to equivalence, the
inclusions of the normal subcategories of
Definition~\ref{def:rig-normal}, and their $2$-cokernels are,
up to equivalence, the corresponding linear ideal quotients.
More explicitly,
\begin{equation}\label{eq:rig-closure}
 \cl^{R}_{\C}(\Scl)
 =\{X\in\C\mid 1_X+u=v
       \text{ for some }u,v\in[\Scl]_{\C}(X,X)\},
\end{equation}
as a full subcategory, and a $2$-cokernel of $F:\B\to\C$ is
\[
 \C\longrightarrow\C/\cl^{R}_{\C}(F\B).
\]
\end{theorem}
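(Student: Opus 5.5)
The plan is to apply Theorem~\ref{thm:criterion} with $U$ the canonical locally faithful forgetful $2$-functor to $\Pt$. The normal data are the normal subcategories of Definition~\ref{def:rig-normal}, each with its inherited linear structure and full inclusion.

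\emph{Preliminaries.} First, strong pointedness: the one-object zero category is $R$-linear, zero-valued functors are linear, and all natural transformations are admitted. So Proposition~\ref{prop:pointedzero} applies verbatim, and $U$ sends this bizero to $\mathbf 1$. Second, $U$ reflects equivalences: a quasi-inverse of a linear equivalence is automatically linear, because it is fully faithful and the transported hom-set bijections are the inverses of linear bijections; biproduct existence is invariant under equivalence. Third, I would prove \eqref{eq:rig-closure}. Write $\overline\Scl$ for the right-hand side; it contains $\Scl$ (take $u=0$, $v=1_X$). The key observation is $[\overline\Scl]_{\C}=[\Scl]_{\C}$ ``up to $\sim$''. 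More precisely, if $X\in\overline\Scl$, then every $f$ through $X$ satisfies $f+fu=fv$ with $fu,fv\in[\Scl]$. Hence each generator of $[\overline\Scl]$ is $\sim_\Scl$-equivalent to $0$, and $\sim_{\overline\Scl}=\sim_\Scl$. Lemma~\ref{lem:rig-quotient} then identifies $\overline\Scl$ with $\mathcal K_U(q_\Scl)$. Applying the same identity with $\Scl$ replaced by $\overline\Scl$ shows $\overline\Scl$ is normal. Minimality is immediate, since $[\Scl]\subseteq[\N]$ for any normal $\N\supseteq\Scl$.

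\emph{Axioms (Q1)--(Q3).} For (Q1), intersections are clear, and inverse images follow by applying a linear $F$ to the equation $1_X+u=v$. Restriction to $\N\subseteq\M$ holds because $[\N]_\M(X,X)=[\N]_\C(X,X)$ for $X\in\M$ by fullness. For transitivity, suppose $1_X+u=v$ with $u,v\in[\N]_\C$. Then $1_X+u=v$ also holds in $[\M]_\C$, so normality of $\M$ gives $X\in\M$; now the factorizations lie in $\M$, and normality of $\N$ in $\M$ applies. For (Q2), the annihilator $\mathcal K_U(F)$ is normal: applying $F$ to $1_X+u=v$ gives $1_{FX}=0$, so $X$ is annihilated. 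The kernel property then follows from Lemma~\ref{lem:annihilation}, since linear functors landing in $\mathcal K_U(F)$ corestrict linearly and transformations lift by fullness. For (Q3), use Lemma~\ref{lem:rig-quotient}; its annihilator is $\N$ by \eqref{eq:rig-annihilation} together with normality. Combining these with the closure description yields the displayed $2$-cokernel.

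\emph{Main obstacle: (Q4).} The remaining step, and the one I expect to be hardest, is (Q4): full faithfulness of $\M/\N\to\C/\N$. Fullness is trivial because both quotients are surjective on hom-sets. Faithfulness requires that for $X,Y\in\M$, the congruences $\sim_\N$ computed in $\M$ and in $\C$ agree. This reduces to $[\N]_\M(X,Y)=[\N]_\C(X,Y)$, which holds because every factorization $X\to N\to Y$ with $N\in\N\subseteq\M$ lies in $\M$ by fullness. Since the witnesses $u,v$ in \eqref{eq:rig-congruence} range over the same sets, the congruences coincide. The criterion then gives $2$-homologicity, and the normal forms follow from it.
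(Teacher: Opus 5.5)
Your proposal is correct and follows the paper's proof essentially step for step. You use the same criterion with the forgetful $U$ to $\Pt$, the same witness-transport arguments for (Q1)--(Q3), and the same fullness identity $[\N]_{\M}=[\N]_{\C}$ for (Q4). The only variation is that you obtain normality of $\mathcal K_U(q_{\Scl})$ by showing $\sim_{\overline{\Scl}}=\sim_{\Scl}$, whereas the paper simply quotes (Q2).

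You leave one check implicit, which the paper makes just before the theorem. Normal subcategories contain the zero objects and are closed under existing finite biproducts, since $1_{S_1\oplus S_2}=\iota_1p_1+\iota_2p_2$ lies in $[\N]_{\C}$. This is needed for the inherited normal data and $\mathcal K_U(F)$ to be objects of $\mathbf{Lin}_{R}^{\oplus}$.
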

\begin{proof}
Let $\Lcat$ be either of the two $2$-categories and let
$U:\Lcat\to\Pt$ be its forgetful $2$-functor. The one-object
category with zero endomorphism semimodule is a strong bizero
object. Indeed, linear functors out of it select zero objects,
the functor into it is unique, and a zero-valued functor is a
zero object in every relevant functor category: the required
natural transformations are unique componentwise. The functor
$U$ is locally faithful and sends this bizero object to
$\mathbf1$. We verify the four conditions of
Theorem~\ref{thm:criterion}.

\smallskip\noindent\emph{(Q1).}
Normal subcategories contain zero objects, as observed above.
Arbitrary intersections are normal: a witness in the ideal of
an intersection is a witness in the ideal of each member.
The whole category is normal, giving the empty intersection.
For a linear functor $F:\B\to\C$ and a normal
$\N\subseteq\C$, apply $F$ to an equation as
in~\eqref{eq:rig-normal} whose factors pass through
$F^{-1}(\N)$. Normality of $\N$ then gives
$F(X)\in\N$, proving normality of the inverse image.

For transitivity, suppose $\M$ is normal in $\C$ and $\N$
is normal in $\M$. A witness for $X$ with factors through
$\N$ is also a witness with factors through $\M$, so
$X\in\M$. Fullness now makes the original witness internal
to $\M$, and its normality condition gives $X\in\N$.
Conversely, if $\N\subseteq\M$ are both normal in $\C$,
every witness internal to $\M$ is also a witness in $\C$;
hence $\N$ is normal in $\M$. This proves the restriction
clause as well.

\smallskip\noindent\emph{(Q2).}
For $F:\B\to\C$, let $\K$ be the full subcategory on
the objects $B$ with $F(B)\iso0$. Applying $F$ to a witness
$1_X+u=v$ through $\K$ gives $1_{F(X)}+0=0$.
Thus $F(X)\iso0$, proving that $\K$ is normal.
A linear functor $G:\X\to\B$ with $FG$ null corestricts
uniquely to $\K$. The corestriction is linear, and every
natural transformation lifts uniquely through the full
inclusion. This proves the full hom-category universal
property of the $2$-kernel.

\smallskip\noindent\emph{(Q3).}
For normal $\N$, take the quotient $q_{\N}$ constructed
above. Its annihilator is exactly $\N$, by
\eqref{eq:rig-annihilation} and normality. Its full
hom-category universal property is
Lemma~\ref{lem:rig-quotient}.

\smallskip\noindent\emph{(Q4).}
Let $\N\subseteq\M$ be normal in $\C$. For $X,Y\in\M$,
fullness gives
\begin{equation}\label{eq:rig-restriction}
 [\N]_{\M}(X,Y)=[\N]_{\C}(X,Y),
\end{equation}
because every factorization through an object of $\N$ is
already internal to $\M$. The congruences
\eqref{eq:rig-congruence} therefore agree on these hom-sets.
Consequently the induced linear functor
$\M/\N\to\C/\N$ is fully faithful.
Finally, an $R$-linear functor that is an equivalence of
underlying categories has an $R$-linear quasi-inverse:
transport the inverses of its bijective hom-semimodule maps
along chosen object isomorphisms. All unit and counit
transformations are allowed $2$-cells. Thus $U$ reflects
equivalences.

The criterion applies and proves the asserted homologicity
and the descriptions of kernels and cokernels. To identify
the closure explicitly, construct $q_{\Scl}$ for an arbitrary
collection $\Scl$. By (Q2), its annihilator is normal, and
it contains $\Scl$. If $\N$ is any normal subcategory
containing $\Scl$, then $[\Scl]_{\C}\subseteq[\N]_{\C}$;
hence~\eqref{eq:rig-normal} puts every object of that
annihilator in $\N$. It is therefore the least normal
subcategory containing $\Scl$. Equation~\eqref{eq:rig-annihilation}
gives~\eqref{eq:rig-closure}.
\end{proof}

In particular, nested normal subcategories have the bipullback
and bipushout subquotient square, and satisfy the third
isomorphism theorem, by Corollary~\ref{cor:third}.

\subsection{Semiadditive categories and recovery of the additive case}

\begin{corollary}\label{cor:rig-additive}
The $2$-category of semiadditive categories, functors preserving
finite biproducts, and all natural transformations is
$2$-homological. The $2$-category $\Add$ is also
$2$-homological, and its normal subcategories are precisely
the retract-closed full replete additive subcategories.
Its quotients are the usual additive ideal quotients.
\end{corollary}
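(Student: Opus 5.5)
The plan is to specialize Theorem~\ref{thm:rig-linear} to the rig $R=\mathbb N$ for the semiadditive statement, and to $R=\mathbb Z$ for the additive one, and then to simplify the normality condition of Definition~\ref{def:rig-normal} in each case.

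First I will identify the $2$-categories. For $R=\mathbb N$, a unital $\mathbb N$-semimodule is just a commutative monoid, and bilinearity over $\mathbb N$ is automatic once composition is biadditive. So $\mathbf{Lin}_{\mathbb N}^{\oplus}$ consists of categories enriched in commutative monoids with finite biproducts, which are precisely the semiadditive categories. Its $1$-cells are the additive functors. I also need that every functor between semiadditive categories preserving finite biproducts is monoid-enriched. The standard argument expresses $f+g$ as the composite $X\xrightarrow{\Delta}X\oplus X\xrightarrow{f\oplus g}Y\oplus Y\xrightarrow{\nabla}Y$, with $\Delta$ and $\nabla$ built from the biproduct structure maps; biproduct preservation then gives preservation of sums. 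Conversely, linear functors preserve biproducts, as already observed before Theorem~\ref{thm:rig-linear}. The two $2$-categories therefore agree, and Theorem~\ref{thm:rig-linear} gives $2$-homologicity. For $R=\mathbb Z$, a $\mathbb Z$-semimodule is an abelian group, so $\mathbf{Lin}_{\mathbb Z}^{\oplus}=\Add$. Again, $\mathbb Z$-linear and additive functors coincide, by the same biproduct argument together with preservation of negatives.

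Second, I will identify the normal subcategories in the additive case. When the hom-sets are groups, the witness $1_X+u=v$ becomes $1_X=v-u\in[\N]_{\C}(X,X)$, since the ideal is a subgroup. The proof of Lemma~\ref{lem:additivequotient} shows that this says exactly that $X$ is a retract of a finite biproduct of objects of $\N$. Definition~\ref{def:rig-normal} therefore asks that $\N$ be closed under retracts of finite biproducts. This is exactly retract closure together with biproduct closure, which is the same as being a retract-closed full replete additive subcategory. The remark after Definition~\ref{def:rig-normal} supplies one direction; the other is immediate.

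Third, I will identify the quotients. In the group case, $f\sim_{\Scl}g$ means $f-g=v-u\in[\Scl]_{\C}$. So the congruence \eqref{eq:rig-congruence} is the additive ideal quotient \eqref{eq:addquotient}, and the quotient descriptions match those of Theorem~\ref{thm:additive}.

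The only step needing genuine care is the identification of the $1$-cells, namely showing that biproduct-preserving functors are automatically enriched. The remaining steps are direct specializations.
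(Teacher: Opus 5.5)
Your proposal is correct and follows essentially the same route as the paper: specialize Theorem~\ref{thm:rig-linear} at $R=\mathbb N$ and $R=\mathbb Z$, identify the $2$-categories through the biproduct-determined enrichment, and in the group case reduce \eqref{eq:rig-congruence} to $f-g\in[\Scl]_{\C}$ and the normality witness to $1_X\in[\N]_{\C}(X,X)$, that is, to $X$ being a retract of a finite biproduct. Your $\nabla(f\oplus g)\Delta$ argument, applied to the identity functor, also gives the uniqueness of the enrichment that the paper uses to identify the objects of $\mathbf{Lin}_{\mathbb N}^{\oplus}$ with semiadditive categories.
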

\begin{proof}
For $R=\mathbb N$, semimodule enrichment is commutative-monoid
enrichment. In a category with finite biproducts this enrichment
is uniquely determined by the biproduct structure, and a
functor preserves it exactly when it preserves finite
biproducts. Thus the first assertion is the
$\mathbf{Lin}_{\mathbb N}^{\oplus}$ case of the theorem.

For $R=\mathbb Z$, unital semimodules are abelian groups,
since $(-1)x$ is the additive inverse of $x$. Hence
$\mathbf{Lin}_{\mathbb Z}^{\oplus}=\Add$ under the canonical
identification of the structures. In this case
\eqref{eq:rig-congruence} reduces to
\[
 f\sim_{\Scl}g
 \quad\Longleftrightarrow\quad
 f-g\in[\Scl]_{\C}(X,Y),
\]
and~\eqref{eq:rig-closure} reduces to
$1_X\in[\Scl]_{\C}(X,X)$. Writing such an identity as
$1_X=\sum_i b_i a_i$ exhibits $X$ as a retract of
$\bigoplus_i S_i$, and every such retraction gives this
identity. Thus the closure is exactly closure under finite
biproducts and retracts, and the quotient is the ordinary
additive ideal quotient. Thus Theorem~\ref{thm:rig-linear}
recovers Theorem~\ref{thm:additive}.
\end{proof}

\subsection{Puppe exact linear categories}\label{subsec:puppe-linear}

The same enrichment can be combined with Puppe exactness. Fix a
commutative rig $R$, and write $\mathbf{PEx}_{R}$ for the
$2$-category of $R$-linear Puppe exact categories, exact
$R$-linear functors, and all natural transformations. Here
exactness means preservation of zero objects and all kernels
and cokernels. Finite biproducts are not required. Write
$\mathbf{PEx}_{R}^{\oplus}$ for the full sub-$2$-category on
objects admitting finite biproducts. The normal subcategories
for these $2$-categories will be the full replete
thick subcategories. Their quotients are the exact localizations
of Theorem~\ref{thm:puppe-localization}.

\begin{lemma}\label{lem:puppe-linear-localization}
Let $\C$ be $R$-linear and Puppe exact, and let
$\N\subseteq\C$ be thick. The exact localization
\[
 q:\C\longrightarrow\C/\N=\C[W_{\N}^{-1}]
\]
admits a unique $R$-linear enrichment making $q$ linear.
For every $\D\in\mathbf{PEx}_{R}$, precomposition induces
an equivalence of entire hom-categories
\[
 \mathbf{PEx}_{R}(\C/\N,\D)\simeq
 \{H\in\mathbf{PEx}_{R}(\C,\D)
      \mid H(N)\iso0\text{ for every }N\in\N\},
\]
where the right-hand side is full.
\end{lemma}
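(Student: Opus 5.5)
The plan is to put the $R$-linear structure on the localization first, and then deduce the hom-category equivalence from Theorem~\ref{thm:puppe-localization} together with Lemma~\ref{lem:localizationNat}.

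\emph{Step 1: the linear structure on hom-sets.} By Theorem~\ref{thm:puppe-localization}, every morphism of $\C/\N$ is a ternary fraction $q(e)^{-1}q(f)q(m)^{-1}$. Equivalently, by the filtered colimit description \eqref{di:eq:colimit} of Theorem~\ref{di:thm:fractions},
\[
 (\C/\N)(X,Y)\iso\varinjlim_{m,e}\C(U,V),
\]
indexed by $m:U\tom X$ in $W_{\N}$ and $e:Y\toe V$ in $W_{\N}$. Each transition map $f\mapsto bfa$ is $R$-linear, because composition in $\C$ is bilinear. A filtered colimit of $R$-semimodules along linear maps is computed on underlying sets. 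It therefore carries a unique semimodule structure for which the colimit injections are linear. Concretely, to add two fractions we first pass to common denominators and then add numerators; the scalar action is defined the same way.

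\emph{Step 2: bilinearity of composition.} Composition must be bilinear. By Theorem~\ref{di:thm:composition}, composition is realized by $\star$, which is built from pullbacks of denominators, a normal factorization and a pushout. I will reduce bilinearity to the three elementary kinds of fraction, using the decomposition \eqref{di:eq:three-factor}. Postcomposing with $q(g)$ for an original arrow $g$ acts on a representative by pushing out $e$ along $g$ and replacing $f$ by $g'f$, as in the functoriality step of Theorem~\ref{di:thm:fractions}. This operation is linear on numerators that share a common denominator. Composing with $q(e)^{-1}$ or $q(m)^{-1}$ only modifies denominators and is visibly linear. The corresponding statements for precomposition follow dually.

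\emph{Step 3: consequences.} Linearity of $q$ is immediate, since $q(f)=[1\backslash f/1]$. Uniqueness of the enrichment holds because every arrow is a composite of $q(f)$ with inverses $q(w)^{-1}$, and inverses of linear isomorphisms are forced. For the universal property, take an exact linear $H$ annihilating $\N$. Theorem~\ref{thm:puppe-localization} gives an exact factor $\overline H$, and the formula $\overline H[e\backslash f/m]=H(e)^{-1}H(f)H(m)^{-1}$ is linear on common-denominator representatives, so $\overline H$ is linear. Conversely, exact linear functors out of $\C/\N$ restrict along $q$ to exact linear functors annihilating $\N$. Full faithfulness on arbitrary natural transformations is exactly Lemma~\ref{lem:localizationNat}.

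The main obstacle is Step 2. Composition is only defined up to choices and is associative only modulo common reduction, so I will check linearity in each variable against the elementary generators rather than on raw ternary diagrams. If that bookkeeping becomes awkward, I would instead use the identification of hom-sets as $\overline H_X$-values in Theorem~\ref{di:thm:fractions} and argue via naturality.
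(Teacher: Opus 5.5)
Your overall route is the paper's. The semimodule structure comes from placing finitely many fractions over common denominators and operating on numerators; your filtered-colimit formulation is an equivalent packaging of the common-reduction criterion. Composition with an original arrow is handled by pushing out or pulling back a fixed denominator independently of the numerator. Uniqueness, linearity of the exact descendant, and descent of all $2$-cells go as in the paper.

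However, Step~2 rests on a claim that is false as stated. The assertion that composing with $q(e)^{-1}$ or $q(m)^{-1}$ only modifies denominators holds on just one side for each. Postcomposition with $q(e)^{-1}$, for an epic denominator $e$, sends $[m',f',e']$ to $[m',f',e'e]$. Precomposition with $q(m)^{-1}$, for a monic denominator $m$, sends it to $[mm',f',e']$. Bilinearity needs every generator on each side.

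Take $\beta=q(e)^{-1}q(f)q(m)^{-1}:Y\to Z$ as in \eqref{di:eq:three-factor}, with $m:U\to Y$ monic. Then the map $\alpha\mapsto\beta\alpha$ begins with postcomposition by $q(m)^{-1}$. By the surjectivity step in the proof of Theorem~\ref{di:thm:fractions}, this factors $e'm=np$ and then pulls $n$ back along the numerator $f'$. So both the new source denominator and the new numerator depend on $f'$. Two fractions over common denominators are sent to fractions over different denominators, and no linearity is visible. Dually, $\alpha\mapsto\alpha\beta$ involves precomposition with $q(e)^{-1}$ for an epic $e$, which requires a pushout along the numerator.

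The missing observation is the one the paper uses to close exactly this step. Postcomposition with $q(m)$, as a map $(\C/\N)(X,U)\to(\C/\N)(X,Y)$, is linear because $m$ is an original arrow, as you have already shown. It is bijective because $q(m)$ is invertible. Hence its inverse, postcomposition with $q(m)^{-1}$, is linear. The same argument applies to every inverse denominator on either side. Since $\C/\N$ is generated by original arrows and inverse denominators, this gives bilinearity.

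You already invoke inverses of linear isomorphisms for uniqueness in Step~3. Your fallback via $\overline H_X$ would reach the same point, since $\overline H_X(q(m)^{-1})=H_X(m)^{-1}$. The argument simply has to be used in Step~2 as well. Once composition is tested against generators inside the genuine category $\C[W_{\N}^{-1}]$, the failure of raw ternary associativity that you list as the main obstacle plays no role.
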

\begin{proof}
Use the ternary fractions and common-reduction equality
criterion of Theorem~\ref{thm:puppe-localization}. Any finite
family of parallel arrows can be represented with a common
monic source denominator and a common epic target denominator:
\[
 \alpha_i=q(e)^{-1}q(f_i)q(m)^{-1},\qquad
 X\xleftarrow{m}U\xrightarrow{f_i}V\xleftarrow{e}Y.
\]
Indeed, intersections refine the source denominators, and
pushouts provide common quotients for the target denominators.
For $r_i\in R$, define
\begin{equation}\label{eq:puppe-linear-sum}
 \sum_i r_i\alpha_i
 :=q(e)^{-1}q\!\left(\sum_i r_i f_i\right)q(m)^{-1}.
\end{equation}
The empty sum is the image of the zero morphism. A refinement
changes a numerator by $f\mapsto jfi$, which is $R$-linear.
The common-reduction criterion therefore proves independence
of representatives and of the common denominators. The
semimodule axioms follow by placing the finitely many arrows
involved over common denominators and checking them on their
numerators.

Composition with an original arrow is linear. For
precomposition, pull the fixed source denominator back along
that arrow; for postcomposition, push the fixed target
denominator out along it. These constructions are independent
of the numerator, which is changed by composition with a
fixed arrow. They therefore preserve
\eqref{eq:puppe-linear-sum}. Composition with the inverse of
a denominator is the inverse of a bijective linear map on
the corresponding hom-semimodules, and is consequently linear
as well. Since original arrows and inverse denominators
generate the localization, composition is bilinear.

This enrichment makes $q$ linear. It is unique: in any such
enrichment, composition with the denominator isomorphisms is
linear, forcing~\eqref{eq:puppe-linear-sum}. The exact
descendant of an exact linear functor annihilating $\N$ is
linear by the same formula. Finally, all natural
transformations descend by Lemma~\ref{lem:puppe-nat}.
\end{proof}

\begin{theorem}\label{thm:puppe-linear}
For every commutative rig $R$, the $2$-category
$\mathbf{PEx}_{R}$ is $2$-pointed and $2$-homological.
Its $2$-kernels are, up to equivalence, inclusions of full
replete thick subcategories, and its $2$-cokernels are the
corresponding exact linear localizations. In particular, a
$2$-cokernel of $F:\B\to\C$ is
\[
 \C\longrightarrow\C/\thick_{\C}(F\B).
\]
The full sub-$2$-category $\mathbf{PEx}_{R}^{\oplus}$ is
also $2$-homological.
\end{theorem}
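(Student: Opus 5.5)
The plan is to apply the quotient criterion, Theorem~\ref{thm:criterion}, to the canonical forgetful $2$-functor $U:\mathbf{PEx}_{R}\to\Pt$, taking as normal subcategories the full replete thick subcategories with their inherited $R$-linear Puppe exact structure. I will then treat $\mathbf{PEx}_{R}^{\oplus}$ by observing that every construction stays inside categories with finite biproducts.

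\emph{Strong bizero and local faithfulness.} The one-object zero category, with its zero endomorphism semimodule, is $R$-linear and Puppe exact. An exact linear functor out of it selects a zero object. Between any exact linear functor and a parallel zero-valued one, the transformations in both directions exist and are unique componentwise, exactly as in Proposition~\ref{prop:pointedzero}. $U$ is locally faithful and sends this object to $\mathbf1$.

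\emph{(Q1) and (Q2).} Thick subcategories contain zero objects. They are closed under intersections and under inverse images along exact functors, since exact functors preserve short exact sequences, subobjects and quotients. Transitivity and restriction follow as in Lemma~\ref{di:lem:saturated-properties}(iii): kernels, cokernels and short exact sequences in a thick subcategory are computed ambiently, and saturation is automatic in the Puppe exact case. A thick subcategory inherits Puppe exactness by Lemma~\ref{lem:inheritance}. Its hom-sets are unchanged, so it inherits the $R$-linear structure and the inclusion is exact and linear. For (Q2) I use the annihilation subcategory of Proposition~\ref{prop:puppe-kernels}. It is thick, and exact linear functors killed by $F$ corestrict to it exactly and linearly. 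Natural transformations lift through the full inclusion, so the general part of Lemma~\ref{lem:annihilation} applies.

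\emph{(Q3) and (Q4).} For (Q3), the quotient is $q:\C\to\C/\N$ of Theorem~\ref{thm:puppe-localization}, which is the identity on objects and has annihilator exactly $\N$. Its linear enrichment and full hom-category universal property are Lemma~\ref{lem:puppe-linear-localization}. For (Q4), full faithfulness of $U(j):\M/\N\to\C/\N$ is the bijection of hom-sets shown in Lemma~\ref{di:lem:nested}. That bijection does not depend on linearity, since the ternary colimit formulas coincide. It remains to show that $U$ reflects equivalences. If an exact linear functor is an equivalence of underlying categories, its quasi-inverse preserves kernels and cokernels by transport of universal properties. It is $R$-linear by transporting the inverses of the bijective linear hom-maps, as in the proof of Theorem~\ref{thm:rig-linear}. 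The unit and counit are admitted $2$-cells. The criterion then gives $2$-homologicity and the normal forms, and the cokernel formula follows from \eqref{eq:generalcoker} with $\cl_{\C}=\thick_{\C}$.

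\emph{The biproduct case.} For $\mathbf{PEx}_{R}^{\oplus}$ I rerun the same verification with the same data. A thick subcategory of a category with finite biproducts is closed under them, because a biproduct is a split extension; so it has finite biproducts. The quotient $q$ is linear, identity on objects, and preserves biproduct equations, so $\C/\N$ again has finite biproducts. All test categories in the universal properties are restricted to the full sub-$2$-category, which is harmless. The step I expect to need the most care is (Q4) together with the linear reflection of equivalences. The key point is that the hom-set identification of Lemma~\ref{di:lem:nested} is compatible with the linear structure defined in \eqref{eq:puppe-linear-sum}. It is, because the sums are computed on numerators over common denominators that may be chosen inside $\M$.
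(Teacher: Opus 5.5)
Your proposal is correct and follows the paper's proof essentially step for step. You apply Theorem~\ref{thm:criterion} to the forgetful $U:\mathbf{PEx}_{R}\to\Pt$ with thick subcategories as the normal data, get (Q3) from Theorem~\ref{thm:puppe-localization} together with Lemma~\ref{lem:puppe-linear-localization}, and get (Q4) from nested full faithfulness plus linear transport of quasi-inverses; for nested full faithfulness the paper cites Theorem~\ref{thm:puppe-homological}, which rests on Lemma~\ref{di:lem:nested}. The biproduct case is handled, as in the paper, via split extensions and linearity of $q$.

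Your final compatibility check is superfluous. The functor $j$ is already a linear $1$-cell of $\mathbf{PEx}_{R}$ because it is obtained from the universal property in (Q3), and (Q4) only asks for full faithfulness of $U(j)$.
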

\begin{proof}
Use the forgetful $2$-functor
$U:\mathbf{PEx}_{R}\to\Pt$. It is locally faithful.
The one-object zero category is a strong bizero object, since
zero-valued exact linear functors are zero objects of the
relevant hom-categories. Its image under $U$ is $\mathbf1$.
Every full replete thick subcategory inherits its ambient
$R$-linear Puppe-exact structure, and its inclusion is exact
and linear. These give the required normal-subcategory data.

For (Q1), thick subcategories contain zero objects and are
closed under intersections and inverse images of exact
functors. Normality is transitive: a subquotient or extension
of objects in the smaller subcategory first lies in the
larger thick subcategory, where the smaller one's thickness
applies. Restriction follows because a full thick subcategory
computes kernels and cokernels as in the ambient category.

For (Q2), an exact functor has a thick full annihilator, as in
Proposition~\ref{prop:puppe-kernels}. A functor with image in
that annihilator corestricts exactly and linearly, and all
natural transformations lift through its full inclusion.
Thus the entire hom-category $2$-kernel property holds.

For (Q3), Theorem~\ref{thm:puppe-localization} gives an exact
quotient annihilating precisely the specified thick
subcategory. Lemma~\ref{lem:puppe-linear-localization} equips
it with its enrichment and supplies the exact linear
hom-category universal property.

For (Q4), if $\N\subseteq\M$ are thick in $\C$, the
induced functor $\M/\N\to\C/\N$ is fully faithful by
Theorem~\ref{thm:puppe-homological}, and is exact and linear
by the preceding lemma. An exact linear functor that is an
underlying equivalence has a linear quasi-inverse, obtained
by transporting its inverse hom-semimodule isomorphisms.
That quasi-inverse is exact because equivalences preserve
the kernel and cokernel universal properties. Hence $U$
reflects equivalences. Theorem~\ref{thm:criterion} applies.

If the ambient category has finite biproducts, so does every
thick subcategory: the split short exact sequence
$A\to A\oplus B\to B$ gives closure under binary
biproducts, and zero objects give the empty one. The quotient
preserves biproducts because it is linear. Consequently the
same normal subcategories and quotients satisfy the criterion
within the full sub-$2$-category with finite biproducts.
\end{proof}

\begin{remark}[Relation with abelian categories]\label{rem:puppe-linear-abelian}
The absence of a finite-biproduct hypothesis makes this a
strict extension of the abelian setting. For example, for a
field $k$, the full category of vector spaces with objects
$0$ and $k$ is $k$-linear and Puppe exact: every nonzero arrow
is invertible, and every zero arrow factors through $0$.
It has no biproduct $k\oplus k$, so it is not abelian.

If finite biproducts are required, Puppe exactness forces
additive inverses. To see this directly, let
$\Delta:A\to A\oplus A$ be the diagonal and let
$q=\operatorname{coker}\Delta$. Since $\Delta$ is monic,
Puppe exactness makes it a kernel of $q$. Put
$u=q\iota_1$ and $v=q\iota_2$, so $u+v=0$.
If $ux=0$, then $\iota_1x=\Delta y$ for some $y$;
applying the second and first projections gives $y=0$ and
$x=0$. Thus $\Ker u=0$. If $hu=0$, then $hv=0$ follows
from $u+v=0$, whence $hq=0$ and $h=0$. Thus
$\Coker u=0$. Puppe exactness makes $u$ invertible, and
\[
 1_A+u^{-1}v=0.
\]
Composing with any arrow into $A$ supplies its additive
inverse. The category is therefore additive and Puppe exact,
hence abelian.

In particular, for $R=\mathbb N$ the finite-biproduct version
of Theorem~\ref{thm:puppe-linear} is exactly the result for
abelian categories and exact functors. More generally, the
$R$-action on the resulting hom-groups extends uniquely to
the universal ring $R^{\mathrm{gp}}$ obtained by additive
group completion. The finite-biproduct objects are thus
precisely the $R^{\mathrm{gp}}$-linear abelian categories.
\end{remark}

\section{Triangulated categories: the quotient-criterion proof}\label{sec:tri}

Section~\ref{sec:tri-direct} proved the triangulated case by characterizing functors through roofs and checking the two composition laws and the homology axiom directly. Here we give a second proof: the additive annihilation kernel and localization lemma verify the common criterion of Theorem~\ref{thm:criterion}. In particular, nested full faithfulness replaces the separate roof calculations for the homology axiom.

\subsection{The 2-cells and the additive 2-kernel}

Write $\Sigma_{\T}$ for the shift of a triangulated category $\T$.
An exact functor is an additive functor $F:\T\to\U$ with an
invertible natural transformation
\[
 \mu^F:F\Sigma_{\T}\xRightarrow{\sim}\Sigma_{\U}F
\]
such that it takes exact triangles to exact triangles,
using $\mu^F$ for the final arrow. A transformation
$\eta:(F,\mu^F)\Rightarrow(G,\mu^G)$ is
\emph{shift-compatible} when
\begin{equation}\label{eq:exactNat}
 (\Sigma_{\U}\eta)\,\mu^F
   =\mu^G\,(\eta\Sigma_{\T}).
\end{equation}
The 2-category $\Tri$ uses these transformations; $\TriAll$ uses
all natural transformations between the same exact functors.
Both are closed under vertical and horizontal composition. The standard
triangulated notions used here can be found in \cite{Verdier} and
\cite{StacksTri}. These are precisely the conventions of Section~\ref{sec:tri-direct};
the displayed equation recalls the shift condition for this second proof.
We prove the result simultaneously for both conventions.

A \emph{thick} subcategory of $\T$ means a full replete triangulated
subcategory closed under retracts in $\T$. Thus it contains zero,
is stable under $\Sigma^{\pm1}$, satisfies the two-out-of-three
condition on exact triangles, and is retract-closed.
It is additive: a biproduct is the middle term of a split triangle.
Write $\thick_{\T}(\Scl)$ for the smallest such subcategory containing
$\Scl$. It is obtained by intersection. This is sometimes called a
strictly full saturated triangulated subcategory; see
\cite[Section 13.6]{StacksTri}.

\begin{lemma}\label{lem:trikernel}
For an exact $F:\T\to\U$, the full subcategory
$\K_F=\{X\mid FX\iso0\}$ is thick. Its inclusion is a 2-kernel
in both $\Tri$ and $\TriAll$. The one-object zero triangulated
category is a strong bizero in both 2-categories.
\end{lemma}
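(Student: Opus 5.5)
The proof has three parts: thickness of $\K_F$, the $2$-kernel universal property via Lemma~\ref{lem:annihilation} in its general form, and the strong bizero property.

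\emph{Thickness.} $\K_F$ is full by definition and replete, since $F$ preserves isomorphisms. It contains the zero objects, because an additive functor sends zero to zero. For stability under $\Sigma^{\pm1}$, the shift comparison $\mu^F$ gives $F\Sigma X\iso\Sigma FX$, and $\Sigma$ is an equivalence, so $FX\iso0$ if and only if $F\Sigma X\iso0$; the same argument handles $\Sigma^{-1}$. For two-out-of-three, apply $F$ to an exact triangle to get an exact triangle in $\U$. If two of its terms are zero, Lemma~\ref{lemmaisozero}, after rotating if needed, shows that the arrow between them is an isomorphism, so the third term is zero. For retracts, additivity of $F$ makes $F(Y)$ a retract of $F(X)\iso0$, hence zero. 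This is also the argument of Theorem~\ref{teorchar2ker1}.

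\emph{Kernel property.} With the inherited triangulated structure, $\K_F$ is a triangulated category: its shift is the restriction of $\Sigma$, and a triangle is exact if it is exact in $\T$. The inclusion is exact with identity shift comparison. I will verify the hypotheses of the general part of Lemma~\ref{lem:annihilation}, taking the forgetful $U$ to $\Pt$. Let $G:\X\to\T$ be exact with $FG$ null. By Lemma~\ref{lem:criterion-null}, which applies because $U$ is locally faithful and $U(\0)\simeq\mathbf1$, every value of $G$ lies in $\K_F$. Hence $G$ corestricts on the nose, and its $\mu^G$ corestricts as well because $\K_F$ is full and stable under $\Sigma$. The corestriction preserves exact triangles by definition of the inherited structure. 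For $2$-cells, any natural transformation $iH\Rightarrow iH'$ lifts uniquely through the full inclusion. In $\Tri$, the lift must also satisfy the shift equation~\eqref{eq:exactNat}. That equation can be checked after applying the inclusion $i$, since $i$ is faithful and strictly compatible with shifts; this is Proposition~\ref{propffinCatisffinTriang}. In $\TriAll$ there is nothing further to check.

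\emph{Strong bizero.} Proposition~\ref{prop:tri-strongzero} handles $\Tri$, and Lemma~\ref{lemmanattozeroisexact} handles $\TriAll$, since admitting all natural transformations leaves the singleton $2$-cell sets unchanged.

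The step needing the most care is the exact-triangle bookkeeping for $\K_F$: checking that the restricted class of triangles satisfies (TR0)--(TR5). For (TR2), (TR4) and (TR5), the cone or the fill-in object produced in $\T$ must lie in $\K_F$. This follows from two-out-of-three, which is already established, so I expect it to be routine.
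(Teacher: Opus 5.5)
Your proposal is correct and follows essentially the paper's argument: thickness via $\mu^F$, rotation and Lemma~\ref{lemmaisozero}; the $2$-kernel via the general form of Lemma~\ref{lem:annihilation}, with the shift equation checked after the faithful inclusion; and strong pointedness, which the paper re-derives from the pointed case and remarks recovers Proposition~\ref{prop:tri-strongzero} and Lemma~\ref{lemmanattozeroisexact}, the results you cite. One small correction to your last paragraph: (TR4) and (TR5) produce no new objects, since the fill-in is a morphism and the octahedral triangle uses only the given objects, so fullness of $\K_F$ suffices there and two-out-of-three is needed only for (TR2).
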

\begin{proof}
The bizero proof is again the pointed proof: a zero-valued functor
has a unique shift isomorphism and all its image triangles are zero
triangles. For any exact $G$ and any parallel zero-valued $Z$, the
pointed proof supplies unique natural transformations $G\Rightarrow Z$
and $Z\Rightarrow G$. Equation~\eqref{eq:exactNat} holds for the
first because its two sides have zero codomain, and for the second
because they have zero domain. Thus null functors are zero objects
in the hom-categories for both conventions, proving the strong
bizero assertion in its full two-sided form. For either
$\Lcat=\Tri$ or $\Lcat=\TriAll$, its canonical forgetful
$U:\Lcat\to\Pt$ is locally faithful and sends this bizero
to $\mathbf1$, so the ambient assumptions of
Section~\ref{sec:criterion} hold.

The additive result already gives the full additive retract-closed
annihilator. Shift stability follows from $\mu^F$. If two terms of an
exact triangle are annihilated, apply $F$ and rotate the triangle
if necessary: the third term is also zero. Thus $\K_F$ is thick with
its induced triangulation. This is the usual object-kernel calculation;
compare \cite[Lemma 13.6.2, Tag 05RC]{StacksTriKernel}.

Take $\B'=\K_F$ with this triangulation and $i$ its inclusion.
These give $U(\B')=\mathcal K_U(F)$ and the required underlying
full inclusion. An exact functor whose image lies in $\K_F$
corestricts exactly: its shift isomorphisms and image triangles lie
in the full subcategory with its induced structure. All natural
transformations lift uniquely through the inclusion. If a transformation
is required to satisfy \eqref{eq:exactNat}, its lift does so because
the equation can be tested after the faithful inclusion. These are
precisely the factorization and lifting hypotheses of
Lemma~\ref{lem:annihilation}, which gives both $2$-kernel statements.
\end{proof}

\begin{remark}[The same kernel, viewed additively]
The strong bizero part recovers Proposition~\ref{prop:tri-strongzero}
and Lemma~\ref{lemmanattozeroisexact}: in both proofs, local
pointedness is inherited from the underlying pointed categories,
and shift compatibility is automatic in both directions at a
null functor. The kernel part recovers Theorem~\ref{teorchar2ker1}: the object set and the
natural-transformation lifting are already those of the additive
annihilation kernel. The only additional checks are stability under
shifts and triangles. In particular, no new kernel construction is
needed in passing from \(\Add\) to either triangulated \(2\)-category.
\end{remark}

Thick subcategories satisfy (Q1): intersections, inverse images under
exact functors, and restriction along thick inclusions preserve all the
defining conditions. For transitivity, inherited exact triangles
are the same triangles, and a retraction first lands in the larger full
thick subcategory and then in the smaller one, exactly as in the additive
proof.

\subsection{Verdier localization as the structured additive quotient}

For thick $\N\subseteq\T$, put
\begin{equation}\label{eq:VerdierW}
 W_{\N}=\{s:X\to Y\mid
 X\xrightarrow{s}Y\to C_s\to\Sigma X
 \text{ is exact for some }C_s\in\N\}.
\end{equation}
In the notation of Section~\ref{sec:tri-direct},
$W_{\N}=\mathcal M(\N)$, and $\vq{\T}{\N}$ denotes the
Verdier quotient written $\T/\N$ there.
The condition is independent of the choice of cone, since cones
of a fixed morphism are isomorphic.

The classical Verdier quotient theorem supplies the triangulated category
\begin{equation}\label{eq:Verdierqu}
 \vq{\T}{\N}=\T[W_{\N}^{-1}]
\end{equation}
and an exact quotient $q_{\N}$. Its exact triangles are those
isomorphic to images of exact triangles of $\T$. The class
$W_{\N}$ admits left and right calculi of fractions; every exact
functor annihilating $\N$ descends exactly through the quotient.
See \cite{Verdier} and, for precise modern statements,
\cite[Proposition 13.5.6 and Lemmas 13.6.6--13.6.8]{StacksTri},
especially Tags 05RG and 05RJ. We use this theorem for existence of the
triangulated quotient, not as a substitute for its 2-dimensional
verification below.

Because $\N$ is additive and retract-closed, the additive quotient
of Section~\ref{sec:additive} already gives
\begin{equation}\label{eq:Verdieradditivefactor}
 \T\xrightarrow{a_{\N}}\aq{\T}{\N}
 \xrightarrow{\ell_{\N}}\vq{\T}{\N}.
\end{equation}
The second arrow is the additive localization at $a_{\N}(W_{\N})$,
by the same argument as for \eqref{eq:Serreadditivefactor}.
An arbitrary additive functor killing $\N$ need not invert
$W_{\N}$; an exact functor killing $\N$ does, since its image of a
triangle with cone in $\N$ has zero cone. This is exactly why the
Verdier quotient is the 2-cokernel here rather than the bare additive
quotient.

\begin{lemma}[The full 2-dimensional Verdier quotient property]\label{lem:VerdierUP}
For thick $\N\subseteq\T$, one has $\Ker(q_{\N})=\N$ and
\begin{equation}\label{eq:VerdierUP}
 \Tri(\vq{\T}{\N},\U)\simeq
 \{H\in\Tri(\T,\U)\mid H(\N)\iso0\}.
\end{equation}
The same statement holds with $\TriAll$ in place of $\Tri$.
\end{lemma}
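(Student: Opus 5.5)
The proof follows the pattern of Lemma~\ref{lem:SerreUP}, with the Verdier fraction calculus replacing the Serre one.

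\emph{Annihilator.} Suppose $X\in\N$. The exact triangle $0\to X\xrightarrow{1_X}X\to\Sigma0$, after rotation, shows that $0\to X$ has cone $X\in\N$. So this map lies in $W_{\N}$, and $q_{\N}X\iso q_{\N}0\iso0$. Conversely, suppose $q_{\N}X\iso0$. Then $q_{\N}(1_X)=q_{\N}(0)$, and the identity-denominator equality criterion \eqref{eq:localequality} gives some $s:Z\to X$ in $W_{\N}$ with $s=1_Xs=0s=0$. A cone of the zero map $Z\to X$ is $X\oplus\Sigma Z$, and cones of a fixed morphism are isomorphic. Hence $X\oplus\Sigma Z\in\N$, and retract closure of $\N$ gives $X\in\N$. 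Alternatively, one can cite Proposition~\ref{propVerdfacts}: the kernel of $q_{\N}$ is the thick closure of $\N$, which is $\N$ itself.

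\emph{The $1$-cell part of \eqref{eq:VerdierUP}.} Let $H$ be exact with $H(\N)\iso0$. For $s\in W_{\N}$, the functor $H$ sends the triangle defining $s$ to an exact triangle whose cone is zero. By Lemma~\ref{lemmaisozero}, $H(s)$ is invertible, so $H$ inverts $W_{\N}$. The classical Verdier theorem then supplies an exact $\overline H$ with $\overline Hq_{\N}=H$ on the nose, including shift data as in Remark~\ref{remtriangleinTriang}. Conversely, every $Tq_{\N}$ annihilates $\N$ by the first step. So precomposition is essentially surjective onto the full subcategory on the right of \eqref{eq:VerdierUP}.

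\emph{Full faithfulness.} Let $R,S:\vq{\T}{\N}\to\U$ be exact. Lemma~\ref{lem:localizationNat} shows that every natural transformation $Rq_{\N}\Rightarrow Sq_{\N}$ descends uniquely, with the same components, to a transformation $R\Rightarrow S$. This settles the $\TriAll$ case. For $\Tri$, the descended transformation has the same components, and $q_{\N}$ is the identity on objects. The shift-compatibility equation \eqref{eq:exactNat} at an object $X$ involves only the components at $X$ and $\Sigma X$ and the shift isomorphisms. The isomorphism $\mu^{Rq}$ is the composite of $R\mu^{q}$ with $\mu^{R}q$, and similarly for $S$. So the equation for $\bar\eta$ is obtained from the one for $\eta$ by conjugating with the invertible $R\mu^{q}_X$ and $S\mu^{q}_X$, using naturality of $\bar\eta$ at $\mu^{q}_X$.

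This $\Tri$ check is the one genuinely new point, and the main obstacle. It is exactly the clause of Lemma~\ref{lem:localizationNat} that says an equation ``can be checked on objects''. If $q_{\N}$ is chosen with identity shift comparison, which the standard construction allows, the equations coincide outright. Naturality in $\U$ is immediate, since everything is given by precomposition.
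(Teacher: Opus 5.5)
Your proof is correct and follows the paper's argument: the same annihilator computation via \eqref{eq:localequality} and the cone $X\oplus\Sigma Z$, then descent of all $2$-cells by Lemma~\ref{lem:localizationNat} with the shift equation checked objectwise. The differences are cosmetic. You cite the classical Verdier universal property for the exact descendant, where the paper rebuilds it by descending $\mu^H$ through Lemma~\ref{lem:localizationNat}; and you handle a general shift comparison $\mu^{q}$ via naturality of $\bar\eta$ at $\mu^{q}_X$, whereas the paper fixes a model with $q_{\N}\Sigma_{\T}=\Sigma q_{\N}$.
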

\begin{proof}
If $X\in\N$, the map $0\to X$ belongs to $W_{\N}$ and its
inversion makes $q_{\N}X$ zero. Conversely, if $q_{\N}X$ is zero,
then \eqref{eq:localequality} gives a zero morphism
$s:Z\to X$ in $W_{\N}$. A cone of the zero morphism is
$X\oplus\Sigma Z$, so this object belongs to $\N$. Since $X$ is
its retract, thickness gives $X\in\N$. This recovers the standard
kernel-of-Verdier-quotient result; compare
\cite[Lemma 13.6.9, Tag 05RK]{StacksTriQuotientKernel}.

Let $H:\T\to\U$ be exact and annihilate $\N$. Its underlying
functor inverts $W_{\N}$ and therefore descends. The descendant is
additive, as in \eqref{eq:Verdieradditivefactor}. Choose the usual
localization model whose shift is induced from that of $\T$, so that
$q_{\N}\Sigma_{\T}=\Sigma_{\vq{\T}{\N}}q_{\N}$.
By Lemma~\ref{lem:localizationNat}, the shift isomorphism $\mu^H$
descends uniquely to the required shift isomorphism of the factor.
It is invertible since all its components are invertible. The factor
preserves exact triangles, because these are precisely the
triangles isomorphic to images of exact triangles of $\T$.
Thus it is an exact factor, with the specified shift data.

Now every natural transformation between two composites with $q_{\N}$
descends uniquely by Lemma~\ref{lem:localizationNat}. If the original
transformation is shift-compatible, equation \eqref{eq:exactNat} holds
for its descendant because it is the same equation on each object,
and $q_{\N}$ is the identity on objects. This proves
\eqref{eq:VerdierUP} for $\Tri$. Omitting that last compatibility
condition proves the assertion for $\TriAll$.
\end{proof}

\begin{remark}[Relation with the direct cokernel proof]
The componentwise descent in Theorem~\ref{teorchar2coker1} is exactly
Lemma~\ref{lem:localizationNat} applied to the Verdier denominators.
That general lemma treats noninvertible natural transformations as
well: only the images of denominators must be invertible. Thus the
two proofs establish the same hom-category universal property.
\end{remark}

\begin{corollary}\label{cor:Verdiercoker}
For an exact $F:\U\to\T$, its 2-cokernel in either triangulated
2-category is
\begin{equation}\label{eq:Verdiercoker}
 \T\longrightarrow\vq{\T}{\thick_{\T}(F\U)}.
\end{equation}
The normal subcategories are precisely the thick subcategories, and
normal quotients are their Verdier quotients, up to equivalence.
\end{corollary}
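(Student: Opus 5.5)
The plan is to follow the pattern of Corollary~\ref{cor:Serrecoker}, and to obtain the normal-form assertions from Theorem~\ref{thm:criterion}. The argument does not depend on which $2$-category we work in. Fix either $\Lcat=\Tri$ or $\Lcat=\TriAll$, an exact $F:\U\to\T$, and write $\N=\thick_{\T}(F\U)$.

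\emph{Annihilation.} The first step is to show that an exact $H:\T\to\X$ annihilates $F$ if and only if $H(\N)\iso0$. By Lemma~\ref{lem:criterion-null}, $HF$ is null exactly when $F\U\subseteq\K_H$. By Lemma~\ref{lem:trikernel}, $\K_H$ is thick. Since $\N$ is the least thick subcategory containing $F\U$, the condition $F\U\subseteq\K_H$ is equivalent to $\N\subseteq\K_H$. So the full subcategory of $\Lcat(\T,\X)$ of $1$-cells annihilating $F$ coincides with the one appearing on the right of \eqref{eq:VerdierUP}.

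\emph{The $2$-cokernel.} Lemma~\ref{lem:VerdierUP} identifies that full subcategory with $\Lcat(\vq{\T}{\N},\X)$ by precomposition with $q_{\N}$. This identification holds for entire hom-categories, for both conventions on $2$-cells, and it is pseudonatural in $\X$. Moreover $q_{\N}F$ is null, because $\Ker(q_{\N})=\N\supseteq F\U$. Together these show that $q_{\N}$ satisfies \eqref{eq:cokernelUP}, which proves \eqref{eq:Verdiercoker}.

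\emph{Normal forms.} For the normal subcategories and quotients, I would verify (Q1)--(Q4) and apply Theorem~\ref{thm:criterion}.
\begin{enumerate}
\item (Q1) is the closure remark recorded right after Lemma~\ref{lem:trikernel}.
\item (Q2) is Lemma~\ref{lem:trikernel} itself.
\item (Q3) is Lemma~\ref{lem:VerdierUP}, using the identity-on-objects localization model.
\item (Q4) requires two facts.
\begin{itemize}
\item Full faithfulness of $\vq{\M}{\N}\to\vq{\T}{\N}$ for nested thick subcategories. I would get this from Lemma~\ref{lem:roofs}, checking \eqref{eq:roofclosed}: given $s:Z\to X$ in $W_{\N}$ with $X\in\M$, rotate its triangle to $\Sigma^{-1}C_s\to Z\to X$. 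Since $\Sigma^{-1}C_s\in\N\subseteq\M$ and $\M$ satisfies two-out-of-three, $Z\in\M$. The restricted class $W_{\N}\cap\Mor(\M)$ is the Verdier class of $\N$ inside $\M$, because cones in $\M$ are ambient cones.
\item That $U$ reflects equivalences. This is Proposition~\ref{propequivinTriang}, and for $\TriAll$ the same construction applies verbatim.
\end{itemize}
\end{enumerate}

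With (Q1)--(Q4) in place, Theorem~\ref{thm:criterion} yields that the normal subcategories are exactly the thick ones and the normal quotients are exactly the Verdier quotients, up to equivalence.

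The only step that is not a pure citation is the roof-closure check in (Q4). I expect it to be the main, if mild, obstacle, because it must be carried out for one-sided roofs with the correct orientation of the rotated triangle.
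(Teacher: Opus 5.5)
Your argument is correct, and its first two steps are exactly the paper's proof. Annihilators are thick by Lemma~\ref{lem:trikernel}, so annihilating $F\U$ is the same as annihilating $\thick_{\T}(F\U)$, and Lemma~\ref{lem:VerdierUP} then gives the universal property.

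For the normal forms you take a heavier route than necessary. The paper combines three facts:
\begin{enumerate}
\item $\Ker(q_{\N})=\N$, from Lemma~\ref{lem:VerdierUP};
\item the full annihilation $2$-kernels of Lemma~\ref{lem:trikernel};
\item uniqueness of representing objects.
\end{enumerate}
Together these show that every thick inclusion is the $2$-kernel of its Verdier quotient, and every Verdier quotient is the $2$-cokernel of its inclusion.

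Correspondingly, the normal-form clause of Theorem~\ref{thm:criterion} only uses (Q1)--(Q3). Your verification of (Q4), which you single out as the main obstacle, is therefore not needed for this corollary. That is the nested full faithfulness via Lemma~\ref{lem:roofs}, together with reflection of equivalences. Proposition~\ref{prop:Verdiernested} supplies exactly this later, for the composition laws and the homology axiom in Theorem~\ref{thm:triangulated}.

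Incidentally, your roof-closure check needs no rotation. Applying (TS2) directly to $Z\to X\to C_s\to\Sigma Z$ with $X,C_s\in\M$ already gives $Z\in\M$.
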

\begin{proof}
An exact functor annihilates $F\U$ if and only if it annihilates its
thick closure, by Lemma~\ref{lem:trikernel}. Apply
Lemma~\ref{lem:VerdierUP}. The kernel calculation and the full
annihilation 2-kernels give the normal forms.
\end{proof}

\begin{remark}[Two necessary saturations]
The image of an exact functor need not be full, and its full replete
image need not be thick;
$\thick_{\T}(F\U)$ is genuinely required in
\eqref{eq:Verdiercoker}. Also, quotienting by a full triangulated
subcategory which is not thick annihilates its thick closure, not just
its original objects. Conversely, none of this requires adding new
objects by idempotent-completing the Verdier quotient. Relative closure
under retracts and idempotent completeness of an entire category are
different requirements.
\end{remark}

\subsection{Nested Verdier subquotients}

The localization results underlying
Proposition~\ref{prop:Verdiernested} below are classical.
For thick $\N\subseteq\M\subseteq\T$, the full faithfulness
of the canonical functor
\[
 j:\vq{\M}{\N}\longrightarrow\vq{\T}{\N}
\]
is established in
\cite[Chap.~II, Proposition~2.3.1(b)]{Verdier}.
The proof of part~(d) of that proposition identifies its
essential image with the full annihilator of
$p:\vq{\T}{\N}\to\vq{\T}{\M}$.
The third-isomorphism equivalence in
Theorem~\ref{thm:triangulated} below is likewise contained in
part~(c) of Verdier's proposition.

The descent of exact functors and morphisms between them
is also included in
\cite[Chap.~II, Corollary~2.2.11(c)]{Verdier}.
The descriptions used here in terms of $2$-kernels and
$2$-cokernels employ Lemmas~\ref{lem:trikernel}
and~\ref{lem:VerdierUP}, which express annihilation and
localization through universal properties on entire
hom-categories in both $\Tri$ and $\TriAll$.

\begin{proposition}\label{prop:Verdiernested}
For thick $\N\subseteq\M\subseteq\T$, the induced exact functor
$j:\vq{\M}{\N}\to\vq{\T}{\N}$ is fully faithful and is the
2-kernel of $p:\vq{\T}{\N}\to\vq{\T}{\M}$, in both $\Tri$
and $\TriAll$.
\end{proposition}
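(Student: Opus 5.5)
Full faithfulness of $j$ will come from Lemma~\ref{lem:roofs}, in direct analogy with the Serre case of Proposition~\ref{prop:Serresubquotient}. First, $W_{\N}^{\M}=W_{\N}^{\T}\cap\Mor(\M)$. Since $\M$ is a full triangulated subcategory with the inherited triangulation, a morphism of $\M$ has a cone in $\T$ that already lies in $\M$ by two-out-of-three. Cones are unique up to isomorphism and $\M$ is replete, so having a cone in $\N$ means the same thing in $\M$ and in $\T$. Both classes admit the calculus of roofs by the classical Verdier theorem quoted before \eqref{eq:Verdierqu}, applied to $\M$ and to $\T$. For the roof-closure condition \eqref{eq:roofclosed}, take $s:Z\to X$ in $W_{\N}^{\T}$ with $X\in\M$. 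There is an exact triangle $Z\to X\to C_s\to\Sigma Z$ with $C_s\in\N\subseteq\M$. Two of its terms lie in $\M$, so $Z\in\M$ by (TS2) after rotation. Lemma~\ref{lem:roofs} then gives that the underlying functor of $j$ is fully faithful. Exactness of $j$ comes from Lemma~\ref{lem:VerdierUP} applied to $q_{\N}^{\T}i_{\M}$, which annihilates $\N$. Its shift data descends, so $j$ is a $1$-cell in both $\Tri$ and $\TriAll$.

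Next I would identify the essential image of $j$ with the annihilator of $p$. The functor $p$ exists by Lemma~\ref{lem:VerdierUP}, since $q_{\M}$ kills $\N$, and it satisfies $pq_{\N}=q_{\M}$ on the nose in the identity-on-objects models. Hence for $X\in\T$ we get $p(q_{\N}X)\iso0$ if and only if $q_{\M}X\iso0$, which holds if and only if $X\in\M$ by $\Ker(q_{\M})=\M$. As $j$ is the identity on the objects of $\M$ and all quotient functors are identity on objects, the replete essential image of $j$ is exactly $\K_p$. By Lemma~\ref{lem:trikernel}, $\K_p$ is thick and its inclusion is the $2$-kernel of $p$ in both $2$-categories.

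It remains to show that $j$, corestricted to $\K_p$, is an equivalence in $\Tri$ and in $\TriAll$. The corestriction is exact because the triangulation on $\K_p$ is induced, and as a functor it is fully faithful and essentially surjective. Proposition~\ref{propequivinTriang} then upgrades it to an equivalence in $\Triang$, with exact unit and counit. Those same data also witness an equivalence in $\TriAll$. Composing with the inclusion identifies $j$, up to invertible $2$-cell, with $\twoker(p)$. Alternatively, the general argument in the proof of Theorem~\ref{thm:criterion}, which only needs $U$ to reflect equivalences, gives the same conclusion.

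The step needing the most care is the first one: it must be checked that the roof calculus and the equality criterion in $\vq{\M}{\N}$ use the same denominators and refinements as in $\vq{\T}{\N}$. This is why I would verify explicitly that $W_{\N}^{\M}$ is the restriction of $W_{\N}^{\T}$ and that refining roofs starting in $\M$ stay in $\M$. Verdier's Proposition~2.3.1(b) can serve as a cross-check.
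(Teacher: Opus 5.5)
Your proposal is correct and follows essentially the same route as the paper. Both arguments show that $W_{\N}^{\M}=W_{\N}^{\T}\cap\Mor(\M)$ and establish roof closure via the cone triangle, then apply Lemma~\ref{lem:roofs}; both identify the essential image of $j$ with the annihilator of $p$ through $p\,q_{\N}=q_{\M}$, and conclude with Lemma~\ref{lem:trikernel}. The only difference is cosmetic: you cite Proposition~\ref{propequivinTriang} to upgrade the corestriction to an exact equivalence, whereas the paper sketches the transport of the shift structure to a quasi-inverse inline.
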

\begin{proof}
For a morphism of $\M$, a cone computed in $\T$ lies in $\M$
and represents its cone in the induced triangulation. Consequently
$W_{\N}^{\M}=W_{\N}^{\T}\cap\Mor(\M)$. If $s:Z\to X$ belongs
to $W_{\N}^{\T}$ and $X\in\M$, a triangle
\[
 Z\xrightarrow{s}X\longrightarrow C_s\longrightarrow\Sigma Z
\]
has $C_s\in\N\subseteq\M$. Two-out-of-three in $\M$ gives
$Z\in\M$. Thus the roof-closure condition \eqref{eq:roofclosed}
holds. Lemma~\ref{lem:roofs} proves full faithfulness of $j$.
This is exactly the same localization argument as in the abelian case,
with the kernel--image short exact sequence replaced by the cone triangle.
It upgrades the fully faithful additive subquotient functor to the
fully faithful Verdier subquotient functor.

The functor $p$ is induced by the Verdier quotient property because
$q_{\M}$ annihilates $\N$. Its full annihilator has exactly the
objects $X\in\M$, since $p(q_{\N}X)=q_{\M}X$.
Thus $j$ has precisely this full thick subcategory as its essential image.
A fully faithful exact functor is an exact equivalence onto its full
triangulated essential image: transport the shift structure to an
ordinary quasi-inverse; a triangle there is exact if and only
if its image is, since a cone on its first morphism in the source
maps to a cone on that morphism in the target, and the triangle
comparison with two identity components has an invertible third
component. This also makes the unit and counit shift-compatible.
Hence $j$ represents the annihilation 2-kernel of $p$ in either
2-category, by Lemma~\ref{lem:trikernel}.
\end{proof}

\begin{remark}[Where the containment hypothesis enters]
In the direct proof of Theorem~\ref{thm:tri-direct}, the hypothesis
\(\Ker(H)\subseteq\operatorname{Im}(G)\) ensures that intermediate
objects of roofs and common refinements remain in that image.
The condition \(\N\subseteq\M\) above has precisely the same role.
It is encoded once in Lemma~\ref{lem:roofs}, after which
Theorem~\ref{thm:criterion} supplies both composition laws and the
homology factorization. Neither proof gives the unconditional
interchange refuted by Example~\ref{ex:tri-not-diexact}.
\end{remark}

\begin{theorem}\label{thm:triangulated}
Both $\Tri$ and $\TriAll$ are 2-pointed and 2-homological.
For thick $\N\subseteq\M\subseteq\T$, the homology factorisation is
\[
 \M\xrightarrow{\text{2-cokernel}}\vq{\M}{\N}
 \xrightarrow{\text{2-kernel}}\vq{\T}{\N}.
\]
The corresponding square is a bipullback and a bipushout, and
\[
 (\vq{\T}{\N})/_{\!\mathrm V}(\vq{\M}{\N})
 \simeq\vq{\T}{\M}.
\]
Here the denominator denotes the full replete essential image of
$\vq{\M}{\N}\to\vq{\T}{\N}$, equivalently the annihilator of
$\vq{\T}{\N}\to\vq{\T}{\M}$.
\end{theorem}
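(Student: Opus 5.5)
We apply Theorem~\ref{thm:criterion} to $\Lcat=\Tri$ and to $\Lcat=\TriAll$ simultaneously, with $U:\Lcat\to\Pt$ the canonical forgetful $2$-functor. Lemma~\ref{lem:trikernel} already supplies $2$-pointedness, local faithfulness of $U$, and $U(\0)=\mathbf 1$. We take the thick subcategories, each with its induced triangulation and inclusion, as the normal-subcategory data. Then we verify (Q1)--(Q4) in order and read off the remaining assertions from Corollary~\ref{cor:third}.

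\textbf{(Q1).} Zero objects are contained in any thick subcategory. Intersections, inverse images along exact functors (shifts via $\mu^F$, triangles preserved, retracts preserved additively), transitivity and restriction were recorded just after Lemma~\ref{lem:trikernel}, and $\thick_\T$ is the closure operation. \textbf{(Q2)} is Lemma~\ref{lem:trikernel} itself. \textbf{(Q3)} takes $q^\T_\N:\T\to\vq{\T}{\N}$ in the identity-on-objects model. Here $\Ker(q_\N)=\N$ and the hom-category equivalence \eqref{eq:VerdierUP} are Lemma~\ref{lem:VerdierUP}, for both conventions on $2$-cells. Pseudonaturality in $\X$ is immediate, since everything is induced by precomposition.

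\textbf{(Q4).} For the first clause, any descent $j$ of $q_\N^\T i_\M^\T$ is isomorphic to the functor of Proposition~\ref{prop:Verdiernested}, and that functor is fully faithful. For the second clause, we must show that $U$ reflects equivalences. Proposition~\ref{propequivinTriang} does this for $\Tri$: an exact functor that is an equivalence of categories admits an exact quasi-inverse with shift-compatible unit and counit. For $\TriAll$ the same adjoint equivalence works, because there are only more $2$-cells. This is the step I expect to require the most care. The transported shift structure $\mu^G$ must make the quasi-inverse preserve exact triangles, and we rely on the triangulated five-lemma argument already given there.

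With (Q1)--(Q4) in place, Theorem~\ref{thm:criterion} yields $2$-homologicity. Its proof of (H3) exhibits the factorization $\M\to\vq{\M}{\N}\xrightarrow{j}\vq{\T}{\N}$ as a $2$-cokernel followed by a $2$-kernel, with $j$ the $2$-kernel of $p$. Corollary~\ref{cor:third} gives the bipullback and bipushout property of the subquotient square. It also gives the equivalence $(\vq{\T}{\N})/_{\!\mathrm V}\mathcal R\simeq\vq{\T}{\M}$, where $\mathcal R=\mathcal K_U(p)$ is the full replete essential image of $j$, as stated.
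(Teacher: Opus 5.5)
Your proposal is correct and matches the paper's proof essentially step for step. You use the same forgetful $2$-functor $U$ and verify (Q1)--(Q4) through Lemma~\ref{lem:trikernel}, Lemma~\ref{lem:VerdierUP}, Proposition~\ref{prop:Verdiernested} and Proposition~\ref{propequivinTriang}, using that an equivalence in $\Tri$ remains one in $\TriAll$. You then conclude with Theorem~\ref{thm:criterion} and Corollary~\ref{cor:third}.
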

\begin{proof}
For either convention, use the canonical forgetful
$2$-functor $U:\Lcat\to\Pt$, with
$\Lcat=\Tri$ or $\TriAll$. It is locally faithful and
sends the one-object zero triangulated category to
$\mathbf1$. Realize each thick subcategory with its
inherited triangulation and inclusion. By
Proposition~\ref{propequivinTriang}, an exact functor
whose underlying functor is an equivalence admits an exact
quasi-inverse with shift-compatible unit and counit.
Consequently $U$ reflects equivalences for both conventions.

The first subsection establishes (Q1)--(Q2) of
Theorem~\ref{thm:criterion} and strong pointedness.
Lemma~\ref{lem:VerdierUP} establishes (Q3), with its full 2-cell
property in both conventions. Proposition~\ref{prop:Verdiernested}
establishes (Q4). The criterion and Corollary~\ref{cor:third} apply.

In particular, successive Verdier quotients compose to the quotient
by the inverse image of the second thick subcategory:
\[
 \T\xrightarrow{q}\vq{\T}{\N}
 \xrightarrow{r}(\vq{\T}{\N})/_{\!\mathrm V}\Scl
 \quad\simeq\quad
 \T\longrightarrow\vq{\T}{q^{-1}\Scl}.
\]
The inverse image is thick because $q$ is exact. The proof is the same
hom-category argument already used in the pointed and additive cases.
\end{proof}

\section{The di-exact proof revisited through the criterion}\label{sec:di-revisited}

Theorem~\ref{di:thm:main} was proved directly before the
triangulated case and before the general criterion. We now identify
exactly which part of that proof the criterion simplifies. None of
the earlier fraction or quotient results depends on this revisitation.

\begin{proposition}\label{prop:di-criterion-revisited}
The saturated thick subcategories and saturated exact quotients of
Section~\ref{sec:di-homological} satisfy the hypotheses of
Theorem~\ref{thm:criterion}. Consequently that criterion recovers
Theorem~\ref{di:thm:main}, including the composition laws and the
conditional homology factorization. Corollary~\ref{cor:third}
recovers the third isomorphism and the two universal properties of
the subquotient square.
\end{proposition}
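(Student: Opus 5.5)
We verify the ambient assumptions and then (Q1)--(Q4) of Theorem~\ref{thm:criterion} for $\Lcat=\DiHom$, using the canonical forgetful $2$-functor $U:\DiHom\to\Pt$. This $U$ is locally faithful, because $2$-cells are natural transformations and $U$ does not change them. It sends the one-object zero category to $\mathbf1$. Strong pointedness is the first paragraph of the proof of Theorem~\ref{di:thm:main}. For each $\C$ we take $\mathfrak N(\C)$ to be the saturated thick subcategories. We realize each one by itself, with the structure inherited by Lemma~\ref{di:lem:saturated-properties}(i) and its exact inclusion. This gives the equalities $U(\C_{\N})=\N$ and $U(i_{\N}^{\C})=(\N\hookrightarrow\underline{\C})$ on the nose.

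(Q1) is Lemma~\ref{di:lem:saturated-properties}. Zero objects lie in $\N$ by thickness. Closure under intersections (including the empty one, $\C$) and under inverse images along exact functors is part (iii), as are transitivity and the restriction clause for $\N\subseteq\M$. (Q2) splits into two facts. Full annihilators are saturated thick by part (ii), and they are $2$-kernels with the full hom-category property by Proposition~\ref{di:prop:2kc}. (Q3) uses $Q_{\N}:\C\to\C/\N$ in its identity-on-objects localization model. Its annihilator is exactly $\N$ by Theorem~\ref{di:thm:quotient}(i). The target is di-exact homological and $Q_{\N}$ is exact by parts (ii) and (iii) of that theorem. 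The equivalence \eqref{eq:normalquotientUP} on entire hom-categories is \eqref{di:eq:2up}, which applies since every di-exact test target is pointed with kernels and cokernels. Pseudonaturality in $\X$ is automatic, because the equivalence is given by precomposition.

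For (Q4), full faithfulness of the descended $j:\M/\N\to\C/\N$ is Lemma~\ref{di:lem:nested}. We must note that any descent is isomorphic to the canonical one, so full faithfulness does not depend on the choice. The remaining clause says that $U$ reflects equivalences. Let $v$ be an exact functor whose underlying functor is an equivalence. A quasi-inverse preserves zero objects, kernels and cokernels, since equivalences transport these universal properties. This is the same argument used in the proof of Theorem~\ref{di:thm:main} for the corestriction of $j$. Unit and counit are admissible, because all natural transformations are allowed. Hence $v$ is an equivalence in $\DiHom$.

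With the hypotheses checked, Theorem~\ref{thm:criterion} yields $2$-homologicity, the normal forms, and the cokernel formula \eqref{eq:generalcoker}. Here $\cl_{\C}=\SatTh_{\C}$, which agrees with Proposition~\ref{di:prop:2kc}. Corollary~\ref{cor:third} gives the third isomorphism and shows that the subquotient square is a bipullback and a bipushout.

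The main obstacle is not any single hypothesis but the bookkeeping that keeps the argument non-circular. Every input has to come from Sections on fractions and quotients proved before Theorem~\ref{di:thm:main}'s homological step, namely Theorems~\ref{di:thm:quotient}, \ref{di:prop:2kc} and Lemma~\ref{di:lem:nested}. In particular we must confirm that Lemma~\ref{di:lem:nested}'s full faithfulness uses only the hom-set formula \eqref{di:eq:colimit}, and not the composition laws.
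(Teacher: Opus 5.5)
Your proposal is correct and follows the paper's proof essentially step by step. The common route is the forgetful $U:\DiHom\to\Pt$, strong pointedness taken from the direct proof, (Q1) from Lemma~\ref{di:lem:saturated-properties}, (Q2) from Proposition~\ref{di:prop:2kc}, (Q3) from Theorem~\ref{di:thm:quotient}, and (Q4) from Lemma~\ref{di:lem:nested} together with the fact that an exact functor which is an underlying equivalence has an exact quasi-inverse. Your extra remarks are accurate refinements of the same argument: that \eqref{di:eq:2up} covers di-exact test targets, and that Lemma~\ref{di:lem:nested} uses only the hom-set formula and not the composition laws, so there is no circularity.
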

\begin{proof}
Use the canonical forgetful $2$-functor $U:\DiHom\to\Pt$.
It is locally faithful and sends the one-object zero category
to $\mathbf1$; realize the normal subcategories with their
inherited structures and inclusions.

The two-sided strong bizero is already constructed in the direct
proof. For (Q1), Lemma~\ref{di:lem:saturated-properties} proves
closure under intersections and exact inverse images, transitivity,
and restriction to a full saturated thick subcategory. Its induced
structure and exact inclusion are established in the same lemma.
Proposition~\ref{di:prop:2kc} provides the full annihilation
$2$-kernel property, hence (Q2). The exact quotient and its precise
annihilation and natural-transformation universal properties are
Theorem~\ref{di:thm:quotient}, giving (Q3). Finally,
Lemma~\ref{di:lem:nested} gives full faithfulness of the induced
nested quotient inclusion. A $1$-cell whose underlying functor
is an equivalence admits an exact quasi-inverse, since equivalences
preserve the kernel and cokernel universal properties. Thus $U$
reflects equivalences, completing (Q4). Applying the criterion and its corollary proves the
assertions without repeating the direct composition or cone arguments.
\end{proof}

\begin{remark}[The extent of the simplification]
The proof above replaces only the final $2$-categorical argument.
The substantive one-dimensional work is still needed: saturation
supplies arbitrary base-change stability; epidd posets supply the
common-reduction equivalence relation; the three-arrow theorem
identifies that relation with all equalities in the localization;
and the saturated quotient theorem proves preservation of kernels
and cokernels of arbitrary, possibly nonnormal, morphisms. None of
these assertions follows merely from the criterion. In contrast,
once those inputs and nested full faithfulness are available, the
two composition laws and the conditional homology axiom have the
same proof in all the applications of the criterion, including the
rig-linear and Puppe exact linear cases.
\end{remark}

\begin{remark}[Puppe exact and abelian specializations]
For Puppe exact categories, saturation is automatic and all
numerators are normal; their saturated quotients are again Puppe
exact by Theorem~\ref{thm:puppe-localization}. The criterion
therefore also recovers the direct specialization
Theorem~\ref{thm:puppe-homological}. For abelian categories the
normal subcategories are Serre subcategories and the same
localization has its abelian quotient structure. Section~\ref{sec:abelian}
verifies the criterion in that structure and relates it to the
additive quotient. In particular, the inclusion $\AbCat\to\pExact$
preserves $2$-cokernels. For an exact $F:\A\to\B$ between
abelian categories, the thick closure of $F\A$ is its Serre
closure. The corresponding Serre quotient is the same ordinary
localization as the Puppe-exact quotient, and
Theorem~\ref{di:thm:quotient}(iv) supplies its full hom-category
universal property against every Puppe exact target. It therefore
remains a $2$-cokernel of $F$ in $\pExact$.
Neither specialization follows just from being a sub-$2$-category
of $\DiHom$: the quotient must stay in the smaller class, and its
structured hom-category universal property must hold there. Those
assertions are supplied by the respective quotient theorems.
\end{remark}

\section{Comparison and the general homological-category question}\label{sec:comparison}

\subsection{Comparison of the quotient constructions}\label{subsec:quotient-comparison}

For a different comparison of abelian and triangulated quotient
constructions see \cite{Krause}. Here the comparison concerns their
universal properties as $2$-cokernels and the precise effect of forgetting
structure.

The preceding proofs give a precise separation between shared facts
and new input. Strong bizero objects, the two-sided local zero-object
property, uniqueness of invertible nullhomotopies when they exist,
and full annihilation kernels have the same componentwise explanation
in each setting. Section~\ref{sec:pointed} isolates that explanation
in $\Pt$. The additive case adds
closure under biproducts and a hom-group quotient. Over a commutative
rig, the hom-group quotient becomes the hom-semimodule congruence
$f+u=g+v$, with $u,v$ in the generated linear ideal. Normality is
therefore the identity-annihilation condition of
Definition~\ref{def:rig-normal}; it is not in general enough to
require closure under retracts and existing finite biproducts.
The abelian case adds Serre closure and localization at
kernel--cokernel equivalences; the triangulated case adds thick
closure and localization at cone equivalences. For Puppe exact
linear categories, the normal subcategories are thick and the
quotient is the exact Puppe localization, whose linear enrichment
is constructed with common ternary denominators in
Lemma~\ref{lem:puppe-linear-localization}. Its finite-biproduct
case recovers the $R^{\mathrm{gp}}$-linear Serre quotient. The criterion-based abelian and triangulated arguments use the
classical existence and structure theorems for their localizations.
Descent of $2$-cells, composition of normal arrows, and the
conditional homology factorization are proved in the text.
Section~\ref{sec:di-homological} established saturated exact
localizations and the corresponding properties directly for di-exact
homological categories, including preservation under
$\pExact\to\DiHom\to\HomCat$.
Section~\ref{sec:di-revisited} explains precisely how the criterion
then compresses that final $2$-categorical argument.

\begin{proposition}\label{prop:forgetful}
The forgetful 2-functors
\[
 \AbCat\longrightarrow\Add\longrightarrow\Pt,
 \qquad
 \Tri\longrightarrow\Add,
 \qquad
 \TriAll\longrightarrow\Add
\]
preserve the strong bizero, the zero objects and zero morphisms of
the hom-categories, and $2$-kernels. They do not in general preserve
$2$-cokernels.
\end{proposition}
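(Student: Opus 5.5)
The plan splits the statement into a positive part (bizero objects, local zero structure, and $2$-kernels are preserved) and a negative part (explicit counterexamples for $2$-cokernels).

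\emph{Strong bizero and local zero structure.} Each forgetful $2$-functor sends the one-object zero category, with its unique structure, to the one-object zero category. That category is the strong bizero in every setting involved, by Proposition~\ref{prop:pointedzero}, Lemma~\ref{lem:trikernel}, and the abelian and additive discussions. By Lemma~\ref{lem:criterion-null}, a $1$-cell is null exactly when its underlying functor is zero-valued, and this condition is unchanged by forgetting structure. The zero objects of the hom-categories are precisely the null $1$-cells, by Lemma~\ref{lem:localpointedness}. The zero morphism between $f$ and $g$ is the composite $f\Rightarrow z\Rightarrow g$ through a null $z$. Its components are the unique maps through zero objects, so it is preserved.

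\emph{$2$-kernels.} In each source $2$-category the $2$-kernel of $F$ is, up to equivalence, the full annihilation subcategory $\K_F$ with its inclusion. This is Lemma~\ref{lem:annihilation} for $\Pt$, the inherited-kernel subsection for $\Add$, the Serre-kernel discussion for $\AbCat$, and Lemma~\ref{lem:trikernel} for $\Tri$ and $\TriAll$. The object class $\{X\mid FX\iso0\}$ does not depend on the structure, and the image of the inclusion under the forgetful $2$-functor is the corresponding annihilation inclusion in the target. That inclusion is the $2$-kernel there by the target's own kernel lemma. Preservation up to equivalence follows, since forgetful $2$-functors preserve equivalences.

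\emph{Failure for $2$-cokernels.} For $\Add\to\Pt$, I will take an additive $\A$ and an object $S$ such that $\add_\A(S)$ is strictly larger than $\ret_\A(S)$. The simplest case is $\A$ the category of finite-dimensional vector spaces over a field $k$ and $S=k$. Then $\ret_\A(k)$ consists of $0$ and $k$ only, whereas $\add_\A(k)$ is all of $\A$. So the additive $2$-cokernel of $k\colon\mathbf 0\to\A$, namely \eqref{eq:additivecoker}, is the zero category. The pointed $2$-cokernel \eqref{eq:pointedcoker} does not annihilate $k^2$, so it is not zero. These cannot be equivalent under $\A$, because the annihilators differ by Proposition~\ref{prop:pointedquotient}(i).

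For $\AbCat\to\Add$, I will again use finite-dimensional vector spaces, now with $S=k$. The Serre quotient is zero, while the additive quotient by $\add(k)$ is also zero, so this choice does not separate them. I will therefore switch to representations of $1\to2$. Let $\N=\Ser(S_2)$. The Serre quotient inverts the surjection $P_1\to S_1$, whose kernel is $S_2$. In the additive quotient by $\add(S_2)$, however, $\operatorname{Hom}(S_1,P_1)=0$, so no inverse is available there and $P_1\not\iso S_1$. The same representation category handles $\Tri\to\Add$ and $\TriAll\to\Add$: in $\operatorname{D^b}$ of representations of $1\to2$, the Verdier quotient by $\langle S_2\rangle$ identifies $P_1$ with $S_1$, but the additive quotient does not. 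Alternatively, one can use the observation before Lemma~\ref{lem:VerdierUP} that the additive quotient is not the localization.

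I expect the main obstacle to be making these non-equivalence claims rigorous. I would detect them with a functor invariant under equivalence under the common domain, namely whether the image of the specific map $P_1\to S_1$ becomes invertible. In the additive quotient I will check directly that no morphism $S_1\to P_1$ exists.
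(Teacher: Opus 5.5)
Your treatment of the strong bizero, the local zero structure and the $2$-kernels is essentially the paper's argument. Your $\AbCat\to\Add$ counterexample works. So does your $\Tri,\TriAll\to\Add$ counterexample, which mirrors the paper's: you use $\thick(S_2)$ and $P_1\to S_1$ with $\operatorname{Hom}(S_1,P_1)=0$, where the paper uses $\thick(S_1)$ and $S_2\to P_1$ with $\operatorname{Hom}(P_1,S_2)=0$. In both examples you should add the check that $S_1$ is not annihilated by the additive quotient, since otherwise the zero map would serve as an inverse. Evaluation at vertex $1$, which is additive, kills $\N$ and sends $S_1$ to $k$, settles this.

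The $\Add\to\Pt$ counterexample, however, fails. First, $k\colon\mathbf 0\to\A$ is not a $1$-cell of $\Add$ or of $\Pt$, because it sends the zero object to $k$. More fundamentally, you cannot separate the two $2$-cokernels by their annihilators for any genuine additive $1$-cell $F\colon\B\to\A$. Its essential image is closed under finite biproducts, since $F(B_1)\oplus F(B_2)\iso F(B_1\oplus B_2)$. Hence $\ret_{\A}(F\B)=\add_{\A}(F\B)$, and the pointed quotient \eqref{eq:pointedcoker} and the additive quotient \eqref{eq:additivecoker} annihilate exactly the same objects. In finite-dimensional vector spaces the only retract-closed additive subcategories are $0$ and the whole category, so there the two quotients even coincide.

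The difference must be detected on hom-sets. The pointed Rees quotient \eqref{eq:Rees} only collapses the ideal to a single class, whereas the additive quotient identifies $f$ and $g$ whenever $f-g$ lies in the ideal. The paper's example exploits exactly this. Take $\A$ generated by $\mathbb Z/4$ and $\mathbb Z/2$, and $\N=\add_{\A}(\mathbb Z/2)$. The ideal in $\operatorname{End}(\mathbb Z/4)$ is then $\{0,2\}$, so $1$ and $3$ stay distinct in the pointed quotient but are identified in the additive one. To make the non-equivalence rigorous, note that an equivalence under $\A$ is faithful and is compatible with the two quotient functors up to isomorphism. Equality of the images of two parallel arrows is therefore an invariant that distinguishes the two quotients.
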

\begin{proof}
The one-object zero category is unchanged, and a null functor
remains zero-valued after forgetting structure. Thus local zero
objects are preserved; so are local zero morphisms, since their
factorizations through null functors are preserved. Preservation of
$2$-kernels was proved by restricting the same full annihilation
construction at each stage. Three examples
explain why the quotient statement cannot be transferred in that way.

\emph{From additive to pointed.}
Let $R=\mathbb Z/4\mathbb Z$ and let $\A$ be a small skeleton of the
additive category generated by the $R$-modules $R$ and
$k=\mathbb Z/2\mathbb Z$, under finite sums and retracts. Let
$\N=\add_{\A}(k)$. In $\operatorname{End}_{\A}(R)=\mathbb Z/4$,
the morphisms factoring through $\N$ are precisely $0$ and $2$.
The pointed quotient identifies $0$ with $2$ but leaves $1$ and $3$
distinct. The additive quotient also identifies $1$ with $3$, because
their difference is $2$. Thus the additive 2-cokernel of
$\N\hookrightarrow\A$ is not its pointed 2-cokernel.

\emph{From abelian to additive.}
Let $\A$ be a small skeleton of finitely generated abelian groups and
$\N$ its finite groups, a Serre subcategory. Every morphism
$\mathbb Z\to N\to\mathbb Z$ with $N$ finite is zero. Thus the
additive quotient retains $\operatorname{End}(\mathbb Z)=\mathbb Z$,
and multiplication by $2$ is not invertible there. In the Serre quotient,
multiplication by $2$ is inverted, since its kernel is zero and its
cokernel is $\mathbb Z/2$. Hence the two cokernels differ.

\emph{From triangulated to additive.}
Fix a field $k$, let $\A$ be the abelian category of finite-dimensional
representations of the quiver $1\to2$, and set $\T=D^b(\A)$.
Regard the representations
\[
 S_1=(k\to0),\qquad S_2=(0\to k),\qquad P_1=(k\xrightarrow{1}k)
\]
as complexes concentrated in degree zero. The short exact sequence
\[
 0\longrightarrow S_2\xrightarrow{s}P_1\longrightarrow S_1
 \longrightarrow0
\]
yields a distinguished triangle. Put $\N=\thick_{\T}(S_1)$.
Thus $s$ becomes invertible in $\vq{\T}{\N}$.

On the other hand, a morphism of representations $P_1\to S_2$ is
zero: commutativity with the arrow $k\xrightarrow{1}k$ forces its
component at vertex $2$ to vanish. Since $P_1$ is projective,
\[
 \operatorname{Hom}_{\T}(P_1,S_2)
 =\operatorname{Hom}_{\A}(P_1,S_2)=0.
\]
This hom-group remains zero in the additive quotient $\aq{\T}{\N}$.
Neither $S_2$ nor $P_1$ becomes a zero object there. Indeed, evaluation
at vertex $2$ is an exact functor $\A\to\mathrm{vect}_k$; its
termwise extension induces an exact functor
$D^b(\A)\to D^b(\mathrm{vect}_k)$ which annihilates $\N$ but sends
both these objects to $k$. It factors through the additive quotient,
so it detects that their images are nonzero. Consequently $s$ has no
inverse in the additive quotient. The Verdier quotient is therefore
not the additive 2-cokernel of $\N\hookrightarrow\T$. This example
works for both choices of triangulated 2-cells.
\end{proof}

\begin{proposition}[Forgetting linear structure and exactness]\label{prop:linear-forgetful}
For every commutative rig $R$, the canonical $2$-functors
\[
 \mathbf{Lin}_{R,0}\longrightarrow\Pt,
 \qquad
 \mathbf{PEx}_{R}\longrightarrow\mathbf{Lin}_{R,0}
\]
preserve the strong bizero, local zero objects and zero morphisms,
and $2$-kernels. They need not preserve $2$-cokernels.
The canonical $2$-functors
\[
 \mathbf{Lin}_{R}^{\oplus}\hookrightarrow\mathbf{Lin}_{R,0},
 \qquad
 \mathbf{PEx}_{R}^{\oplus}\hookrightarrow\mathbf{PEx}_{R},
\]
\[
 \mathbf{PEx}_{R}\longrightarrow\pExact,
 \qquad
 \mathbf{PEx}_{R}^{\oplus}\longrightarrow\AbCat
\]
preserve these pointed structures and both $2$-kernels and
$2$-cokernels.
\end{proposition}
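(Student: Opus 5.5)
I would handle the three groups of assertions separately, and I would use the normal forms already established for each $2$-category: full annihilation subcategories for $2$-kernels and the specified quotients for $2$-cokernels.

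\emph{Pointed structure.} In every listed $2$-functor the one-object zero category goes to the one-object zero category. A null $1$-cell is zero-valued, and being zero-valued survives forgetting structure, so null $1$-cells go to null $1$-cells. By Lemma~\ref{lem:localpointedness} these are exactly the local zero objects on both sides. Local zero morphisms are the composites $f\Rightarrow z\Rightarrow g$ through a null $z$, so they are preserved as well.

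\emph{$2$-kernels.} For every listed $2$-functor the source's $2$-kernel of $F$ is the full annihilator $\K_F$ with its inherited structure. This holds by Theorem~\ref{thm:rig-linear}(Q2), Theorem~\ref{thm:puppe-linear}(Q2) and Lemma~\ref{lem:annihilation}. Its image in the target is the same full annihilator, and that is the target's $2$-kernel of $F$.

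\emph{Non-preservation of $2$-cokernels along $\mathbf{Lin}_{R,0}\to\Pt$.} For $R=\mathbb Z$ I would reuse the first example of Proposition~\ref{prop:forgetful}. In $\mathbb Z/4$-modules, the pointed quotient identifies only $0$ and $2$, while the linear quotient \eqref{eq:rig-congruence} also identifies $1$ and $3$.

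\emph{Non-preservation along $\mathbf{PEx}_R\to\mathbf{Lin}_{R,0}$.} For $R=\mathbb Z$ I would reuse the finite-groups Serre example. Multiplication by $2$ on $\mathbb Z$ is inverted in the exact localization, but it survives as a non-invertible map in the linear ideal quotient, since every composite $\mathbb Z\to N\to\mathbb Z$ with $N$ finite is zero.

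\emph{Preservation of $2$-cokernels along the full inclusions.} For $\mathbf{Lin}_R^{\oplus}\hookrightarrow\mathbf{Lin}_{R,0}$ and $\mathbf{PEx}_R^{\oplus}\hookrightarrow\mathbf{PEx}_R$ I would first note that the closures are computed identically: $\cl^R_{\C}$ in \eqref{eq:rig-closure} and $\thick_{\C}$ do not depend on the ambient $2$-category. The quotient built in the larger $2$-category lies in the smaller one, because it preserves biproducts; the proofs of Theorems~\ref{thm:rig-linear} and~\ref{thm:puppe-linear} note this. Its universal property \eqref{eq:rig-quotient-up}, respectively Lemma~\ref{lem:puppe-linear-localization}, is stated against all targets in the larger $2$-category. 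It therefore restricts to the smaller one, and the same $1$-cell is a $2$-cokernel in both.

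\emph{Preservation along $\mathbf{PEx}_R\to\pExact$.} Here the quotient $\C\to\C/\thick_{\C}(F\B)$ is, after forgetting, the quotient of Theorem~\ref{thm:puppe-characterization}: Lemma~\ref{lem:puppe-linear-localization} only adds an enrichment to the same exact localization. So its image is a $2$-cokernel in $\pExact$.

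\emph{Preservation along $\mathbf{PEx}_R^{\oplus}\to\AbCat$.} I would use Remark~\ref{rem:puppe-linear-abelian}: objects with finite biproducts are abelian categories, thick subcategories coincide with Serre subcategories, and the exact localization coincides with the Serre quotient as the same ordinary localization. Corollary~\ref{cor:Serrecoker} then applies.

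The main subtlety I expect is in these last two cases. The two target $2$-categories have more test objects, namely non-linear categories, and I need the universal property against them. I would handle this by reducing to the fact that the unenriched quotient is literally the same localization. Its universal property in $\pExact$ and $\AbCat$ is already established by Theorems~\ref{thm:puppe-characterization} and~\ref{thm:abelian}, so no enrichment argument is needed there.
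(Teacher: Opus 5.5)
Your proposal is correct and follows essentially the same route as the paper. It preserves the bizero and local zero structure componentwise, uses the same full annihilation subcategories as $2$-kernels, reuses the $\mathbb Z/4$ and finite-groups examples at $R=\mathbb Z$ for the failures, and obtains the positive $2$-cokernel statements by observing that the closures and the underlying ordinary localizations are literally the same on both sides.
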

\begin{proof}
The strong bizero and local zero structures are preserved for the
same componentwise reasons as in Proposition~\ref{prop:forgetful}.
All the $2$-kernels are the corresponding full annihilation
subcategories, with inherited structure, so are preserved as well.

For the two full inclusions, the normal subcategory and quotient
constructions are the same as in the larger $2$-category. Normal
subcategories of an object with finite biproducts again have finite
biproducts, and linear quotient functors preserve the biproduct
identities. The proofs of Theorems~\ref{thm:rig-linear}
and~\ref{thm:puppe-linear} therefore identify the same $2$-cokernels
on both sides.

Forgetting enrichment from $\mathbf{PEx}_{R}$ preserves thick
closure and the underlying exact localization, by
Lemma~\ref{lem:puppe-linear-localization}. Thus it preserves
$2$-cokernels. With finite biproducts, the same observation identifies
the underlying quotient with the Serre quotient, proving the last
assertion for $\mathbf{PEx}_{R}^{\oplus}\to\AbCat$.

The failures in the first display already occur at $R=\mathbb Z$.
The additive-to-pointed example in Proposition~\ref{prop:forgetful}
is also a linear-to-pointed example. Its abelian-to-additive example
is an exact-linear-to-linear example: the linear ideal quotient
retains the endomorphism ring $\mathbb Z$ of $\mathbb Z$, whereas
the exact quotient inverts multiplication by $2$. These examples
also give the corresponding failures when the source and target
linear categories are required to have finite biproducts.
\end{proof}

\subsection{The general homological-category question}\label{sec:homcat}

Let $\HomCat$ have as objects small homological categories in the
sense of Definition~\ref{def:grandis-homological}, as $1$-cells
functors preserving zero objects, all
kernels and all cokernels, and as $2$-cells arbitrary natural
transformations. Its full sub-$2$-category $\DiHom$ is
$2$-homological by Theorem~\ref{di:thm:main}. The question in this
subsection concerns arbitrary homological objects, without the
unconditional normality of normal-monomorphism--normal-epimorphism
composites. We retain preservation of all kernels and cokernels;
replacing that requirement by preservation only of short exact
sequences would define a different $2$-category.

\begin{proposition}\label{prop:homcat-kernels}
Both $\HomCat$ and $\DiHom$ are $2$-pointed and admit all
$2$-kernels. For an exact functor $F:\C\to\D$, its $2$-kernel is
the exact inclusion of the full annihilation subcategory
\[
 \Ker(F)=\{X\in\C\mid F(X)\cong0\}.
\]
Their forgetful $2$-functors to $\Pt$ preserve these $2$-kernels.
\end{proposition}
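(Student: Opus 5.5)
The plan is to reduce everything to componentwise arguments already used in the pointed case, handling $\HomCat$ and $\DiHom$ simultaneously. The di-exact case is already covered by Theorem~\ref{di:thm:main} and Proposition~\ref{di:prop:2kc}, so the same argument written for a general homological category recovers both.

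\emph{Step 1: $2$-pointedness.} I would take the one-object zero category as the bizero. It is homological, and possibly trivially di-exact. An exact functor into it is unique, and an exact functor out of it selects a zero object, so any two such functors are uniquely isomorphic. For strongness, fix an exact $F$ and a parallel null exact $Z$. A null exact functor is zero-valued, and a zero-valued functor is isomorphic to one through the bizero. The unique component maps $F(X)\to Z(X)$ and $Z(X)\to F(X)$ are natural, because each naturality square has a zero codomain or a zero domain. All natural transformations are admitted, so these are the unique $2$-cells, exactly as in Proposition~\ref{prop:pointedzero}.

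\emph{Step 2: the annihilation subcategory is an object of the $2$-category.} Let $\Ker(F)$ be the full replete subcategory on annihilated objects. I would check that it is closed under ambient kernels and cokernels of its own morphisms. For $f:X\to Y$ with $F(X)\cong F(Y)\cong0$, exactness gives $F(\ker f)\cong\ker F(f)$, and this is the kernel of a morphism between zero objects. That kernel is a subobject of a zero object, hence zero; dually for cokernels. It contains zero and is nonempty, so Lemma~\ref{lem:inheritance} transfers homologicity, and di-exact homologicity when $\C$ is di-exact. That lemma also shows the inclusion computes kernels and cokernels as in $\C$, so the inclusion is exact. This step is the only point needing care, because inheritance requires closure under ambient kernels and cokernels rather than thickness. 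I expect no genuine obstacle, since annihilation by an exact functor gives this closure directly.

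\emph{Step 3: the universal property and preservation.} Let $G:\X\to\C$ be exact with $FG$ null. By Lemma~\ref{lem:criterion-null} applied with $U$ the forgetful $2$-functor, or simply componentwise, $G$ lands in $\Ker(F)$ and so corestricts. The corestriction is exact because its kernels and cokernels are the ambient ones. Natural transformations lift uniquely through the full inclusion, which gives the hom-category equivalence~\eqref{eq:kernelUP}. Equivalently, I would invoke the general part of Lemma~\ref{lem:annihilation} with $\B'=\Ker(F)$. Preservation by $U:\HomCat\to\Pt$ and $U:\DiHom\to\Pt$ then follows, because the underlying inclusion is exactly the pointed $2$-kernel $\K_F$ of Lemma~\ref{lem:annihilation}.
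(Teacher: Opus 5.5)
Your proposal is correct and follows the paper's proof essentially step for step: the strong bizero is the one-object zero category with the componentwise argument, $\Ker(F)$ is closed under ambient kernels and cokernels so Lemma~\ref{lem:inheritance} applies, and the universal property comes from corestricting and uniquely lifting transformations through the full inclusion, which is the same annihilation kernel as in $\Pt$. The hedge ``possibly trivially di-exact'' can be dropped, since the zero category is di-exact outright: its only morphism is an identity.
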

\begin{proof}
The one-object zero category is homological and di-exact; the
zero-preserving functors to and from it are exact. The same
componentwise argument as for $\Pt$ gives a unique natural
transformation in each direction between any exact functor and a
parallel null exact functor. Both are allowed $2$-cells. Thus each
null exact functor is a zero object of its hom-category, proving
$2$-pointedness in the sense of Definition~\ref{def:2pointed}.

Let $u:X\to Y$ have both endpoints in $\Ker(F)$. Since $F$
preserves its kernel, the image of that kernel is the kernel of
a morphism between zero objects, hence has zero domain. The dual
argument applies to its cokernel. Thus $\Ker(F)$ is closed under
ambient kernels and cokernels of its morphisms. It contains zero
objects and is full and replete, so Lemma~\ref{lem:inheritance}
proves that it is homological, and di-exact when $\C$ is di-exact.
Its inclusion preserves every kernel and cokernel.

An exact $G:\E\to\C$ with $FG$ null corestricts to $\Ker(F)$.
The corestriction preserves kernels and cokernels because they are
computed in $\C$. Every transformation between two such
corestrictions is uniquely the corresponding transformation between
their composites with the full inclusion. This proves the full
hom-category universal property.
It is the same annihilation kernel as in $\Pt$.
\end{proof}

\begin{proposition}[Necessary saturation of an annihilation class]\label{prop:homcat-saturation}
Every annihilation subcategory of an exact functor from a pointed
homological category is saturated thick in the sense of
Definition~\ref{di:def:saturated}.
\end{proposition}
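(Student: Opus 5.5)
The plan is to reduce everything to Lemma~\ref{di:lem:saturated-properties}(ii), whose proof never uses di-exactness, and to re-verify each clause directly for a homological source. Let $F:\C\to\D$ be exact with $\C$ pointed homological, and put $\K=\Ker(F)$. We do not assume anything about $\D$ beyond the existence of zero objects, kernels and cokernels, which $F$ preserves. Fullness, repleteness and the presence of zero objects are immediate from the fact that $F$ preserves zero objects and isomorphisms.

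For thickness, first take a short exact sequence $A\xrightarrow{m}B\xrightarrow{e}C$ in $\C$. Exactness of $F$ makes $F(m)=\ker F(e)$ and $F(e)=\coker F(m)$.
\begin{itemize}
\item If $F(B)\cong0$, then $F(A)$ is the domain of a kernel of a map out of zero. The kernel of $0\to F(C)$ is the identity of $0$, so $F(A)\cong0$. Dually, $F(C)\cong0$.
\item If $F(A)\cong F(C)\cong0$, then $F(m)$ is the kernel of a map to zero, hence an isomorphism $F(A)\to F(B)$. So $F(B)\cong0$.
\end{itemize}
Normal subobjects and normal quotients of an object of $\K$ are the outer terms of such sequences, so the first item covers them as special cases.

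For saturation, take an arbitrary $f:X\to Y$. Suppose $Y,\Ker f\in\K$. Since $F(\ker f)=\ker F(f)$ and $F(Y)\cong0$, the morphism $F(f)$ has zero codomain. Its kernel is therefore an isomorphism onto $F(X)$. That kernel has domain $F(\Ker f)\cong0$, so $F(X)\cong0$, which gives~\eqref{di:eq:satk}. The implication~\eqref{di:eq:satc} is the dual argument, using preservation of cokernels.

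The only point I expect to need care is that these computations use only the preservation of kernels and cokernels of \emph{arbitrary} morphisms, the defining property of $1$-cells in $\HomCat$. They use neither di-exactness nor homologicity of $\D$: the kernel of a map into a zero object is an isomorphism in any pointed category with kernels. Homologicity of $\C$ is needed only to make ``normal subobject'', ``short exact sequence'' and Definition~\ref{di:def:saturated} meaningful. So the argument is exactly the proof of Lemma~\ref{di:lem:saturated-properties}(ii), checked to be valid in this generality.
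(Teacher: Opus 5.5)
Your proof is correct and follows the paper's approach. The paper's proof simply observes that the proof of Lemma~\ref{di:lem:saturated-properties}(ii) uses only preservation of all kernels and cokernels, and never di-exactness. Your write-up spells that same argument out clause by clause.
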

\begin{proof}
Lemma~\ref{di:lem:saturated-properties}(ii) was proved for arbitrary
homological categories. It used preservation of all kernels and
cokernels, normal subquotients and extensions, but did not use
di-exactness.
\end{proof}

The next example explains why the saturation step in the quotient
construction is necessary even when the source is di-exact.

\begin{example}[A thick subcategory need not be an annihilation kernel]\label{ex:homcat-obstruction}
Let $\C$ be a small skeleton of the category of at most countable
pointed sets, and let $\N$ be its full subcategory of finite pointed
sets. Kernels are pointed subsets. A cokernel collapses a pointed
subset to the basepoint and makes no other identifications.
Consequently a normal map is precisely a pointed map that is
injective away from its zero fibre. Such maps compose: if
$gf(x)=gf(y)\ne0$, injectivity of $g$ away from its zero fibre gives
$f(x)=f(y)\ne0$, and that of $f$ gives $x=y$.
Proposition~\ref{prop:diexact-equivalences} proves that $\C$ is
di-exact homological. All the indicated constructions remain
countable, so the size restriction does not affect this argument.

The subcategory $\N$ is full, replete, closed under retracts,
normal subquotients, and extensions. For the last assertion, in a
short exact sequence of pointed sets the nonbasepoint part of the
quotient is in bijection with the complement of the kernel subset;
if kernel and quotient are finite then the middle set is finite.
The inclusion $I:\N\to\C$ is exact.

Let $J=\{0,1\}$. For every pointed set $X$ there is a map
$u_X:X\to J$ taking every nonbasepoint to $1$, with $\Ker(u_X)=0$.
Suppose an exact $F:\C\to\D$ annihilates $\N$. Then $F(J)=0$
and $F(\Ker(u_X))=0$, so Proposition~\ref{prop:homcat-saturation}
gives $F(X)=0$ for every $X$. Thus $F$ is null. In particular
$\N$ cannot be the annihilation kernel of any such functor.
In fact the $2$-cokernel of $I$ in both $\HomCat$ and $\DiHom$
is $\C\to0$: the functors annihilating $I$ are exactly the null
functors, and their natural-transformation category is contractible.

In contrast, the pointed quotient $q:\C\to\pq{\C}{\N}$ of
Section~\ref{sec:pointed} does not annihilate any infinite pointed
set. Such a set is not a retract of a finite set. In that quotient
$q(u_X)$ has zero codomain, so its kernel is the identity of $qX$,
whereas $q(\Ker(u_X))=0$. For infinite $X$ this proves that $q$
is not exact. Thus the pointed ideal quotient is not the requisite
homological quotient, even when the original category is di-exact
homological and the subcategory is thick.
\end{example}

In the notation of this example,
\[
 \SatTh_{\C}(\N)=\C,
\]
since a saturated thick subcategory containing $J$ contains every
$X$ by the first saturation implication applied to $u_X$.
Theorem~\ref{di:thm:quotient} therefore gives exactly the zero quotient
already detected by the universal property. Saturation, rather than
passage to the exact core alone, resolves this example. General
localization methods for null ideals and arrow categories
\cite{Fritz} remain relevant to the broader question, but the theorem
for di-exact sources does not by itself settle arbitrary homological
sources.

\begin{proposition}[A sufficient route to the general theorem]\label{prop:homcat-conditional}
Suppose that pointed homological categories admit a system of full
replete admissible subcategories with the following properties:
annihilation kernels are admissible; admissibility has the
intersection, inverse-image, transitivity and restriction properties
in \textup{(Q1)} of Theorem~\ref{thm:criterion}; every admissible
subcategory has an exact normal quotient with the precise
annihilation and hom-category universal properties in \textup{(Q3)};
and nested quotient inclusions are fully faithful, as in
\textup{(Q4)}. Then $\HomCat$ is $2$-homological.
\end{proposition}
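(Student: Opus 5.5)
The plan is to apply Theorem~\ref{thm:criterion} with $\Lcat=\HomCat$ and $U:\HomCat\to\Pt$ the canonical forgetful $2$-functor, taking the admissible subcategories as the normal-subcategory data, each realized by itself with its inherited homological structure and its inclusion. First I check the ambient assumptions of Section~\ref{sec:criterion}. Proposition~\ref{prop:homcat-kernels} gives $2$-pointedness with the one-object zero category as strong bizero. The functor $U$ sends that bizero to $\mathbf1$ and is locally faithful, since $2$-cells are ordinary natural transformations.

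Next I match the hypotheses to (Q1)--(Q4).
\begin{enumerate}
\item (Q1). The assumption supplies intersections, inverse images, transitivity and restriction. Containment of zero objects follows because admissible subcategories are closed under intersection and annihilation kernels are admissible: the annihilator of the identity functor consists exactly of the zero objects. I also need the inherited structure to exist. Here I use that every admissible subcategory is the annihilator of its own quotient, by the precise annihilation clause of (Q3). The argument of Proposition~\ref{prop:homcat-kernels} via Lemma~\ref{lem:inheritance} then makes it homological, with an exact inclusion.
\item (Q2). This is Proposition~\ref{prop:homcat-kernels} together with the assumed admissibility of annihilation kernels.
\item (Q3). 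This is assumed verbatim.
\item (Q4). Full faithfulness of nested quotient inclusions is assumed. It remains to show that $U$ reflects equivalences.
\end{enumerate}

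For reflection of equivalences, let $v$ be exact with $U(v)$ an equivalence of categories. Choose an ordinary quasi-inverse. It preserves zero objects and all kernels and cokernels, because an equivalence transports those universal properties. So it is a $1$-cell of $\HomCat$. Its unit and counit are natural transformations, hence admitted $2$-cells. Thus $v$ is an equivalence in $\HomCat$, and Theorem~\ref{thm:criterion} yields $2$-homologicity.

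The only step that needs care, and the one I expect to be the main obstacle, is confirming that the hypothesis list really contains everything the criterion uses. In particular, I must check two points: that the descended $1$-cell $j$ in (Q4) exists in $\HomCat$, which follows from (Q3) applied to $q_{\N}^{\C}i_{\M}^{\C}$; and that full faithfulness of $U(j)$ is independent of the choice of $j$, which holds since descents are isomorphic. I would also note that Proposition~\ref{prop:homcat-saturation} does not have to be used. Saturation is a consequence for the admissible class rather than an input here.
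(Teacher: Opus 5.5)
Your proposal is correct and follows the paper's route. Proposition~\ref{prop:homcat-kernels} supplies the strong bizero and (Q2), and the assumptions supply the remaining hypotheses of Theorem~\ref{thm:criterion}; you also spell out the inherited structure on admissible subcategories and the reflection of equivalences, which the paper leaves implicit.

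One small repair is needed. Closure under intersections, together with admissibility of the zero-object annihilator, does not imply that every admissible $\N$ contains the zero objects. Use instead the precise annihilation clause $\N=\mathcal{K}_U(q_{\N})$: the exact functor $q_{\N}$ preserves zero objects, so $\N$ contains them.
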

\begin{proof}
Proposition~\ref{prop:homcat-kernels} supplies the strong bizero,
including both local zero-object properties, and the kernel
assertion (Q2). The assumptions supply exactly the
remaining hypotheses of Theorem~\ref{thm:criterion}. Its proof
therefore constructs $2$-cokernels, proves both composition laws,
and proves the conditional homology axiom. 
\end{proof}

For arbitrary homological sources, it remains to establish the
required exact quotients and fully faithful nested subquotients.
Theorem~\ref{di:thm:quotient} supplies them for di-exact sources,
but its proof uses unconditional normal factorization twice:
in Lemma~\ref{di:lem:denominators}(iv) and when proving that the
quotient sends normal monomorphisms to monomorphisms. Grandis's
conditional homology axiom does not provide these factorizations for
all the pairs occurring there. We therefore retain the following
conjecture for $\HomCat$, separately from the proved di-exact case.

\begin{conjecture}\label{conj:homcat}
The $2$-category $\HomCat$ of small pointed Grandis homological
categories, exact functors, and arbitrary natural transformations
is $2$-homological.
\end{conjecture}

The proved $2$-homological cases are $\Pt$, $\Add$, $\pExact$,
$\AbCat$, $\Triang$, $\TriAll$, and $\DiHom$. In particular,
Corollary~\ref{di:cor:inclusions} shows that the saturated quotients
of di-exact sources already have their full $2$-cokernel universal
property against arbitrary homological targets. What remains is the
quotient problem for sources that are not di-exact.

\end{document}